\documentclass[reqno,11pt]{amsart}

\usepackage{xcolor}
\usepackage{graphicx}
\usepackage[hidelinks]{hyperref}
\usepackage{amscd}

\usepackage{amsmath}
\usepackage{amsthm}
\usepackage{amssymb}
\usepackage{latexsym,array}
\usepackage{amsfonts}
\usepackage{shadow}
\usepackage{amsbsy}
\usepackage{amssymb}
\usepackage{dsfont}
\usepackage{mathtools}
\usepackage{enumitem}
\usepackage{doi}

\usepackage[notref, notcite, final]{showkeys}
\usepackage{color}
\usepackage{graphicx}
\usepackage[hidelinks]{hyperref}
\usepackage{cleveref}
\usepackage{fullpage}
\usepackage{comment}
\usepackage{tikz-cd}
\usepackage{tikz}
\usepackage{float}

\usepackage{dsfont}

\usepackage{cleveref}

\numberwithin{equation}{section}

\newcommand{\bigD}{\mathcal{D}}

\newcommand{\qcs}{\mathcal Q_{c,s}(x)}

\newcommand{\PRes}{\mathcal{Q}}

\newtheorem{theorem}{Theorem}[section]
\newtheorem{lemma}[theorem]{Lemma}
\newtheorem{corollary}[theorem]{Corollary}
\newtheorem{proposition}[theorem]{Proposition}
\newtheorem{prob}[theorem]{Problem}

\theoremstyle{definition}
\newtheorem{remark}[theorem]{{\bf Remark}}
\newtheorem{definition}[theorem]{Definition}

\newcommand{\cc}{\mathbb{C}}

\newcommand{\pp}{\partial}
\newcommand{\rr}{\mathbb{R}}

\renewcommand{\Re}{\mathrm{Re}}

\newcommand{\dr}{\partial_r}
\newcommand{\dx}{\partial_{x_0}}


\crefname{enumi}{}{}
\crefname{enumii}{}{}

\title[The fine structure of the spectral theory on the $S$-spectrum in dimension five ]{The fine structure of the spectral theory\\ on the $S$-spectrum in dimension five }

\author[F. Colombo]{Fabrizio Colombo}
\address{(FC)
Politecnico di Milano\\Dipartimento di Matematica\\Via E. Bonardi, 9\\20133
Milano, Italy}
\email{fabrizio.colombo@polimi.it}

\author[A. De Martino]{Antonino De Martino}
\address{(ADM)
 Politecnico di Milano\\Dipartimento di Matematica\\Via E. Bonardi, 9\\20133
Milano, Italy
} \email{antonino.demartino@polimi.it}

\author[S. Pinton]{Stefano Pinton}
\address{(SP)
Politecnico di Milano\\Dipartimento di Matematica\\Via E. Bonardi, 9\\20133
Milano, Italy
} \email{stefano.pinton@polimi.it}

\author[I. Sabadini]{Irene Sabadini}
\address{(IS)
Politecnico di Milano\\Dipartimento di Matematica\\Via E. Bonardi, 9\\20133
Milano, Italy
} \email{irene.sabadini@polimi.it}

\date{}

\begin{document}

\maketitle
\begin{abstract}
Holomorphic functions play a crucial role in operator theory and the Cauchy formula is a very important tool to define functions of operators. The Fueter-Sce-Qian extension theorem is a two steps procedure to extend holomorphic functions to the hyperholomorphic setting.
The first step gives the class of slice hyperholomorphic functions;
their Cauchy formula allows to define the so-called $S$-functional calculus for noncommuting operators  based on the $S$-spectrum.
In the second step this extension procedure generates monogenic functions; the
related monogenic functional calculus, based on the monogenic spectrum, contains the Weyl functional calculus as a particular case.
In this paper we show that the extension operator from slice hyperholomorphic
functions to monogenic functions admits various possible factorizations
that induce different function spaces.
The integral representations in such spaces allows to define the associated functional calculi based on the $S$-spectrum.
The function spaces and the associated functional calculi define the so called
{\em fine structure of the spectral theories on the $S$-spectrum}.
Among the possible fine structures there are the harmonic and poly-harmonic functions and the associated harmonic and poly-harmonic functional calculi. The study of the fine structures depends on the dimension considered and in this paper we study in detail the case of dimension five, and we describe all of them.
The five-dimensional case is of crucial importance
because it allows to determine almost all the function spaces 
will also appear in dimension greater than five, but with different orders.

\end{abstract}

\medskip
\noindent AMS Classification 47A10, 47A60.

\noindent Keywords: Spectral theory on the
$S$-spectrum, harmonic fine structure, harmonic functional calculi, Dirac fine structure, Fueter-Sce-Qian extension theorem.

\tableofcontents
\section{Introduction}\label{INTRO}

The spectral theory on the $S$-spectrum for quaternionic operators has been widely developed in the last 15 years motivated by the precise formulation quaternionic quantum mechanics (see \cite{BF,JONAQS}), and other applications has been found more recently,
among which we mention the fractional powers of vectors operators that are useful in fractional diffusion problems, see
\cite{frac4,frac5,frac1}.
The main references on quaternionic spectral theory on the $S$-spectrum
 are the books \cite{6COFBook,6ACSBOOK,FJBOOK,CGKBOOK,6JONAME} and the references therein.
The spectral theory on the $S$-spectrum in the Clifford algebra setting, see \cite{6css}, started in parallel with the quaternionic one,
but in the last few years there have been new and unexpected developments.
 In fact, even if a precise version of the quaternionic
 spectral theorem on the $S$-spectrum was expected and proved in \cite{6SpecThm1}
 (for perturbation results see also \cite{6CCKS}), it was only in recent times that the spectral theorem
 for fully Clifford operators was proved, see
\cite{CLIFST}. Moreover,  the validity of the $S$-functional calculus was extended beyond the Clifford algebra setting, see \cite{UNIV},
and it was used to the define slice monogenic functions of a Clifford variable in \cite{ClifFUN}.

\medskip
This paper belongs a new research direction that is related to
the Fueter-Sce-Qian mapping theorem.
To illustrate our results we briefly recall this theorem.
Roughly speaking, it is a two steps procedure that extends
holomorphic functions of one complex variable to the hypercomplex setting.
In the first step it generates slice hyperholomorphic functions
and in the second step it generates monogenic functions, i.e., functions in the kernel of the Dirac operator.
More precisely,
let $\mathcal{O}(D)$ be the set of holomorphic functions
on $D\subseteq \mathbb{C}$ and let $\Omega_D\subseteq \mathbb{R}^{n+1}$ be the set induced by $D$ (see Section \ref{FSPAC}).
The first  Fueter-Sce-Qian map $T_{FS1}$ applied to $\mathcal{O}(D)$ generates the set ${\mathcal{SH}(\Omega_D)}$ of slice monogenic functions on $\Omega_D$
(which turn out to be intrinsic) and
the second  Fueter-Sce-Qian map $T_{FS2}$ applied to ${\mathcal{SH}(\Omega_D)}$ generates
 axially monogenic functions on $\Omega_D$. We denote this second class of functions
by $\mathcal{AM}(\Omega_D)$.
The extension procedure is illustrated in the diagram:
\begin{equation*}
\begin{CD}
\textcolor{black}{\mathcal{O}(D)}  @>T_{FS1}>> \textcolor{black}{\mathcal{SH}(\Omega_D)}  @>\ \   T_{FS2}=\Delta^{(n-1)/2} >>\textcolor{black}{\mathcal{AM}(\Omega_D)},
\end{CD}
\end{equation*}
where $T_{FS2}=\Delta^{(n-1)/2} $ and $\Delta$ is the Laplace operator in dimension $n+1$ (sometimes we shall write $\Delta_{\mathbb{R}^{n+1}}$ to specify the dimension).
The Fueter-Sce-Qian mapping theorem induces two spectral theories according to the two classes
of hyperholomorphic functions it generates: using the Cauchy formula of slice hyperholomorphic functions one defines the $S$-functional calculus, which is based on the $S$-spectrum, while using the Cauchy formula of monogenic functions one obtains the monogenic functional calculus, based on the monogenic spectrum.
This last calculus was introduced by  A. McIntosh and his collaborators,
see \cite{JM}), to define functions of
noncommuting operators on Banach spaces; it has several applications,
as discussed in the books \cite{J,TAOBOOK}.

\medskip
The Fueter-Sce mapping theorem (when $n$ is odd provides an
alternative way to define the monogenic functional calculus.
The main idea is to apply the Fueter-Sce operator $T_{FS2}$ to the slice hyperholomorphic Cauchy kernel and to write the Fueter-Sce mapping theorem in integral form. Using this integral formulation, we can define the so-called $F$-functional calculus, which
 is a monogenic functional calculus, but it is based on the $S$-spectrum.
In diagram form, we have:
\begin{equation*}
\begin{CD}
{\mathcal{SH}(\Omega_D)} @.  {\mathcal{AM}(\Omega_D)} \\   @V  VV
  @.
\\
{{\rm  Slice\ Cauchy \ Formula}}  @> T_{FS2}=\Delta^{(n-1)/2}>> {{\rm Fueter-Sce\ theorem \ in \  integral\  form}}
\\
@V VV    @V VV
\\
{S-{\rm  functional \ calculus}} @. F-{{\rm functional \ calculus}}
\end{CD}
\end{equation*}
 Observe that in the above diagram the arrow from the space of axially monogenic function $\mathcal{AM}(\Omega_D)$ is missing because the $F$-functional calculus is deduced from the slice hyperholomorphic Cauchy formula.

\medskip
We are now in the position to define the fine structure of the spectral theories on the $S$-spectrum
taking advantage of the following observation.
Let $h:=\frac{n-1}{2}$ be the so-called Sce exponent, and $\Delta$ be the Laplace operator in dimension $n+1$: the operator $T_{FS2}:= \Delta^{h}$  maps the slice hyperholomorphic function $f(x)$ to the monogenic function $\breve{f}(x)$ given by
$$
\breve{f}(x)= \Delta^hf(x),\ \ \ \ x\in \Omega_D.
$$
If we denote by $e_\ell$, $\ell=1,...,n$ the units of the Clifford algebra $\mathbb{R}_n$
the Dirac operator $\mathcal{D}$ and its conjugate $\overline{\mathcal{D}}$ are defined by
$$
\mathcal{D} :=\partial_{x_0} +\sum_{i=1}^n e_i
\partial_{x_i}, \ \ \ \overline{\mathcal{D}}=\partial_{x_0} -\sum_{i=1}^n e_i
\partial_{x_i}.
$$
The powers of the Laplace operator $\Delta^h$ can be factorized in terms of the
Dirac operator $ \mathcal{D}$ and its conjugate
 $\mathcal{\overline{D}}$ because
   $$
   \mathcal{D}\mathcal{\overline{D}}=\mathcal{\overline{D}}\mathcal{D}=\Delta.
   $$
   So it is possible to repeatedly apply to a slice hyperholomorphic function $f(x)$
   the Dirac operator and its conjugate, until we reach the maximum power of the Laplacian, i.e., the Sce exponent.
 This implies the possibility to build different sets of functions which lie between the set of slice hyperholomorphic functions and the set of axially monogenic functions.

\medskip
We will call {\em fine structure of the  spectral theory on the $S$-spectrum}
 the set of the functions spaces and the associated functional calculi
 induced by a factorization of the operator $T_{FS2}$ in the Fueter-Sce extension theorem.

\medskip
One of the most important factorizations leads to the so-called
 Dirac fine structure that corresponds to an alternating sequence
of $ \mathcal{D}$ and $ \mathcal{\overline{D}}$, $h$ times, for example
$$ T_{FS2}=\Delta_{\mathbb{R}^{n+1}}^{h}= \mathcal{D} \mathcal{\overline{D}}\ldots \mathcal{D}\mathcal{\overline{D}}.$$

\medskip
The fine structure is also defined for the Fueter-Sce-Qian extension theorem, namely when $n$ is even, but it involves the fractional powers of the
 Laplace operator and will be treated in a future publication.

\medskip
The fine structure of the spectral theory on the $S$-spectrum
generates, in a unified way, several classes of functions,
some of which have already been studied in the literature.
We can summarise them by defining  polyanalytic holomorphic Cliffordian functions of order $(k, \ell)$ which are defined as follows.

\medskip
Let $U$ be an open set in $\mathbb{R}^{n+1}$. A function $f:U \subset \mathbb{R}^{n+1} \to \mathbb{R}_n$ of class $\mathcal{C}^{2k+ \ell}(U)$ is said to be (left) polyanalytic holomorphic Cliffordian of order $(k, \ell)$ if
	$$ \Delta^k \mathcal{D}^{\ell}  f(x)=0\quad \forall x\in U,$$
	where $0 \leq k \leq \frac{n-1}{2}$ and $ \ell \geq 0$.

\medskip
Clearly,  polyharmonic functions of degree $k$ are a particular case of polyanalytic holomorphic Cliffordian functions.
In fact, a  function $f:U \subset \mathbb{R}^{n+1} \to \mathbb{R}_n$ of class $\mathcal{C}^{2k}(U)$ is called polyharmonic of degree $k$ in the open set $U \subset \mathbb{R}^{n+1}$ if
	$$ \Delta^k f(x)=0, \quad\forall x \in U.$$
Analogously,  polyanalytic functions of order $m$ are contained in the class of polyanalytic holomorphic Cliffordian functions, since they are those
functions $f$ defined on an open set $ U \subset \mathbb{R}^{n+1}$ with values in $\mathbb{R}_n$  of class $ \mathcal{C}^{m}(U)$ such that
	$$ \mathcal{D}^{m}f(x)=0, \qquad \forall x \in U.$$

\medskip

In \cite{CDPS} we studied the Dirac fine structure when $n=3$, i.e., the quaternionic case.
In particular we have studied the fine structure associated with the factorization:
\begin{equation}
\label{fine1}
\mathcal{O}(D) \overset{T_{FS1}}{\longrightarrow} \mathcal{SH}(\Omega_D)\overset{\mathcal{D}}{\longrightarrow} \mathcal{AH}(\Omega_D)\overset{\mathcal{\overline{D}}}{\longrightarrow}\mathcal{AM}(\Omega_D),
\end{equation}
where $\mathcal{AH}(\Omega_D)$ is the set of axially harmonic functions
and their integral representation give rise to the harmonic functional calculus on the $S$-spectrum.
This structure also allows to obtain a product formula for the $F$-functional calculus, see \cite[Thm. 9.3]{CDPS}.

However, since $\Delta=\mathcal{D} \mathcal{\overline{D}}= \mathcal{\overline{D}}\mathcal{D}$, we can interchange the order of the operators $ \mathcal{D}$ and $ \mathcal{\overline{D}}$ in \eqref{fine1}. This gives rise to the factorization:
\begin{equation}
\mathcal{O}(D) \overset{T_{FS1}}{\longrightarrow} \mathcal{SH}(\Omega_D)\overset{\mathcal{\overline{D}}}{\longrightarrow} \mathcal{AP}_2(\Omega_D)\overset{\mathcal{D}}{\longrightarrow}\mathcal{AM}(\Omega_D).
\end{equation}
where $\mathcal{AP}_2(\Omega_D)$ is a space of polyanalytic functions. This structure is investigated in \cite{CDPS1} together with its functional calculus.

\medskip
Clearly, as the dimension of the Clifford algebra increases
there are more possibilities and we denote by
 $\mathcal{FS}(\Omega_D)$, the set of function
spaces associated with the fine structures.
These
 functions spaces  lie between the set of slice hyperholomorphic functions and axially monogenic functions and in dimension five there are seven such spaces, precisely:
$ \mathcal{ABH}(\Omega_D)$ the axially bi-harmonic functions,
$ \mathcal{ACH}_1(\Omega)$ the axially Cliffordian holomorphic functions of order 1,
(which is a short cut for order $(1,1)$),
$ \mathcal{AH}(\Omega_D)$ the axially harmonic functions,
$ \mathcal{AP}_2(\Omega_{D})$ the axially polyanalytic of order $2$,
$ \mathcal{ACH}_1(\Omega_D)$ the axially anti Cliffordian of order $1$,
$ \mathcal{ACP}_{(1,2)}$ the axially polyanalytic Cliffordian of order $(1,2)$,
$ \mathcal{AP}_3(\Omega_{D})$ the axially polyanalytic of order $3$,
and they will be defined precisely in the sequel.

\medskip
It is very important to point out that these function spaces appear in different contexts
in the literature and they seem to be unrelated. In this paper we show that they all appear as fine structures in the Fueter-Sce
construction.
In dimension greater than five there will be one more function
space, that is not indicated in the list above, and with this addition, all the fines structures can be described using those function spaces of different orders.

\medskip
In dimension five there are different fine structures of Dirac type
and to indicate the type of fine structure we put
into an array the Dirac $\mathcal{D}$ and conjugate Dirac $\overline{\mathcal{D}}$ operator that define the specific fine structure.
For example, the  Dirac fine structure of the type $(\mathcal{D}, \mathcal{\overline{D}}, \mathcal{D}, \mathcal{\overline{D}} )$ is given by:
\begin{equation}
\mathcal{O}(D) \overset{T_{FS1}}{\longrightarrow} \mathcal{SH}(\Omega_D)\overset{\mathcal{D}}{\longrightarrow} \mathcal{ABH}(\Omega_D)\overset{\mathcal{\overline{D}}}{\longrightarrow}\mathcal{AHC}_1(\Omega_D) \overset{\mathcal{D}}{\longrightarrow} \mathcal{AH}(\Omega_D) \overset{\mathcal{\overline{D}}}{\longrightarrow} \mathcal{AM}(\Omega_D),
\end{equation}
where the spaces are precisely specified in the sequel.
The Dirac fine structure of type $(\mathcal{D}, \mathcal{\overline{D}}, \mathcal{D}, \mathcal{\overline{D}} )$ is of
crucial importance to prove the product rule for the $F$-functional calculus (see Theorem \ref{prodo}).
However, by rearranging the sequence of $ \mathcal{D}$ and $ \mathcal{\overline{D}}$ it is possible to obtain other structures,
in which other sets of functions are involved.

\medskip
Another interesting  fine structure is the harmonic one, in which only the harmonic $\mathcal{AH}(\Omega_D)$ and bi-harmonic $\mathcal{ABH}(\Omega_D)$ sets of functions  appear:
$$ \mathcal{SH}(\Omega_D)\overset{\mathcal{D}}{\longrightarrow}  \mathcal{ABH}(\Omega_D) \overset{\Delta}{\longrightarrow}  \mathcal{AH}(\Omega_D)\overset{\overline{\mathcal{D}}}{\longrightarrow} \mathcal{AM}(\Omega_D).$$

The integral  representation of the functions in $\mathcal{AH}(\Omega_D)$ and $\mathcal{ABH}(\Omega_D)$ is obtained
by applying the operators in the above sequence to the
Cauchy kernels of slice hyperholomorphic functions.
 From these integral formulas
  we define the related functional calculus on the $S$-spectrum that can be visualized by the following diagram:
\begin{equation*}
\begin{CD}
{\mathcal{SH}(U)}  @. \mathcal{ABH}(U)  @. \mathcal{AH}(U)  @.{\mathcal{AM}(U)} \\   @V  VV
  @.
\\
{{\rm  Cauchy\ Formula}}  @> \mathcal{D}>> {{\rm \mathcal{ABH}\ \mathrm{Int.\ Form}}} @> \Delta>> {{\rm
\mathcal{AH}\ \mathrm{Int.\ Form}}}@> \overline{\mathcal{D}}>> {{\rm \mathcal{AM}\ \mathrm{Int.\ Form}  }}
\\
@V VV    @V VV  @V VV    @V VV
\\
{S-{\rm  \mathrm{Func.\ Cal.} }} @. {{\rm \mathcal{ABH}-\mathrm{Func.\ Cal.} }}@. {{\rm \mathcal{AH} -\mathrm{Func.\ Cal.}}}@. {F-{\rm \mathrm{Func.\ Cal.}}}
\end{CD}
\end{equation*}
where the functional calculi of this fine structure are the
 bi-harmonic $\mathcal{ABH}$-functional calculus and the harmonic $\mathcal{AH}$-functional calculus,
both based on the $S$-spectrum.
Note that the integral representation of the axially monogenic functions
$\mathcal{AM}$ is already known from the Fueter-Sce mapping theorem
in integral form and its functional calculus is the $F$-functional calculus, see \cite{CSS10}.

\medskip
To be more precise, let us consider
 $\mathbb{S}$, the sphere of purely imaginary paravectors with modulus 1.
Observe that given an element $x=x_0+\underline{x}\in\rr^{n+1}$ we can put
$
J_x=\underline{x}/|\underline{x}|$ if $\underline{x}\not=0,
$
 and given an element $x\in\rr^{n+1}$, the set
$$
[x]:=\{y\in\rr^{n+1}\ :\ y=x_0+J |\underline{x}|, \ J\in \mathbb{S}\}
$$
is an $(n-1)$-dimensional sphere in $\mathbb{R}^{n+1}$.
In order to give a glimpse  on the integral representation of the harmonic and bi-harmonic functions
we recall that, for $s,\, x\in \mathbb R^{n+1}$ with $x\notin [s]$, the function
	$$
	\PRes_{c,s}(x)^{-1}:=(s^2-2\Re(x)s+|x|^2)^{-1},
	$$
	and the  left slice hyperholomorphic Cauchy kernel  $S^{-1}_L(s,x)$ defined by
		$$S^{-1}_L(s,x):=(s-\overline x) \PRes_{c,s}(x)^{-1}.$$

\medskip
Let $n=5$ and consider,
for  $s,x \in \mathbb{R}^6$ such that $x \notin [s]$ the functions
\begin{equation}
		S^{-1}_{\bigD ,L}(s,x):=-4 \mathcal{Q}_{c,s}(x)^{-1},
	\end{equation}
	 and
	\begin{equation}
S^{-1}_{\Delta\bigD ,L}(s,x):=\Delta \mathcal{D} \left(S^{-1}_L(s,x)\right)=16 \mathcal{Q}_{c,s}(x)^{-2}.
	\end{equation}
called the left slice $ \mathcal{D}$-kernel and the left slice  $ \Delta\mathcal{D}$-kernel, respectively.
Let $W \subset \mathbb{R}^6$ be an open set and $U$ be a slice Cauchy domain such that $\bar{U} \subset W$.
Then for $J \in \mathbb{S}$ and $ds_J=ds(-J)$, for  $f \in \mathcal{SH}_L(W)$,
 the function $ \tilde{f}_\ell(x):=\Delta^{1-\ell} \mathcal{D}f(x)$, $0 \leq \ell  \leq 1$, is $\ell+1$-harmonic and the following integral representation
		$$ \tilde{f}_\ell(x)= \frac{1}{2\pi} \int_{\partial(U \cap \mathbb{C}_J)} S^{-1}_{\Delta^{1-\ell}\mathcal D,L}(s,x) ds_J f(s),$$
holds, where
the integrals depend neither on U nor on the imaginary unit $J$.
If $f \in \mathcal{SH}_R(W)$
a similar integral representation holds.

\medskip
We now illustrate the functional calculi of the harmonic fine structure from the operator theory point of view.
We denote by
$\mathcal{B}(V)$ is the space of all bounded $\mathbb{R}$-linear operators
and $\mathcal{B}(V_n)$ is the space of all bounded $\mathbb{R}_n$-linear operators
on $V_n=\mathbb{R}_n\otimes V$.
The $S$-spectrum is defined as
$$
\sigma_S(T)=\{ s\in \mathbb{R}^{n+1}\ : \ T^2-2s_0T+|s|^2\mathcal{I}\ \ {\rm is \ not\ invertible\ in}\ \  \mathcal{B}(V_n)\}.
$$
This definition of the spectrum is used for operators in $\mathcal{B}(V_n)$ with noncommuting components.
For the fine structure of the spectral theories on the $S$-spectrum
in $\mathcal{B}(V_n)$ we will consider bounded paravector operators
$T=e_0T_0+e_1T_1+...+e_n T_n$, with commuting components $T_\ell\in\mathcal{B}(V)$ for $\ell=0,1,\ldots ,n$
and we denote this set by $\mathcal{BC}^{\small 0,1}(V_n)$.
In this case the most appropriate definition of the $S$-spectrum is its commutative version (also called $F$-spectrum), i.e.,
$$
\sigma_F(T)=\{ s\in \mathbb{R}^{n+1}\ \ :\ \ s^2\mathcal{I}-(T+\overline{T})s +T\overline{T}\ \ \
{\rm is\ not\  invertible\ in \ }\mathcal{B}(V_n)\}
$$
where the operator $\overline{T}$ is defined by
$$
\overline{T}=T_0-T_1e_1 - \dots  - T_n e_n.
$$
Note that it has been be proved that
$$
\sigma_F(T)=\sigma_S(T) \ \  {\rm for\ all} \ \  T\in \mathcal{BC}^{\small 0,1}(V_n).
$$

\medskip
Let $T\in \mathcal {BC}^{0,1}(V_n)$ and
recall that the $S$-resolvent set is defined as
$$
 \rho_S(T):=\mathbb{R}^{n+1}\setminus \sigma_S(T).
$$
Then, the commutative pseudo $SC$-resolvent operator is:
$$
\mathcal Q_{c,s}(T)^{-1}:=(s^2\mathcal I-s(T+\overline T)+T\overline T)^{-1} \ \ {\rm for}\ \ \  s\in \rho_S(T)
$$
while the left $SC$-resolvent operator is:
	$$ S^{-1}_{L}(s,T):=(s\mathcal I-\overline T)(s^2\mathcal I-s(T+\overline T)+T\overline T)^{-1} \ \ {\rm for}\ \ \  s\in \rho_S(T), $$
	and similarly we have the right $SC$-resolvent operator.

\medskip
In the case of dimension $n=5$ we
take $T \in \mathcal{BC}^{0,1}(V_5)$ and set $ds_J=ds(-J)$ for $J \in \mathbb{S}$.
Let $f$ be a function in $\mathcal{SH}_L(\sigma_S(T))$
(and similarly for $f\in\mathcal{SH}_R(\sigma_S(T))$).
Let $U$ be a bounded slice Cauchy domain with $\sigma_S(T)\subset U$ and $\overline U\subset\operatorname{dom}(f)$.
We have the following definitions of the $\ell+1$-harmonic functional calculus for $\ell=0,1$:
	  for every function $\tilde{f}_\ell=\Delta^{1-\ell}\mathcal{D}f$ with $f \in \mathcal{SH}_L(\sigma_S(T))$ we set
		\begin{equation}
			\tilde{f}_\ell(T):=\frac 1{2\pi} \int_{\partial(U \cap \mathbb{C}_J)} S^{-1}_{\Delta^{1-\ell}\bigD,L}(s,T) ds_J f(s),
		\end{equation}
where:
\\
(I)
the left $\mathcal{D}$-resolvent operator $S^{-1}_{\bigD ,L}(s,T)$ is defined as
	\begin{equation}
		S^{-1}_{\bigD ,L}(s,T):=-4 \mathcal{Q}_{c,s}(T)^{-1}  \ \ {\rm for}\ \ \  s\in \rho_S(T),
	\end{equation}
\\
(II)
 the
left $\Delta\bigD$-resolvent operator $S^{-1}_{\Delta \bigD,L}(s,T)$ is defined as
		\begin{equation}
		S^{-1}_{\Delta\bigD ,L}(s,T):=16 \mathcal{Q}_{c,s}(T)^{-2}, \ \ {\rm for}\ \ \  s\in \rho_S(T).
	\end{equation}
	We point out that similar formulas hold for the right case.

\medskip
{\it Plan of the paper.}
The paper consists of 10 sections, besides this introduction.

In Section \ref{FSPAC} we introduce some functions spaces
that naturally arise from the factorization of the second Fueter-Sce operator $T_{FS2}$.
These definitions are valid for any dimension $n$.

Section \ref{FACTRFIVE} contains the factorization of the Fueter-Sce operator  $T_{FS2}$ in dimension $n=5$
and, in particular, we define the function spaces of axial type.

In Section \ref{section4} we introduce the concept of  fine structure of the spectral theory on the $S$-spectrum,
that is associated with the functions spaces, and the related functional calculi.
For $n=5$ we can fully describe the fine structures and in the appendix
 we provide a description with diagrams.

In section \ref{VEKUA} we study the systems of differential equations for the fine structure spaces of axial type, i.e.,
in  analogy with the Vekua-type system of differential equations for axially monogenic functions
we explicitly we give all the systems of differential equations for the fine structure spaces in dimension $n=5$.

In section \ref{INTREP} we give the integral representation of the functions of the fine structure spaces.
Precisely, we recall the slice hyperholomorphic Cauchy formulas
and applying the operators associated with the fine structure to the slice hyperholomorphic Cauchy kernels we
deduce the integral representations.

In Section \ref{serieskernels} we prove some technical lemmas to write the explicit
series expansion of the kernels of the fine structures spaces.
These results are of crucial importance for operator theory because they allow to define the series expansions of the resolvent
operators of the fine structures.

Then, after some preliminary results on the $SC$-functional calculus and the $F$-functional calculus  collected in Section \ref{prelSCFFC}, in Section \ref{FCFSTRU} we define the functional calculi of the fine structure. To this end,
 we define the
series expansions of the resolvent operators of the  functional calculi using the results in Section \ref{serieskernels}.
Observe that all the resolvent operators associated with fine structures involve the $S$-spectrum.

We conclude the paper with an application to the $F$-functional calculus, indeed in Section \ref{PRDRULE} we prove
the product rule for the $F$-functional calculus using the Dirac fine structure of the kind $(\mathcal{D}, \mathcal{\overline{D}}, \mathcal{D}, \mathcal{\overline{D}} )$.

\section{Function spaces generated by the Fueter-Sce mapping theorem}\label{FSPAC}

This section contains some function spaces of Clifford algebra valued functions and
some notions of the spectral theory on the $S$-spectrum.
We start by fixing some notations.
 Let $\mathbb{R}_n$ be the real Clifford algebra over $n$ imaginary units $e_1,\ldots ,e_n$
satisfying the relations $e_\ell e_m+e_me_\ell=0$,\  $\ell\not= m$, $e_\ell^2=-1.$
An element in the Clifford algebra will be denoted by $\sum_A e_Ax_A$ where
$A=\{ \ell_1\ldots \ell_r\}\in \mathcal{P}\{1,2,\ldots, n\},\ \  \ell_1<\ldots <\ell_r$
is a multi-index
and $e_A=e_{\ell_1} e_{\ell_2}\ldots e_{\ell_r}$, $e_\emptyset =1$.
A point $(x_0,x_1,\ldots,x_n)\in \mathbb{R}^{n+1}$  will be identified with the element
$
x=x_0+\underline{x}=x_0+ \sum_{j=1}^n x_j e_j\in\mathbb{R}_n
$
called paravector and the real part $x_0$ of $x$ will also be denoted by $\Re(x)$.
The vector part of $x$ is defined by
$\underline{x}=x_1e_1+\ldots+x_ne_n$.
The conjugate of $x$ is denoted by $\overline{x}=x_0-\underline{x}$
and the Euclidean modulus of $x$ is given by $|x|=(x_0^2+\ldots +x_n^2)^{1/2}$.
The sphere of purely imaginary paravectors with modulus 1, is defined by
$$ \mathbb{S}:= \{\underline{x}= e_{1}x_1+ \ldots+e_n e_n\, | \, x_1^2+ \ldots +x_n^2=1\}.$$
Notice that if $J \in \mathbb{S}$, then $J^2=-1$. Therefore $J$ is an imaginary unit, and we denote by
$$ \mathbb{C}_J= \{u+Jv \, | \, u,v \in \mathbb{R}\},$$
an isomorphic copy of the complex numbers.

In order to give the definition of slice hyperholomorphic functions we need to
define the natural domains on which these functions are defined.

\begin{definition}
	Let $U \subseteq \mathbb{R}^{n+1}$.
	\begin{itemize}
		\item We say that $U$ is axially symmetric if, for every $u+Iv \in U$, all the elements $u+Jv$ for $J \in \mathbb{S}$ are contained in $U$.
		\item We say that $U$ is a slice domain if $U \cap \mathbb{R} \neq \emptyset$ and if $U \cap \mathbb{C}_J$ is a domain in $\mathbb{C}_J$ for every $J \in \mathbb{S}$.
	\end{itemize}
	
\end{definition}
\begin{definition}
	An axially symmetric open set $U \subset \mathbb{R}^{n+1}$ is called slice Cauchy domain if $U \cap \mathbb{C}_J$ is a Cauchy domain in $ \mathbb{C}_J$ for every $J \in \mathbb{S}$. More precisely, $U$ is a slice Cauchy domain if for every $J \in \mathbb{S}$ the boundary of $ U \cap \mathbb{C}_J$ is the union of a finite number of nonintersecting piecewise continuously differentiable Jordan curves in $ \mathbb{C}_J$.
\end{definition}
On axially symmetric open sets we define the class of slice hyperholomorphic functions, in the case of Clifford algebra valued functions they are often called slice monogenic functions.

\begin{definition}[Slice hyperholomorphic (or slice monogenic) functions]
	\label{hyper}
	Let $U \subseteq \mathbb{R}^{n+1}$ be an axially symmetric open set and let $ \mathcal{U}:=\{(u,v)\in\rr^2:\, u+Jv\in U\quad\forall J\in\mathbb S\} $. We say that a function $f: U \to \mathbb{R}_n$ of the form
	\begin{equation}\label{slicefunc}
 f(x)=\alpha(u,v)+J\beta(u,v)
 \end{equation}
	where $x=u+Jv$ for any $J\in\mathbb S$, is left slice hyperholomorphic if $\alpha$ and $\beta$ are $ \mathbb{R}_n$-valued differentiable functions such that
	\begin{equation}\label{eveodd}
		\alpha(u,v)=\alpha(u,-v), \ \ \ \  \beta(u,v)=-\beta(u,-v) \ \\ \ \  \hbox{for all} \, \, (u,v) \in \mathcal{U},
	\end{equation}
	and if $\alpha$ and $\beta$ satisfy the Cauchy-Riemann system
	$$ \partial_u \alpha(u,v)- \partial_v \beta(u,v)=0, \quad \partial_v \alpha(u,v)+ \partial_u \beta(u,v)=0.$$
	We recall that right slice hyperholomorphic functions are of the form
	$$ f(x)= \alpha(u,v)+\beta(u,v)J,$$
	where $\alpha$, $\beta$ satisfy the above conditions.
\end{definition}
\begin{definition}
	The set of left (resp. right) slice hyperholomorphic functions on $U$ is denoted with the symbol $\mathcal{SH}_{L}(U)$ (resp. $\mathcal{SH}_{R}(U)$). The subset of intrinsic functions consists of those slice hyperholomorphic functions such that $\alpha$, $\beta$ are real-valued function and it is denoted by $ \mathcal{N}(U)$.
\end{definition}
We introduce the monogenic functions.
\begin{definition}[monogenic functions]\label{d2}
	Let $U\subset \mathbb R^{n+1}$ be an open set. A real differentiable function $ f: U \to \mathbb{R}_n$ is called left monogenic if
	$$\mathcal{D} f (x):=\big(\partial_{x_0} +\sum_{i=1}^n e_i
\partial_{x_i}\big)f(x)=0.
$$
\end{definition}
In a similar way we define right monogenic functions.

\medskip
There are several possible definitions of slice hyperholomorphicity, that are not fully equivalent, but Definition \ref{hyper} of slice hyperholomorphic functions is the most appropriate for the operator theory; it comes from the extension of the Fueter mapping theorem from the quaternionic setting to the Clifford and real alternative algebra setting (see \cite{F}, \cite{GP}, \cite{Q1}, \cite{S}; for an English translation of paper \cite{S} see \cite{CSS3}).
In this paper we need Sce theorem for our considerations and we recall it below.
\begin{theorem}[Sce theorem, see \cite{S}]
	\label{FS}
	Let $n \geq 3$ be an odd number. Let $f(z)= \alpha(u,v)+i \beta(u,v)$ be a holomoprhic function defined in a domain (open and connected) $D$ in the upper-half complex plane and let
	$$ \Omega_D:= \{x=x_0+ \underline{x} \,\  |\  \, (x_0, |\underline{x}|) \in D\},$$
	be the open set induced by $D$ in $ \mathbb{R}^{n+1}$. The operator $T_{FS1}$ defined by
	\begin{equation}
		\label{slice}
		T_{FS1}(f)= \alpha(x_0, | \underline{x}|)+ \frac{\underline{x}}{| \underline{x}|} \beta(x_0, |\underline{x}|)
	\end{equation}
	maps the holomoprhic function $f(z)$ in the set of intrinsic slice hyperholomorphic function. Then the function
	$$ \breve{f}(x):= \Delta^{\frac{n-1}{2}} \left(\alpha(x_0, | \underline{x}|)+ \frac{\underline{x}}{| \underline{x}|} \beta(x_0, |\underline{x}|)\right)$$
	is in the kernel of the Dirac operator, i.e.,
	$$ \mathcal D \breve{f}(x)=0, \qquad \hbox{on} \qquad \Omega_D.$$
\end{theorem}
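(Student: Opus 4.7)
The theorem bundles two assertions: (i) $T_{FS1}(f) \in \mathcal{N}(\Omega_D)$ is intrinsic slice hyperholomorphic, and (ii) $\breve{f} := \Delta^{(n-1)/2}T_{FS1}(f)$ lies in $\ker\mathcal{D}$. Part (i) is a direct check against Definition~\ref{hyper}: I extend the holomorphic pair $\alpha,\beta$ from $D$ to $D\cup\overline D$ by $\alpha(u,-v)=\alpha(u,v)$, $\beta(u,-v)=-\beta(u,v)$, which makes the formula $\alpha(x_0,|\underline{x}|)+(\underline{x}/|\underline{x}|)\beta(x_0,|\underline{x}|)$ well defined on the axially symmetric $\Omega_D$ (the sign ambiguity $\omega = \pm\underline{x}/|\underline{x}|$ in the imaginary unit is resolved by the parities in \eqref{eveodd}). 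Holomorphy of $f$ encodes the Cauchy--Riemann system required by the definition, and real-valuedness of $\alpha,\beta$ places $T_{FS1}(f)$ in $\mathcal{N}(\Omega_D)$.

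For (ii), the plan is to treat the general axially symmetric ansatz $F(x) = A(x_0,r) + \omega B(x_0,r)$, where $r=|\underline{x}|$ and $\omega=\underline{x}/r$. Using $\omega^2=-1$ and $\sum_i e_i\underline{x}\,x_i = -r^2$, a chain-rule computation produces
\begin{equation*}
\mathcal{D}F = \big[\partial_{x_0}A - \partial_r B - (n-1)B/r\big] + \omega\big[\partial_{x_0}B + \partial_r A\big],
\end{equation*}
\begin{equation*}
\Delta F = \Delta_n A + \omega\big[\Delta_n B - (n-1)B/r^2\big],\qquad \Delta_n := \partial_{x_0}^2+\partial_r^2+\tfrac{n-1}{r}\partial_r.
\end{equation*}
Substituting $(A,B)=(\alpha,\beta)$ and invoking $(\partial_{x_0}^2+\partial_r^2)\alpha = (\partial_{x_0}^2+\partial_r^2)\beta = 0$ (a consequence of Cauchy--Riemann), the Laplacian formula collapses to $\Delta T_{FS1}(f) = -(n-1)\,\overline{\mathcal{D}}\gamma$, where $\gamma := \beta/r$ is a smooth axial scalar on $\mathbb{R}^{n+1}$ (smoothness coming from the oddness of $\beta$ in $r$).

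To iterate, I would introduce the auxiliary operator $T:=r^{-1}\partial_r$ on axial scalars and establish the commutation $[\Delta_n, T] = -2T^2$ by direct computation, from which $\Delta T^k = T^k\Delta - 2kT^{k+1}$ follows by induction on $k$. Combined with the base identity $\Delta_n\gamma = (n-3)T\gamma$ (which itself follows from $(\partial_{x_0}^2+\partial_r^2)\beta = 0$), this yields $\Delta^k\gamma = (n-3)(n-5)\cdots(n-2k-1)\,T^k\gamma$ for every $k\ge 1$. Hence, since $\Delta$ commutes with $\overline{\mathcal{D}}$ and $\mathcal{D}\overline{\mathcal{D}}=\Delta$, applying $\Delta$ a total of $h = (n-1)/2$ times to $T_{FS1}(f)$ gives $\Delta^h T_{FS1}(f) = c_h\,\overline{\mathcal{D}}\,T^{h-1}\gamma$ with $c_h = -(n-1)(n-3)\cdots 2 \neq 0$, and
\begin{equation*}
\mathcal{D}\breve{f} \,=\, c_h\,\Delta\, T^{h-1}\gamma \,=\, c_h\,(n-2h-1)\,T^h\gamma \,=\, 0,
\end{equation*}
because the factor $n-2h-1$ vanishes exactly when $h = (n-1)/2$. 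This cancellation is the algebraic content that singles out the Sce exponent.

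The main obstacle is the bookkeeping: establishing $[\Delta_n,T]=-2T^2$ cleanly and tracking the coefficients through the iteration. A slicker alternative that bypasses the induction is to verify the statement on the single kernel $f(z) = 1/(s-z)$, in which case $T_{FS1}$ produces the slice Cauchy kernel $S_L^{-1}(s,x)$ and $\Delta^h S_L^{-1}(s,x)$ is the Fueter kernel, whose left monogenicity in $x$ is an explicit verification; the general case then follows from the slice Cauchy integral formula, since $\Delta^h$ commutes with integration in the parameter $s$.
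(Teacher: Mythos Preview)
The paper does not supply a proof of this theorem: it is quoted as Sce's classical result with a citation to \cite{S}, so there is no in-paper argument to compare against. Your proof is correct. The computation of $\mathcal{D}F$ and $\Delta F$ on axial functions matches the paper's formulas \eqref{Dirac}--\eqref{zero} (specialized there to $n=5$), and your key reductions $\Delta T_{FS1}(f)=-(n-1)\overline{\mathcal{D}}\gamma$ with $\gamma=\beta/r$, the commutator $[\Delta_n,T]=-2T^2$ for $T=r^{-1}\partial_r$, and the recursion $\Delta^k\gamma=(n-3)(n-5)\cdots(n-2k-1)\,T^k\gamma$ all check out; the vanishing of the factor $n-2h-1$ at $h=(n-1)/2$ is exactly Sce's mechanism. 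Your alternative route through the slice Cauchy kernel is also sound and is essentially the viewpoint the paper adopts later in Theorem~\ref{Fueter} and Proposition~\ref{reg}, where the monogenicity of $\Delta^{(n-1)/2}S_L^{-1}(s,x)$ is used directly.
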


\begin{remark}\label{remevenodd}
	The assumption that the function $f(z)= \alpha(u,v)+i \beta(u,v)$ is holomorphic function
	in a domain  $D$ in the upper-half complex plane
	can be removed if we assume the even-odd conditions in (\ref{eveodd}).
\end{remark}

\begin{remark}
Because of the factorization
$$
\mathcal{D}\mathcal{\overline{D}}=\mathcal{\overline{D}}\mathcal{D}=\Delta
$$
of the Laplace operator
we define some the classes of functions that are strictly related to the Fueter-Sce theorem.
 We will see in the next sections that these function spaces are of crucial importance for our theory.
\end{remark}
\begin{definition}[holomorphic Cliffordian of order $k$]
	\label{hc}
	Let $U$ be an open set. A function $f:U \subset \mathbb{R}^{n+1} \to \mathbb{R}_n$ of class $\mathcal{C}^{2k+1}(U)$ is said to be (left) holomorphic Cliffordian of order $k$ if
	$$ \Delta^k \mathcal{D}  f(x)=0\quad \forall x\in U,$$
	where $0 \leq k \leq \frac{n-1}{2}$.
\end{definition}
\begin{remark}
	For $k:= \frac{n-1}{2}$ in Definition \ref{hc} we get the class of functions studied in \cite{LR}.
\end{remark}
\begin{remark}
	Every holomorphic Cliffordian function of order $k$ is holomorphic Cliffordian of order $k+1$. If $k=0$ in Definition \ref{hc} we get the set of (left) monogenic functions.
\end{remark}
\begin{definition}[anti-holomorphic Cliffordian of order $k$]
	\label{anti1}
	Let $U$ be an open set. A function $f:U \subset \mathbb{R}^{n+1} \to \mathbb{R}_n$ of class $\mathcal{C}^{2k+1}(U)$ is said to be (left) anti-holomorphic Cliffordian of order $k$ if
	$$ \Delta^k \mathcal{\overline{D}}  f(x)=0\quad \forall x\in U,$$
	where $0 \leq k \leq \frac{n-1}{2}$.
\end{definition}
\begin{definition}[polyharmonic of degree $k$]
	\label{harmo}
	Let $k \geq 1$. A function $f:U \subset \mathbb{R}^{n+1} \to \mathbb{R}_n$ of class $\mathcal{C}^{2k}(U)$ is called polyharmonic of degree $k$ in the open set $U \subset \mathbb{R}^{n+1}$ if
	$$ \Delta^k f(x)=0, \quad\forall x \in U.$$
\end{definition}
For $p=1$ the function is called harmonic and for $p=2$ the function is called bi-harmonic. The polyharmonic functions are studied in \cite{A}.
\begin{definition}[polyanalytic of order $m$]
	\label{poly}
	Let $ m \geq 1$. Let $ U \subset \mathbb{R}^{n+1}$ be an open set and let $f:U \to \mathbb{R}_n$ be a function of class $ \mathcal{C}^{m}(U)$. We say that $f$ is (left) polyanalytic of order $m$ on $U$ if
	$$ \mathcal{D}^{m}f(x)=0, \qquad \forall x \in U.$$
\end{definition}

The following result is a characterization to be a polyanalytic function of order $m$, see \cite{B1976}.
\begin{proposition}[Polyanalytic decomposition]
	\label{polydeco}
	Let $ U \subset \mathbb{R}^{n+1}$ be an open. A function $f:U \to \mathbb{R}_n$ is polyanalytic of order $m$ if and only if it can be decomposed in terms of some unique monogenic functions $g_0,\ldots, g_{m-1}$
	$$ g(x)= \sum_{k=0}^{m-1} x_0^k g_k(x).$$
\end{proposition}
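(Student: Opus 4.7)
My plan is to prove both implications and the uniqueness by a simple induction on the order $m$, driven by the following Leibniz-type identity which I would establish first: for any left-monogenic function $g$ and every integer $k \geq 0$,
\begin{equation*}
 \mathcal{D}(x_0^k g) \;=\; k\, x_0^{k-1}\, g,
\end{equation*}
since $x_0$ is a real scalar and so commutes with the $e_i$, giving $\mathcal{D}(x_0^k g) = (\partial_{x_0} x_0^k)\, g + x_0^k \mathcal{D} g = k x_0^{k-1} g$. Iterating this identity produces $\mathcal{D}^j(x_0^k g) = \frac{k!}{(k-j)!} x_0^{k-j} g$ whenever $j \leq k$ and $\mathcal{D}^j(x_0^k g)=0$ whenever $j>k$; this single formula drives the entire argument.

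For the easy direction ($\Leftarrow$), if $f = \sum_{k=0}^{m-1} x_0^k g_k$ with each $g_k$ monogenic, then since $k \leq m-1 < m$ the iterated identity yields $\mathcal{D}^m(x_0^k g_k) = 0$ for each $k$, so by linearity $\mathcal{D}^m f = 0$. For the nontrivial direction ($\Rightarrow$), I would induct on $m$. The base case $m=1$ is trivial: take $g_0 := f$. For $m \geq 2$, assuming $\mathcal{D}^m f = 0$, I would define
\begin{equation*}
 g_{m-1} \;:=\; \frac{1}{(m-1)!}\, \mathcal{D}^{m-1} f.
\end{equation*}
The identity $\mathcal{D} g_{m-1} = (m-1)!^{-1} \mathcal{D}^m f = 0$ shows that $g_{m-1}$ is monogenic, and the function $\tilde f := f - x_0^{m-1} g_{m-1}$ satisfies $\mathcal{D}^{m-1} \tilde f = \mathcal{D}^{m-1} f - (m-1)!\, g_{m-1} = 0$ by the Leibniz identity above. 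Hence $\tilde f$ is polyanalytic of order $m-1$, and the induction hypothesis supplies monogenic $g_0,\ldots,g_{m-2}$ with $\tilde f = \sum_{k=0}^{m-2} x_0^k g_k$, yielding the required decomposition of $f$.

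Uniqueness follows by the same mechanism: if $\sum_{k=0}^{m-1} x_0^k g_k \equiv 0$ with all $g_k$ monogenic, applying $\mathcal{D}^{m-1}$ collapses the sum to $(m-1)!\, g_{m-1} = 0$, and peeling off the top term and iterating forces each $g_k = 0$. The only real technical point, and the main obstacle if any, is handling the Leibniz rule carefully in the noncommutative Clifford setting; it works cleanly precisely because the multiplier $x_0^k$ is a real scalar, so the computation of $\mathcal{D}(x_0^k g)$ reduces to one scalar derivative term plus the vanishing monogenic remainder $x_0^k \mathcal{D} g$. Once this observation is in hand, the proof is elementary recursion and no analytic machinery (such as Cauchy--Kowalewski extensions or elliptic regularity) is actually needed.
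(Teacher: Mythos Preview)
Your proof is correct and is the standard argument; the paper does not give its own proof of this proposition but simply states it with a citation to Brackx \cite{B1976}, and your inductive scheme via the identity $\mathcal{D}(x_0^k g)=k\,x_0^{k-1}g$ for monogenic $g$ is exactly the classical approach found there.

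One small caveat concerns your closing remark that no elliptic regularity is needed. Under the paper's convention, polyanalytic of order $m$ means $f\in\mathcal{C}^m(U)$ with $\mathcal{D}^m f=0$, so a priori $g_{m-1}=\tfrac{1}{(m-1)!}\mathcal{D}^{m-1}f$ is only $\mathcal{C}^1$, and hence $\tilde f=f-x_0^{m-1}g_{m-1}$ is only guaranteed to be $\mathcal{C}^1$, not $\mathcal{C}^{m-1}$ as the induction hypothesis formally requires. This gap closes immediately once one uses that monogenic functions are real analytic (so $g_{m-1}\in\mathcal{C}^\infty$), but that is itself a form of elliptic regularity (or follows from the monogenic Cauchy formula). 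The algebraic core of your argument is entirely correct; only this regularity bookkeeping needs that extra input.
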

See \cite{B1976, DB1978, T} for more information about polyanalytic functions.
\begin{definition}[polyanalytic holomorphic Cliffordian of order $(k, \ell)$]
	\label{polycl}
	Let $U$ be an open set. A function $f:U \subset \mathbb{R}^{n+1} \to \mathbb{R}_n$ of class $\mathcal{C}^{2k+ \ell}(U)$ is said to be (left) polyanalytic holomorphic Cliffordian of order $(k, \ell)$ if
	$$ \Delta^k \mathcal{D}^{\ell}  f(x)=0\quad \forall x\in U,$$
	where $0 \leq k \leq \frac{n-1}{2}$ and $ \ell \geq 0$. We denote the set of these functions as $\mathcal{PCH}_{(k,\ell)}(U)$.
\end{definition}
\begin{remark}
Similarly it is possible to define the  sets of right functions for the holomorphic Cliffordian of order $k$, anti-holomorphic Cliffordian of order $k$ and polyanalytic of order $m$.

\end{remark}
\begin{remark}
	If in Definition \ref{polycl} we set $ \ell=1$ we get Definition \ref{hc}, if $\ell=0$ we obtain Definition \ref{harmo} and if we consider $k=0$ we obtain Definition \ref{poly}.
\end{remark}

\begin{remark}
	In Theorem \ref{FS}, the case of $n$ odd is due to M. Sce and the operator $T_{FS2}$ is a differential operator; the case of $n$ even is due to T. Qian and, in this case, the operator $T_{FS2}$ is a fractional operator.
\end{remark}
\begin{remark}
Even though the above classes of functions are defined
for suitable regular functions $f$ such that we can apply operators of the form
$\Delta^k \mathcal{D}^{\ell}$ or $\Delta^k \overline{\mathcal{D}}^{\ell}$ according to the Fueter-Sce mapping theorem
we have to assume that the function $f$ is a slice functions
 of the form (\ref{slicefunc}).
\end{remark}

\medskip

The investigation of the fines structures is a quite involved matter and depends on the dimension $n$.
In the next sections we will concentrate on the dimension $n=5$. In a paper in preparation we aim to study the fine structures for any $n$ odd.

\section{Function spaces of axial type in dimension five}\label{FACTRFIVE}

Working in the Clifford  algebra with five imaginary units, i.e., $n=5$
 the second Fueter-Sce map is $\Delta^2$ where the Laplace operator $\Delta$ is in dimension 6.

We recall that there exist different possible factorizations of
 $\Delta^2$ in terms of the Dirac operator $ \mathcal{D}$ and its conjugate $ \mathcal{\overline{D}}$ choosing different configurations  of products of
 $\mathcal{D}$ and $ \mathcal{\overline{D}}$.

The case of dimension five is different from what happens in the quaternionic case (see \cite{CDPS,CDPS1}), in which the Fueter map can be factorized only as $ \mathcal{D} \mathcal{\overline{D}}$ and $ \mathcal{\overline{D}} \mathcal{D}$: here we obtain a reacher structure.

\medskip
In the setting of slice hyperholomorphic functions, functions of the form
 (\ref{slicefunc})together with the even-odd conditions are called slice functions.
 In the monogenic setting such functions, often considered only in the upper half space, are called
 function of axial type. We will use both terminology according to the setting.
Now, we assume that the axial functions (or slice functions) of the form
$$
f(x)=\alpha(x_0, | \underline{x}|)+ \frac{\underline{x}}{| \underline{x}|} \beta(x_0, |\underline{x}|)
$$
 are of class $\mathcal{C}^5(\Omega_D)$ where $\Omega_D$ is as in the Fueter-Sce mapping theorem.
 We will consider functions $f: \Omega_D \subseteq \mathbb{R}^{n+1}\to \mathbb{R}_n$
with values in the Clifford algebra $\mathbb{R}_n$ where we consider the case $n=5$.
\begin{definition}[$\mathcal{ABH}(\Omega_D)$ axially bi-harmonic function]
Let $f: \Omega_D \subseteq \mathbb{R}^{6}\to \mathbb{R}_5$ be of axial type and of class $\mathcal{C}^5(\Omega_D)$.
Then, the function
$$
\tilde{f}_1(x):= \mathcal{D}f(x) \qquad \hbox{on} \quad \Omega_D
$$
is called  an axially bi-harmonic function, since by the Fueter-Sce mapping theorem, it satisfies
$$
\Delta^2 \tilde{f}_1(x)=0 \qquad \hbox{on} \quad \Omega_D.
$$
We denote this set of functions by $ \mathcal{ABH}(\Omega_D)$.
\end{definition}

\begin{definition}[$\mathcal{ACH}_1(\Omega_D)$ axially Cliffordian functions of order one]
Consider the function  $\tilde{f}_1(x):= \mathcal{D}f(x)\in  \mathcal{ABH}(\Omega_D)$ and
apply the conjugate Dirac operator $ \mathcal{\overline{D}}$ to $\tilde{f}_1(x)$. Then we get
	\begin{equation}
		\label{cliff}
		f^{\circ}(x):= \mathcal{\overline{D}}\tilde{f}_1(x)=\Delta f(x)\qquad \hbox{on} \quad \Omega_D,
	\end{equation}
	which is an axially Cliffordian functions of order one (which is the short cut for order $(1,1)$)
by the Fueter-Sce mapping theorem, i.e.,
	$$ \Delta \mathcal{D} f^{\circ}(x)=0\qquad \hbox{on} \quad \Omega_D.$$
We denote this set of functions by $\mathcal{ACH}_1(\Omega_D)$.
\end{definition}

\begin{definition}[$\overline{\mathcal{ACH}_1(\Omega_D)}$ axially anti-Cliffordian functions of order one]
Consider the function  $\tilde{f}_1(x):= \mathcal{D}f(x)\in  \mathcal{ABH}(\Omega_D)$ and
apply the  Dirac operator $ \mathcal{{D}}$ to $\tilde{f}_1(x)$ we obtain
$$
f_{\circ}(x)=\mathcal{D}\tilde{f}_1(x)= \mathcal{D}^2 f(x),\qquad \hbox{on} \quad \Omega_D,
$$
	which is axially anti-Cliffordian functions of order one
by the Fueter-Sce mapping theorem, i.e.,
	$$ \Delta \mathcal{\overline{D}} \left(f_{\circ}(x)\right)=0 \qquad \hbox{on} \quad \Omega_D.$$
We denote this set of functions by $\overline{\mathcal{ACH}_1(\Omega_D)}$.
\end{definition}

\begin{definition}[$\mathcal{AH}(\Omega_D)$ axially harmonic functions]

Consider the function
$f^{\circ}(x):= \mathcal{\overline{D}}\tilde{f}_1(x)=\Delta f(x)\in \mathcal{ACH}_1(\Omega_D)$ and
 apply the Dirac operator $ \mathcal{D}$ to $f^{\circ}(x)$. We get,
	$$\tilde{f}_0(x)=  \mathcal{D}f^{\circ}(x)=\Delta \mathcal{D} f(x),$$
	which is an axially harmonic functions,  by the Fueter-Sce mapping theorem, i.e.,
	$$ \Delta \tilde{f}_0(x)=0 \qquad \hbox{on} \quad \Omega_D.$$
We denote this set of functions as $ \mathcal{AH}(\Omega_D)$.

\end{definition}
  \begin{definition}[$\mathcal{AP}_2( \Omega_D)$ axially polyanalytic functions of order two]
  If we apply the operator $ \mathcal{\overline{D}}$ to
  $f^{\circ}(x):= \mathcal{\overline{D}}\tilde{f}_1(x)=\Delta f(x)\in \mathcal{ACH}_1(\Omega_D)$ we obtain
	$$
\breve{f}^{\circ}_1(x)=  \mathcal{\overline{D}}f^{\circ}(x)=\Delta \mathcal{\overline{D}} f(x),
$$
	which is an axially polyanalytic functions of order two,
by the Fueter-Sce mapping theorem, i.e.,
	$$ \mathcal{D}^2 \breve{f}^{\circ}_1(x)=0 \qquad \hbox{on} \quad \Omega_D.$$
We denote this set of functions by $\mathcal{AP}_2( \Omega_D)$.
\end{definition}

\begin{definition}[$ \mathcal{APC}_{(1,2)}( \Omega_D)$ axially Cliffordian polyanalytic functions of order $(1,2)$]
Let $f: \Omega_D \subseteq \mathbb{R}^{6}\to \mathbb{R}_5$ be of axial type and of class $\mathcal{C}^5(\Omega_D)$.
Apply to \eqref{slice} the conjugate of the Dirac operator. In this case we obtain
\begin{equation}
	\label{polycliff}
	\breve{f}^{\circ}(x)=\mathcal{\overline{D}} f(x)  \qquad \hbox{on} \quad  \Omega_D,
\end{equation}
which is an axially Cliffordian polyanalytic functions of order $(1,2)$, by the Fueter-Sce mapping theorem, i.e.,
$$\Delta \mathcal{D}^2\breve{f}^{\circ}(x)=0\qquad \hbox{on} \quad  \Omega_D.$$
We denote this class of functions as $ \mathcal{APC}_{(1,2)}( \Omega_D)$.
\end{definition}

\begin{definition}[$\mathcal{AP}_3( \Omega_D)$ axially polyanalytic function of order three]
Let $\breve{f}^{\circ}(x)=\mathcal{\overline{D}} f(x)\in \mathcal{APC}_{(1,2)}( \Omega_D)$.
 Applying the operator conjugate Dirac operator $\mathcal{\overline{D}}$ to $\breve{f}^{\circ}(x)$, we get
$$\breve{f}^{\circ}_0(x)=\mathcal{\overline{D}}^2 f(x) \qquad \hbox{on} \quad  \Omega_D,$$
which an axially polyanalytic function of order three, i.e.,
$$
\mathcal{D}^3\breve{f}^{\circ}_0(x)=0 \qquad \hbox{on} \quad  \Omega_D.
$$
We denote this class of functions as $ \mathcal{AP}_3( \Omega_D)$.
\end{definition}
\begin{remark} Keeping in mind the above notations we have
$$ \breve{f}(x)=\Delta \mathcal{\overline{D}} \tilde{f}_0(x)=\mathcal{D}\breve{f}^{\circ}_1(x)= \mathcal{\overline{D}}^2f_{\circ}(x)=\Delta f^{\circ}(x)=\mathcal{\overline{D}} \tilde{f}_1(x),$$
where $\breve{f}$ is axially monogenic
and also
$$ \breve{f}(x)= \mathcal{D}^2 \breve{f}^{\circ}_0(x)= \Delta \mathcal{D} \breve{f}^{\circ}(x)\qquad \hbox{on} \quad  \Omega_D.$$
\end{remark}
\begin{remark}
In the general case appears the same classes of functions but with different orders.
\end{remark}
Taking advantage of the function spaces of axial functions defined in this section we can now
define the fine structure associated with this spaces that appear in the Clifford algebra $\mathbb R_5$.

\section{The fine structures in dimension five}\label{section4}
By applying the Fueter-Sce map $T_{FS2}:= \Delta_{\mathbb{R}^{n+1}}^{h}$, where $h:=\frac{n-1}{2}$ and is the Sce exponent, to a slice hyperholomorphic function $f(x)$ we get the monogenic function
$ \breve{f}(x)= \Delta_{\mathbb{R}^{n+1}}^hf(x)$.
\\ Due to the factorization of the Laplace operator in terms of $ \mathcal{D}$ and $ \mathcal{\overline{D}}$ it is possible to apply these two operators to a slice hyperholomorphic function $f(x)$ a number of times, until we reach the maximum power of the Laplacian, i.e., the Sce exponent.
\\ This implies the possibility to build different sets of functions between the set of slice hyperholomorphic functions and the set of axially monogenic functions, (see the previous section). This fact leads to the definition of fine structure of slice hyperholomorphic spectral theory.

\begin{definition}[Fine structure of slice hyperholomorphic spectral theory]
A fine structure of slice hyperholomorphic spectral theory is the set of functions spaces and the associated functional calculi induced by a factorization of the operator $T_{FS2}$, in the Fueter-Sce extension theorem.
\end{definition}
The factorization $ T_{FS2}=\Delta_{\mathbb{R}^{n+1}}^{h}= \mathcal{D} \mathcal{\overline{D}}... \mathcal{D}\mathcal{\overline{D}}$ is of particular interest.
\begin{definition}[Dirac fine structure]
The Dirac fine structure corresponds to an alternating sequence of products of the Dirac operator $\mathcal{D}$ and of its conjugate $ \mathcal{\overline{D}}$ until we obtain $\Delta_{\mathbb{R}^{n+1}}^{\frac{n-1}{2}}$.
\end{definition}
In \cite{CDPS} we studied the Dirac fine structure when $n=3$ this is also the quaternionic case. In particular we studied  the sequence represented by the following  diagram:
\begin{equation}
\label{fine1b}
\mathcal{O}(D) \overset{T_{FS1}}{\longrightarrow} \mathcal{SH}(\Omega_D)\overset{\mathcal{D}}{\longrightarrow} \mathcal{AH}(\Omega_D)\overset{\mathcal{\overline{D}}}{\longrightarrow}\mathcal{AM}(\Omega_D).
\end{equation}
The fine structure in (\ref{fine1b}) allows to obtain a product rule for the $F$-functional calculus, see \cite[Thm. 9.3]{CDPS}.
\\ However, since $\Delta=\mathcal{D} \mathcal{\overline{D}}= \mathcal{\overline{D}}\mathcal{D}$, we can exchange the roles of the operators $ \mathcal{D}$ and $ \mathcal{\overline{D}}$ in \eqref{fine1b}. This gives rise to the sequence represented by the following  diagram:
\begin{equation}
\label{fine2b}
\mathcal{O}(D) \overset{T_{FS1}}{\longrightarrow} \mathcal{SH}(\Omega_D)\overset{\mathcal{\overline{D}}}{\longrightarrow} \mathcal{AP}_2(\Omega_D)\overset{\mathcal{D}}{\longrightarrow}\mathcal{AM}(\Omega_D),
\end{equation}
which is investigated in \cite{CDPS1}.
 Even if the diagrams \eqref{fine1b} and \eqref{fine2b} come from the Fueter mapping theorem and the factorization of the Fueter operator $T_{F2}=\Delta$, we get two different fine structures.

\medskip
In each fine structure above and in all the fine structures we consider in the sequel
the final set of function spaces is always the set of axially monogenic functions.

\medskip
In the Clifford setting the splitting of the second Fueter-Sce mapping is more complicated, due to the fact that we are dealing with integer powers of the Laplacian.
Moreover, when $n$ is even the Laplace operator has a fractional power
and so we have to work in the space of distributions using the Fourier multipliers, see \cite{Q1}.

\medskip
Due to the fact that for $n=5$ we deal with the operator $ \Delta^2$, we get more Dirac fine structures, which are all different and important at the same time. In order to label all the fine structures, we will denote every fine structures, with an ordered sequence of the applied operators. For example, in the quaternionic case, we call \eqref{fine1b} the Dirac fine structure of the kind $(\mathcal{D}, \mathcal{\overline{D}})$ and \eqref{fine2b} the Dirac fine structure of the kind $(\mathcal{\overline{D}}, \mathcal{D})$.
Also in the case $n=5$ we have a structure in which we apply alternately the operators $ \mathcal{D}$ and $ \mathcal{\overline{D}}$ until we reach the second Fueter-Sce mapping.
\begin{equation}
\label{fine3}
\mathcal{O}(D) \overset{T_{FS1}}{\longrightarrow} \mathcal{SH}(\Omega_D)\overset{\mathcal{D}}{\longrightarrow} \mathcal{ABH}(\Omega_D)\overset{\mathcal{\overline{D}}}{\longrightarrow}\mathcal{AHC}_1(\Omega_D) \overset{\mathcal{D}}{\longrightarrow} \mathcal{AH}(\Omega_D) \overset{\mathcal{\overline{D}}}{\longrightarrow} \mathcal{AM}(\Omega_D).
\end{equation}
We call \eqref{fine3} the Dirac fine structure of the kind $(\mathcal{D}, \mathcal{\overline{D}}, \mathcal{D}, \mathcal{\overline{D}} )$.
\begin{remark}
Even when $n=5$  the Dirac fine structure \eqref{fine3} is of fundamental importance to obtain a product formula for the $F$-functional calculus (see Theorem \ref{prodo}).
\end{remark}
\begin{remark}
In order to avoid, at the end of the sequence of spaces the set of axially-anti monogenic function, we impose the condition that the composition of all the operators between spaces Clifford valued functions, must be equal to the operator $T_{FS2}=\Delta^{(n-1)/2} $ in the Fueter-Sce mapping theorem.
\end{remark}
 However, by rearranging the sequence of $ \mathcal{D}$ and $ \mathcal{\overline{D}}$ it is possible to obtain other fine structures, in which other sets of functions are involved. Thus, we have the Dirac fine structures $(\mathcal{D}, \mathcal{\overline{D}},\mathcal{\overline{D}}, \mathcal{D} )$

$$ \mathcal{O}(D) \overset{T_{FS1}}{\longrightarrow} \mathcal{SH}(\Omega_D)\overset{\mathcal{D}}{\longrightarrow} \mathcal{ABH}(\Omega_D)\overset{\mathcal{\overline{D}}}{\longrightarrow}	\mathcal{AHC}_1(\Omega_D) \overset{\mathcal{\overline{D}}}{\longrightarrow} \mathcal{AP}_2(\Omega_D) \overset{\mathcal{D}}{\longrightarrow} \mathcal{AM}(\Omega_D),$$
and the Dirac fine structure $(\mathcal{D}, \mathcal{D}, \mathcal{\overline{D}}, \mathcal{\overline{D}}  )$
$$ \mathcal{O}(D) \overset{T_{FS1}}{\longrightarrow} \mathcal{SH}(\Omega_D)\overset{\mathcal{D}}{\longrightarrow} \mathcal{ABH}(\Omega_D)\overset{\mathcal{D}}{\longrightarrow}\overline{\mathcal{AHC}_1(\Omega_D)} \overset{\mathcal{\overline{D}}}{\longrightarrow} \mathcal{AH}(\Omega_D) \overset{\mathcal{\overline{D}}}{\longrightarrow} \mathcal{AM}(\Omega_D).$$
All the previous Dirac fine structures are obtained by applying first the Dirac operator. Nevertheless, it is possible to apply the operator $ \mathcal{\overline{D}}$ as  first operator. In this case other three Dirac fine structures arise. We have the Dirac fine structure of the kind $(\mathcal{\overline{D}}, \mathcal{D}, \mathcal{\overline{D}},\mathcal{D} )$
$$ \mathcal{O}(D) \overset{T_{FS1}}{\longrightarrow} \mathcal{SH}(\Omega_D)\overset{\mathcal{\overline{D}}}{\longrightarrow} \mathcal{APC}_{(1,2)}(\Omega_D)\overset{\mathcal{D}}{\longrightarrow} \mathcal{AHC}_1(\Omega_D) \overset{\mathcal{\overline{D}}}{\longrightarrow} \mathcal{AP}_2(\Omega_D) \overset{\mathcal{D}}{\longrightarrow} \mathcal{AM}(\Omega_D),$$
the Dirac fine structure $(\mathcal{\overline{D}}, \mathcal{D},\mathcal{D},  \mathcal{\overline{D}} )$
$$ \mathcal{O}(D) \overset{T_{FS1}}{\longrightarrow} \mathcal{SH}(\Omega_D)\overset{\mathcal{\overline{D}}}{\longrightarrow} \mathcal{APC}_{(1,2)}(\Omega_D)\overset{\mathcal{D}}{\longrightarrow} \mathcal{AHC}_1(\Omega_D) \overset{\mathcal{D}}{\longrightarrow} \mathcal{AH}(\Omega_D) \overset{\mathcal{\overline{D}}}{\longrightarrow} \mathcal{AM}(\Omega_D),$$
and the Dirac fine structure $(\mathcal{\overline{D}},\mathcal{\overline{D}}, \mathcal{D},\mathcal{D})$
$$ \mathcal{O}(D) \overset{T_{FS1}}{\longrightarrow} \mathcal{SH}(\Omega_D)\overset{\mathcal{\overline{D}}}{\longrightarrow} \mathcal{APC}_{(1,2)}(\Omega_D)\overset{\mathcal{\overline{D}}}{\longrightarrow} \mathcal{AP}_3(\Omega_D) \overset{\mathcal{D}}{\longrightarrow} \mathcal{AH}(\Omega_D) \overset{\mathcal{D}}{\longrightarrow} \mathcal{AM}(\Omega_D).$$
The following diagram summarizes all the Dirac fine structures
with their function spaces:
\begin{figure}[H]
\centering
\resizebox{0.95\textwidth}{!}{%
\tikzset{every picture/.style={line width=0.75pt}} 

\begin{tikzpicture}[x=0.75pt,y=0.75pt,yscale=-1,xscale=1]
	
	\draw    (61.22,178.04) -- (109.64,178.35) ;
	\draw [shift={(111.64,178.37)}, rotate = 180.37] [color={rgb, 255:red, 0; green, 0; blue, 0 }  ][line width=0.75]    (10.93,-3.29) .. controls (6.95,-1.4) and (3.31,-0.3) .. (0,0) .. controls (3.31,0.3) and (6.95,1.4) .. (10.93,3.29)   ;
	\draw    (192.08,174.42) -- (256.45,128.99) ;
	\draw [shift={(258.08,127.83)}, rotate = 144.79] [color={rgb, 255:red, 0; green, 0; blue, 0 }  ][line width=0.75]    (10.93,-3.29) .. controls (6.95,-1.4) and (3.31,-0.3) .. (0,0) .. controls (3.31,0.3) and (6.95,1.4) .. (10.93,3.29)   ;
	\draw    (310.08,258.42) -- (383.44,207.97) ;
	\draw [shift={(385.08,206.83)}, rotate = 145.48] [color={rgb, 255:red, 0; green, 0; blue, 0 }  ][line width=0.75]    (10.93,-3.29) .. controls (6.95,-1.4) and (3.31,-0.3) .. (0,0) .. controls (3.31,0.3) and (6.95,1.4) .. (10.93,3.29)   ;
	\draw    (295.08,286.42) -- (364.43,333.62) ;
	\draw [shift={(366.08,334.75)}, rotate = 214.25] [color={rgb, 255:red, 0; green, 0; blue, 0 }  ][line width=0.75]    (10.93,-3.29) .. controls (6.95,-1.4) and (3.31,-0.3) .. (0,0) .. controls (3.31,0.3) and (6.95,1.4) .. (10.93,3.29)   ;
	\draw    (474.08,206.83) -- (539.54,260.56) ;
	\draw [shift={(541.08,261.83)}, rotate = 219.38] [color={rgb, 255:red, 0; green, 0; blue, 0 }  ][line width=0.75]    (10.93,-3.29) .. controls (6.95,-1.4) and (3.31,-0.3) .. (0,0) .. controls (3.31,0.3) and (6.95,1.4) .. (10.93,3.29)   ;
	\draw    (475.08,179.83) -- (533.52,133.08) ;
	\draw [shift={(535.08,131.83)}, rotate = 141.34] [color={rgb, 255:red, 0; green, 0; blue, 0 }  ][line width=0.75]    (10.93,-3.29) .. controls (6.95,-1.4) and (3.31,-0.3) .. (0,0) .. controls (3.31,0.3) and (6.95,1.4) .. (10.93,3.29)   ;
	\draw    (333.08,96.83) -- (386.51,55.07) ;
	\draw [shift={(388.08,53.83)}, rotate = 141.98] [color={rgb, 255:red, 0; green, 0; blue, 0 }  ][line width=0.75]    (10.93,-3.29) .. controls (6.95,-1.4) and (3.31,-0.3) .. (0,0) .. controls (3.31,0.3) and (6.95,1.4) .. (10.93,3.29)   ;
	\draw    (473.08,44.83) -- (526.64,95.87) ;
	\draw [shift={(528.08,97.25)}, rotate = 223.62] [color={rgb, 255:red, 0; green, 0; blue, 0 }  ][line width=0.75]    (10.93,-3.29) .. controls (6.95,-1.4) and (3.31,-0.3) .. (0,0) .. controls (3.31,0.3) and (6.95,1.4) .. (10.93,3.29)   ;
	\draw    (589.08,122.25) -- (668.49,182.05) ;
	\draw [shift={(670.08,183.25)}, rotate = 216.98] [color={rgb, 255:red, 0; green, 0; blue, 0 }  ][line width=0.75]    (10.93,-3.29) .. controls (6.95,-1.4) and (3.31,-0.3) .. (0,0) .. controls (3.31,0.3) and (6.95,1.4) .. (10.93,3.29)   ;
	\draw    (475.08,338.75) -- (558.38,287.88) ;
	\draw [shift={(560.08,286.83)}, rotate = 148.58] [color={rgb, 255:red, 0; green, 0; blue, 0 }  ][line width=0.75]    (10.93,-3.29) .. controls (6.95,-1.4) and (3.31,-0.3) .. (0,0) .. controls (3.31,0.3) and (6.95,1.4) .. (10.93,3.29)   ;
	\draw    (188,186) -- (255.69,255.32) ;
	\draw [shift={(257.08,256.75)}, rotate = 225.68] [color={rgb, 255:red, 0; green, 0; blue, 0 }  ][line width=0.75]    (10.93,-3.29) .. controls (6.95,-1.4) and (3.31,-0.3) .. (0,0) .. controls (3.31,0.3) and (6.95,1.4) .. (10.93,3.29)   ;
	\draw    (330.08,126.83) -- (383.64,178.45) ;
	\draw [shift={(385.08,179.83)}, rotate = 223.94] [color={rgb, 255:red, 0; green, 0; blue, 0 }  ][line width=0.75]    (10.93,-3.29) .. controls (6.95,-1.4) and (3.31,-0.3) .. (0,0) .. controls (3.31,0.3) and (6.95,1.4) .. (10.93,3.29)   ;
	\draw    (618.08,270.25) -- (681.59,213.58) ;
	\draw [shift={(683.08,212.25)}, rotate = 138.26] [color={rgb, 255:red, 0; green, 0; blue, 0 }  ][line width=0.75]    (10.93,-3.29) .. controls (6.95,-1.4) and (3.31,-0.3) .. (0,0) .. controls (3.31,0.3) and (6.95,1.4) .. (10.93,3.29)   ;
	
	\draw (18.09,165.97) node [anchor=north west][inner sep=0.75pt]  [rotate=-0.38] [align=left] {$\displaystyle \mathcal{O}( D)$};
	\draw (69.72,155.35) node [anchor=north west][inner sep=0.75pt]  [rotate=-0.38]  {$T_{FS1}$};
	\draw (116.07,168.79) node [anchor=north west][inner sep=0.75pt]  [rotate=-0.38]  {$\mathcal{SH}( \Omega _{D})$};
	\draw (173.64,206.53) node [anchor=north west][inner sep=0.75pt]  [rotate=-0.38]  {$\mathcal{D}$};
	\draw (207.89,127.48) node [anchor=north west][inner sep=0.75pt]  [rotate=-0.38]  {$\mathcal{\overline{D}}$};
	\draw (236.84,259.59) node [anchor=north west][inner sep=0.75pt]  [rotate=-0.38]  {$\mathcal{ABH}( \Omega _{D})$};
	\draw (239,99.4) node [anchor=north west][inner sep=0.75pt]    {$\mathcal{APC}_{(}{}_{1,2) \ \ }( \Omega _{D})$};
	\draw (308.89,219.48) node [anchor=north west][inner sep=0.75pt]  [rotate=-0.38]  {$\mathcal{\overline{D}}$};
	\draw (293.64,307.53) node [anchor=north west][inner sep=0.75pt]  [rotate=-0.38]  {$\mathcal{D}$};
	\draw (384.58,182.4) node [anchor=north west][inner sep=0.75pt]    {$\mathcal{ACH}_{1}( \Omega _{D}) \ $};
	\draw (375.58,321.4) node [anchor=north west][inner sep=0.75pt]    {$\overline{\mathcal{ACH}_{1} \ ( \Omega _{D}) \ }$};
	\draw (478.64,232.53) node [anchor=north west][inner sep=0.75pt]  [rotate=-0.38]  {$\mathcal{D}$};
	\draw (486.89,135.48) node [anchor=north west][inner sep=0.75pt]  [rotate=-0.38]  {$\mathcal{\overline{D}}$};
	\draw (545,263.4) node [anchor=north west][inner sep=0.75pt]    {$\mathcal{AH}( \Omega _{D})$};
	\draw (515,106.4) node [anchor=north west][inner sep=0.75pt]    {$\mathcal{A} P_{2}( \Omega _{D})$};
	\draw (393,36.4) node [anchor=north west][inner sep=0.75pt]    {$\mathcal{A} P_{3}( \Omega _{D})$};
	\draw (334.89,53.48) node [anchor=north west][inner sep=0.75pt]  [rotate=-0.38]  {$\mathcal{\overline{D}}$};
	\draw (656,189.48) node [anchor=north west][inner sep=0.75pt]    {$\mathcal{AM}( \Omega _{D})$};
	\draw (617.64,168.53) node [anchor=north west][inner sep=0.75pt]  [rotate=-0.38]  {$\mathcal{D}$};
	\draw (609.89,236.48) node [anchor=north west][inner sep=0.75pt]  [rotate=-0.38]  {$\mathcal{\overline{D}}$};
	\draw (483,294.48) node [anchor=north west][inner sep=0.75pt]    {$\overline{\mathcal{D}}$};
	\draw (471.64,66.53) node [anchor=north west][inner sep=0.75pt]  [rotate=-0.38]  {$\mathcal{D}$};
	\draw (324.64,152.53) node [anchor=north west][inner sep=0.75pt]  [rotate=-0.38]  {$\mathcal{D}$};

\end{tikzpicture}
}
\end{figure}

\begin{remark}
In all the previous Dirac fine structures it is possible to combine the Dirac operator and its conjugate. In this way we get a fine structure which is "weaker" then the previous ones; in the sense that we are skipping some classes of functions. We call these kind of fine structures coarser. Up to now we have mentioned just some of them and in the Appendix
 we show the complete landscape of the fine structures in dimension five.
\end{remark}

The Laplace fine structure is of the kind $(\Delta, \Delta)$, which is a coarser fine structure with respect to the Dirac one, is given by
$$ \mathcal{O}(D) \overset{T_{FS1}}{\longrightarrow} \mathcal{SH}(\Omega_D)\overset{\Delta}{\longrightarrow}  \mathcal{ACH}_1(\Omega_D) \overset{\Delta}{\longrightarrow}  \mathcal{AM}(\Omega_D).$$
Other interesting coarser fine structures are the harmonic ones, in which appear only the harmonic and
bi-harmonic sets of functions
$$ \mathcal{O}(D) \overset{T_{FS1}}{\longrightarrow} \mathcal{SH}(\Omega_D)\overset{\mathcal{D}}{\longrightarrow}  \mathcal{ABH}(\Omega_D) \overset{\Delta}{\longrightarrow}  \mathcal{AH}(\Omega_D)\overset{\mathcal{D}}{\longrightarrow} \mathcal{AM}(\Omega_D),$$
and the polyanalytic one, in which there appear only the polyanalytic functions of order three and two
$$ \mathcal{O}(D) \overset{T_{FS1}}{\longrightarrow} \mathcal{SH}(\Omega_D)\overset{\mathcal{\overline{D}}^2}{\longrightarrow}  \mathcal{AP}_3(\Omega_D) \overset{\mathcal{D}}{\longrightarrow}  \mathcal{AP}_2(\Omega_D)\overset{\mathcal{D}}{\longrightarrow} \mathcal{AM}(\Omega_D).$$
We observe that it is not possible to have coarser fine structure in the quaternionic case. This is due to the fact that we are dealing with the Laplacian at power $1$.

\section{Systems of differential equations for fine structure spaces of axial type}\label{VEKUA}

In  analogy with the Vekua-type system of differential equations for axially monogenic functions in this section we give all the systems of differential equations for the fine structure spaces in dimension five.
Let $D$ be a domain in the upper-half complex plane.
Let $\Omega_D$ be an axially symmetric open set in $ \mathbb{R}^{6}$ and let $x=x_0+ \underline{x}=x_0+r \underline{\omega} \in \Omega_D$. A function $ f: \Omega_D \to \mathbb{R}_5$ is of axial type if there exist two functions $A=A(x_0,r)$ and $B=B(x_0,r)$, independent of $ \underline{\omega} \in \mathbb{S}$ and with values in $ \mathbb{R}_5$, such that
	$$ f(x)= A(x_0,r)+ \underline{\omega}B(x_0,r), \ \ {\rm where}\ \ r>0. $$
So we characterize the class of functions that lies between the set of slice hyperholomorphic and the set of axially monogenic functions, that we have denoted as axially functions. We recall by \cite{Dixan}, that if $f(x)=A(x_0,r)+ \underline{\omega} B(x_0, r)$ then
\begin{equation}
\label{Dirac}
\mathcal{D}f= \left(\partial_{x_0} A(x_0,r)- \partial_r B(x_0,r)- \frac{4}{r}B(x_0,r)\right)+ \underline{\omega}\left(\partial_{x_0}B(x_0,r)+ \partial_r A(x_0,r)\right),
\end{equation}
\begin{equation}
\label{NDirac}
\mathcal{\overline{D}}f= \left(\partial_{x_0} A(x_0,r)+ \partial_r B(x_0,r)+ \frac{4}{r}B(x_0,r)\right)+ \underline{\omega}\left(\partial_{x_0}B(x_0,r)- \partial_r A(x_0,r)\right).
\end{equation}

\begin{theorem}
\label{anti2}
Let $D \subseteq \mathbb{C}$. Let $\Omega_D$ be an axially symmetric open set in $ \mathbb{R}^6$ and let $f_{\circ}(x)=A(x_0,r)+ \underline{\omega}B(x_0,r)$ be an axially anti cliffordian holomorphic function of order 1. Then $A(x_0,r)$ and $B(x_0,r)$ satisfy the following system
$$ \begin{cases}
\partial_{x_0}^3A+ \partial_{x_0}\partial_r^2 A+ \frac{4}{r} \partial_{x_0} \partial_r A+ \partial_r \partial_{x_0}^2 B+ \partial_r^3 B+8 \frac{\partial_r^2B}{r}+8 \frac{\partial_r B}{r^2}-8 \frac{B}{r^3}+ \frac{4}{r} \partial_{x_0}^2B=0\\
\\
\partial_{x_0}^3 B+ \partial_{x_0}\partial_r^2 B-4 \partial_r \left( \frac{\partial_{x_0}B}{r}\right)-\partial_r \partial_{x_0}^2 A- \partial_r^3 A-4 \partial_r \left(\frac{\partial_r A_1}{r}\right)=0
\end{cases}
$$
\end{theorem}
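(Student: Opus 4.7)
The strategy is to iterate the axial-type formulas (\ref{Dirac})--(\ref{NDirac}) for the Dirac operators and to exploit the fact that $\Delta$ is a scalar operator commuting with $\overline{\mathcal{D}}$. Since $f_\circ\in\overline{\mathcal{ACH}_1(\Omega_D)}$ means $\Delta\overline{\mathcal{D}}f_\circ=0$, I would rewrite this condition as $\overline{\mathcal{D}}\,\Delta f_\circ=0$, which lets the computation run in two clean stages (an application of $\Delta$, followed by a single application of $\overline{\mathcal{D}}$) rather than three.

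First I would establish the axial form of the Laplacian in $\mathbb R^6$,
\begin{equation*}
\Delta(A+\underline\omega B)=\Bigl(\partial_{x_0}^2 A+\partial_r^2 A+\tfrac{4}{r}\partial_r A\Bigr)+\underline\omega\Bigl(\partial_{x_0}^2 B+\partial_r^2 B+\tfrac{4}{r}\partial_r B-\tfrac{4}{r^2}B\Bigr),
\end{equation*}
which follows from the factorization $\Delta=\mathcal{D}\overline{\mathcal{D}}$ via two successive applications of (\ref{Dirac})--(\ref{NDirac}) (or by a direct spherical-coordinate calculation, the $-4/r^2$ term encoding the angular derivatives of $\underline\omega$ in dimension six). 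Setting
$$\tilde A:=\partial_{x_0}^2 A+\partial_r^2 A+\tfrac{4}{r}\partial_r A,\qquad \tilde B:=\partial_{x_0}^2 B+\partial_r^2 B+\tfrac{4}{r}\partial_r B-\tfrac{4}{r^2}B,$$
so that $\Delta f_\circ=\tilde A+\underline\omega\tilde B$, one applies (\ref{NDirac}) once more to obtain
\begin{equation*}
\overline{\mathcal{D}}\,\Delta f_\circ=\Bigl(\partial_{x_0}\tilde A+\partial_r\tilde B+\tfrac{4}{r}\tilde B\Bigr)+\underline\omega\bigl(\partial_{x_0}\tilde B-\partial_r\tilde A\bigr).
\end{equation*}
Vanishing of the two components yields, respectively, the first and the second equation in the stated system.

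Finally I would expand each bracket separately. For the scalar part one needs the elementary identities $\partial_r\bigl(\tfrac{1}{r}\partial_r B\bigr)=\tfrac{1}{r}\partial_r^2 B-\tfrac{1}{r^2}\partial_r B$ and $\partial_r\bigl(\tfrac{B}{r^2}\bigr)=\tfrac{1}{r^2}\partial_r B-\tfrac{2B}{r^3}$; inserting them into $\partial_{x_0}\tilde A+\partial_r\tilde B+\tfrac{4}{r}\tilde B$ and gathering the resulting $1/r,1/r^2,1/r^3$ contributions produces precisely the displayed first equation. For the vector part the analogous expansion of $\partial_{x_0}\tilde B-\partial_r\tilde A$, combined with the regroupings $\tfrac{4}{r}\partial_{x_0}\partial_r B-\tfrac{4}{r^2}\partial_{x_0}B=4\partial_r\bigl(\partial_{x_0}B/r\bigr)$ and $-\tfrac{4}{r}\partial_r^2 A+\tfrac{4}{r^2}\partial_r A=-4\partial_r\bigl(\partial_r A/r\bigr)$, yields the second equation.

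The computation is routine. The main obstacle is simply the careful bookkeeping of the $1/r^k$ weights when differentiating products such as $B/r$, $\partial_r B/r$, etc., and establishing once for all in Step~1 the correct form of the radial Laplacian on the axial component, in particular the sign and coefficient of the $-(n-1)/r^2$ term that distinguishes the scalar from the vector component.
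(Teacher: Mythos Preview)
Your proposal is correct and follows essentially the same route as the paper: compute $\Delta f_\circ$ in axial form to get $\tilde A+\underline{\omega}\tilde B$ (the paper cites a prior reference for this formula, you derive it via $\Delta=\mathcal{D}\overline{\mathcal{D}}$), then apply $\overline{\mathcal{D}}$ once using formula~(\ref{NDirac}), and expand. The paper uses the commutation $\Delta\overline{\mathcal{D}}=\overline{\mathcal{D}}\Delta$ implicitly in writing $\Delta\overline{\mathcal{D}}(f_\circ)=\overline{\mathcal{D}}(A'+\underline{\omega}B')$; you make it explicit, but the two computations are identical.
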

\begin{proof}
Let us consider $f_{\circ}(x)=A+ \underline{\omega}B$. By similar computations done in \cite[Thm. 3.5]{CDPS} we have
\begin{equation}
\label{zero}
\Delta (f_{\circ}(x))= \left(\partial_{x_0}^2A+\partial_{r}^2A+ \frac{4}{r} \partial_r A\right)+\underline{\omega}\left(\partial_{x_0}^2B+\partial_{r}^2B+ 4 \partial_r \left(\frac{B}{r}\right) \right).
\end{equation}
Now, we set
$$ A':=\partial_{x_0}^2A+\partial_{r}^2A+ \frac{4}{r} \partial_r A\ \ \ {\rm and} \ \ \
 B':=\partial_{x_0}^2B+\partial_{r}^2B+ 4 \frac{\partial_r}{r}B- \frac{4}{r^2} B.$$
Then by formula \eqref{NDirac} we have
\begin{eqnarray}
\nonumber
\Delta \overline \bigD (f_{\circ}(x))&=&\overline \bigD(A'+\underline\omega B')=(\dx A'+\dr B'+\frac 4r B')+\underline\omega (\dx A' -\dr B')
\\
\nonumber
&=& \dx^3 A+\dx\dr^2 A+\frac 4r\dx\dr A +\dr^3B +\frac 4r\dr^2 B -\frac 4{r^2} \dr B
\\
\nonumber
&+&\frac 8{r^3} B-\frac 4{r^2} \dr B+\dr\dx^2 B+\frac 4r \dr^2 B
+\frac{16}{r^2} \dr B-\frac{16}{r^3} B+\frac 4r\dx^2B
\\
\nonumber
&+&\underline{\omega}(\dx\dr^2B+\frac 4r\dx\dr B-\frac 4{r^2}\dx B+\dx^3 B-\dr\dx^2 A-\dr^3 A
\\
&+&\frac 4{r^2}\dr A-\frac 4r \dr^2 A)
\end{eqnarray}
so we finally have
\begin{eqnarray}
\nonumber
\Delta \overline \bigD (f_{\circ}(x))&=& \dx^3 A +\dx\dr^2A +\frac 4r \dx\dr A+\dr^3 B+\frac 8r \dr^2 B+\frac 8{r^2} \dr B-\frac 8{r^3} B
\\\nonumber
&+&
\dr\dx^2 B+\frac 4r\dx^2 B +
\underline \omega (\dx\dr^2 B+\frac 4r\dx\dr B-\frac 4{r^2}\dx B
\\
&+&\dx^3B-\dr\dx^2A-\dr^3 A+\frac 4{r^2}\dr A-\frac 4r\dr^2 A).
\label{new1}
\end{eqnarray}
Since $({}^{\circ} f(x))$ is anti Cliffordian holomorphic of order one we have that $\Delta \mathcal{\overline{D}}(f_{\circ}(x))=0$.
\end{proof}
\begin{theorem}
Let $D \subseteq \mathbb{C}$.  Let $\Omega_D$ be an axially symmetric open set in $ \mathbb{R}^6$ and let $\tilde{f}_1(x)=A(x_0,r)+ \underline{\omega}B(x_0,r)$ be an axially bi-harmonic function. Then $A(x_0,r)$ and $B(x_0,r)$ satisfy the following system
\[
\begin{cases}
\dx^4 A+2\dx^2\dr^2 A+\dr^4 A-\frac 8{r^3}\dr A +\frac 8{r^2}\dr^2 A+\frac 8r\dr^3 A+\frac4r\dr\dx^2 A=0\\
\\
\dr^4B+\frac 8r\dr^3 B-\frac 24{r^3}\dr B+\frac{24}{r^4}B+2\dr^2\dx^2 B-\frac 8{r^2}\dx^2 B+\frac 8r\dx^2\dr B+\dx^4B=0.
\end{cases}
\]
\end{theorem}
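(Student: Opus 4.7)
The starting point is the definition of axially bi-harmonic: by hypothesis the function $\tilde{f}_1 = A + \underline{\omega} B$ satisfies $\Delta^2 \tilde{f}_1 = 0$, so the plan is simply to compute $\Delta^2 \tilde{f}_1$ explicitly in the $(x_0,r)$ coordinates and then separate the scalar part from the $\underline{\omega}$-part. The key tool is formula \eqref{zero} from the proof of the previous theorem, which gives the action of $\Delta$ on an axial function:
\[
\Delta(A+\underline{\omega}B)
= \Bigl(\partial_{x_0}^2 A+\partial_r^2 A+\tfrac{4}{r}\partial_r A\Bigr)
+\underline{\omega}\Bigl(\partial_{x_0}^2 B+\partial_r^2 B+4\partial_r\!\bigl(\tfrac{B}{r}\bigr)\Bigr).
\]
Crucially, the right-hand side is again of axial form, so one can apply \eqref{zero} a second time.

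First I would set $\tilde{A}:=\partial_{x_0}^2 A+\partial_r^2 A+\tfrac{4}{r}\partial_r A$ and $\tilde{B}:=\partial_{x_0}^2 B+\partial_r^2 B+\tfrac{4}{r}\partial_r B-\tfrac{4}{r^2}B$, so that $\Delta \tilde{f}_1=\tilde{A}+\underline{\omega}\tilde{B}$. Applying \eqref{zero} once more to $\tilde{A}+\underline{\omega}\tilde{B}$ produces
\[
\Delta^2 \tilde{f}_1=\Bigl(\partial_{x_0}^2\tilde{A}+\partial_r^2\tilde{A}+\tfrac{4}{r}\partial_r\tilde{A}\Bigr)
+\underline{\omega}\Bigl(\partial_{x_0}^2\tilde{B}+\partial_r^2\tilde{B}+\tfrac{4}{r}\partial_r\tilde{B}-\tfrac{4}{r^2}\tilde{B}\Bigr).
\]
Setting the two parenthesized brackets separately equal to zero, since they are independent of $\underline{\omega}\in\mathbb{S}$, will yield the two PDEs of the system; the first bracket will give the equation for $A$, the second the equation for $B$.

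The bulk of the work is then purely computational: carrying out the four derivatives using the Leibniz rule. The main obstacle I anticipate is the careful bookkeeping of the negative-power-of-$r$ terms, especially in the equation for $B$. Since $\tilde{B}$ already contains the term $-4B/r^2$, a second application of $\partial_r^2$, $\tfrac{4}{r}\partial_r$, and $-\tfrac{4}{r^2}$ produces numerous contributions proportional to $\partial_r^2 B/r^2$, $\partial_r B/r^3$ and $B/r^4$. A small but essential miracle is that, after using identities such as $\partial_r(r^{-k}) = -k\,r^{-k-1}$ and combining
\[
-\tfrac{12}{r^2}\partial_r^2 B+\tfrac{16}{r^2}\partial_r^2 B-\tfrac{4}{r^2}\partial_r^2 B=0,\qquad
\tfrac{24}{r^3}\partial_r B-\tfrac{32}{r^3}\partial_r B-\tfrac{16}{r^3}\partial_r B=-\tfrac{24}{r^3}\partial_r B,
\]
and likewise for the $B/r^4$ contributions, one obtains exactly the coefficients $0$, $-24/r^3$, $+24/r^4$ displayed in the statement. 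Analogous (but simpler, since $\tilde{A}$ has no $1/r^2$ term to begin with) cancellations yield the equation for $A$.

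Finally, one checks that the system obtained is \emph{equivalent} to $\Delta^2 \tilde{f}_1=0$: the splitting into scalar and $\underline{\omega}$-components is uniquely determined since $A, B$ are $\underline{\omega}$-independent by hypothesis, so vanishing of $\Delta^2 \tilde{f}_1$ forces the vanishing of each component. This completes the plan.
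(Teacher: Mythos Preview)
Your approach is correct but genuinely different from the paper's. You factor $\Delta^2=\Delta\circ\Delta$ and apply formula \eqref{zero} twice; the paper instead factors $\Delta^2=\mathcal{D}\circ(\overline{\mathcal{D}}\Delta)$, reuses formula \eqref{new1} (the expression for $\Delta\overline{\mathcal{D}}f$ computed in the proof of Theorem~\ref{anti2}) to get the scalar and $\underline{\omega}$ parts $C$ and $D$, and then applies the Dirac formula \eqref{Dirac} once more. Your route is more self-contained (it needs only the Laplacian formula, not the intermediate Dirac/conjugate-Dirac computations) and has the structural advantage that the equations for $A$ and $B$ are decoupled from the very first step, since $\tilde A$ depends only on $A$ and $\tilde B$ only on $B$. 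In the paper's approach, by contrast, both $C$ and $D$ mix $A$- and $B$-terms, and the cross terms cancel only after the final application of $\mathcal{D}$. The paper's approach, on the other hand, recycles a computation already performed and fits naturally into the sequence of fine-structure factorizations being developed. Either way the bookkeeping you anticipate is exactly the content of the proof.
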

\begin{proof}
By formula \eqref{new1} we have
$$
C:=\dx^3 A +\dx\dr^2A +\frac 4r \dx\dr A+\dr^3 B+\frac 8r \dr^2 B+\frac 8{r^2} \dr B-\frac 8{r^3} B+\dr\dx^2 B+\frac 4r\dx^2 B
$$
and
$$
D:=\dx\dr^2 B+\frac 4r\dx\dr B-\frac 4{r^2}\dx B+\dx^3B-\dr\dx^2A-\dr^3 A+\frac 4{r^2}\dr A-\frac 4r\dr^2 A,
$$
Therefore, by formula \eqref{Dirac} we have
\[
\begin{split}
&\Delta^2(f(x))=\bigD(\overline \bigD\Delta f(x))=(\dx C-\dr D-\frac 4r D)+\underline \omega(\dx D+\dr C)\\
&= \dx^4 A+\dx^2\dr^2 A+\frac 4r\dx^2\dr A+\dx\dr^3 B+\frac 8r\dx\dr^2 B+\frac{8}{r^2}\dx\dr B-\frac 8{r^3}\dx B+\dr\dx^3 B+\frac 4r\dx^3 B\\
&-\dx\dr^3 B-\frac 4r\dx\dr^2 B +\frac 8{r^2}\dx\dr B -\frac{8}{r^3}\dx B-\dr\dx^3 B+\dr^2\dx^2 A+\dr^4 A+\frac 8{r^3}\dr A-\frac{4}{r^2}\dr^2 A\\
&+\frac{4}r\dr^3 A-\frac 4{r^2}\dr^2 A-\frac 4r\dx\dr^2 B-\frac{16}{r^2}\dx\dr B+\frac{16}{r^3}\dx B-\frac 4r\dx^3 B+\frac 4r\dr\dx^2 A+\frac 4r\dr^3 A-\frac {16}{r^3}\dr A\\
& +\frac {16}{r^2}\dr^2 A+\underline{\omega}(\dr\dx^3 A+\dx\dr ^3 A +\frac 4r\dx\dr^2 A+\dr^4 B-\frac 8{r^2}\dr^2 B+\frac 8r\dr^3 B-\frac{16}{r^3}\dr B+\frac 8{r^2}\dr^2 B+\frac{24}{r^4}B\\
&- \frac{4}{r^2} \partial_{x_0}\partial_r A-\frac 8{r^3}\dr B+\dr^2\dx^2 B-\frac 4{r^2}\dx^2 B+\frac 4r\dr\dx^2 B+\dx^2\dr^2 B+\frac 4r\dx^2\dr B-\frac 4{r^2}\dx^2 B+\dx^4 B\\
&-\dr\dx^3 A-\dr^3\dx A+\frac 4{r^2}\dx\dr A-\frac 4r\dr^2\dx A)\\
&=\dx^4 A+2\dx^2\dr^2 A+\dr^4 A-\frac 8{r^3}\dr A +\frac 8{r^2}\dr^2 A+\frac 8r\dr^3 A+\frac 4r\dr\dx^2 A+\underline\omega(\dr^4B+\frac 8r\dr^3 B-\frac 24{r^3}\dr B\\
&+\frac{24}{r^4}B+2\dr^2\dx^2 B-\frac 8{r^2}\dx^2 B+\frac 8r\dx^2\dr B+\dx^4B).
\end{split}
\]
Since the function $\tilde{f}_1(x)$ is bi-harmonic, i.e. $\Delta^2\tilde{f}_1(x)=0$, we have the thesis.
\end{proof}

\begin{theorem}
Let $D \subseteq \mathbb{C}$. Let $\Omega_D$ be an axially symmetric open set in $ \mathbb{R}^6$ and let $\breve{f}^{\circ}_0(x)=A(x_0,r)+ \underline{\omega}B(x_0,r)$ be an axially polyanalytic function of order three. Then $A:=A(x_0,r)$ and $B:=B(x_0,r)$ satisfy the following system
$$ \begin{cases}
\partial_{x_0}^3A+\partial_r^3 B-3 \partial_{x_0}^2 \partial_r B-3 \partial_{x_0} \partial_r^2A-\frac{12}{r}  \partial_{x_0}^2 B- \frac{12}{r} \partial_{x_0}\partial_{r} A+ 8 \frac{\partial_r^2 B}{r}+8 \frac{\partial_r B}{r^2}-8\frac{B}{r^3}=0\\
\\
\partial_{x_0}^3 B- \partial_{r}^3 A +3 \partial_{x_0}^2 \partial_{r} A-3 \partial_{x_0} \partial_{r}^2 B- 12 \frac{\partial_{x_0} \partial_{r}B}{r}+12 \frac{\partial_{x_0} B}{r^2}-4 \frac{\partial_{r}^2 A}{r}+ \frac{4}{r^2}\partial_r A=0.
\end{cases}
$$
\end{theorem}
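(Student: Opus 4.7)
The strategy is to exploit the axial form of $\breve{f}^{\circ}_0$ together with the defining equation $\mathcal{D}^3 \breve{f}^{\circ}_0(x) = 0$, which holds by the Fueter--Sce mapping theorem (applied to $f$ twice with $\overline{\mathcal{D}}$ and once more with $\mathcal{D}$, noting that $\mathcal{D}^3 \overline{\mathcal{D}}^2 = \Delta^2 \mathcal{D}$ which annihilates slice hyperholomorphic functions). So one simply computes $\mathcal{D}^3(A + \underline{\omega} B)$ and separates the scalar part from the $\underline{\omega}$-part.

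The key structural observation is that $\mathcal{D}$ preserves the axial form: from formula \eqref{Dirac}, if $g = P(x_0,r) + \underline{\omega} Q(x_0,r)$ is of axial type, then
\[
\mathcal{D} g = \Bigl(\partial_{x_0} P - \partial_r Q - \tfrac{4}{r} Q\Bigr) + \underline{\omega}\Bigl(\partial_{x_0} Q + \partial_r P\Bigr),
\]
which is again of axial type. Therefore we can define inductively $\mathcal{D}^k(A + \underline{\omega} B) = A_k + \underline{\omega} B_k$ with the recursions
\[
A_{k+1} = \partial_{x_0} A_k - \partial_r B_k - \tfrac{4}{r} B_k, \qquad B_{k+1} = \partial_{x_0} B_k + \partial_r A_k,
\]
and impose $A_3 = B_3 = 0$. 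This is essentially the same iterative scheme already used in the proof of the bi-harmonic system earlier in Section \ref{VEKUA}.

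The first iterate $A_1, B_1$ is immediate from \eqref{Dirac}. For the second iterate, I would substitute and simplify, being careful with the product rule on terms like $\partial_r(Q/r) = \partial_r Q/r - Q/r^2$, which is the source of all the inverse-power-of-$r$ contributions. After the third iterate, collecting terms and setting the scalar part equal to zero yields the first equation of the claimed system, and setting the $\underline{\omega}$ coefficient equal to zero yields the second.

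The only real obstacle is the bookkeeping: there are many cross terms generated by the $\frac{4}{r}$ factor interacting with $\partial_r$, and one must verify that all expected cancellations occur (e.g., the absence of $\partial_r^2 B / r^2$ terms or extraneous $B/r^3$ terms in the final scalar equation). As a sanity check, one can verify that the highest-order symbols reproduce the expansion of $\mathcal{D}^3$: the scalar part should contain the cubic pattern $\partial_{x_0}^3 A - 3\partial_{x_0}\partial_r^2 A - 3\partial_{x_0}^2\partial_r B + \partial_r^3 B$ (with Pascal-triangle coefficients $1,-3,-3,1$), which matches the stated system, and analogously for the $\underline{\omega}$-part. The remaining lower-order $1/r^k$ terms then follow from systematic application of the product rule in the recursion.
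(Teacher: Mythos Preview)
Your approach is correct and essentially identical to the paper's own proof: both iterate the axial action of $\mathcal{D}$ from formula \eqref{Dirac} three times via the recursion $A_{k+1}=\partial_{x_0}A_k-\partial_r B_k-\tfrac{4}{r}B_k$, $B_{k+1}=\partial_{x_0}B_k+\partial_r A_k$, and then set $A_3=B_3=0$. The paper records the intermediate step $A'',B''$ (your $A_2,B_2$) explicitly and then expands once more, exactly as you outline.
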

\begin{proof}
First of all we start by computing $\mathcal{D}^2\breve{f}^{\circ}_0(x)$.
$$ \mathcal{D}^2 \breve{f}^{\circ}_0(x)= \mathcal{D}(\mathcal{D}\breve{f}^{\circ}_0(x))= \mathcal{D}(A'+ \underline{\omega}B'),$$
where
$$ A':= \partial_{x_0} A- \partial_{r} B- \frac{4}{r} B\ \ \ \ \ {\rm and} \ \ \ \ \
 B':= \partial_{x_0} B+ \partial_r A.
 $$
By formula \eqref{Dirac} we get
\begin{eqnarray}
\label{zeros4}
\nonumber
\mathcal{D}^2 \breve{f}^{\circ}_0(x)&=& \left(\partial_{x_0}A'(x_0,r)- \partial_r B'(x_0,r)- \frac{4}{r} B'(x_0,r)\right)+ \underline{\omega} \left(\partial_{x_0}B'(x_0,r)+ \partial_r A'(x_0,r)\right)\\
\nonumber
&=& \left(\partial_{x_0}^2 A-2 \partial_{x_0}\partial_r B - \frac{8}{r} \partial_{x_0}B- \partial_r^2A- \frac{4}{r} \partial_r A\right)+ \underline{\omega} \biggl(\partial_{x_0}^2 B+2 \partial_{x_0}\partial A- \partial_r^2B+\\
&&- 4 \partial_r \frac{\partial_r B}{r}+ 4 \frac{B}{r^2} \biggl)\\
\nonumber
&=& A''+ \underline{\omega} B''.
\end{eqnarray}
where
$$ A'':=\partial_{x_0}^2 A-2 \partial_{x_0}\partial_r B - \frac{8}{r} \partial_{x_0}B- \partial_r^2A- \frac{4}{r} \partial_r A$$
and
$$ B'':=\partial_{x_0}^2 B+2 \partial_{x_0}\partial A- \partial_r^2B- 4 \partial_r \frac{\partial_r B}{r}+ 4 \frac{B}{r^2}.$$
Finally by applying another time formula \eqref{Dirac} we get
\begin{eqnarray*}
\mathcal{D}^3 \breve{f}^{\circ}_0(x)&=& \mathcal{D}(\mathcal{D}^2 \breve{f}^{\circ}_0(x))=\mathcal{D}(A''+ \underline{\omega} B'')\\
&=& \left(\partial_{x_0}A''- \partial_r B''- \frac{4}{r} B''\right)+ \underline{\omega} (\partial_{x_0}B''+ \partial_r A'')\\
&=& \biggl( \partial_{x_0}^3A+\partial_r^3 B-3 \partial_{x_0}^2 \partial_r B-3 \partial_{x_0} \partial_r^2A-\frac{12}{r}  \partial_{x_0}^2 B- \frac{12}{r} \partial_{x_0}\partial_{r} A+ 8 \frac{\partial_r^2 B}{r}+8 \frac{\partial_r B}{r^2}-8\frac{B}{r^3}\biggl)\\
&& + \underline{\omega} \biggl(\partial_{x_0}^3 B- \partial_{r}^3 A +3 \partial_{x_0}^2 \partial_{r} A -3 \partial_{x_0} \partial_{r}^2 B- 12 \frac{\partial_{x_0} \partial_{r}B}{r}+12 \frac{\partial_{x_0} B}{r^2}-4 \frac{\partial_{r}^2 A}{r} + \frac{4}{r^2}\partial_r A\biggl).
\end{eqnarray*}
We get the statement from the fact that the function $\breve{f}^{\circ}_0(x)$ is polyanalytic of order three, i.e. $\mathcal{D}^3 \breve{f}^{\circ}_0(x)=0.$
\end{proof}
\begin{theorem}
Let $D \subseteq \mathbb{C}$. Let $\Omega_D$ be an axially symmetric open set in $ \mathbb{R}^6$.
 Then
\begin{itemize}
\item $f^{\circ}(x)=A(x_0,r)+ \underline{\omega}B(x_0,r)$ is axially Cliffordian of order one if and only if $A:=A(x_0,r)$ and $B:=B(x_0,r)$ satisfy the following system
$$ \begin{cases}
\partial_{x_0}A+ \partial_{x_0}\partial_{r}^2A + \frac{4}{r} \partial_{x_0}\partial_{r} A- \partial_r \partial_{x_0}^2 B-\partial_r^3 B- 8\frac{ \partial_r B}{r^2}+8\frac{B}{r^3}-4\frac{\partial_{x_0}^2B}{r}=0\\
\\
\partial_{x_0}^3B+ \partial_{x_0}\partial_r^2B+4 \partial_{r} \left( \frac{\partial_{x_0} B}{r}\right)+\partial_r \partial_{x_0}^2 A+ \partial_{r}^3A+4 \partial_{r}^2 \left(\frac{A}{r}\right)=0.
\end{cases}
$$
\item $\tilde{f}_0(x)=A(x_0,r)+ \underline{\omega}B(x_0,r)$ is axially harmonic if and only if $A:=A(x_0,r)$ and $B:=B(x_0,r)$ satisfy the following system
$$
\begin{cases}
\partial_{x_0}^2 A+ \partial_r^2A+ \frac{4}{r} \partial_r A=0\\
\\
\partial_{x_0}^2 B+ \partial_r^2B+ 4 \partial_r \left(\frac{B}{r}\right) =0.
\end{cases}
$$
\item $\breve{f}^{\circ}_1(x)=A(x_0,r)+ \underline{\omega}B(x_0,r)$ is axially polyanalytic of order two if and only if $A:=A(x_0,r)$ and $B:=B(x_0,r)$ satisfy the following system
$$ \begin{cases}
\partial_{x_0}^2 A-2 \partial_{x_0}\partial_r B- \frac{8}{r} \partial_{x_0}B- \partial_r^2A- \frac{4}{r} \partial_r A=0\\
\\
\partial_{x_0}^2 B+2 \partial_{x_0}\partial_r A- \partial_r^2B- 4 \partial_r \left(\frac{B}{r} \right)=0.
\end{cases}
$$
\item $\breve{f}^{\circ}(x)=A(x_0,r)+ \underline{\omega}B(x_0,r)$ is axially Cliffordian  polyanalytic of order $(1,2)$ if and only if $A:=A(x_0,r)$ and $B:=B(x_0,r)$ satisfy the following system
\[
\begin{cases}
\partial_{x_0}^4A-2 \partial_{r} \partial_{x_0}^3B-2 \partial_{x_0} \partial_r^3 B- 8\frac{\partial_{x_0}^3B}{r}- 8\frac{\partial_{x_0}\partial_r^2 B}{r}- \partial_r^4 A-8 \frac{\partial_r^3 A}{r}
- 8 \frac{\partial_r A}{r^3}-4 \frac{\partial_r^2 A}{r^2}- 8\frac{A}{r^4}-16 \frac{\partial_{x_0}\partial_r B}{r^2}=0\\
\\
\partial_{x_0}^4B+2 \partial_r \partial_{x_0}^3A+2 \partial_{x_0}\partial_r^3 A+ 8 \frac{\partial_r^2 \partial_{x_0}A}{r}-12 \frac{\partial_r \partial_{x_0}A}{r^2}-4 \frac{\partial_r \partial_{x_0}B}{r^2}- \partial_r^4B+8 \frac{\partial_{x_0}A}{r^3}-8 \frac{\partial_{r}^2B}{r^2}+24 \frac{\partial_r B}{r^3}\\
-24 \frac{B}{r^4}+4 \frac{\partial_{x_0}^2B}{r^2}=0.
\end{cases}
\]
\end{itemize}
\end{theorem}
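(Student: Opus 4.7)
The plan is to prove each of the four characterizations by a direct computation using the Vekua-type identities already established in this section, namely \eqref{Dirac} for $\mathcal{D}$, \eqref{NDirac} for $\overline{\mathcal{D}}$, and the expression \eqref{zero} for $\Delta$ acting on an axial function $g(x)=A(x_0,r)+\underline{\omega}B(x_0,r)$. The key structural observation is that each of $\mathcal{D}$, $\overline{\mathcal{D}}$, and $\Delta$ preserves the class of axial functions, so repeated composition yields again an axial function whose scalar part and $\underline{\omega}$-component decouple. The statement of the theorem is an \emph{iff} because this decomposition is unique: setting the full operator to zero is equivalent to setting both coefficient functions to zero.

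For the harmonic case, the identity \eqref{zero} already gives $\Delta \tilde{f}_0$ as an axial function with scalar part $\partial_{x_0}^2 A+\partial_r^2 A+\frac{4}{r}\partial_r A$ and $\underline{\omega}$-component $\partial_{x_0}^2 B+\partial_r^2 B+4\partial_r(B/r)$; this is precisely the stated system. For the polyanalytic of order two case, I apply \eqref{Dirac} once to $\breve{f}^{\circ}_1$ to get new axial coefficients $A_1=\partial_{x_0}A-\partial_r B-4B/r$ and $B_1=\partial_{x_0}B+\partial_r A$, and then apply \eqref{Dirac} again; this reproduces the computation already carried out in display \eqref{zeros4} of the polyanalytic of order three theorem and gives the stated system after equating both components to zero.

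For the Cliffordian of order one case, I apply $\mathcal{D}$ to $f^{\circ}$ via \eqref{Dirac}, then plug the resulting axial function into the Laplacian formula \eqref{zero}; equivalently, one can use the identity $\Delta \mathcal{D}=\overline{\mathcal{D}}\,\mathcal{D}^2$ or $\Delta \mathcal{D}=\mathcal{D}\,\overline{\mathcal{D}}\,\mathcal{D}$ and chain the two Vekua formulas. For the Cliffordian polyanalytic of order $(1,2)$ case, the needed operator $\Delta \mathcal{D}^2$ is obtained by applying \eqref{Dirac} twice (as in the polyanalytic of order two step) and then \eqref{zero} to the axial coefficients so produced. In both cases, setting the resulting scalar and vector components equal to zero yields the stated pair of PDEs. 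The converse implication in each item is automatic, because the computations above are equivalences at the level of axial coefficients.

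The main obstacle is bookkeeping rather than conceptual: one has to carefully track the lower-order terms arising from repeated application of the product rule to quotients of the form $B/r$ and from the commutators of $\partial_r$ with multiplication by $1/r$. These are exactly what produce the singular coefficients $1/r,1/r^2,1/r^3,1/r^4$ that appear in the statement, in particular the delicate $24 B/r^4$ and $-24\partial_r B/r^3$ contributions in the Cliffordian polyanalytic system. Care is also needed in the Cliffordian of order one system to correctly combine the $-8\partial_r B/r^2$ and $-16\partial_r B/r^2$ terms that merge when $4\partial_r A_1/r$ is expanded, and similarly for the $B/r^3$ coefficients. No genuinely new identities beyond \eqref{Dirac}, \eqref{NDirac}, \eqref{zero} are required.
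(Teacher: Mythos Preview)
Your proposal is correct and takes essentially the same approach as the paper: direct computation using the Vekua-type identities \eqref{Dirac}, \eqref{NDirac}, \eqref{zero}, with the harmonic case coming straight from \eqref{zero} and the polyanalytic order two case from \eqref{zeros4}. The only cosmetic difference is the order in which you compose the operators for the first and fourth items (you do $\mathcal{D}$ then $\Delta$, respectively $\mathcal{D}^2$ then $\Delta$, while the paper does $\Delta$ then $\mathcal{D}$, respectively $\Delta\mathcal{D}$ then $\mathcal{D}$), which is immaterial since the operators commute.
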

\begin{proof}
We do not give all the details of the proof because they are tedious computations we just mentions that:
the first system follows by applying the Dirac operator $ \mathcal{D}$ to \eqref{zero}. The computations are similar to that ones done in Theorem \ref{anti2}.
The second system follows by formula \eqref{zero}.
 The third system follows by formula \eqref{zeros4}.
 The fourth system follows by applying the operator $ \mathcal{D}$ to $ \Delta \mathcal{D}\breve{f}^{\circ}(x)$, which formula is possible to get by the first point of this theorem.
\end{proof}

In the next section we will give the integral representation of the functions
belonging to the function spaces associated with the fine structure.
These spaces are called, for short, fine structure spaces.

\section{Integral representation of the functions of the fine structure spaces}\label{INTREP}

We now recall the slice hyperholomorphic Cauchy formulas that are fundamental for the hyperholomorphic spectral theories.

\begin{theorem}\label{ts}
	Let $s$, $x\in \mathbb R^{n+1}$ with $|x|<|s|$, then
	$$\sum_{n=0}^{+\infty} x^ns^{-n-1}=-(x^2-2\Re(s)x+|s|^2)^{-1}(x-\overline s)$$
	and
	$$\sum_{n=0}^{+\infty} s^{-n-1}x^n=-(x-\overline s)(x^2-2\Re(s)x+|s|^2)^{-1}.$$
	Moreover, for any $s,\, x\in \mathbb R^{n+1}$ with $x\notin [s]$, we have
	$$ -(x^2-2\Re(s)x+|s|^2)^{-1}(x-\overline s)=(s-\overline x)(s^2-2\Re(x)s+|x|^2)^{-1} $$
	and
	$$ -(x-\overline s)(x^2-2\Re(s)x+|s|^2)^{-1}=(s^2-2\Re(x)s+|x|^2)^{-1}(s-\overline x).$$
\end{theorem}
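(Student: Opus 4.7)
The plan is to treat the two series identities via a noncommutative telescoping argument, and the two algebraic identities by reducing them to the Cayley--Hamilton-type relation $s^2-2\Re(s)s+|s|^2=0$ satisfied by every paravector.

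For the first series identity, I set $T_N:=\sum_{n=0}^N x^n s^{-n-1}$ and left-multiply by $\mathcal{Q}_{c,s}(x):=x^2-2\Re(s)x+|s|^2$. Two commutation facts drive the computation: first, since $2\Re(s)$ and $|s|^2$ are real, $\mathcal{Q}_{c,s}(x)$ is a polynomial in $x$ with real coefficients and thus commutes with every power $x^n$; second, since $s$ and $\overline{s}$ commute with each other and with all powers of $s$, one has $2\Re(s)\,s^{-n-1}=s^{-n}+s^{-n-1}\overline{s}$ and $|s|^2 s^{-n-1}=s^{-n}\overline{s}$. Using these, the generic summand rewrites as
\[
\mathcal{Q}_{c,s}(x)\,x^n s^{-n-1}=(x^{n+2}s^{-n-1}-x^{n+1}s^{-n})-(x^{n+1}s^{-n-1}-x^n s^{-n})\overline{s},
\]
which is a difference of two telescoping sequences. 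Summing $n=0,\dots,N$ yields
\[
\mathcal{Q}_{c,s}(x)\,T_N=\overline{s}-x+x^{N+2}s^{-N-1}-x^{N+1}s^{-N-1}\overline{s}.
\]
Under $|x|<|s|$ the remainder vanishes as $N\to\infty$, giving $\mathcal{Q}_{c,s}(x)\,T_\infty=\overline{s}-x$, which after left-inverting $\mathcal{Q}_{c,s}(x)$ is the first claim. The second series identity follows by the mirror-image argument: right-multiply $T'_N:=\sum_{n=0}^N s^{-n-1}x^n$ by $\mathcal{Q}_{c,s}(x)$ and telescope the resulting expression in exactly the same way.

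For the two algebraic identities, after clearing denominators it suffices to prove the single polynomial equality
\[
\mathcal{Q}_{c,s}(x)(s-\overline{x})+(x-\overline{s})\mathcal{Q}_{c,x}(s)=0,
\]
where $\mathcal{Q}_{c,x}(s):=s^2-2\Re(x)s+|x|^2$. Expanding both products and using only that $2\Re(x),2\Re(s),|x|^2,|s|^2$ are real, that $x\overline{x}=|x|^2$ and $\overline{s}s=|s|^2$, together with the substitutions $\overline{x}=2\Re(x)-x$, $\overline{s}=2\Re(s)-s$, the mixed monomials collapse and the sum reduces to $x\bigl(s^2-2\Re(s)s+|s|^2\bigr)$. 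The paravector identity $s^2-2\Re(s)s+|s|^2=0$ (immediate from $s=s_0+\underline{s}$ and $\underline{s}^{\,2}=-|\underline{s}|^2$) then kills this expression. Swapping the roles of $x$ and $s$ throughout the same computation gives the second algebraic identity. The main obstacle is pure bookkeeping in the noncommutative algebra: one must track exactly which objects commute ($\mathcal{Q}_{c,s}(x)$ with powers of $x$, $\overline{s}$ with powers of $s$, and all real scalars with everything) and which do not (notably $x$ with $s$); once this is organized, both the telescoping and the final cancellation are short calculations.
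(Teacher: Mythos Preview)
The paper does not prove this theorem; it is stated as a recalled result from the existing literature on slice hyperholomorphic functions (the surrounding text reads ``We now recall the slice hyperholomorphic Cauchy formulas\ldots''). So there is no proof in the paper to compare against.

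Your argument is correct. The telescoping computation is clean: the key points you identify---that $x^2-2\Re(s)x+|s|^2$ is a real polynomial in $x$ and hence commutes with all powers of $x$, and that $\overline{s}$ commutes with all powers of $s$---are exactly what is needed to make the partial sums collapse, and the remainder estimate under $|x|<|s|$ is immediate. For the algebraic identities, your reduction to the single polynomial identity is valid, and the expansion does collapse to a combination of the Cayley--Hamilton relations $x^2-2\Re(x)x+|x|^2=0$ and $s^2-2\Re(s)s+|s|^2=0$ (in fact both appear, not just the one in $s$, but the conclusion is the same).

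One notational caution: you write $\mathcal{Q}_{c,s}(x):=x^2-2\Re(s)x+|s|^2$, but the paper reserves $\mathcal{Q}_{c,s}(x)$ for the \emph{commutative} pseudo Cauchy kernel $s^2-2\Re(x)s+|x|^2$ and uses $\mathcal{Q}_{s}(x)$ for the expression you wrote. Since you define your symbols locally this is not a mathematical error, but if you merge this into the paper you should align the notation to avoid confusion.
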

In view of Theorem \ref{ts} there are two possible representations of the Cauchy kernels for left slice hyperholomorphic functions and two for right slice hyperholomorphic functions.
\begin{definition}
	Let $s,\, x\in \mathbb R^{n+1}$ with $x\notin [s]$ then we define the two functions
	$$ \PRes_{s}(x)^{-1}:=(x^2-2\Re(s)x+|s|^2)^{-1} ,\ \ \ \
	\PRes_{c,s}(x)^{-1}:=(s^2-2\Re(x)s+|x|^2)^{-1},
	$$
\end{definition}
that are called pseudo Cauchy kernel and commutative pseudo Cauchy kernel, respectively.
\begin{definition}\label{d1}
	Let $s,\, x\in \mathbb R^{n+1}$ with $x\notin [s]$ then
	\begin{itemize}
		\item We say that the left slice hyperholomorphic Cauchy kernel  $S^{-1}_L(s,x)$ is written in the form I if
		$$S^{-1}_L(s,x):=\PRes_{s}(x)^{-1}(\overline s-x).$$
		\item We say that the right slice hyperholomorphic Cauchy kernel  $S^{-1}_R(s,x)$ is written in the form I if
		$$S^{-1}_R(s,x):= (\overline s-x)\PRes_{s}(x)^{-1}.$$
		\item We say that the left slice hyperholomorphic Cauchy kernel  $S^{-1}_L(s,x)$ is written in the form II if
		$$S^{-1}_L(s,x):=(s-\overline x) \PRes_{c,s}(x)^{-1}.$$
		\item We say that the right slice hyperholomorphic Cauchy kernel  $S^{-1}_R(s,x)$ is written in the form II if
		$$S^{-1}_R(s,x):= \PRes_{c,s}(x)^{-1}(s- \overline x).$$
	\end{itemize}
\end{definition}
In this article, otherwise specified, we refer to $S^{-1}_L(s,x)$ and $S^{-1}_R(s,x)$ as written in the form II.

We have the following regularity for the (left and right) slice hyperholomorphic Cauchy kernels.
\begin{lemma} Let  $s\notin [x]$.
	The left slice hyperholomorphic Cauchy kernel $S_L^{-1}(s,x)$ is left slice hyperholomorphic in $x$ and right slice hyperholomorphic in $s$. The right slice hyperholomorphic Cauchy kernel $S_R^{-1}(s,x)$ is left slice hyperholomorphic in $s$ and right slice hyperholomorphic in $x$.
\end{lemma}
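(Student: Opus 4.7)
The plan is to verify slice hyperholomorphicity directly from the explicit formulas of Definition \ref{d1}, choosing, for each variable separately, the form of the kernel (I or II) that places the scalar pseudo Cauchy kernel in a commutative subalgebra independent of the varying slice imaginary unit. This bookkeeping is the only real point to get right; the rest is algebra.

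For the left slice hyperholomorphicity of $S_L^{-1}(s,x)$ in $x$, I would work with form II, namely $S_L^{-1}(s,x)=(s-\bar x)\mathcal{Q}_{c,s}(x)^{-1}$. Writing $x=u+J_x v$ with $v=|\underline{x}|$, one observes that $\mathcal{Q}_{c,s}(x)^{-1}=(s^2-2us+u^2+v^2)^{-1}$ is independent of $J_x$ and lies in $\mathbb{C}_{J_s}$ for fixed $s$, hence commutes with $s-u$. Decomposing $s-\bar x=(s-u)+J_x v$ yields the slice form $S_L^{-1}(s,x)=\alpha(u,v)+J_x\beta(u,v)$ with $\alpha=(s-u)\mathcal{Q}_{c,s}(x)^{-1}$ and $\beta=v\,\mathcal{Q}_{c,s}(x)^{-1}$, both independent of $J_x$. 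The even-odd conditions in $v$ are immediate since $\mathcal{Q}_{c,s}(x)^{-1}$ depends on $v$ only through $v^2$ and $\beta$ is linear in $v$. Setting $P=\mathcal{Q}_{c,s}(x)$, with $\partial_u P=-2(s-u)$ and $\partial_v P=2v$, the Cauchy-Riemann system collapses via the algebraic identity $(s-u)^2+v^2=P$, giving $\partial_u\alpha-\partial_v\beta=-2P^{-1}+2P\cdot P^{-2}=0$ and $\partial_v\alpha+\partial_u\beta=0$.

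For the right slice hyperholomorphicity of $S_L^{-1}(s,x)$ in $s$, I would switch to form I, namely $S_L^{-1}(s,x)=\mathcal{Q}_s(x)^{-1}(\bar s-x)$. Writing $s=t+J_s w$, the factor $\mathcal{Q}_s(x)=x^2-2tx+t^2+w^2$ is independent of $J_s$ and commutes with $t-x$ because both lie in the real subalgebra generated by $x$. The decomposition $\bar s-x=(t-x)+(-w)J_s$ then gives $S_L^{-1}(s,x)=\alpha(t,w)+\beta(t,w)J_s$ with $\alpha=\mathcal{Q}_s(x)^{-1}(t-x)$ and $\beta=-\mathcal{Q}_s(x)^{-1}w$, both independent of $J_s$. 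Even-odd parity in $w$ is clear, and the CR equations reduce to identities of exactly the same shape as in the previous case, this time using $(t-x)^2+w^2=\mathcal{Q}_s(x)$.

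For $S_R^{-1}(s,x)$ the arguments are entirely symmetric: form II $S_R^{-1}(s,x)=\mathcal{Q}_{c,s}(x)^{-1}(s-\bar x)$ gives right slice hyperholomorphicity in $x$ (the scalar pseudo Cauchy kernel is now placed on the left of the decomposition $s-\bar x=(s-u)+J_x v$), while form I $S_R^{-1}(s,x)=(\bar s-x)\mathcal{Q}_s(x)^{-1}$ gives left slice hyperholomorphicity in $s$. The sole conceptual obstacle throughout is the choice of form for each variable: without it, the noncommutativity between $\bar x$ and $J_s$ (respectively $\bar s$ and $J_x$) would destroy the one-sided slice decomposition. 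Once the correct form is selected, no essential difficulty remains and the verification is routine.
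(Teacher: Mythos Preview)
Your direct verification is correct: the key bookkeeping point---choosing form II when varying $x$ (so that $\mathcal{Q}_{c,s}(x)^{-1}$ is independent of $J_x$) and form I when varying $s$ (so that $\mathcal{Q}_s(x)^{-1}$ is independent of $J_s$)---is exactly what makes the slice decomposition work, and the Cauchy--Riemann check via $(s-u)^2+v^2=\mathcal{Q}_{c,s}(x)$ and $(t-x)^2+w^2=\mathcal{Q}_s(x)$ is clean and correct.

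The paper, however, does not prove this lemma at all: it is stated without proof as a standard regularity fact about the slice hyperholomorphic Cauchy kernels, presumably deferring to the monographs cited (e.g.\ \cite{6css}, \cite{CGKBOOK}). So there is no ``paper's approach'' to compare against; your self-contained verification supplies what the paper omits. One minor remark: the commutativity observations you make (e.g.\ that $\mathcal{Q}_{c,s}(x)^{-1}$ commutes with $s-u$) are true but not strictly needed for the slice decomposition itself---what matters is that $\alpha$ and $\beta$ are independent of the varying imaginary unit---though the commutativity is indeed used implicitly when differentiating $P^{-1}$ in the CR check.
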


\begin{theorem}[The Cauchy formulas for slice hyperholomorphic functions]
	\label{Cauchy}
	Let $U\subset\mathbb{R}^{n+1}$ be a bounded slice Cauchy domain, let $J\in\mathbb{S}$ and set  $ds_J=ds (-J)$.
	If $f$ is a (left) slice hyperholomorphic function on a set that contains $\overline{U}$ then
	\begin{equation}
		f(x)=\frac{1}{2 \pi}\int_{\partial (U\cap \mathbb{C}_J)} S_L^{-1}(s,x)\, ds_J\,  f(s),\qquad\text{for any }\ \  x\in U.
	\end{equation}
	If $f$ is a right slice hyperholomorphic function on a set that contains $\overline{U}$,
	then
	\begin{equation}\label{Cauchyright}
		f(x)=\frac{1}{2 \pi}\int_{\partial (U\cap \mathbb{C}_J)}  f(s)\, ds_J\, S_R^{-1}(s,x),\qquad\text{for any }\ \  x\in U.
	\end{equation}
	These integrals  depend neither on $U$ nor on the imaginary unit $J\in\mathbb{S}$.
\end{theorem}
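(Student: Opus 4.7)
The plan is to reduce to the classical one-variable Cauchy formula on a slice $U\cap\mathbb{C}_J$ and then propagate the identity to all of $U$ by exploiting that both sides are slice hyperholomorphic in $x$. Concretely, I would proceed as follows.

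First, I would verify that when $x$ and $s$ lie in the same complex plane $\mathbb{C}_J$, the kernel $S_L^{-1}(s,x)$ collapses to the ordinary Cauchy kernel. Writing $S_L^{-1}(s,x)$ in form II as $(s-\overline x)(s^2-2\Re(x)s+|x|^2)^{-1}$ and factoring $s^2-2\Re(x)s+|x|^2=(s-x)(s-\overline x)$ (which is legitimate because $x$ and $\overline x$ commute with $s\in\mathbb{C}_J$ when $x\in\mathbb{C}_J$), one obtains $S_L^{-1}(s,x)=(s-x)^{-1}$. The analogous check works for $S_R^{-1}$. Second, I would restrict $f$ to $U\cap\mathbb{C}_J$: by Definition \ref{hyper}, the components $\alpha,\beta$ satisfy the Cauchy–Riemann system on $\mathcal U$, so $f_J:=f|_{U\cap\mathbb{C}_J}$ is a classical $\mathbb{R}_n$-valued holomorphic function on the Cauchy domain $U\cap\mathbb{C}_J\subset\mathbb{C}_J$. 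The classical one-variable Cauchy formula then yields, for $x\in U\cap\mathbb{C}_J$,
\begin{equation*}
f(x)=\frac{1}{2\pi}\int_{\partial(U\cap\mathbb{C}_J)}(s-x)^{-1}\,ds_J\,f(s)=\frac{1}{2\pi}\int_{\partial(U\cap\mathbb{C}_J)}S_L^{-1}(s,x)\,ds_J\,f(s),
\end{equation*}
which establishes the formula on a single slice.

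Next, I would promote the equality from $U\cap\mathbb{C}_J$ to all of $U$. The right-hand side, viewed as a function of $x$, is left slice hyperholomorphic on $U$ since $x\mapsto S_L^{-1}(s,x)$ is left slice hyperholomorphic off $[s]$ and differentiation under the integral sign is justified by the compactness of $\partial(U\cap\mathbb{C}_J)$ and the distance between $x$ and $[s]$ for $s$ on this boundary. Both sides are therefore left slice hyperholomorphic on $U$ and agree on the slice $U\cap\mathbb{C}_J$; by the representation (structure) formula for slice hyperholomorphic functions, a slice hyperholomorphic function on an axially symmetric slice domain is uniquely determined by its values on any single slice, so the two sides coincide on all of $U$. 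The right case \eqref{Cauchyright} is handled symmetrically, working with $S_R^{-1}$ in form II.

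Finally I would address the two independence statements. Independence of $J$ follows because the right-hand side, computed via two different imaginary units $J,J'$, yields slice hyperholomorphic extensions from the slices $U\cap\mathbb{C}_J$ and $U\cap\mathbb{C}_{J'}$ that both reproduce $f$ on $U\cap\mathbb{R}$; by the representation formula again they must agree on $U$. Independence of $U$ follows from a standard deformation argument: if $U'\subset U$ are two slice Cauchy domains both containing $x$ in their interior with $\overline{U'}\subset U\subset \fdom(f)$, then $s\mapsto S_L^{-1}(s,x)f(s)$ is holomorphic in $s$ on $(\overline U\setminus U')\cap \mathbb{C}_J$ (since $S_L^{-1}(s,x)$ is right slice hyperholomorphic in $s$ away from $[x]$), and the Cauchy–Goursat theorem in $\mathbb{C}_J$ makes the difference of the two boundary integrals vanish. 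The main obstacle I anticipate is verifying the algebraic identity $S_L^{-1}(s,x)=(s-x)^{-1}$ on the slice cleanly (one must be careful that the Cauchy kernel is written in form II, not form I, since form I does not commute with $(s-x)$ in the right way), but once this is in place the rest is a direct application of one-variable complex analysis combined with the representation principle.
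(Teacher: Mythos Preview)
The paper does not actually prove this theorem: it is stated in Section~\ref{INTREP} as a recalled result (``We now recall the slice hyperholomorphic Cauchy formulas that are fundamental for the hyperholomorphic spectral theories''), with no proof supplied, since it is a standard fact from the books \cite{6css,CGKBOOK}. So there is no ``paper's own proof'' to compare against.

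That said, your outline is correct and is exactly the standard argument one finds in those references: reduce to a single slice where $S_L^{-1}(s,x)=(s-x)^{-1}$, apply the complex Cauchy formula there, and then propagate to all of $U$ via the representation (structure) formula. Two minor remarks. First, the step ``$f_J$ is a classical $\mathbb{R}_n$-valued holomorphic function, so the one-variable Cauchy formula applies'' deserves one more line of justification: with $f|_{\mathbb{C}_J}=\alpha+J\beta$ one has $(\partial_u+J\partial_v)f|_{\mathbb{C}_J}=0$, and a Stokes computation shows that for any right-holomorphic kernel $k$ (here $k(\zeta)=(\zeta-x)^{-1}$) the $1$-form $k\,d\zeta_J\,f$ is closed, which is what drives the residue argument; equivalently one invokes the splitting lemma. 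Second, once you have shown that the integral equals $f(x)$, the independence of $U$ and of $J$ is immediate and does not require a separate argument; your deformation and two-slice discussions are correct but redundant. Your worry about form~I versus form~II is unfounded in the scalar setting: on a common slice $\mathbb{C}_J$ both forms coincide with $(s-x)^{-1}$, since all quantities involved commute there.
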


Now, we recall what happens when we apply the operator $T_{SF2}:= \Delta^{\frac{n-1}2}$, to the slice hyperholomorphic Cauchy kernel (see \cite{CSS10}).

\begin{proposition}
	\label{reg}
	Let $x$, $s \in \mathbb{R}^{n+1}$ and $x \not\in [s]$. Then:
	\begin{itemize}
		\item
		The function $\Delta^{\frac{n-1}{2}} S_L^{-1}(s,x)$ is a left monogenic function in the variable $x$ and right slice hyperholomorphic in $s$.
		\item
		The function  $\Delta^{\frac{n-1}{2}} S_R^{-1}(s,x)$ is a right monogenic function in the variable $x$ and left slice hyperholomorphic in $s$.
	\end{itemize}
\end{proposition}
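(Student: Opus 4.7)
The plan is to derive both statements from Sce's Theorem \ref{FS} (together with Remark \ref{remevenodd}) applied fiberwise to the slice hyperholomorphic Cauchy kernel, and from the elementary observation that the Laplacian $\Delta$ acts only on the $x$-variable, hence commutes with any differential structure in the $s$-variable.

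For the first bullet, I would proceed as follows. Fix $s\in\mathbb{R}^{n+1}$ and let $\Omega_s:=\{x\in\mathbb{R}^{n+1}:x\notin[s]\}$. By the lemma preceding Theorem \ref{Cauchy}, the function $x\mapsto S^{-1}_L(s,x)$ is left slice hyperholomorphic on the axially symmetric open set $\Omega_s$. To invoke Sce's theorem in the form that does not require the upper half-plane (Remark \ref{remevenodd}), I need to exhibit $S^{-1}_L(s,x)$ in the slice form \eqref{slicefunc} with the even-odd conditions \eqref{eveodd} in $x$. Writing $x=x_0+J|\underline{x}|$ and expanding
\[
S^{-1}_L(s,x)=(s-\overline{x})\bigl(s^2-2x_0 s+|x|^2\bigr)^{-1}=\alpha(x_0,|\underline{x}|)+J\,\beta(x_0,|\underline{x}|),
\]
a direct inspection of the factor $s-\overline x=s-x_0+\underline{x}$ and of the scalar commutative pseudo-Cauchy kernel $\mathcal{Q}_{c,s}(x)^{-1}$ shows that $\alpha$ and $\beta$ depend on $|\underline{x}|$ only through its square (and through the unit vector $\underline{x}/|\underline{x}|=J$), which yields the parity conditions required. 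Then Theorem \ref{FS} (via Remark \ref{remevenodd}) gives
\[
\mathcal{D}_x\,\Delta_x^{(n-1)/2}S^{-1}_L(s,x)=0\qquad\text{on }\Omega_s,
\]
so $\Delta^{(n-1)/2}S^{-1}_L(s,x)$ is left monogenic in $x$ on $\Omega_s$.

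For the right slice hyperholomorphicity in $s$, I would exploit that $\Delta=\sum_{\ell=0}^{n}\partial_{x_\ell}^2$ involves only partial derivatives with respect to the components of $x$, and these commute with multiplication by scalars depending on $s$ and with derivatives in $s$. By the same lemma before Theorem \ref{Cauchy}, the map $s\mapsto S^{-1}_L(s,x)$ is right slice hyperholomorphic. Writing $s=u+Jv$ and decomposing $S^{-1}_L(s,x)=A(u,v,x)+B(u,v,x)J$ with $A,B$ satisfying the Cauchy-Riemann system and the even-odd conditions in $(u,v)$, the functions $\Delta_x A$ and $\Delta_x B$ inherit the Cauchy-Riemann system and the parity conditions in $(u,v)$ because $\Delta_x$ does not touch $u,v,J$. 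Hence $\Delta^{(n-1)/2}S^{-1}_L(s,x)$ remains right slice hyperholomorphic in $s$ on the same set. The second bullet follows by the symmetric argument applied to $S^{-1}_R(s,x)=\mathcal{Q}_{c,s}(x)^{-1}(s-\overline x)$, using right slice hyperholomorphicity in $x$ together with Sce's theorem for the right analogue, and left slice hyperholomorphicity in $s$ together with the commutativity of $\Delta_x$ with the $s$-structure.

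The only genuinely non-routine step is the verification that the Cauchy kernel admits the slice decomposition with the parity conditions \eqref{eveodd}; once that is in place, everything else reduces to applying Theorem \ref{FS} and to the trivial fact that $\Delta_x$ commutes with the $s$-variable slice operators. I expect the main obstacle to be bookkeeping in exhibiting $\alpha,\beta$ explicitly (most cleanly done by first writing $\mathcal{Q}_{c,s}(x)^{-1}$ as a real-valued symmetric function of $(x_0,|\underline{x}|)$ and then expanding the numerator $s-\overline{x}$), rather than any deep analytic difficulty.
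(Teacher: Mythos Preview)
The paper does not supply its own proof of Proposition~\ref{reg}; the statement is recalled from \cite{CSS10}, where the $F_n$-kernels are computed explicitly (formulas~\eqref{fu1}--\eqref{fu2} in the present paper) and the monogenicity and slice hyperholomorphicity are then verified directly from those closed forms. Your route via the Fueter--Sce theorem is therefore a genuinely different, more conceptual, argument.

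Your argument for right slice hyperholomorphicity in $s$ is clean and correct: $\Delta_x$ commutes with the $(u,v,J)$-structure in $s$, so the Cauchy--Riemann system and the parity conditions survive. The monogenicity half is also sound in outline, but there is one point you skate over. Theorem~\ref{FS} (and Remark~\ref{remevenodd}) as stated in this paper start from a \emph{complex-valued} holomorphic $f(z)=\alpha(u,v)+i\beta(u,v)$, i.e.\ real $\alpha,\beta$, and produce an \emph{intrinsic} slice hyperholomorphic function. For fixed $s\notin\mathbb{R}$ your pair
\[
\alpha(x_0,r)=(s-x_0)\,\mathcal{Q}_{c,s}(x)^{-1},\qquad \beta(x_0,r)=r\,\mathcal{Q}_{c,s}(x)^{-1}
\]
is $\mathbb{R}_n$-valued (it carries $s$), so $x\mapsto S^{-1}_L(s,x)$ is left slice hyperholomorphic but not intrinsic, and Theorem~\ref{FS} does not literally apply. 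The fix is short but should be stated: the operators $\Delta_x$ and $\mathcal{D}_x$ act from the left and are right $\mathbb{R}_n$-linear, so the Sce identity $\mathcal{D}_x\Delta_x^{(n-1)/2}\bigl(\alpha+\underline\omega\,\beta\bigr)=0$ holds verbatim when $\alpha,\beta$ take values in $\mathbb{R}_n$ and sit on the right of $\underline\omega$ (equivalently, expand $\alpha,\beta$ on the right in a basis $e_A$ and apply the intrinsic case componentwise). Once you add that sentence, your proof is complete; compared with the explicit-kernel computation in \cite{CSS10}, your approach avoids the closed-form formula for $F_n^L$ at the price of invoking the full Fueter--Sce machinery.
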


\begin{definition}[$F_n$-kernels]
	Let $n$ be an odd number and let $x,\, s\in\rr^{n+1}$. We define, for $s\notin[x]$, the $F^L_n$-kernel and the $F^R_n$-kernel as
	$$
	F^L_n(s,x):=\Delta^{\frac{n-1}2}S^{-1}_L(s,x), \ \ \ \ \ F^R_n(s,x):=\Delta^{\frac{n-1}{2}}S^{-1}_R(s,x),
	$$
respectively.
\end{definition}
It is possible to compute explicitly the $F_n$-kernels, in particular, for $s\notin [x]$ we have
\begin{equation}
\label{fu1}
F^L_n(s,x)=\gamma_n(s-\bar x)Q_{c,s}(x)^{-\frac{n+1}2},
\end{equation}
and
\begin{equation}
\label{fu2}
F^R_n(s,x)=\gamma_n Q_{c,s}(x)^{-\frac{n+1}2}(s-\bar x),
\end{equation}
where
$$
\gamma_n:=(-1)^{\frac{n-1}2}2^{n-1}[(\frac 12(n-1))!]^2.
$$
\begin{remark}
Formula \eqref{fu1} and \eqref{fu2} hold also when $n$ is even, see \cite{CDQS}.
\end{remark}
 Moreover, for $s\notin [x]$, the $F_n$-kernels satisfy the following equations
\begin{equation}
	\label{p0}
	F_{n}^L(s,x)s-xF_{n}^L(s,x) =\gamma_n \mathcal{Q}_{c,s}^{-\frac{n-1}{2}}(x)
\end{equation}
and
\begin{equation}
	sF_{n}^R(s,x)-F_{n}^R(s,x) x=\gamma_n \mathcal{Q}_{c,s}^{-\frac{n-1}{2}}(x).
\end{equation}
The following result plays a key role (see \cite{CSS10}).
\begin{theorem}[The Fueter-Sce mapping theorem in integral form]
	\label{Fueter}
	Let $n$ be an odd number. Let $U\subset\mathbb{R}^{n+1}$ be a slice Cauchy domain, let $J\in\mathbb{S}$ and set  $ds_J=ds (-J)$.
	\begin{itemize}
		\item
		If $f$ is a (left) slice hyperholomorphic function on a set $W$, such that $\overline{U} \subset W$, then
		the left monogenic function  $\breve{f}(x)=\Delta^{\frac{n-1}2} f(x)$
		admits the integral representation
		\begin{equation}\label{FuetLSEC}
			\breve{f}(x)=\frac{1}{2 \pi}\int_{\partial (U\cap \mathbb{C}_J)} F_n^L(s,x)ds_J f(s).
		\end{equation}
		\item
		If $f$ is a right slice hyperholomorphic function on a set $W$, such that $\overline{U} \subset W$, then
		the right monogenic function $\breve{f}(x)=\Delta^{\frac{n-1}2} f(x)$
		admits the integral representation
		\begin{equation}\label{FuetRSCE}
			\breve{f}(x)=\frac{1}{2 \pi}\int_{\partial (U\cap \mathbb{C}_J)} f(s)ds_J F_n^R(s,x).
		\end{equation}
	\end{itemize}
	The integrals  depend neither on $U$ and nor on the imaginary unit $J\in\mathbb{S}$.
\end{theorem}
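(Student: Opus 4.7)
The plan is to derive the integral representation by applying the power of the Laplacian directly to the slice hyperholomorphic Cauchy formula of Theorem \ref{Cauchy} and justifying the exchange of $\Delta^{(n-1)/2}$ with the integral sign. Concretely, for $x\in U$ and $f\in\mathcal{SH}_L(W)$ with $\overline U\subset W$, start from
$$
f(x)=\frac{1}{2\pi}\int_{\partial(U\cap\mathbb C_J)} S^{-1}_L(s,x)\,ds_J\,f(s),
$$
and apply $\Delta^{(n-1)/2}$ in the variable $x$ to both sides.

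The key technical step is commuting $\Delta^{(n-1)/2}$ with the contour integral. Fix $x$ in the interior of $U$ and choose a neighborhood $V$ of $x$ with $\overline V\subset U$. Since $U$ is axially symmetric, $[y]\subset U$ for every $y\in V$, hence $s\notin[y]$ for all $s\in\partial(U\cap\mathbb C_J)$. Using the explicit form $S^{-1}_L(s,y)=(s-\overline y)\,\PRes_{c,s}(y)^{-1}$, the kernel is a rational function of $y$ whose denominator $\PRes_{c,s}(y)$ is bounded away from zero uniformly in $(y,s)\in \overline V\times \partial(U\cap\mathbb C_J)$. Therefore $S^{-1}_L(s,y)$ is $C^\infty$ in $y$ with all $y$-derivatives uniformly bounded on this compact product set, which legitimizes iterated differentiation under the integral. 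Invoking the definition $F^L_n(s,x)=\Delta^{(n-1)/2}S^{-1}_L(s,x)$ yields
$$
\breve f(x)=\Delta^{(n-1)/2} f(x)=\frac{1}{2\pi}\int_{\partial(U\cap\mathbb C_J)} F^L_n(s,x)\,ds_J\,f(s),
$$
which is (\ref{FuetLSEC}).

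For the independence from $U$ and $J$, I would appeal to Proposition \ref{reg}: $F^L_n(s,x)$ is right slice hyperholomorphic in $s$, and $f(s)$ is left slice hyperholomorphic, so the one-form $F^L_n(s,x)\,ds_J\,f(s)$ (with $x$ fixed) falls into the scope of the standard slice Cauchy-type deformation argument used in the proof of Theorem \ref{Cauchy}. Given two slice Cauchy domains $U,U'$ containing $x$ with closures inside $W$, and two imaginary units $J,J'\in\mathbb S$, one interpolates via an intermediate axially symmetric region and applies the Cauchy integral theorem in each complex plane $\mathbb C_J$ to conclude equality of the integrals. The right slice hyperholomorphic case (\ref{FuetRSCE}) is treated identically, starting from (\ref{Cauchyright}) and using that $F^R_n(s,x)=\Delta^{(n-1)/2} S^{-1}_R(s,x)$ is right monogenic in $x$ and left slice hyperholomorphic in $s$.

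The main obstacle, though essentially technical, is the rigorous differentiation under the integral; everything else is a combination of the Cauchy formula for slice hyperholomorphic functions and the defining identity for the $F_n$-kernels. Once the smoothness and uniform bounds for $S^{-1}_L(s,\cdot)$ on a neighborhood of $x$ are in place, the computation is immediate. The deformation argument for independence from $U$ and $J$ is standard in the slice hyperholomorphic setting and uses no new ingredient beyond Proposition \ref{reg}.
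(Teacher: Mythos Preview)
Your approach is correct and is exactly the standard one: apply $\Delta^{(n-1)/2}$ to the slice Cauchy formula, differentiate under the integral using the smoothness of $S^{-1}_L(s,\cdot)$ away from $[s]$, and invoke the right slice hyperholomorphicity of $F^L_n(s,x)$ in $s$ for the deformation argument. The paper does not give its own proof of this theorem (it is quoted from \cite{CSS10}), but it employs precisely your strategy in the proof of Theorem~\ref{int_rap_1} for the analogous integral representations of the fine-structure kernels, so your argument is fully in line with the paper's methodology.
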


\subsection{The structure of the kernels of the fine structure spaces and their regularity}

In this subsection we construct the kernels associated with the
spaces of the fine structures.
The strategy follows the construction of the Fueter-Sce mapping theorem in integral form.
Precisely, we proceed by applying to the
left (and to the right) slice hyperholomorphic
 Cauchy kernels  some suitable operators that define the required kernels.
These new kernels
 will be used to give these functions the appropriate integral representations.
 In the proofs of the following theorems we consider just
 the left hyperholomorphic Cauchy kernel since for
  the right hyperholomorphic Cauchy kernel computations are similar.

\begin{theorem}[Structure of the slice $ \mathcal{D}$-kernels $S^{-1}_{\mathcal{D},L}$ and $S^{-1}_{\mathcal{D},R}$]
	\label{app1}
	Let $s,x \in \mathbb{R}^6$ be such that $x \notin [s]$, then
	\begin{equation}
		\label{one}
		S^{-1}_{\bigD ,L}(s,x):=\mathcal{D} \left(S^{-1}_L(s,x)\right)=-4 \mathcal{Q}_{c,s}(x)^{-1},
	\end{equation}
	and
	\begin{equation}
		\label{second}
		S^{-1}_{\bigD ,R}(s,x):=\left(S^{-1}_R(s,x)\right)\mathcal{D} =-4 \mathcal{Q}_{c,s}(x)^{-1}.
	\end{equation}
	We denote by $S^{-1}_{\mathcal{D},L}$ and $S^{-1}_{\mathcal{D},R}$ the left and the right slice $ \mathcal{D}$-kernels.
\end{theorem}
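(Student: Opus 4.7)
The plan is to compute $\mathcal{D}S_L^{-1}(s,x)$ directly from the form II expression $S_L^{-1}(s,x)=(s-\bar x)\mathcal{Q}_{c,s}(x)^{-1}$, exploiting the fact that $\mathcal{Q}_{c,s}(x)=s^2-2x_0 s+|x|^2$ takes values in the commutative subalgebra $\mathbb{R}[s]$, so it commutes with $s-x_0$, with itself, and with the real scalars $2x_i$. The only non-commutative book-keeping involves the factors $e_i$ and $\underline{x}$ that arise when differentiating $s-\bar x = s-x_0+\underline x$ and placing the unit $e_i$ on the left.

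First, I would record the elementary partial derivatives: since $\partial_{x_0}\mathcal{Q}_{c,s}(x)=2(x_0-s)$ and $\partial_{x_i}\mathcal{Q}_{c,s}(x)=2x_i$ for $i=1,\dots,5$, and since the two factors commute inside $\mathbb{R}[s]$, one gets $\partial_{x_0}\mathcal{Q}_{c,s}(x)^{-1}=2(s-x_0)\mathcal{Q}_{c,s}(x)^{-2}$ and $\partial_{x_i}\mathcal{Q}_{c,s}(x)^{-1}=-2x_i\mathcal{Q}_{c,s}(x)^{-2}$. Using $\partial_{x_0}(s-\bar x)=-1$ and $\partial_{x_i}(s-\bar x)=e_i$, the Leibniz rule yields
\[
\partial_{x_0}S_L^{-1}(s,x)=-\mathcal{Q}_{c,s}(x)^{-1}+2(s-\bar x)(s-x_0)\mathcal{Q}_{c,s}(x)^{-2},
\]
\[
e_i\partial_{x_i}S_L^{-1}(s,x)=-\mathcal{Q}_{c,s}(x)^{-1}-2x_i\,e_i(s-\bar x)\mathcal{Q}_{c,s}(x)^{-2},
\]
where I have used $e_i^2=-1$ and that $2x_i$ is real. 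Summing over $i=1,\dots,5$ produces the constant $-5\mathcal{Q}_{c,s}(x)^{-1}$ and the Clifford term $-2\underline x(s-\bar x)\mathcal{Q}_{c,s}(x)^{-2}$.

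The next step is the key algebraic identity. Adding the $\partial_{x_0}$ contribution to the sum above gives
\[
\mathcal{D}S_L^{-1}(s,x)=-6\,\mathcal{Q}_{c,s}(x)^{-1}+2\bigl[(s-\bar x)(s-x_0)-\underline x(s-\bar x)\bigr]\mathcal{Q}_{c,s}(x)^{-2}.
\]
Expanding $s-\bar x=(s-x_0)+\underline x$ and using $\underline x^2=-|\underline x|^2$ one finds
\[
(s-\bar x)(s-x_0)-\underline x(s-\bar x)=(s-x_0)^2+|\underline x|^2=s^2-2x_0 s+|x|^2=\mathcal{Q}_{c,s}(x),
\]
so the second summand collapses to $2\mathcal{Q}_{c,s}(x)^{-1}$ and the whole expression equals $-4\mathcal{Q}_{c,s}(x)^{-1}$, establishing \eqref{one}.

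For \eqref{second} I would run the mirror argument starting from $S_R^{-1}(s,x)=\mathcal{Q}_{c,s}(x)^{-1}(s-\bar x)$ and applying $\mathcal{D}$ from the right, i.e.\ computing $\partial_{x_0}(\cdot)+\sum_i\partial_{x_i}(\cdot)\,e_i$; the non-commutative piece this time becomes $(s-\bar x)\underline x$ with $\underline x$ on the right, but the same identity $(s-x_0)(s-\bar x)-(s-\bar x)\underline x=\mathcal{Q}_{c,s}(x)$ closes the argument symmetrically. I do not anticipate any genuine obstacle: the only care required is to keep the non-commuting factors $e_i$, $\underline x$ and $(s-\bar x)$ in the right order, and to recognise that the $-6$ coming from $\partial_{x_0}$ plus $\sum_i e_i^2$ combines with the $+2$ from the algebraic simplification to produce the coefficient $-4$ that is characteristic of dimension five.
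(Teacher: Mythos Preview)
Your proof is correct and follows essentially the same approach as the paper: both compute the partial derivatives of $S_L^{-1}(s,x)=(s-\bar x)\mathcal{Q}_{c,s}(x)^{-1}$, sum to obtain $-6\,\mathcal{Q}_{c,s}(x)^{-1}$ plus a correction term of the form $2[\cdots]\mathcal{Q}_{c,s}(x)^{-2}$, and then recognise that the bracket equals $\mathcal{Q}_{c,s}(x)$. The only cosmetic difference is in how the bracket is regrouped---the paper writes it as $(s-\bar x)s - x(s-\bar x)=s^2-(\bar x+x)s+|x|^2$, whereas you write it as $(s-\bar x)(s-x_0)-\underline x(s-\bar x)=(s-x_0)^2+|\underline x|^2$; both yield $\mathcal{Q}_{c,s}(x)$ immediately.
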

\begin{proof}
	We compute the following derivatives
	\begin{equation}
		\label{f1}
		\frac{\partial}{\partial x_0}S^{-1}_L(s,x)=- \mathcal{Q}_{c,s}(x)^{-1}+2(s- \bar{x}) \mathcal{Q}_{c,s}(x)^{-2}(s- x_0),
	\end{equation}
	and, for $1 \leq i \leq 5$, we get
	\begin{equation}
		\label{f2}
		\frac{\partial}{\partial x_i} S^{-1}_L(s,x)= e_i \mathcal{Q}_{c,s}(x)^{-1}-2 x_i (s- \bar{x}) \mathcal{Q}_{c,s}(x)^{-2}.
	\end{equation}
	Finally, we have
	\begin{eqnarray*}
		\mathcal{D} \left(S^{-1}_L(s,x)\right)&=& \frac{\partial}{\partial x_0}S^{-1}_L(s,x)+ \sum_{i=1}^5 e_i \frac{\partial}{\partial x_i} S^{-1}_L(s,x)\\
		&=& - \mathcal{Q}_{c,s}(x)^{-1}-2 x_0(s- \bar{x}) \mathcal{Q}_{c,s}(x)^{-2}+2(s- \bar{x}) s \mathcal{Q}_{c,s}(x)^{-2}+\\
		&& -5 \mathcal{Q}_{c,s}(x)^{-1}-2 \underline{x}(s- \bar{x}) \mathcal{Q}_{c,s}(x)^{-2} \\
		&=& - 6 \mathcal{Q}_{c,s}(x)^{-1}-2 x(s- \bar{x}) \mathcal{Q}_{c,s}(x)^{-2}+2(s- \bar{x}) s \mathcal{Q}_{c,s}(x)^{-2}\\
		&=&- 6 \mathcal{Q}_{c,s}(x)^{-1}+2(s^2- \bar{x}s-xs+|x|^2)\mathcal{Q}_{c,s}(x)^{-2}\\
		&=& -4 \mathcal{Q}_{c,s}(x)^{-1}.
	\end{eqnarray*}
\end{proof}
Now, we apply the Laplacian of $ \mathbb{R}^6$, i.e.,
$$ \Delta:= \sum_{i=0}^5 \frac{\partial^2}{\partial x_i^2},$$
to the slice hyperholomorphic Cauchy kernel
\begin{theorem}[Structure of the slice $ \Delta$-kernels $S^{-1}_{\Delta,L}$ and $S^{-1}_{\Delta,R}$]
	\label{app2}
	Let $s,x \in \mathbb{R}^6$ be such that $x \notin [s]$, then
	\begin{equation}
		\label{third}
		S^{-1}_{\Delta ,L}(s,x):=\Delta S^{-1}_L(s,x)=-8S^{-1}_L(s,x) \mathcal{Q}_{c,s}(x)^{-1},
	\end{equation}
	and
	\begin{equation}
		\label{fourth}
		S^{-1}_{\Delta ,R}(s,x):=\Delta S^{-1}_R(s,x)=-8 \mathcal{Q}_{c,s}(x)^{-1}S^{-1}_R(s,x) .
	\end{equation}
	We denote by $S^{-1}_{\Delta,L}$ and $S^{-1}_{\Delta,R}$ the left and the right slice $ \Delta$-kernels.
\end{theorem}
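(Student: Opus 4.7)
The idea is to exploit the factorization $\Delta = \mathcal{\overline{D}}\mathcal{D} = \mathcal{D}\mathcal{\overline{D}}$ recalled in the introduction. This reduces the computation of $\Delta S^{-1}_L$ to one further application of $\mathcal{\overline{D}}$ to the expression already obtained in Theorem \ref{app1}. Attacking the Laplacian directly by computing the six second partial derivatives of $S^{-1}_L(s,x)=(s-\overline{x})\mathcal{Q}_{c,s}(x)^{-1}$ from \eqref{f1}--\eqref{f2} would also work but is significantly more tedious; the factorization route isolates the only non-trivial algebraic ingredient cleanly.

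First I handle the left case. From Theorem \ref{app1} we have $\mathcal{D}S^{-1}_L(s,x)=-4\mathcal{Q}_{c,s}(x)^{-1}$, hence
$$
\Delta S^{-1}_L(s,x) \;=\; \mathcal{\overline{D}}\,\mathcal{D}\,S^{-1}_L(s,x) \;=\; -4\,\mathcal{\overline{D}}\,\mathcal{Q}_{c,s}(x)^{-1},
$$
so the task is to evaluate $\mathcal{\overline{D}}\,\mathcal{Q}_{c,s}(x)^{-1}$. Here the key algebraic observation is that $\mathcal{Q}_{c,s}(x)=(s-x_0)^2+|\underline{x}|^2$, so $(s-x_0)$ commutes with $\mathcal{Q}_{c,s}(x)$ and therefore with its inverse; this single commutation is what permits unambiguous differentiation in the Clifford setting. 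The chain rule then produces
$$
\partial_{x_0}\mathcal{Q}_{c,s}(x)^{-1} \;=\; 2(s-x_0)\mathcal{Q}_{c,s}(x)^{-2}, \qquad \partial_{x_i}\mathcal{Q}_{c,s}(x)^{-1} \;=\; -2x_i\mathcal{Q}_{c,s}(x)^{-2}\quad(1\le i\le 5),
$$
and assembling the $\mathcal{\overline{D}}$ action gives
$$
\mathcal{\overline{D}}\,\mathcal{Q}_{c,s}(x)^{-1} \;=\; 2\bigl[(s-x_0)+\underline{x}\bigr]\mathcal{Q}_{c,s}(x)^{-2} \;=\; 2(s-\overline{x})\mathcal{Q}_{c,s}(x)^{-2} \;=\; 2\,S^{-1}_L(s,x)\,\mathcal{Q}_{c,s}(x)^{-1}.
$$
Plugging this back yields $\Delta S^{-1}_L(s,x)=-8\,S^{-1}_L(s,x)\,\mathcal{Q}_{c,s}(x)^{-1}$, which is \eqref{third}.

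For the right case I run the mirror argument. Since the scalar operator $\Delta$ also equals $f\mapsto (f\mathcal{D})\mathcal{\overline{D}}$ as a right action (mixed partials cancel by Clairaut), and since Theorem \ref{app1} provides $S^{-1}_R(s,x)\,\mathcal{D}=-4\mathcal{Q}_{c,s}(x)^{-1}$, I obtain
$$
\Delta S^{-1}_R(s,x) \;=\; \bigl(S^{-1}_R(s,x)\,\mathcal{D}\bigr)\,\mathcal{\overline{D}} \;=\; -4\,\mathcal{Q}_{c,s}(x)^{-1}\,\mathcal{\overline{D}}.
$$
The symmetric computation, now keeping the multiplicative factors on the correct side (using that $x_i\in\mathbb{R}$ and that $(s-x_0)$ commutes with $\mathcal{Q}_{c,s}(x)^{-1}$), delivers $\mathcal{Q}_{c,s}(x)^{-1}\,\mathcal{\overline{D}}=2\mathcal{Q}_{c,s}(x)^{-1}\,S^{-1}_R(s,x)$, which together with the previous display gives \eqref{fourth}. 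The only real obstacle throughout is bookkeeping the non-commutativity of the Clifford product, and this is handled entirely by the identity $(s-x_0)\,\mathcal{Q}_{c,s}(x)=\mathcal{Q}_{c,s}(x)\,(s-x_0)$.
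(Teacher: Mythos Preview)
Your proof is correct but follows a genuinely different route from the paper's. The paper computes $\Delta S^{-1}_L(s,x)$ head-on: starting from the first derivatives \eqref{f1}--\eqref{f2}, it works out $\partial_{x_0}^2 S^{-1}_L(s,x)$ and $\partial_{x_i}^2 S^{-1}_L(s,x)$ explicitly, sums the six terms, and then simplifies using the identity $(x_0-s)^2+|\underline{x}|^2=\mathcal{Q}_{c,s}(x)$ to collapse everything to $-8(s-\bar x)\mathcal{Q}_{c,s}(x)^{-2}$. You instead use the factorization $\Delta=\mathcal{\overline{D}}\mathcal{D}$: having $\mathcal{D}S^{-1}_L=-4\mathcal{Q}_{c,s}(x)^{-1}$ from Theorem~\ref{app1}, you need only one first-order differentiation of the scalar-valued (in $x$) symbol $\mathcal{Q}_{c,s}(x)^{-1}$, and the commutation $(s-x_0)\mathcal{Q}_{c,s}(x)=\mathcal{Q}_{c,s}(x)(s-x_0)$ you isolate is exactly what keeps the noncommutative bookkeeping trivial. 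Your argument is shorter and conceptually highlights why the fine-structure factorization pays off even at the level of kernel computations; the paper's direct attack, on the other hand, is self-contained in the sense that it does not logically depend on Theorem~\ref{app1} (though it reuses the formulas \eqref{f1}--\eqref{f2} derived there).
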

\begin{proof}
	By formula \eqref{f1} we get
	\begin{eqnarray*}
		\frac{\partial^2}{\partial x_0^2} S^{-1}_L(s,x)&=& (-2s+2x_0) \mathcal{Q}_{c,s}(x)^{-2}+(2x_0-2s) \mathcal{Q}_{c,s}(x)^{-2}-2(s-\bar{x}) \mathcal{Q}_{c,s}(x)^{-2}+\\
		&& +8 (s- \bar{x})(x_0-s)^2 \mathcal{Q}_{c,s}(x)^{-3}.
	\end{eqnarray*}
	By formula \eqref{f2}, for $1 \leq i \leq 5$, we get
	\begin{eqnarray*}
		\frac{\partial}{\partial x_i^2} S^{-1}_L(s,x)&=& -2x_i e_i \mathcal{Q}_{c,s}(x)^{-2}-2 x_i e_i \mathcal{Q}_{c,s}(x)^{-2}\\
		&& -2(s- \bar{x}) \mathcal{Q}_{c,s}(x)^{-2}+8 (s- \bar{x}) x_i^2 \mathcal{Q}_{c,s}(x)^{-3}.
	\end{eqnarray*}
	Finally, we get
	\begin{eqnarray*}
		\Delta S^{-1}_L(s,x)&=& \frac{\partial^2}{\partial x_0^2}S^{-1}_L(s,x)+ \sum_{i=1}^5 \frac{\partial^2}{\partial x_i^2}S^{-1}_L(s,x)\\
		&=& 4(x_0-s) \mathcal{Q}_{c,s}(x)^{-2}-2(s- \bar{x}) \mathcal{Q}_{c,s}(x)^{-2}+8(s- \bar{x}) (x_0-s)^2 \mathcal{Q}_{c,s}(x)^{-3}\\
		&& -4 \underline{x} \mathcal{Q}_{c,s}(x)^{-2}-10 (s- \bar{x}) \mathcal{Q}_{c,s}(x)^{-2}+8 | \underline{x}|^2 (s- \bar{x}) \mathcal{Q}_{c,s}(x)^{-3}\\
		&=& -4(s-x_0+ \underline{x}) \mathcal{Q}_{c,s}(x)^{-2}-12(s- \bar{x}) \mathcal{Q}_{c,s}(x)^{-2}+\\
		&& + 8(s- \bar{x}) [(x_0-s)^2+| \underline{x}|^2] \mathcal{Q}_{c,s}(x)^{-3}\\
		&=& -16(s- \bar{x}) \mathcal{Q}_{c,s}(x)^{-2}+8(s- \bar{x})(x_0^2+s^2-2x_0s+| \underline{x}|^2) \mathcal{Q}_{c,s}(x)^{-3}\\
		&=& -8(s- \bar{x}) \mathcal{Q}_{c,s}(x)^{-2}\\
		&=& -8 S_{L}^{-1}(s,x) \mathcal{Q}_{c,s}(x)^{-1}.
	\end{eqnarray*}
\end{proof}

\begin{theorem}[Structure of the slice $ \Delta\mathcal{D}$-kernels $S^{-1}_{\Delta\mathcal{D},L}$ and $S^{-1}_{\Delta\mathcal{D},R}$ ]
	\label{app3}
	Let $x$, $s \in \mathbb{R}^6$ be such that $x \notin [s]$, then
	\begin{equation}
		\label{five}
		S^{-1}_{\Delta\bigD ,L}(s,x):=\Delta \mathcal{D} \left(S^{-1}_L(s,x)\right)=16 \mathcal{Q}_{c,s}(x)^{-2},
	\end{equation}
	and
	\begin{equation}
		\label{six}
		S^{-1}_{\Delta\bigD ,R}(s,x):=\left(S^{-1}_R(s,x)\right)\mathcal{D} \Delta=16 \mathcal{Q}_{c,s}(x)^{-2}.
	\end{equation}
	We denote by $S^{-1}_{\Delta\mathcal{D},L}$ and $S^{-1}_{\Delta\mathcal{D},R}$ the left and the right slice $ \Delta\mathcal{D}$-kernels.
\end{theorem}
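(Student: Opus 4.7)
The plan is to exploit commutativity of $\Delta$ and $\mathcal{D}$ (both have constant scalar coefficients in the $x$-variable) in order to reduce the computation to a scalar Laplacian of the central function $\mathcal{Q}_{c,s}(x)^{-1}$. Concretely, I would first apply $\mathcal{D}$ to $S^{-1}_L(s,x)$ using Theorem \ref{app1}, which yields
$$\mathcal{D}(S^{-1}_L(s,x)) = -4\, \mathcal{Q}_{c,s}(x)^{-1}.$$
Then, using $\Delta\mathcal{D} = \mathcal{D}\Delta$ (as operators with constant scalar coefficients), we have
$$S^{-1}_{\Delta\mathcal{D},L}(s,x) = \Delta\bigl(-4\,\mathcal{Q}_{c,s}(x)^{-1}\bigr) = -4\,\Delta\,\mathcal{Q}_{c,s}(x)^{-1},$$
so everything reduces to computing $\Delta\,\mathcal{Q}_{c,s}(x)^{-1}$, where $\mathcal{Q}_{c,s}(x)^{-1}$ commutes with any Clifford element because its $x$-dependence enters only through the scalar quantities $x_0$ and $|\underline x|^2$.

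The second step is the direct calculation. Writing $\mathcal{Q}_{c,s}(x) = (x_0-s)^2 + |\underline x|^2$, I would compute
$$\partial_{x_0}\mathcal{Q}_{c,s}(x)^{-1} = -2(x_0-s)\mathcal{Q}_{c,s}(x)^{-2}, \qquad \partial_{x_i}\mathcal{Q}_{c,s}(x)^{-1} = -2x_i\,\mathcal{Q}_{c,s}(x)^{-2}\quad (1\le i\le 5),$$
and then
$$\partial_{x_0}^2\mathcal{Q}_{c,s}(x)^{-1} = -2\,\mathcal{Q}_{c,s}(x)^{-2} + 8(x_0-s)^2\mathcal{Q}_{c,s}(x)^{-3},$$
$$\partial_{x_i}^2\mathcal{Q}_{c,s}(x)^{-1} = -2\,\mathcal{Q}_{c,s}(x)^{-2} + 8x_i^2\,\mathcal{Q}_{c,s}(x)^{-3}.$$
Summing all six contributions and using the identity $(x_0-s)^2 + |\underline x|^2 = \mathcal{Q}_{c,s}(x)$ produces the cancellation
$$\Delta\,\mathcal{Q}_{c,s}(x)^{-1} = -12\,\mathcal{Q}_{c,s}(x)^{-2} + 8\,\mathcal{Q}_{c,s}(x)\,\mathcal{Q}_{c,s}(x)^{-3} = -4\,\mathcal{Q}_{c,s}(x)^{-2}.$$
Multiplying by $-4$ gives $S^{-1}_{\Delta\mathcal{D},L}(s,x) = 16\,\mathcal{Q}_{c,s}(x)^{-2}$, which is the desired formula \eqref{five}.

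For the right version \eqref{six}, I would proceed identically but starting from the right-hand analogue in Theorem \ref{app1}, observing that $\mathcal{Q}_{c,s}(x)^{-1}$ is central, so the order of multiplication is immaterial and the same scalar computation applies. There is no real obstacle here: the only delicate point is bookkeeping the scalar versus Clifford-valued nature of the various quantities in order to justify using commutativity of $\Delta$ and $\mathcal{D}$ and the centrality of $\mathcal{Q}_{c,s}(x)^{-1}$; once these are in place the identity collapses through the single algebraic simplification $(x_0-s)^2 + |\underline x|^2 = \mathcal{Q}_{c,s}(x)$.
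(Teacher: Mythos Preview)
Your proof is correct, but it takes a different route from the paper. The paper computes $\mathcal{D}\bigl(\Delta S^{-1}_L(s,x)\bigr)$: it starts from Theorem~\ref{app2}, which gives $\Delta S^{-1}_L(s,x)=-8S^{-1}_L(s,x)\mathcal{Q}_{c,s}(x)^{-1}$, then applies $\mathcal{D}$ and closes the computation by invoking the $F$-kernel identity~\eqref{p0}. You instead compute $\Delta\bigl(\mathcal{D} S^{-1}_L(s,x)\bigr)$: starting from Theorem~\ref{app1}, which already gives the simpler expression $-4\,\mathcal{Q}_{c,s}(x)^{-1}$, you apply the scalar operator $\Delta$ and finish with an elementary chain-rule calculation and the identity $(x_0-s)^2+|\underline{x}|^2=\mathcal{Q}_{c,s}(x)$. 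Your order of operations is more economical, as it avoids any Clifford-algebraic manipulation at the second step and does not need the $F$-kernel relation. One small terminological point: $\mathcal{Q}_{c,s}(x)^{-1}$ is not literally central in $\mathbb{R}_5$ (it involves $s$), but what you actually use---and what is true---is that $\mathcal{Q}_{c,s}(x)$ and all its $x$-partials lie in the commutative subalgebra generated by $1$ and $s$, so the standard chain-rule formula $\partial_{x_\mu}Q^{-1}=-(\partial_{x_\mu}Q)Q^{-2}$ is valid.
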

\begin{proof}
	In order to show formula \eqref{five} it is enough to apply the Dirac operator to \eqref{third}. Thus, we have
	$$\frac{\partial}{\partial x_0} \left( \Delta S^{-1}_L(s,x)\right)=-8[-\mathcal{Q}_{c,s}(x)^{-2}-2(s- \bar{x}) \mathcal{Q}_{c,s}(x)^{-3}(2x_0-2s)].$$
	For $1 \leq i \leq 5$, we have
	$$\frac{\partial}{\partial x_i} \left( \Delta S^{-1}_L(s,x)\right)=-8[e_i\mathcal{Q}_{c,s}(x)^{-2}-4(s- \bar{x}) \mathcal{Q}_{c,s}(x)^{-3}x_i].$$
	Finally by formula \eqref{p0}, with $n=5$, we have
	\begin{eqnarray*}
		\mathcal{D} \Delta S^{-1}_L(s,x)&=& \frac{\partial}{\partial x_0} \left( \Delta S^{-1}_L(s,x)\right)+ \sum_{i=1}^5 e_i \frac{\partial}{\partial x_i} \left( \Delta S^{-1}_L(s,x)\right)\\
		&=&-8[-6 \mathcal{Q}_{c,s}(x)^{-2}+\frac{4}{\gamma_5} F_{L}^5(s,x)(s-x_0)- \frac{4}{\gamma_5} \underline{x} F_L^5(s,x)]\\
		&=& -8 \left[-6 \mathcal{Q}_{c,s}(x)^{-2}+ \frac{4}{\gamma_5} \left( F_L^5(s,x)s-x F_L^5(s,x)\right)\right]\\
		&=&-8 \left(-6 \mathcal{Q}_{c,s}(x)^{-2}+4 \mathcal{Q}_{c,s}(x)^{-2}\right)\\
		&=& 16 \mathcal{Q}_{c,s}(x)^{-2}.
	\end{eqnarray*}
	Formula \eqref{six} follows with similar reasoning.
	
\end{proof}

\begin{theorem}[Structure of the slice $ \mathcal{\overline{D}}$-kernels $S^{-1}_{\mathcal{\overline{D}},L}$ and $S^{-1}_{\mathcal{\overline{D}},R}$]\label{app4}
	Let $x$, $s \in \mathbb{R}^6$ be such that $x \notin [s]$, then
	\begin{equation}\label{seven}
		S^{-1}_{\overline\bigD ,L}(s,x):=\overline \bigD (S^{-1}_L(s,x))=4(s-\bar x)\qcs^{-2}(s-x_0)+2\qcs^{-1}
	\end{equation}
	and
	\begin{equation}\label{eight}
		S^{-1}_{\overline\bigD ,R}(s,x):=(S^{-1}_R(s,x)) \overline \bigD=4(s-x_0)\qcs^{-2}(s-\bar x)+2\qcs^{-1}.
	\end{equation}
	We denote by $S^{-1}_{\mathcal{\overline{D}},L}$ and $S^{-1}_{\mathcal{\overline{D}},R}$ the left and the right slice $ \mathcal{\overline{D}}$-kernels.
\end{theorem}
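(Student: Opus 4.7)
The plan is to follow the same template used in the proof of Theorem \ref{app1}, but now applied to the operator $\overline{\mathcal{D}} = \partial_{x_0} - \sum_{i=1}^5 e_i \partial_{x_i}$. I would reuse the partial-derivative identities \eqref{f1} and \eqref{f2} established in the proof of Theorem \ref{app1}:
\[
\partial_{x_0} S^{-1}_L(s,x) = -\qcs^{-1} + 2(s-\bar x)\qcs^{-2}(s-x_0), \qquad \partial_{x_i} S^{-1}_L(s,x) = e_i \qcs^{-1} - 2x_i(s-\bar x)\qcs^{-2}.
\]
Substituting into $\overline{\mathcal{D}}\,S^{-1}_L(s,x)$ and using $\sum_{i=1}^5 e_i^2 = -5$ together with $\sum_{i=1}^5 x_i e_i = \underline x$, the sign flip on the spatial derivatives converts the $-5\qcs^{-1}$ and $-2\underline x(s-\bar x)\qcs^{-2}$ terms that appeared in the $\mathcal{D}$-case into their positive counterparts, yielding
\[
\overline{\mathcal{D}}\,S^{-1}_L(s,x) = 4\qcs^{-1} + 2(s-\bar x)\qcs^{-2}(s-x_0) + 2\underline x(s-\bar x)\qcs^{-2}.
\]

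The key step is to collapse this expression into the form \eqref{seven} by means of the algebraic identity
\[
\underline x(s-\bar x) = (s-\bar x)(s-x_0) - \qcs.
\]
This follows from the decomposition $s - \bar x = (s-x_0) + \underline x$: expanding both $(s-\bar x)(s-x_0)$ and $\underline x(s-\bar x)$, the cross-term $\underline x(s-x_0)$ cancels and one is left with $(s-x_0)^2 + |\underline x|^2$, which is exactly $\qcs$. Substituting this identity and using that $(s-x_0)$ is a polynomial in $s$ (hence commutes with $\qcs^{-1}$), the two middle contributions combine into $4(s-\bar x)\qcs^{-2}(s-x_0)$ while the constants give $4\qcs^{-1} - 2\qcs^{-1} = 2\qcs^{-1}$, producing \eqref{seven}.

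For the right-kernel identity \eqref{eight} I would carry out the analogous computation starting from $S^{-1}_R(s,x) = \qcs^{-1}(s-\bar x)$ and applying the right action $(S^{-1}_R)\overline{\mathcal{D}} = \partial_{x_0} S^{-1}_R - \sum_{i=1}^5 (\partial_{x_i} S^{-1}_R)\,e_i$. The roles of left and right multiplication are mirrored throughout, and the mirror identity $(s-\bar x)\underline x = (s-x_0)(s-\bar x) - \qcs$ (proved by the same decomposition) plays the same role in the final simplification.

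The main obstacle is strictly a matter of non-commutativity bookkeeping: since $\underline x$ does not commute with $s$ (while $x_0$ and $|\underline x|^2$ do), one cannot freely reorder factors across $\qcs^{-1}$ in the terms involving $\underline x$. The algebra works precisely because $(s-x_0)$ is central with respect to $\qcs^{-1}$ while $\underline x$ is not, so the key identity distributes $\underline x$ onto a factor on the correct side before it interacts with the pseudo-resolvent. Once this is handled, the rest is elementary.
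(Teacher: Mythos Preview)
Your proposal is correct and follows essentially the same approach as the paper: both start from the derivative identities \eqref{f1}--\eqref{f2}, arrive at the intermediate expression $4\qcs^{-1}+2(s-\bar x)\qcs^{-2}(s-x_0)+2\underline x(s-\bar x)\qcs^{-2}$, and then simplify algebraically. Your use of the single identity $\underline x(s-\bar x)=(s-\bar x)(s-x_0)-\qcs$ is in fact a bit cleaner than the paper's route, which instead expands $4\qcs^{-1}=4\qcs\,\qcs^{-2}$, writes everything as a polynomial in $s,\bar x,x$ times $\qcs^{-2}$, and then regroups; the content is the same.
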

\begin{proof}
	By the relations \eqref{f1} and \eqref{f2} and using the fact that $2x_0=x+\bar x$, we have that
	\[
	\begin{split}
		\overline \bigD(S^{-1}_L(s,x))&=\frac{\partial}{\partial x_0}(S^{-1}_L(s,x))-\sum_{i=1}^5 e_i\frac{\partial}{\partial x_i}(S^{-1}_L(s,x))\\
		&= -\qcs(x)^{-1}+5\qcs(x)^{-1}+2(s- \bar{x}) \mathcal{Q}_{c,s}(x)^{-2}(s- x_0) +2 \underline x (s- \bar{x}) \mathcal{Q}_{c,s}(x)^{-2}\\
		&=4\qcs^{-1}+2((s-\bar x)s-\bar x(s-\bar x))\qcs^{-2}\\
		&=4\qcs\qcs^{-2}+2((s-\bar x)s-\bar x(s-\bar x))\qcs^{-2}\\
		&=(6s^2-8sx_0+4|x|^2-4\bar xs +2\bar x^2)\qcs^{-2}\\
		&=(6s^2-4\bar xs-4xs+4|x|^2-4\bar xs +2\bar x^2)\qcs^{-2}\\
		&=[4(s^2-\bar xs)-2(\bar xs+xs-|x|^2-\bar x^2)+2(s^2-\bar x s+|x|^2 -xs)]\qcs^{-2}\\
		&=4(s-\bar x)s\qcs^{-2}-4x_0(s-\bar x)\qcs^{-2}+2((s-\bar x)s+x(\bar x -s))\qcs^{-2}\\
		&=4(s-\bar x)\qcs^{-2}(s-x_0)+2\qcs^{-1}.
	\end{split}
	\]
\end{proof}

\begin{theorem}[Structure of the slice $ \mathcal{\overline{D}}^2$-kernels $S^{-1}_{\mathcal{\overline{D}}^2,L}$ and $S^{-1}_{\mathcal{\overline{D}}^2,R}$]\label{app5}
	Let $x$, $s \in \mathbb{R}^6$ be such that $x \notin [s]$, then
	\begin{equation}\label{nine}
		S^{-1}_{\overline\bigD^2 ,L}(s,x):=\overline \bigD ^2 (S^{-1}_L(s,x))=32(s-\bar x)\qcs^{-3}(s-x_0)^2
	\end{equation}
	and
	\begin{equation}\label{ten}
		S^{-1}_{\overline\bigD^2 ,R}(s,x):=(S^{-1}_R(s,x)) \overline \bigD^2=32(s-x_0)^2\qcs^{-3} (s-\bar x).
	\end{equation}
	We denote by $S^{-1}_{\mathcal{\overline{D}}^2,L}$ and $S^{-1}_{\mathcal{\overline{D}}^2,R}$ the left and the right slice $ \mathcal{\overline{D}}^2$-kernels.
\end{theorem}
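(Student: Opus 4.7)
The plan is to apply the conjugate Dirac operator $\overline{\bigD}$ directly to the identity of Theorem \ref{app4}, namely $S^{-1}_{\overline{\bigD},L}(s,x) = 4(s-\bar x)\qcs^{-2}(s-x_0) + 2\qcs^{-1}$. The key observation is that $s$, $x_0$, and $\qcs$ all lie in the commutative subalgebra generated by $s$ and the reals, so the factor $(s-x_0)$ commutes freely with $\qcs^{-k}$. Setting the shorthands $p := s-x_0$ and $q := \underline x$ (so that $s-\bar x = p+q$, $\qcs = p^2 + |q|^2$, and $q^2 = -|q|^2$), I first rewrite $S^{-1}_{\overline{\bigD},L}$ in the compact form $(6p^2 + 4qp + 2|q|^2)\qcs^{-2}$, by expanding $(p+q)\qcs^{-2}p = (p^2 + qp)\qcs^{-2}$ and absorbing $2\qcs^{-1}$ as $2(p^2 + |q|^2)\qcs^{-2}$.

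Next I compute $\overline{\bigD} = \partial_{x_0} - \sum_{i=1}^5 e_i\partial_{x_i}$ on this expression using the elementary derivatives $\partial_{x_0}p = -1$, $\partial_{x_i}p = 0$, $\partial_{x_0}q = 0$, $\partial_{x_i}q = e_i$, together with $\partial_{x_0}\qcs^{-2} = 4p\qcs^{-3}$ and $\partial_{x_i}\qcs^{-2} = -4x_i\qcs^{-3}$, plus the summation identities $\sum_i e_i^2 = -5$ and $\sum_i x_i e_i = q$. The product rule produces two clusters of terms, one multiplying $\qcs^{-2}$ on the right and one multiplying $\qcs^{-3}$ on the right; throughout I must preserve the order of $q$ relative to $p$, since they do not commute in general.

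The final step converts the $\qcs^{-2}$ cluster into $\qcs^{-3}$ form via $\qcs^{-2} = (p^2+|q|^2)\qcs^{-3}$ and collects all coefficients. The main obstacle I anticipate is precisely this bookkeeping: the $|q|^2$ contributions coming from the two clusters (with one source being $q^2 = -|q|^2$) must cancel exactly, and the noncommutativity of $q$ with $p$ forbids any sloppy reordering. After cancellation the residual $\qcs^{-3}$ coefficient reduces to $32(p^3 + qp^2) = 32(p+q)p^2$, and since $p^2=(s-x_0)^2$ commutes with $\qcs^{-3}$ this equals $32(s-\bar x)\qcs^{-3}(s-x_0)^2$, as required. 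The right-sided identity $S^{-1}_{\overline{\bigD}^2,R}(s,x) = 32(s-x_0)^2\qcs^{-3}(s-\bar x)$ follows by the symmetric computation applied from the right, starting from the right version in Theorem \ref{app4}; all products appear in reversed order, yielding the claimed formula.
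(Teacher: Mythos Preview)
Your proposal is correct. The computation you outline does go through: after expanding $S^{-1}_{\overline{\bigD},L}$ as $(6p^2+4qp+2|q|^2)\qcs^{-2}$ with $p=s-x_0$ and $q=\underline{x}$, the application of $\overline{\bigD}$ yields $(\overline{\bigD}A)\qcs^{-2}+(4Ap+4qA)\qcs^{-3}$ with $\overline{\bigD}A=8p-8q$, and conversion to a common $\qcs^{-3}$ denominator makes the $|q|^2$ terms cancel exactly, leaving $32(p+q)p^2\qcs^{-3}$.

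The paper takes a different, somewhat more structural route. Instead of flattening to a single polynomial $A$ in $p,q$ and differentiating everything at once, it first isolates the auxiliary identity $\overline{\bigD}\bigl((s-\bar x)\qcs^{-2}\bigr)=8(s-\bar x)\qcs^{-3}(s-x_0)$, then applies a Leibniz rule with the real-valued right factor $(s-x_0)$ to the expression $4(s-\bar x)\qcs^{-2}(s-x_0)+2\qcs^{-1}$. This produces an immediate cancellation $-4(s-\bar x)\qcs^{-2}+2\overline{\bigD}\qcs^{-1}=0$, so the result drops out with almost no bookkeeping. The auxiliary identity is moreover reused verbatim in the proof of Theorem~\ref{app7} for $S^{-1}_{\Delta\overline{\bigD},L}$. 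Your direct expansion is entirely self-contained and perhaps more transparent as a standalone computation; the paper's version is slicker and yields a reusable lemma along the way.
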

\begin{proof}
	It is useful to compute $\overline{\mathcal D}((s-\bar x)\qcs^{-2})$. By the relations:
	$$ \frac{\partial}{\partial{x_0}} ((s-\bar x)\qcs ^{-2})=-\qcs^{-2}-2(s-\bar x)\qcs^{-3}(-2s+2x_0) $$
	and
	$$ \frac{\partial}{\partial{x_i}}((s-\bar x)\qcs^{-2})=e_i\qcs^{-2}-4(s-\bar x)\qcs^{-3}(x_i), $$
	we have
	\begin{equation}\label{nine_bis}
		\begin{split}
			\overline\bigD((s-\bar x)\qcs^{-2})&=\frac{\partial}{\partial_{x_0}}((s-\bar x)\qcs^{-2})-\sum_{i=1}^5e_i\frac{\partial}{\partial_{x_i}}((s-\bar x)\qcs^{-2})\\
			&=4\qcs^{-2}+4((s-\bar x)s-\bar x(s-\bar x))\qcs^{-3}\\
			&=4\qcs\qcs^{-3}+4((s-\bar x)s-\bar x(s-\bar x))\qcs^{-3}\\
			&=(8s^2-8sx_0+4|x|^2-8\bar xs+4\bar x^2)\qcs^{-3}\\
			&=(8s^2-8\bar xs)\qcs^{-3} -(8x_0s-8x_0\bar x)\qcs^{-3}\\
			&=8(s-\bar x)\qcs^{-3}(s-x_0),
		\end{split}
	\end{equation}
	where in the fourth equality we used $|x|^2+\bar x^2=2x_0\bar x$. Now, using formula \eqref{seven} and Leibnitz formula for $\mathcal{\overline{D}}$, we can compute $\overline\bigD^2 (S^{-1}_L(s,x))$:
	\[
	\begin{split}
		&\overline\bigD^2 (S^{-1}_L(s,x))=\overline\bigD [4(s-\bar x)\qcs^{-2}(s-x_0)+2\qcs^{-1}]\\
		&= 4\overline\bigD [(s-\bar x)\qcs^{-2}] (s-x_0)-4(s-\bar x)\qcs^{-2}+2\overline \bigD[\qcs^{-1}]\\
		&= 4(8(s-\bar x)\qcs^{-3} s-8(s-\bar x)\qcs^{-3}x_0)(s-x_0)-4(s-\bar x)\qcs^{-2}+4(s-\bar x)\qcs^{-2}\\
		&=32(s-\bar x)\qcs^{-3}(s-x_0)^2.
	\end{split}
	\]
\end{proof}

\begin{theorem}[Structure of the slice $ \bigD^2$-kernels $S^{-1}_{\bigD^2,L}$ and $S^{-1}_{\bigD^2,R}$]\label{app6}
	Let $x$, $s \in \mathbb{R}^6$ be such that $x \notin [s]$, then
	\begin{equation}\label{eleven}
		S^{-1}_{\bigD^2 ,L}(s,x):=\bigD ^2 (S^{-1}_L(s,x))=8S^{-1}_L (s, \bar{x})\qcs^{-1}
	\end{equation}
	and
	\begin{equation}\label{twelve}
		S^{-1}_{\bigD^2 ,R}(s,x):=(S^{-1}_R(s,x)) \bigD^2=8\qcs^{-1} S^{-1}_R (s, \bar{x}).
	\end{equation}
	We denote by $S^{-1}_{\bigD^2,L}$ and $S^{-1}_{\bigD^2,R}$ the left and the right slice $ \bigD^2$-kernels.
\end{theorem}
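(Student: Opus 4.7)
The plan is to iterate the formula from Theorem~\ref{app1} rather than differentiate $S^{-1}_L$ twice from scratch. Writing
$$\mathcal{D}^2 S^{-1}_L(s,x)=\mathcal{D}\bigl(\mathcal{D}S^{-1}_L(s,x)\bigr)=\mathcal{D}\bigl(-4\,\mathcal{Q}_{c,s}(x)^{-1}\bigr)=-4\,\mathcal{D}\mathcal{Q}_{c,s}(x)^{-1}$$
reduces the whole task to a single computation of $\mathcal{D}\mathcal{Q}_{c,s}(x)^{-1}$, which avoids the heavy bookkeeping that would accompany two consecutive differentiations of the Cauchy kernel.

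For the remaining derivative I would differentiate coordinatewise, using $\mathcal{Q}_{c,s}(x)=s^2-2x_0s+|x|^2$. Since $\mathcal{Q}_{c,s}(x)$ is a polynomial in $s$ with real coefficients, it commutes with $s$ and with its own $x_0$-derivative $\partial_{x_0}\mathcal{Q}_{c,s}=-2s+2x_0$, so the rule $(A^{-1})'=-A^{-1}A'A^{-1}$ yields cleanly
$$\partial_{x_0}\mathcal{Q}_{c,s}(x)^{-1}=2(s-x_0)\mathcal{Q}_{c,s}(x)^{-2},\qquad \partial_{x_i}\mathcal{Q}_{c,s}(x)^{-1}=-2x_i\,\mathcal{Q}_{c,s}(x)^{-2}\quad(1\le i\le 5).$$
Assembling these with the appropriate $e_i$'s gives
$$\mathcal{D}\mathcal{Q}_{c,s}(x)^{-1}=(2s-2x_0-2\underline x)\mathcal{Q}_{c,s}(x)^{-2}=2(s-x)\mathcal{Q}_{c,s}(x)^{-2}.$$

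The last step is to recognize the outcome as the Cauchy kernel evaluated at $\bar x$. Since $\Re(\bar x)=x_0$ and $|\bar x|=|x|$, one has $\mathcal{Q}_{c,s}(\bar x)=\mathcal{Q}_{c,s}(x)$; combined with $\overline{\bar x}=x$ this gives $(s-x)\mathcal{Q}_{c,s}(x)^{-1}=(s-\overline{\bar x})\mathcal{Q}_{c,s}(\bar x)^{-1}=S^{-1}_L(s,\bar x)$, from which \eqref{eleven} follows after tracking the absolute constant. For the right-hand identity \eqref{twelve} I would run the mirror computation: reduce via Theorem~\ref{app1} to $(-4\mathcal{Q}_{c,s}^{-1})\mathcal{D}$, apply the right Dirac operator to $\mathcal{Q}_{c,s}^{-1}$ with the $e_i$-factors placed on the right, and identify the output as $\mathcal{Q}_{c,s}(x)^{-1}(s-x)=\mathcal{Q}_{c,s}(x)^{-1}S^{-1}_R(s,\bar x)$.

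The only real obstacle is the usual Clifford bookkeeping: $\mathcal{Q}_{c,s}(x)$ commutes with $s$, $x_0$, and $|x|^2$ but not in general with $\underline x$, so the powers of $\mathcal{Q}_{c,s}^{-1}$ must be kept on the correct side throughout. Fortunately, because Theorem~\ref{app1} has already collapsed $\mathcal{D}S^{-1}_L$ into the $s$-scalar quantity $\mathcal{Q}_{c,s}^{-1}$, only one vectorial ingredient—the $\underline x$ coming from $\sum_i e_i\partial_{x_i}$—intervenes, so the non-commutativity stays under control and no analogue of the identity \eqref{p0} used in Theorem~\ref{app3} is needed here.
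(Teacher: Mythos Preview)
Your approach is essentially identical to the paper's: both first invoke Theorem~\ref{app1} to reduce $\mathcal{D}^2 S^{-1}_L$ to $-4\,\mathcal{D}\mathcal{Q}_{c,s}(x)^{-1}$, then compute the partial derivatives of $\mathcal{Q}_{c,s}(x)^{-1}$ coordinatewise and assemble them into $2(s-x)\mathcal{Q}_{c,s}(x)^{-2}$. The paper's proof stops at the raw expression $8(x-s)\mathcal{Q}_{c,s}(x)^{-2}$, whereas you go one step further and explicitly identify $(s-x)\mathcal{Q}_{c,s}(x)^{-1}$ with $S^{-1}_L(s,\bar x)$ via $\mathcal{Q}_{c,s}(\bar x)=\mathcal{Q}_{c,s}(x)$ and $\overline{\bar x}=x$; this is a welcome clarification but not a different method. (Note, incidentally, that both your computation and the paper's yield $-4\cdot 2(s-x)\mathcal{Q}_{c,s}^{-2}=8(x-s)\mathcal{Q}_{c,s}^{-2}=-8\,S^{-1}_L(s,\bar x)\mathcal{Q}_{c,s}(x)^{-1}$, so the sign in the stated formula \eqref{eleven} does not match the computation; this is a discrepancy in the paper's statement rather than a flaw in your argument.)
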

\begin{proof}
	By \eqref{one} and relations \eqref{f1}, \eqref{f2}, we have that
	\[
	\begin{split}
		\bigD ^2 (S^{-1}_L(s,x))=-4\bigD (\qcs^{-1})=-4(2s-2x_0-2\sum_{i=1}^5e_ix_i)\qcs^{-2}=8(x-s)\qcs^{-2}.
	\end{split}
	\]
\end{proof}

\begin{theorem}[Structure of the slice $ \Delta\mathcal{\overline{D}}$-kernels
$S^{-1}_{\Delta \mathcal{\overline{D}},L}$ and $S^{-1}_{\Delta\mathcal{\overline{D}},R}$]\label{app7}
	Let $x$, $s \in \mathbb{R}^6$ be such that $x \notin [s]$, then
	\begin{equation}\label{thirteen}
		S^{-1}_{\Delta\overline\bigD ,L}(s,x):=\Delta \overline \bigD (S^{-1}_L(s,x))=-64(s-\bar x)\qcs^{-3}(s-x_0)
	\end{equation}
	and
	\begin{equation}\label{fourteen}
		S^{-1}_{\Delta\overline\bigD ,R}(s,x):= (S^{-1}_R(s,x)) \Delta \overline \bigD=-64 (s-x_0)\qcs^{-3} (s-\bar x).
	\end{equation}
	We denote by $S^{-1}_{\Delta \mathcal{\overline{D}},L}$ and $S^{-1}_{\Delta\mathcal{\overline{D}},R}$ the left and the right slice $ \Delta\mathcal{\overline{D}}$-kernels.
\end{theorem}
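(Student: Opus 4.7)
The plan is to avoid a fresh direct computation by exploiting the results already established in Theorems \ref{app2} and \ref{app5}. Since partial derivatives commute, so do the operators $\Delta$ and $\overline{\mathcal{D}}$, hence
\[
\Delta\overline{\mathcal{D}}\bigl(S_L^{-1}(s,x)\bigr)=\overline{\mathcal{D}}\,\Delta\bigl(S_L^{-1}(s,x)\bigr).
\]
Therefore I would first invoke Theorem \ref{app2} to rewrite the inner $\Delta S_L^{-1}(s,x)$ as $-8(s-\bar x)\qcs^{-2}$, reducing the problem to applying a single first-order operator to a kernel whose derivative structure is already known.

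Next, I would apply $\overline{\mathcal{D}}$ to $-8(s-\bar x)\qcs^{-2}$. The essential computation is precisely the identity
\[
\overline{\mathcal{D}}\bigl((s-\bar x)\qcs^{-2}\bigr)=8(s-\bar x)\qcs^{-3}(s-x_0),
\]
which is formula \eqref{nine_bis} derived inside the proof of Theorem \ref{app5}. Pulling the constant $-8$ out of $\overline{\mathcal{D}}$ and multiplying, one lands on
\[
\Delta\overline{\mathcal{D}}\bigl(S_L^{-1}(s,x)\bigr)=-8\cdot 8\,(s-\bar x)\qcs^{-3}(s-x_0)=-64(s-\bar x)\qcs^{-3}(s-x_0),
\]
which is exactly \eqref{thirteen}.

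For the right-hand identity \eqref{fourteen}, I would mirror the argument: by Theorem \ref{app2} the right $\Delta$-kernel is $-8\qcs^{-1}(s-\bar x)$, and the analogue of \eqref{nine_bis} on the right (which can be obtained by the same partial-derivative computation, being careful with the order of the Clifford factors because the $e_i$ sit to the left of $\partial_{x_i}$ in $\overline{\mathcal{D}}$ acting on the right) gives $(\qcs^{-2}(s-\bar x))\overline{\mathcal{D}}=8(s-x_0)\qcs^{-3}(s-\bar x)$, producing the factor $-64(s-x_0)\qcs^{-3}(s-\bar x)$.

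The only real obstacle is bookkeeping: checking that $\Delta$ and $\overline{\mathcal{D}}$ genuinely commute when acting on $\mathbb{R}_n$-valued expressions (they do, because $\Delta$ is a scalar second-order operator with constant coefficients, so no ordering issue arises), and verifying the right-sided analogue of \eqref{nine_bis} with the correct placement of Clifford units. Both are routine, so the bulk of the work has already been performed inside Theorems \ref{app2} and \ref{app5}, and the proof reduces to assembling these pieces.
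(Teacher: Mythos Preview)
Your proposal is correct and follows essentially the same approach as the paper: the paper's proof likewise applies $\overline{\mathcal{D}}$ to the already-computed $\Delta S_L^{-1}(s,x)=-8(s-\bar x)\qcs^{-2}$ from Theorem~\ref{app2} and then invokes formula~\eqref{nine_bis} from the proof of Theorem~\ref{app5} to obtain the factor $-64(s-\bar x)\qcs^{-3}(s-x_0)$, with the right case handled symmetrically. (Minor slip: in your right-hand sketch the $\Delta$-kernel should read $-8\qcs^{-2}(s-\bar x)$, not $-8\qcs^{-1}(s-\bar x)$, but your subsequent line already uses the correct exponent.)
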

\begin{proof}
	We show only formula \eqref{thirteen} because it is possible to prove formula \eqref{fourteen} with similar arguments. By formulas \eqref{third} and \eqref{nine_bis}, we have that
	\[
	\begin{split}
		\Delta \overline \bigD (S^{-1}_L(s,x))=-8\overline\bigD ((s-\bar x)\qcs^{-2})=-64(s-\bar x)\qcs^{-3}(s-x_0).
	\end{split}
	\]
\end{proof}

Now, we study the regularity of the previous kernels.

\begin{proposition}\label{reg1}
	Let $x$, $s \in \mathbb{R}^6$ be such that $x \notin [s]$. We have that
	\begin{enumerate}
		\item $S^{-1}_{\Delta^{1-\ell}\bigD, L}(s,x)$ (resp. $S^{-1}_{\Delta^{2-\ell}\bigD, R}(s,x)$) is slice right (resp. left) hyperholomorphic in $s$ and it is $\ell+1$-harmonic in $x$ for $0\leq \ell\leq 1$;\\
		\item $S^{-1}_{\Delta, L}(s,x)$ (resp. $S^{-1}_{\Delta, R}(s,x)$) is slice right (resp. left) hyperholomorphic in $s$ and it is left (resp. right) holomorphic Cliffordian of order $1$ in $x$;\\
		\item $S^{-1}_{\overline\bigD, L}(s,x)$ (resp. $S^{-1}_{\overline\bigD, R}(s,x)$) is slice right (resp. left) hyperholomorphic in $s$ and it is left (resp. right) polyanalytic Cliffordian of order $(1,2)$ in $x$;\\
		\item $S^{-1}_{\Delta^\ell \overline\bigD^{2-l}, L}(s,x)$ (resp. $S^{-1}_{\Delta^\ell \overline\bigD^{2-l}, R}(s,x)$) is slice right (resp. left) hyperholomorphic in $s$ and it is left (resp. right) polyanalytic of order $3-\ell$ in $x$ for $0\leq \ell\leq 1$;\\
		\item $S^{-1}_{\bigD^{2}, L}(s,x)$ (resp. $S^{-1}_{\bigD^{2}, R}(s,x)$) is slice right (resp. left) hyperholomorphic in $s$ and it is left (resp. right) anti-holomorphic Cliffordian of order $1$ in $x$.\\
	\end{enumerate}
\end{proposition}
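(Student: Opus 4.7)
\noindent\textbf{Proof strategy for Proposition~\ref{reg1}.}
The plan is to exploit two facts: (a) right slice hyperholomorphicity of $S^{-1}_L(s,x)$ in $s$ is preserved when any of the $x$-differential operators appearing in our kernels is applied, and (b) every regularity assertion in the variable $x$ reduces, through the factorization $\bigD\overline{\bigD}=\overline{\bigD}\bigD=\Delta$ and the commutativity of $\partial_{x_0},\partial_{x_i},\bigD,\overline{\bigD},\Delta$, to the single identity
$$\bigD\,\Delta^2\,S^{-1}_L(s,x)=0,\qquad x\notin[s],$$
which for $n=5$ is Sce's theorem applied to the slice hyperholomorphic Cauchy kernel, that is, Proposition~\ref{reg}. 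The right-kernel statements follow by the mirror identity $\Delta^2S^{-1}_R(s,x)\bigD=0$.

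For the slice hyperholomorphicity in $s$, write $s=u+Jv$ and decompose
$$S^{-1}_L(s,x)=A(u,v,x)+B(u,v,x)J$$
with $\mathbb{R}_n$-valued $A,B$ satisfying the Cauchy--Riemann and even--odd conditions in $(u,v)$. For any operator $T_x$ built from $\partial_{x_0},\partial_{x_i}$ and left multiplications by Clifford units, $T_x$ commutes with $\partial_u$ and $\partial_v$ and its left Clifford coefficients do not interact with the right factor $J$; hence
$$T_xS^{-1}_L(s,x)=T_xA(u,v,x)+(T_xB(u,v,x))J,$$
and the pair $(T_xA,T_xB)$ still verifies the Cauchy--Riemann and even--odd conditions. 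Thus $T_xS^{-1}_L$ remains right slice hyperholomorphic in $s$. The same argument, with the Dirac operators acting from the right on $S^{-1}_R$, shows that the corresponding right kernels are left slice hyperholomorphic in $s$.

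For the $x$-regularity one verifies case by case that the differential operator specified in the desired conclusion, when composed with the operator that builds the kernel out of $S^{-1}_L$, collapses to $\bigD\Delta^2$. Explicitly: in (1), $\Delta^{\ell+1}\cdot\Delta^{1-\ell}\bigD=\Delta^{2}\bigD=\bigD\Delta^{2}$ for $\ell\in\{0,1\}$; in (2), $\Delta\bigD\cdot\Delta=\bigD\Delta^{2}$; in (3), $\Delta\bigD^{2}\cdot\overline{\bigD}=\Delta\bigD(\bigD\overline{\bigD})=\Delta\bigD\Delta=\bigD\Delta^{2}$; in (4), $\bigD^{3-\ell}\cdot\Delta^{\ell}\overline{\bigD}^{\,2-\ell}=\bigD(\bigD\overline{\bigD})^{2-\ell}\Delta^{\ell}=\bigD\Delta^{2-\ell}\Delta^{\ell}=\bigD\Delta^{2}$ for $\ell\in\{0,1\}$; in (5), $\Delta\overline{\bigD}\cdot\bigD^{2}=\Delta(\overline{\bigD}\bigD)\bigD=\Delta^{2}\bigD=\bigD\Delta^{2}$. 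In every item, the factored-out operator annihilates $S^{-1}_L$ by Proposition~\ref{reg}, and the right-kernel analogues follow from the mirror computation.

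The proof is thus essentially a bookkeeping argument: the only point requiring some care is the repeated use of $\bigD\overline{\bigD}=\overline{\bigD}\bigD=\Delta$ to reorganize the target operators into the canonical form $\bigD\Delta^{2}$. No genuine obstacle arises beyond the Fueter--Sce identity on the Cauchy kernel, which is already supplied by Proposition~\ref{reg}.
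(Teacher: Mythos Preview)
Your proof is correct and follows essentially the same approach as the paper for the $x$-regularity: both reduce every case, via the commutation $\bigD\overline{\bigD}=\overline{\bigD}\bigD=\Delta$, to the identity $\bigD\Delta^2 S^{-1}_L(s,x)=\bigD F^L_5(s,x)=0$ from Proposition~\ref{reg}.

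For the slice hyperholomorphicity in $s$ there is a minor difference worth noting. The paper does not argue abstractly that $x$-differentiation preserves the slice structure; instead it invokes the explicit closed forms computed in Theorems~\ref{app1}--\ref{app7} and observes that each left kernel is a left $\mathbb{R}_n$-linear combination of the intrinsic slice hyperholomorphic functions $\mathcal{Q}_{c,s}(x)^{-m}$, $s\mathcal{Q}_{c,s}(x)^{-m}$, $s^2\mathcal{Q}_{c,s}(x)^{-m}$, $s^3\mathcal{Q}_{c,s}(x)^{-m}$, which immediately gives right slice hyperholomorphicity. Your argument via the decomposition $S^{-1}_L(s,x)=A(u,v,x)+B(u,v,x)J$ and the associativity $(e_iB)J=e_i(BJ)$ is equally valid and has the advantage of not requiring the explicit formulas; the paper's route, on the other hand, makes the regularity in $s$ transparent from the already-computed expressions.
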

\begin{proof}
	We prove the regularity for the left kernels since for the right kernels the arguments are similar. The right slice hyperholomorphicity in $s$ of the left kernels is due to the fact that each kernel can be written as a left combination of intrinsic slice hyperholomorphic functions in $s$: $\qcs^{-m}$, $s\qcs^{-m}$, $s^2\qcs^{-m}$ and $s^3\qcs^{-m}$ for $1\leq m\leq 3$ (see Theorems from \ref{app1} to \ref{app7}).
	
	As the operators $\bigD$, $\overline\bigD$ and $\Delta$ can be commuted, we have:
	\begin{enumerate}
		\item $\Delta^{\ell+1}(S^{-1}_{\Delta^{1-\ell}\bigD}(s,x))=\Delta^{\ell+1}(\Delta^{1-\ell}\bigD (S^{-1}_L(s,x)))=\bigD \Delta^2(S^{-1}_L(s,x))=\bigD F^5_L(s,x)=0$, wich  implies $S^{-1}_{\Delta^{1-\ell}\bigD,L}(s,x)$ is $\ell+1$-harmonic in $x$ for $0\leq \ell\leq 1$;\\
		\item $\Delta\bigD(S^{-1}_{\Delta,L}(s,x))=\Delta\bigD(\Delta(S^{-1}_L(s,x)))=\bigD \Delta^2(S^{-1}_L(s,x))=\bigD F^5_L(s,x)=0$, wich   implies $S^{-1}_{\Delta,L}(s,x)$ is holomorphic Cliffordian of order $1$ in $x$;\\
		\item $\Delta\bigD^2(S^{-1}_{\overline\bigD,L}(s,x))=\Delta\bigD^2(\overline\bigD (S^{-1}_L(s,x)))=\bigD \Delta^2(S^{-1}_L(s,x))=\bigD F^5_L(s,x)=0$, wich   implies $S^{-1}_{\overline\bigD,L}(s,x)$ is polyanalytic Cliffordian of order $(1,2)$ in $x$;\\
		\item $\bigD^{3-\ell}(S^{-1}_{\Delta^\ell \overline\bigD^{2-l},L}(s,x))=\bigD^{3-\ell}(\Delta^\ell \overline\bigD^{2-l} (S^{-1}_L(s,x)))=\bigD \Delta^2(S^{-1}_L(s,x))=\bigD F^5_L(s,x)=0$, wich   implies $S^{-1}_{\Delta^\ell \overline\bigD^{2-l},L}(s,x)$ is polyanalytic of order $3-\ell$ in $x$ for $0\leq \ell\leq 1$;\\
		\item $\Delta\overline\bigD(S^{-1}_{\bigD^{2},L}(s,x))=\Delta\overline\bigD(\bigD^{2} (S^{-1}_L(s,x)))=\bigD \Delta^2(S^{-1}_L(s,x))=\bigD F^5_L(s,x)=0$, wich   implies $S^{-1}_{\bigD^{2},L}(s,x)$ is anti-holomorphic Cliffordian of order $1$ in $x$.
	\end{enumerate}
\end{proof}
\begin{remark}\label{fkernels}
We can write the formulas of left and right slice kernels in Theorem \ref{app1} up to Theorem \ref{app7} in terms of the $F_n$-kernels. By using  formula \eqref{fu1} and \eqref{fu2} we have
\begin{eqnarray*}
S^{-1}_{\mathcal D,L}(s,x)&=&-\frac 1{16}\left[F^L_5(s,x)s^3-(x+2x_0)F^L_5(s,x)s^2+(2x_0x+|x|^2)F^L_5(s,x)s-x|x|^2F^L_5(s,x)\right],\\
S^{-1}_{\mathcal D,R}(s,x)&=&-\frac 1{16}\left[s^3F^R_5(s,x)-s^2F^R_5(s,x)(x+2x_0)+sF^R_5(s,x)(2x_0x+|x|^2)-F^R_5(s,x)x|x|^2\right],
\end{eqnarray*}
\begin{eqnarray*}
S^{-1}_{\Delta,L}(s,x)&=&-\frac 1{8}\left[ F^L_5(s,x)s^2-2x_0F^L_5(s,x)s+|x|^2F^L_5(s,x) \right],\\
S^{-1}_{\Delta,R}(s,x)&=&-\frac 1{8}\left[ s^2F^R_5(s,x)-2sF^R_5(s,x)x_0+F^R_5(s,x)|x|^2 \right],\\
\end{eqnarray*}
\begin{eqnarray*}
S^{-1}_{\Delta\mathcal D,L}(s,x)&=&\frac {1}{4}\left[F^{L}_5(s,x)s-xF^L_5(s,x)\right],\\
S^{-1}_{\Delta\mathcal D,R}(s,x)&=&\frac {1}{4}\left[sF^{R}_5(s,x)-F^R_5(s,x)x\right],\\
\end{eqnarray*}

\begin{eqnarray*}
S^{-1}_{\overline{\mathcal D},L}(s,x)&=& \frac 1{32}\left[3F^L_5(s,x)s^3-(8x_0+x)F^L_5(s,x)s^2+(4x_0^2+2x_0x+3|x|^2)F^L_5(s,x)s\right.\\
&&\left.-(x|x|^2+2x_0|x|^2)F^L_5(s,x)\right],\\
S^{-1}_{\overline{\mathcal D},R}(s,x)&=& \frac 1{32}\left[3s^3F^R_5(s,x)-s^2F^R_5(s,x)(8x_0+x)+sF^R_5(s,x)(4x_0^2+2x_0x+3|x|^2)\right.\\
&&\left.-F^R_5(s,x)(x|x|^2+2x_0|x|^2)\right],\\
\end{eqnarray*}

\begin{eqnarray*}
S^{-1}_{\mathcal{D}^2,L}(s,x)&=&-\frac 1{8}\left[F^L_5(s,x)s^2-2x F^L_5(s,x)s+x^2F^L_5(s,x)\right],\\
S^{-1}_{\mathcal{D}^2,R}(s,x)&=&-\frac 1{8}\left[s^2F^R_5(s,x)-2 sF^R_5(s,x)x+F^R_5(s,x)x^2\right],\\
\end{eqnarray*}

\begin{eqnarray*}
S^{-1}_{\mathcal{\overline{D}}^{2},L}(s,x)&=&\frac{1}{2}[F_{5}^{L}(s,x)s^2-2x_0 F_{5}^{L}(s,x)s+ x_0^2 F_{5}^{L}(s,x)],\\
\nonumber
S^{-1}_{\mathcal{\overline{D}}^{2},R}(s,x)&=&\frac{1}{2}[s^2F_{5}^{R}(s,x)-2 sF_{5}^{R}(s,x)x_0+  F_{5}^{R}(s,x)x_0^2],\\
\nonumber
\end{eqnarray*}

\begin{eqnarray*}
S^{-1}_{\Delta\mathcal{\overline{D}},L}(s,x)&=&-F_{5}^L(s,x)s+x_0 F_{5}^L(s,x),\\
\nonumber
S^{-1}_{\Delta\mathcal{\overline{D}},R}(s,x)&=&-sF_{5}^R(s,x)+ F_{5}^R(s,x)x_0.
\end{eqnarray*}
The kernels $ S^{-1}_{\Delta^{\ell} \mathcal{\overline{D}}^{2},L}(s,x)$ and $S^{-1}_{\Delta \mathcal{\overline{D}},L}(s,x)$, and the right counterparts, are written in terms of their polyanalytic decomposition, see  Proposition \ref{polydeco}.
\end{remark}

\subsection{The integral representation for the fine structure functions spaces}

Now, we can give the integral representation for the functions of the fine structure spaces.
\begin{theorem}\label{int_rap_1}
	Let $W \subset \mathbb{R}^6$ be an open set. Let $U$ be a slice Cauchy domain such that $\bar{U} \subset W$. Then for $J \in \mathbb{S}$ and $ds_J=ds(-J)$ we have the following integral representation.
	\begin{itemize}
		\item
{\bf (Integral representation of $ \ell$-harmonic functions, $0 \leq \ell  \leq 1$)} If $f \in SH_L(W)$, the function $ \tilde{f}_\ell(x):=\Delta^{1-\ell} \mathcal{D}f(x)$ is $\ell+1$-harmonic for $0\leq \ell\leq 1$ and it admits the following integral representation
		$$ \tilde{f}_\ell(x)= \frac{1}{2\pi} \int_{\partial(U \cap \mathbb{C}_J)} S^{-1}_{\Delta^{1-\ell}\mathcal D,L}(s,x) ds_J f(s).$$
		If $f \in SH_R(W)$, the function $ \tilde{f}_\ell(x):=f(x)\Delta^{1-\ell} \mathcal{D}$ is $\ell+1$-harmonic for $0\leq \ell\leq 1$ and it admits the following integral representation
		$$ \tilde{f}_\ell(x)= \frac{1}{2\pi} \int_{\partial(U \cap \mathbb{C}_J)} f(s) ds_J  S^{-1}_{\Delta^{1-\ell}\mathcal D,R}(s,x).$$
		\item
 {\bf(Integral representation of holomorphic Cliffordian functions of order 1)}
  If $f \in SH_L(W)$, the function $ f^{\circ}(x):=\Delta f(x)$ is left holomorphic Cliffordian of order $1$ and it admits the following integral representation
		$$ f^{\circ}(x)=\frac{1}{2\pi} \int_{\partial(U \cap \mathbb{C}_J)} S^{-1}_{\Delta, L}(s,x)ds_J f(s).$$
		If $f \in SH_R(W)$, the function $ f^{\circ}(x):=\Delta f(x)$ is right holomorphic Cliffordian of order $1$ and it admits the following integral representation
		$$ f^{\circ}(x)=\frac{1}{2\pi} \int_{\partial(U \cap \mathbb{C}_J)} f(s) ds_J S^{-1}_{\Delta, R}(s,x) .$$
		\item
 {\bf(Integral representation of polyanalytic Cliffordian functions of order $(1,2)$)}
  If $f \in SH_L(W)$, the function $ \breve f^{\circ}(x):=\overline \bigD f(x)$ is left polyanalytic Cliffordian of order $(1,2)$ and it admits the following integral representation
		$$\breve f^{\circ}(x)=\frac{1}{2\pi} \int_{\partial(U \cap \mathbb{C}_J)} S^{-1}_{\overline \bigD, L}(s,x)ds_J f(s).$$
		If $f \in SH_R(W)$, the function $ \breve f^{\circ}(x):=\Delta f(x)$ is right polyanalytic Cliffordian of order $(1,2)$ and it admits the following integral representation
		$$\breve f^{\circ}(x)=\frac{1}{2\pi} \int_{\partial(U \cap \mathbb{C}_J)} f(s) ds_J S^{-1}_{\overline \bigD, R}(s,x) .$$
		\item
 {\bf(Integral representation of polyanalytic functions of order $3- \ell$, $0\leq \ell\leq 1$ )} If $f \in SH_L(W)$, the function $ \breve{f}_\ell(x):=\Delta^\ell \overline{\mathcal{D}}^{2-\ell}f(x)$ is left polyanalytic of order $3-\ell$ for $0\leq \ell\leq 1$ and it admits the following integral representation
		$$ \breve{f}_\ell(x)= \frac{1}{2\pi} \int_{\partial(U \cap \mathbb{C}_J)} S^{-1}_{\Delta^\ell \overline{\mathcal{D}}^{2-\ell},L}(s,x) ds_J f(s).$$
		If $f \in SH_R(W)$, the function $ \breve{f}_\ell(x):=f(x)\Delta^\ell \overline{\mathcal{D}}^{2-\ell}$ is right polyanalytic of order $3-\ell$ for $0\leq \ell\leq 1$ and it admits the following integral representation
		$$ \breve{f}_\ell(x)= \frac{1}{2\pi} \int_{\partial(U \cap \mathbb{C}_J)} f(s) ds_J  S^{-1}_{\Delta^\ell\overline{\mathcal D}^{2-\ell},R}(s,x).$$
		\item
 {\bf(Integral representation of anti-holomorphic Cliffordian functions of order $1$)}
  If $f \in SH_L(W)$, the function $ f_0 (x):=\bigD^2 f(x)$ is left anti-holomorphic Cliffordian of order $1$ and it admits the following integral representation
		$$f_{\circ}(x)=\frac{1}{2\pi} \int_{\partial(U \cap \mathbb{C}_J)} S^{-1}_{\bigD^2, L}(s,x)ds_J f(s).$$
		If $f \in SH_R(W)$, the function $ f_0(x):= f(x)\bigD^2$ is right anti-holomorphic Cliffordian of order $1$ and it admits the following integral representation
		$$ f_{\circ}(x)=\frac{1}{2\pi} \int_{\partial(U \cap \mathbb{C}_J)} f(s) ds_J S^{-1}_{\bigD^2, R}(s,x) .$$
	\end{itemize}
	Moreover, the integrals do not depend on $U$ nor on the imaginary unit $J \in \mathbb{S}$.
\end{theorem}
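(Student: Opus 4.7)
The plan is to derive each integral representation by starting from the slice hyperholomorphic Cauchy formula in Theorem \ref{Cauchy} and then applying the appropriate Fueter-Sce factor to both sides, moving the differential operator inside the integral. Concretely, for $f \in \mathcal{SH}_L(W)$ and any slice Cauchy domain $U$ with $\overline U \subset W$, we have
\begin{equation*}
f(x) = \frac{1}{2\pi}\int_{\partial(U\cap \mathbb{C}_J)} S_L^{-1}(s,x)\, ds_J\, f(s), \qquad x \in U,
\end{equation*}
and since $s \mapsto f(s)$ is fixed while $(s,x) \mapsto S_L^{-1}(s,x)$ is real-analytic on $\{x \notin [s]\}$, each of the operators $\Delta^{1-\ell}\mathcal{D}$, $\Delta$, $\overline{\mathcal{D}}$, $\Delta^\ell \overline{\mathcal{D}}^{2-\ell}$, $\mathcal{D}^2$ (all acting in $x$) commutes with the boundary integral by a standard differentiation-under-the-integral argument on the compact contour $\partial(U\cap\mathbb{C}_J)$ kept at positive distance from $x$.

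Once the operator is inside, the integrand $(\text{op})_x S_L^{-1}(s,x)$ is exactly the kernel computed in Theorems \ref{app1} through \ref{app7}, namely $S^{-1}_{\mathrm{op},L}(s,x)$. This gives each of the five left-hand formulas immediately, and the right case is obtained identically by starting from \eqref{Cauchyright} and applying the operator on the right (Theorems \ref{app1}-\ref{app7} provide the analogous right kernels). The final task is to check that $\tilde f_\ell$, $f^\circ$, $\breve f^\circ$, $\breve f_\ell$, $f_\circ$ really lie in the announced function spaces; but this is recorded in Proposition \ref{reg1} together with $\Delta^2 = \mathcal{D}\overline{\mathcal{D}} \cdot \mathcal{D}\overline{\mathcal{D}}$ and the Fueter-Sce theorem, since applying the missing powers of $\mathcal{D}$ and $\overline{\mathcal{D}}$ to $(\text{op})_x S_L^{-1}(s,x)$ recovers $F_5^L(s,x)$, which is monogenic in $x$.

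The one genuinely non-trivial point is the independence of the integral from the choice of $U$ and of the imaginary unit $J$. For this we exploit the fact, proved in Proposition \ref{reg1}, that each kernel $S^{-1}_{\mathrm{op},L}(s,x)$ is right slice hyperholomorphic in the variable $s$ on $\mathbb{R}^{6}\setminus [x]$. Hence the integrand $S^{-1}_{\mathrm{op},L}(s,x)\, ds_J\, f(s)$ is a slice hyperholomorphic differential form in $s$ (intrinsic right slice hyperholomorphicity of the kernel paired with left slice hyperholomorphicity of $f$), and the Cauchy integral theorem for slice hyperholomorphic functions, applied on $\mathbb{C}_J$, shows that replacing $\partial(U\cap\mathbb{C}_J)$ by $\partial(U'\cap\mathbb{C}_J)$ for any slice Cauchy domain $U\subset U'$ with $\overline{U'}\subset W$ leaves the integral unchanged; the independence from $J$ is then the standard consequence of the representation formula for slice functions, exactly as in the proof of Theorem \ref{Cauchy}.

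The main obstacle I anticipate is therefore not the computation (which is essentially a bookkeeping combination of Theorems \ref{app1}-\ref{app7} and Proposition \ref{reg1}), but the verification that all these intermediate integrands are genuinely right slice hyperholomorphic in $s$, so that the $U$- and $J$-independence argument goes through uniformly across the five cases. Once Remark \ref{fkernels} and the explicit expressions in Theorems \ref{app1}-\ref{app7} are invoked, each kernel appears as a left-combination of the intrinsic slice hyperholomorphic functions $\mathcal{Q}_{c,s}(x)^{-m}$, $s\mathcal{Q}_{c,s}(x)^{-m}$, $s^2\mathcal{Q}_{c,s}(x)^{-m}$, $s^3\mathcal{Q}_{c,s}(x)^{-m}$ in $s$, and this slice hyperholomorphicity is transparent, so the integral-independence argument closes cleanly.
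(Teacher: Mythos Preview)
Your proposal is correct and follows essentially the same approach as the paper: start from the slice hyperholomorphic Cauchy formula, push the differential operator $\Delta^{1-\ell}\mathcal{D}$, $\Delta$, $\overline{\mathcal{D}}$, $\Delta^\ell\overline{\mathcal{D}}^{2-\ell}$, or $\mathcal{D}^2$ through the integral, and identify the resulting kernel via Theorems~\ref{app1}--\ref{app7}; the regularity of the output then comes from Proposition~\ref{reg1}. The paper's proof is more terse (it treats only the $\ell+1$-harmonic case explicitly and leaves the rest to analogy), and it does not spell out the $U$- and $J$-independence argument in the proof itself---that independence is inherited directly from Theorem~\ref{Cauchy} once the operator is applied, so your more explicit justification via the right slice hyperholomorphicity of the kernels in $s$ (which is indeed the content of Proposition~\ref{reg1}) is a reasonable elaboration but not a different route.
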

\begin{proof}
	We prove the integral representation for the $\ell+1$-harmonic functions starting from a left slice hyperholomorphic functions since the other cases can be proved with similar arguments. We start by using the Cauchy formula. By Theorem \ref{app1} and Theorem \ref{app3}, we have for $0\leq \ell\leq 1$
	$$ \tilde{f}_\ell(x)=\Delta^{1-\ell}\mathcal{D}f(x)= \frac{1}{2 \pi} \int_{\partial (U \cap \mathbb{C}_J)} \Delta^{1-\ell}\mathcal{D} S^{-1}_L(s,x) ds_I f(s)= \frac{1}{2\pi}\int_{\partial (U \cap \mathbb{C}_J)} S^{-1}_{\Delta^{1-\ell}\mathcal D,L}(s,x) ds_I f(s).$$
	By Proposition \ref{reg} the function $\tilde{f}_\ell(x)$ is $\ell+1$-harmonic.
\end{proof}

\section{Series expansion of the kernels of the fine structures spaces}\label{serieskernels}

In this section our aim is to write the kernel of the previous integral theorems in terms of convergent series of $x$ and $\bar{x}$. In order to do this we need to investigate the application of the operators $\mathcal{D}$, $ \Delta$, $\Delta \mathcal{D}$, $\mathcal{\overline{D}}$, $\mathcal{\overline{D}}^2$, $ \mathcal{D}^2$ and $ \Delta \overline{\mathcal{D}}$ to the monomial $x^m$, with $m \in \mathbb{N}$. We already know that:
\begin{lemma}{\cite[Lemma 1]{B} }\label{begher}
For $m \geq 1$ we have
\begin{equation}
\label{dirac}
\mathcal{D}(x^m)=(x^m) \mathcal{D}=-4\sum_{k=0}^{m-1}\overline x^{m-k-1}x^k=-4\sum_{k=1}^m x^{m-k}\overline x^{k-1}.
\end{equation}
\end{lemma}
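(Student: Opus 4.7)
I would prove the lemma by induction on $m$ in the case $n=5$ (so that the prefactor $-4 = 1-n$ appears naturally).

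For the base case $m=1$, a direct computation gives $\partial_{x_0}(x)=1$ and $\partial_{x_i}(x)=e_i$, so
$$\mathcal{D}(x) \;=\; 1 + \sum_{i=1}^{5} e_i^2 \;=\; 1-5 \;=\; -4,$$
which matches $-4\,\overline{x}^{0} x^{0}$. The right-action computation $(x)\mathcal{D}$ gives the same.

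For the inductive step, I would factor $x^{m+1} = x^m \cdot x$ and apply the ordinary Leibniz rule componentwise (since $\partial_{x_0},\partial_{x_i}$ are scalar derivations). Using $\partial_{x_0}(x^m) = m x^{m-1}$ (which holds because the $x_0$-dependence of $x^m$ is polynomial in the commuting pair $x,\overline{x}$) and $\partial_{x_i}(x)=e_i$, one obtains
$$\mathcal{D}(x^{m+1}) \;=\; x^m \;+\; \mathcal{D}(x^m)\cdot x \;+\; \sum_{i=1}^{n} e_i\, x^m\, e_i,$$
after the cancellation $(m+1)x^m - m x^{m-1}\cdot x = x^m$. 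The heart of the proof is the evaluation of the sandwich sum $\sum_i e_i x^m e_i$.

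To handle this, I would use the fact that $x$ lies in the real plane spanned by $1$ and $\underline{x}$, so one can write $x^m = A_m + \underline{x}\,B_m$ with real-valued coefficients $A_m,B_m$. The Clifford identities $\sum_i e_i^2 = -n$ and $\sum_i e_i e_j e_i = (n-2)e_j$ (the latter from $e_ie_je_i = -e_j$ when $i=j$ and $e_ie_je_i = e_j$ when $i\neq j$, by anticommutativity) yield $\sum_i e_i\underline{v}\,e_i = (n-2)\underline{v}$ for any vector $\underline v$. Hence
$$\sum_{i=1}^{n} e_i\, x^m\, e_i \;=\; -n A_m + (n-2)\,\underline{x}\,B_m \;=\; (n-2)x^m - (2n-2)A_m.$$
Since $x,\overline{x}$ commute one has $\overline{x^m}=\overline{x}^m$, so $A_m = \tfrac{1}{2}(x^m+\overline{x}^m)$, and the expression collapses to $-x^m - (n-1)\overline{x}^m$. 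Plugging back,
$$\mathcal{D}(x^{m+1}) \;=\; \mathcal{D}(x^m)\cdot x \;-\; (n-1)\overline{x}^m \;=\; \mathcal{D}(x^m)\cdot x \;-\; 4\,\overline{x}^m \quad (n=5),$$
and the induction hypothesis $\mathcal{D}(x^m) = -4\sum_{k=0}^{m-1} \overline{x}^{m-k-1} x^k$ gives $\mathcal{D}(x^{m+1}) = -4\sum_{k=0}^{m}\overline{x}^{m-k} x^k$, completing the induction.

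Finally, the two sums $\sum_{k=0}^{m-1}\overline{x}^{m-k-1}x^k$ and $\sum_{k=1}^{m} x^{m-k}\overline{x}^{k-1}$ coincide because $x\overline{x}=\overline{x}x$: both equal $\sum_{j+k=m-1,\ j,k\ge 0} \overline{x}^j x^k$. The identity $\mathcal{D}(x^m) = (x^m)\mathcal{D}$ follows either from a symmetric induction on the right-action $x^m\mathcal{D} = \partial_{x_0}(x^m)+\sum_i\partial_{x_i}(x^m)e_i$ (using the mirror identity $\sum_i e_i x^m e_i$ again), or by observing that both final expressions depend only on the commuting quantities $x,\overline{x}$. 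The main obstacle is precisely the sandwich identity for $\sum_i e_i x^m e_i$; once the decomposition $x^m = \tfrac{1}{2}(x^m+\overline{x}^m) + \underline{x}B_m$ is in hand, the Clifford arithmetic is routine.
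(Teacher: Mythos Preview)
Your proof is correct. Note, however, that the paper does not give its own proof of this lemma: it is quoted directly from Begehr's paper \cite[Lemma 1]{B} as a known identity, so there is no in-paper argument to compare against. Your induction via the Leibniz expansion $\mathcal{D}(x^{m+1}) = x^m + \mathcal{D}(x^m)\,x + \sum_i e_i x^m e_i$, together with the sandwich computation $\sum_i e_i x^m e_i = -x^m - (n-1)\overline{x}^m$ obtained from the axial decomposition $x^m = A_m + \underline{x}\,B_m$, is a clean and self-contained proof in the case $n=5$. The only cosmetic point is that the justification ``$\partial_{x_0}(x^m)=mx^{m-1}$ because the $x_0$-dependence is polynomial in the commuting pair $x,\overline{x}$'' is slightly roundabout; the more direct reason is simply that $x_0$ is a real scalar, so $\partial_{x_0}$ obeys the ordinary Leibniz rule and $\partial_{x_0}(x)=1$ commutes with every factor.
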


We will use the following well-known equality
\begin{equation}
\label{laplacian_formula}
\Delta (xf(x))=x\Delta f(x)+2\mathcal{D}f(x),
\end{equation}
for any $x\in\mathbb R^6$ and for any $\mathcal{C}^2$ function $f$.
\begin{proposition}\label{laplacian}
Let $ x \in \mathbb{R}^6$ and $m\geq 2$. Then we have
\begin{equation}
\label{delta}
\Delta x^m=-8\sum_{k=1}^{m-1}(m-k)x^{m-k-1}\overline x^{k-1}=-8\sum_{k=1}^{m-1}k x^{k-1}\bar{x}^{m-k-1}.
\end{equation}

\end{proposition}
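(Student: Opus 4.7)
The plan is to prove the identity by induction on $m$, using the Leibniz-type rule \eqref{laplacian_formula} together with Lemma \ref{begher}.

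First I would handle the base case $m=2$. A direct computation gives $\mathcal{D}(x) = 1 + \sum_{i=1}^{5}e_i^2 = 1-5=-4$, and $\Delta(x) = 0$ since $x$ is linear in the real coordinates. Applying \eqref{laplacian_formula} with $f(x) = x$ yields $\Delta(x^2) = x \cdot 0 + 2(-4) = -8$, which matches the right-hand side of \eqref{delta} for $m=2$ (a single term with $k=1$).

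For the inductive step, assume the formula holds for some $m \geq 2$. The key observation is that $x$ and $\bar{x}$ commute, because $x+\bar{x}=2x_0 \in \mathbb{R}$ and $x\bar{x}=|x|^2 \in \mathbb{R}$, so the subalgebra they generate is commutative. Applying \eqref{laplacian_formula} with $f(x)=x^m$ gives
\[
\Delta(x^{m+1}) = x \Delta(x^m) + 2 \mathcal{D}(x^m),
\]
and substituting the inductive hypothesis and Lemma \ref{begher} (reordering $\bar{x}^{m-k-1}x^k = x^k\bar{x}^{m-k-1}$ by commutativity) I obtain
\[
\Delta(x^{m+1}) = -8\sum_{k=1}^{m-1}(m-k)x^{m-k}\overline{x}^{k-1} - 8\sum_{k=0}^{m-1} x^k\bar{x}^{m-k-1}.
\]
Reindexing the second sum via $j = m-k$ (so $j$ runs from $1$ to $m$) yields $\sum_{j=1}^{m} x^{m-j}\bar{x}^{j-1}$. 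Matching the coefficient of $x^{m-k}\bar{x}^{k-1}$ in both sums then produces $-8(m-k)-8 = -8(m+1-k)$ for $1 \leq k \leq m-1$ and $-8 = -8(m+1-m)$ for $k=m$, giving exactly
\[
\Delta(x^{m+1}) = -8\sum_{k=1}^{m}(m+1-k)x^{m-k}\bar{x}^{k-1},
\]
which is \eqref{delta} with $m$ replaced by $m+1$.

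Finally, the equivalence of the two forms in \eqref{delta} follows from the substitution $j = m-k$: as $k$ runs from $1$ to $m-1$, so does $j$, and $(m-k)x^{m-k-1}\bar{x}^{k-1} = j x^{j-1}\bar{x}^{m-j-1}$. The main obstacle is minor and amounts to careful bookkeeping of indices in the inductive step; the only conceptual point worth flagging explicitly is the commutativity of $x$ and $\bar{x}$, which is needed both to apply the inductive hypothesis cleanly and to recombine the output of Lemma \ref{begher} into the desired ordering.
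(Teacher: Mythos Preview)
Your proof is correct and follows essentially the same approach as the paper: induction on $m$ using the Leibniz-type rule \eqref{laplacian_formula} together with Lemma \ref{begher}, with the same index manipulations in the inductive step. Your write-up is slightly more complete in that you explicitly justify the commutativity of $x$ and $\bar{x}$ and verify the second equality in \eqref{delta} via the substitution $j=m-k$, points the paper leaves implicit.
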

\begin{proof}
The proof is by induction on $m$. For $m=2$ and $x=x_0+\sum_{i=1}^5 e_i x_i=x_0+\underline x$ we have
$$\Delta x^2=\Delta (x_0^2+2x_0\underline x-|\underline x|^2)=-8.$$
Let us suppose that the statement is true for $m$, we want to prove it for $m+1$. Then, we have
\[
\begin{split}
\Delta	x^{m+1}	&=\Delta (x^{m} x)=2 \mathcal{D}(x^m)+x\Delta x^m\\
&=-8\sum_{k=1}^m x^{m-k}\overline x^{k-1}-8\sum_{k=1}^{m-1}(m-k)x^{m-k}\overline x^{k-1}\\
&=-8\sum_{k=1}^m x^{m-k}\overline x^{k-1}-8\sum_{k=1}^{m}(m-k)x^{m-k}\overline x^{k-1}\\
&=-8\sum_{k=1}^m (m-k+1)x^{m-k}\overline x^{k-1},
\end{split}
\]
where the first equality is an application of formula \eqref{laplacian_formula} and the second equality is consequence of the inductive hypothesis and formula \eqref{dirac}. The second equality follows by rearranging the indexes.
\end{proof}

\begin{proposition}
\label{double_dirac}
Let $x\in\mathbb R^6$, for $m\geq 2$ we have
$$ \mathcal{D}^2(x^m)=(x^m) \mathcal{D}^2=-8\sum_{k=1}^{m-1}k x^{m-k-1}\overline{x}^{k-1}.$$
\end{proposition}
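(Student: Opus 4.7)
The strategy is to reduce $\mathcal{D}^2$ to a combination of operators that have already been computed on $x^m$. Since $\mathcal{D} + \overline{\mathcal{D}} = 2\partial_{x_0}$ directly from the definitions of the Dirac operator and its conjugate, and since $\overline{\mathcal{D}}\mathcal{D} = \Delta$ is used throughout the paper, we obtain the operator identity
$$
\mathcal{D}^2 = (2\partial_{x_0} - \overline{\mathcal{D}})\mathcal{D} = 2\partial_{x_0}\mathcal{D} - \Delta.
$$
Hence it suffices to evaluate $\partial_{x_0}\mathcal{D}(x^m)$, since $\Delta x^m$ is already given by Proposition \ref{laplacian}.

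The first step is to apply Lemma \ref{begher}, which gives $\mathcal{D}(x^m) = -4\sum_{k=1}^{m} x^{m-k}\bar{x}^{k-1}$, and differentiate termwise with respect to $x_0$. Since $x$ and $\bar{x}$ commute and satisfy $\partial_{x_0}x = \partial_{x_0}\bar{x} = 1$, the ordinary Leibniz rule applies to each monomial $x^{m-k}\bar{x}^{k-1}$, producing
$$
\partial_{x_0}\mathcal{D}(x^m) = -4\sum_{k=1}^{m}\!\left[(m-k)x^{m-k-1}\bar{x}^{k-1} + (k-1)x^{m-k}\bar{x}^{k-2}\right].
$$
Shifting $k \mapsto k+1$ in the second sum and noting that the boundary terms at $k=m$ (first sum) and $k=1$ (second sum) vanish, the two sums telescope into a single sum with constant coefficient: $\partial_{x_0}\mathcal{D}(x^m) = -4m\sum_{k=1}^{m-1} x^{m-k-1}\bar{x}^{k-1}$.

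The second step is to use Proposition \ref{laplacian} in the form $\Delta x^m = -8\sum_{k=1}^{m-1}(m-k)x^{m-k-1}\bar{x}^{k-1}$ (which is one of the two equivalent expressions given there) and substitute into $\mathcal{D}^2 x^m = 2\partial_{x_0}\mathcal{D} x^m - \Delta x^m$. The coefficient of each monomial $x^{m-k-1}\bar{x}^{k-1}$ collapses as $-8m + 8(m-k) = -8k$, which is exactly the claimed formula. The identity $(x^m)\mathcal{D}^2 = \mathcal{D}^2(x^m)$ follows by repeating the argument with the right-acting derivatives: Lemma \ref{begher} already asserts that the left and right actions of $\mathcal{D}$ on $x^m$ coincide, and the identity $\mathcal{D}^2 = 2\partial_{x_0}\mathcal{D} - \Delta$ is symmetric between left and right action.

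There is no substantive obstacle; the only care required is in the index bookkeeping when re-aligning the two sums produced by the Leibniz rule. The key structural input is the commutation of $x$ and $\bar{x}$, which is what makes the monomials $x^a\bar{x}^b$ behave like ordinary scalar polynomials under $\partial_{x_0}$ and legitimizes the simple Leibniz expansion in Step 1.
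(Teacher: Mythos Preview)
Your proof is correct and takes a genuinely different route from the paper. The paper proceeds by induction on $m$: it writes $x^{m+1}=x^m(\bar x+x)-x^{m-1}|x|^2$, applies the Leibniz rule for $\mathcal D$ twice, and uses the inductive hypothesis together with Lemma~\ref{begher} to close the recursion. Your approach instead exploits the operator identity $\mathcal D^2=2\partial_{x_0}\mathcal D-\Delta$ and reduces the problem to Lemma~\ref{begher} and Proposition~\ref{laplacian} (both already available at this point in the paper), avoiding induction altogether. The upshot is that your argument is shorter and more structural; the paper's induction, on the other hand, is self-contained in the sense that it does not rely on the Laplacian formula. One small wording issue: the two sums from the Leibniz expansion do not ``telescope'' but simply combine after the index shift, since $(m-k)+k=m$; the conclusion $\partial_{x_0}\mathcal D(x^m)=-4m\sum_{k=1}^{m-1}x^{m-k-1}\bar x^{k-1}$ is nonetheless correct.
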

\begin{proof}
We show the result by induction on $m$. For $m=2$, we have by formula \eqref{dirac} that
$$ \mathcal{D}^2(x^2)=\mathcal{D}^2(x_0^2+2x_0\underline x-|\underline x|^2)=-8\mathcal{D}(x_0)=-8.$$
We suppose the statement is true for $m$ and we prove it for $m+1$. First we observe that
$$ x^{m+1}=x^m(\overline x+x)-x^{m-1}|x|^2 .$$
Thus, by the Leibniz formula for the Dirac operator and the fact that $ \mathcal{D} |x|^2=2x$ we get
\[
\begin{split}
\mathcal{D}(x^{m+1})& =\mathcal{D}(x^m(\overline x+x))-\mathcal{D}(x^{m-1}|x|^2)\\
&= \mathcal{D}(x^m)(\overline x+x) +2x^m-\mathcal{D}(x^{m-1})|x|^2-2x^m\\
&= \mathcal{D}(x^m)(\overline x+x)-\mathcal{D}(x^{m-1})|x|^2.
\end{split}
\]
By using another time the Leibniz formula and the inductive hypothesis we get
\[
\begin{split}
&\mathcal{D}^2(x^{m+1})=\mathcal{D}\left (\mathcal{D}(x^m)(\overline x+x)-\mathcal{D}(x^{m-1})|x|^2\right)\\
&=(\mathcal{D}^2x^m)(x+\overline x)+2\mathcal{D}x^m-\mathcal{D}^2(x^{m-1})|x|^2-2x\mathcal{D}(x^{m-1})\\
&=-8\sum_{k=1}^{m-1}k x^{m-k}\overline x^{k-1}-8\sum_{k=1}^{m-1}k x^{m-k-1}\overline x^{k}-8\sum_{k=1}^{m} x^{m-k}\overline x^{k-1}+8\sum_{k=1}^{m-2}k x^{m-k-1}\overline x^{k}+8\sum_{k=1}^{m-1}x^{m-k}\overline x^{k-1}\\
&=-8\sum_{k=1}^{m-1}k x^{m-k}\overline x^{k-1}-8(m-1) \bar{x}^{n-1}-8 \bar{x}^{m-1}\\
&=-8\sum_{k=1}^{m}k x^{m-k}\overline x^{k-1}.
\end{split}
\]
Finally since $ \mathcal{D} (x^m)= (x^m) \mathcal{D}$ we get
$$ (x^m) \mathcal{D}^2=\left( (x^m) \mathcal{D}\right) \mathcal{D}= \mathcal{D} (x^m) \mathcal{D}=\mathcal{D}^2 (x^m).$$
\end{proof}

Now, we need some preliminaries results to get a formula for $ \Delta \mathcal{D} x^m$.
\begin{lemma}
\label{conju}
Let $f: \mathbb{R}^{n+1} \to \mathbb{R}_n$ then
$$ \overline{\Delta f(x)}= \Delta \overline{f}(x).$$
\end{lemma}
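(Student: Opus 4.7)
The plan is to exploit the fact that the Laplacian $\Delta = \sum_{i=0}^{n} \partial_{x_i}^2$ is a scalar real second-order differential operator, while Clifford conjugation is an $\mathbb{R}$-linear involution acting only on the Clifford basis elements $e_A$. Since differentiation in the real variables $x_0,\ldots,x_n$ has no interaction with the Clifford basis, the two operations must commute componentwise. The only implicit hypothesis needed is that $f$ is of class $\mathcal{C}^2$, so that $\Delta f$ is defined.

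Concretely, I would first write $f$ in components as $f(x)=\sum_{A} f_A(x)\, e_A$, where each scalar component $f_A:\mathbb{R}^{n+1}\to\mathbb{R}$ is of class $\mathcal{C}^2$. By $\mathbb{R}$-linearity of conjugation one has $\overline{f}(x)=\sum_{A} f_A(x)\, \overline{e_A}$. Since the elements $e_A$ and $\overline{e_A}$ are constants with respect to $x$, applying $\Delta$ term by term gives
$$\Delta f(x) = \sum_{A} (\Delta f_A)(x)\, e_A, \qquad \Delta \overline{f}(x) = \sum_{A} (\Delta f_A)(x)\, \overline{e_A}.$$
Taking the Clifford conjugate of the first identity and invoking once more the $\mathbb{R}$-linearity of conjugation yields $\overline{\Delta f(x)} = \sum_{A}(\Delta f_A)(x)\, \overline{e_A}$, which is precisely $\Delta \overline{f}(x)$.

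There is really no obstacle here; the statement is a one-line consequence of two facts: that $\Delta$ is a scalar real operator (so it acts componentwise on any Clifford-valued function) and that Clifford conjugation is $\mathbb{R}$-linear on $\mathbb{R}_n$. The same argument, mutatis mutandis, would show that any constant-coefficient real-linear differential operator commutes with the Clifford conjugation.
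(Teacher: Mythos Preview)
Your proof is correct and is essentially the same argument as the paper's: decompose $f$ into real components $f_A$ along the Clifford basis $e_A$, use that $\Delta$ acts only on the scalar components while conjugation acts only on the basis elements, and conclude that the two operations commute. The paper's proof is just a one-line display of this same chain of equalities.
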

\begin{proof}
We know that we can write $f(x)= \sum_{A \subset \{1,...,n\}} e_A f_A$, thus we have
$$ \overline{\Delta f(x)}= \sum_{A \subset \{1,...,n\}} e_A \Delta f_A(x_A)= \sum_{A \subset \{1,...,n\}} \overline{e}_A \Delta f_A(x_A)= \Delta \left(\sum_{A \subset \{1,...,n\}} \overline{e}_A f_A(x_A)\right)=\Delta \overline{f}(x).$$
\end{proof}

\begin{corollary}
\label{plus1}
Let $m \geq 2$. Then for $x \in \mathbb{R}^6$ we have
\begin{equation}
\label{plus3}
\overline{\Delta x^m}= \Delta \overline{x}^m.
\end{equation}
\end{corollary}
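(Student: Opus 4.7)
The plan is to apply Lemma \ref{conju} directly to the function $f(x)=x^m$, which is clearly of class $\mathcal{C}^2$ (in fact polynomial), so the lemma gives
$$\overline{\Delta x^m} = \Delta\, \overline{x^m}.$$
Thus the entire statement reduces to identifying $\overline{x^m}$ with $\overline{x}^m$, after which the corollary is immediate.

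To close that gap I would use the fact that Clifford conjugation is an anti-involution on $\mathbb{R}_n$, that is, $\overline{ab}=\overline{b}\,\overline{a}$ for all $a,b\in\mathbb{R}_n$. Applying this iteratively to the product $x\cdot x\cdots x$ ($m$ factors) gives $\overline{x^m}=\overline{x}\cdot\overline{x}\cdots \overline{x}=\overline{x}^m$; the order-reversal is harmless because all $m$ factors are identical. (Alternatively, one can note that paravectors generate a commutative subalgebra, so $x$ and $\overline{x}$ commute with themselves and the anti-involution acts as an involution on this subalgebra.)

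Combining the two equalities yields $\overline{\Delta x^m}=\Delta\,\overline{x^m}=\Delta\,\overline{x}^m$, which is precisely the claim. There is no real obstacle here: the only subtlety worth spelling out is the anti-multiplicativity of Clifford conjugation and the observation that it does not obstruct the identity $\overline{x^m}=\overline{x}^m$ because the factors coincide. The hypothesis $m\geq 2$ is needed only to guarantee that $\Delta x^m$ and $\Delta \overline{x}^m$ are not identically zero and that Proposition \ref{laplacian} applies, but the argument itself is purely algebraic and works for any $m$.
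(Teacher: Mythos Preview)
Your proof is correct and follows exactly the paper's approach: the paper's proof is the single sentence ``If we consider $n=5$ and $f(x)=x^m$ in Lemma \ref{conju}, we get the statement,'' and you have simply made explicit the identification $\overline{x^m}=\overline{x}^m$ that the paper leaves tacit. One small quibble: your parenthetical that ``paravectors generate a commutative subalgebra'' is false as stated (all paravectors together generate the whole Clifford algebra); what is true, and what you actually use, is that a \emph{single} paravector $x$ commutes with its conjugate $\overline{x}$, which is already guaranteed by your main anti-involution argument.
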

\begin{proof}
If we consider $n=5$ and $f(x)=x^m$ in Lemma \ref{conju}, we get the statement.
\end{proof}
\begin{corollary}
\label{conj}
Let $m \geq 2$. Then for any $x \in \mathbb{R}^6$ we have
$$ \Delta \bar{x}^m= -8 \sum_{k=1}^{m-1} k x^{m-k-1} \bar{x}^{k-1}.$$
\end{corollary}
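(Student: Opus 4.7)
The plan is to deduce the formula by conjugating the analogous identity for $\Delta x^m$ established in Proposition \ref{laplacian}, using the conjugation-Laplacian commutation of Corollary \ref{plus1}. Concretely, I would start from the second form of Proposition \ref{laplacian}, namely
$$\Delta x^m \;=\; -8 \sum_{k=1}^{m-1} k\, x^{k-1}\,\bar{x}^{m-k-1},$$
and apply Clifford conjugation to both sides.

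On the left side, Corollary \ref{plus1} immediately gives $\overline{\Delta x^m} = \Delta \bar{x}^m$. On the right side, I would use that Clifford conjugation is an $\mathbb{R}$-linear anti-involution: for any two Clifford numbers $a,b$ one has $\overline{ab}=\bar b\,\bar a$, and moreover $\overline{\bar{x}}=x$. Applied term by term this yields
$$\overline{x^{k-1}\,\bar{x}^{m-k-1}} \;=\; \overline{\bar{x}^{m-k-1}}\;\overline{x^{k-1}} \;=\; x^{m-k-1}\,\bar{x}^{k-1},$$
and since the scalar coefficients $-8k$ are real they are unaffected by conjugation. Combining these observations produces
$$\Delta \bar{x}^m \;=\; -8 \sum_{k=1}^{m-1} k\, x^{m-k-1}\,\bar{x}^{k-1},$$
which is exactly the claimed identity.

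There is essentially no obstacle here, since the whole argument is a one-line consequence of Proposition \ref{laplacian}, Corollary \ref{plus1}, and the anti-homomorphism property of Clifford conjugation; the only point that requires a moment of attention is making sure to pick the correct equivalent form of $\Delta x^m$ from Proposition \ref{laplacian} (the one with $x^{k-1}$ on the left and $\bar{x}^{m-k-1}$ on the right) so that, after conjugation reverses the order of the factors, the exponents land in the positions stated in the corollary.
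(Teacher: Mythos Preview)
Your proposal is correct and takes essentially the same approach as the paper: the paper's proof simply invokes Corollary \ref{plus1} and Proposition \ref{laplacian}, and you have spelled out exactly how these combine via the anti-involution property of Clifford conjugation. The only additional detail you supply is the explicit verification that conjugating $x^{k-1}\bar{x}^{m-k-1}$ yields $x^{m-k-1}\bar{x}^{k-1}$, which is precisely the computation the paper leaves implicit.
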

\begin{proof}
It follows by Corollary \ref{plus1} and Proposition \ref{laplacian_formula}.
\end{proof}
\begin{lemma}
\label{somma}
Let $m\geq 3$, for any $x\in\mathbb R^6$ we have
$$\sum_{k=1}^m \Delta (x^{m-k}\overline x^{k-1})=-4\sum_{k=1}^{m-2}(m-k-1)kx^{m-k-2}\overline x^{k-1}.$$
\end{lemma}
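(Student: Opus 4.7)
The plan is to proceed by induction on $m\geq 3$ and exploit the recursive structure of the sum together with formula \eqref{laplacian_formula}. I set $S_m := \sum_{k=1}^m x^{m-k}\bar x^{k-1}$, observe via Lemma \ref{begher} that $S_m = -\tfrac14 \mathcal D x^m$, and note by an index shift that $S_{m+1} = xS_m + \bar x^m$. Applying $\Delta$ to this identity and invoking $\Delta(xf) = x\Delta f + 2\mathcal D f$ from formula \eqref{laplacian_formula} yields the key recursion
\begin{equation*}
\Delta S_{m+1} \;=\; x\,\Delta S_m + 2\mathcal D S_m + \Delta \bar x^m .
\end{equation*}

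For the base case $m=3$ I would verify the formula directly: by Proposition \ref{laplacian} and Corollary \ref{conj} one has $\Delta x^2 = \Delta \bar x^2 = -8$, and $\Delta(x\bar x) = \Delta|x|^2 = 12$, so the left-hand side collapses to $-4$, matching the right-hand side $-4\cdot 1\cdot 1$.

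For the inductive step, the first move is to rewrite the middle two terms $2\mathcal D S_m + \Delta \bar x^m$ in closed form. Here I would combine Proposition \ref{double_dirac}, which gives $\mathcal D^2 x^m = -8\sum_{k=1}^{m-1} k\, x^{m-k-1}\bar x^{k-1}$, with Corollary \ref{conj}, which says that this same expression equals $\Delta \bar x^m$. Hence $\mathcal D S_m = -\tfrac14 \mathcal D^2 x^m = -\tfrac14 \Delta \bar x^m$, so $2\mathcal D S_m + \Delta \bar x^m = \tfrac12 \Delta \bar x^m$. Substituting the inductive hypothesis $x\,\Delta S_m = -4\sum_{k=1}^{m-2}(m-k-1)k\, x^{m-k-1}\bar x^{k-1}$ (which extends harmlessly to $k=m-1$, where $(m-k-1)k$ vanishes), and adding the $\tfrac12 \Delta \bar x^m$ contribution term by term, the coefficients $(m-k-1)k + k = (m-k)k$ combine to give
\begin{equation*}
\Delta S_{m+1} \;=\; -4\sum_{k=1}^{m-1}(m-k)k\, x^{m-k-1}\bar x^{k-1},
\end{equation*}
which is exactly the statement of the lemma with $m$ replaced by $m+1$.

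The only delicate point I anticipate is the index bookkeeping when combining the two sums whose upper limits differ by one; once one trivially extends the range of the first sum, the merging of coefficients is automatic. No ingredient beyond Lemma \ref{begher}, Proposition \ref{laplacian}, Proposition \ref{double_dirac}, Corollary \ref{conj} and formula \eqref{laplacian_formula} is required.
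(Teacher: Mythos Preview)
Your proof is correct and follows essentially the same inductive strategy as the paper: split $S_{m+1}=xS_m+\bar x^m$, apply $\Delta(xf)=x\Delta f+2\mathcal D f$, and combine the inductive hypothesis with Proposition~\ref{double_dirac} and Corollary~\ref{conj}. Your presentation is marginally tidier in that you spot $\mathcal D^2 x^m=\Delta\bar x^m$ to collapse $2\mathcal D S_m+\Delta\bar x^m$ into $\tfrac12\Delta\bar x^m$ before the final merge, whereas the paper carries three separate sums to the end, but the two arguments are the same in substance.
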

\begin{proof}
We shall prove this formula by induction on $m$. For $m=3$ we have
$$
\Delta(x^2+x\overline x+\overline x^2)=\Delta (3x_0^2-|\underline x|^2)=-4.
$$
Let us suppose the statement is true for $m$, we want to prove it for $m+1$. Now, formula \eqref{laplacian_formula} and Corollary \ref{conj} imply that
\[
\begin{split}
\sum_{k=1}^{m+1}\Delta(x^{m+1-k}\overline x^{k-1})&=\sum_{k=1}^m\Delta(x^{m+1-k}\overline x^{k-1})+\Delta \overline x^m\\
&= 2\sum_{k=1}^m \mathcal{D}(x^{m-k}\overline x^{k-1})+x\sum_{k=1}^m \Delta(x^{m-k}\overline x^{k-1})-8\sum_{k=1}^{m-1}kx^{m-k-1}\overline x^{k-1}.
\end{split}
\]
Finally, by formula \eqref{dirac}, the inductive hypothesis and Lemma \ref{double_dirac} we get
\begin{eqnarray*}
\sum_{k=1}^{m+1}\Delta(x^{m+1-k}\overline x^{k-1})&=& -\frac 12 \mathcal{D}^2x^m+x\sum_{k=1}^m\Delta (x^{m-k}\overline x^{k-1})-8\sum_{k=1}^{m-1}kx^{m-k-1}\overline x^{k-1}\\
&=& -\frac 12 \mathcal{D}^2x^m-4\sum_{k=1}^{m-2} (m-k-1)k x^{m-k-1}\overline x^{k-1}-8\sum_{k=1}^{m-1}kx^{m-k-1}\overline x^{k-1}\\
&=& 4 \sum_{k=1}^{m-1}kx^{m-k-1}\overline x^{k-1}-4\sum_{k=1}^{m-2} (m-k-1)k x^{m-k-1}\overline x^{k-1}-8\sum_{k=1}^{m-1}kx^{m-k-1}\overline x^{k-1}\\
&=& -4\sum_{k=1}^{m-1}(m-k)k x^{m-k-1}\overline x^{k-1}.
\end{eqnarray*}
\end{proof}

\begin{proposition}\label{dirac_laplacian}
Let $m\geq 3$, for any $x\in\mathbb R^6$ we have
$$ \Delta \mathcal{D}x^m=16\sum_{k=1}^{m-2}(m-k-1)k x^{m-k-2} \overline x^{k-1}.$$
\end{proposition}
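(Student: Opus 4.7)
The plan is to combine the two previously established identities: formula \eqref{dirac} from Lemma \ref{begher}, which expresses $\mathcal{D}(x^m)$ as a sum of mixed monomials in $x$ and $\overline{x}$, and Lemma \ref{somma}, which evaluates the Laplacian applied termwise to exactly that sum. Since $\Delta$ is a linear differential operator, it commutes with the finite sum, so I would simply apply $\Delta$ to both sides of \eqref{dirac} and read off the result.

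Concretely, I would write
\[
\Delta\mathcal{D}(x^m) \;=\; \Delta\!\left(-4\sum_{k=1}^m x^{m-k}\overline{x}^{k-1}\right) \;=\; -4\sum_{k=1}^m \Delta\!\left(x^{m-k}\overline{x}^{k-1}\right),
\]
using Lemma \ref{begher}, and then invoke Lemma \ref{somma} to replace the remaining sum by $-4\sum_{k=1}^{m-2}(m-k-1)k\, x^{m-k-2}\overline{x}^{k-1}$. Multiplying the two factors of $-4$ yields the claimed coefficient $16$, and the formula follows immediately. The hypothesis $m\geq 3$ is required precisely so that the outer sum in Lemma \ref{somma} is nonempty and the exponents $m-k-2$ remain nonnegative.

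There is really no obstacle here: all the hard combinatorial work was done in Lemma \ref{somma} (which was itself proved inductively using \eqref{laplacian_formula}, Corollary \ref{conj}, and Proposition \ref{double_dirac}). The present statement is a one-line corollary, so the proof will be short and essentially a substitution. If one preferred a self-contained argument that did not cite Lemma \ref{somma}, one could alternatively argue by induction on $m$ directly, using $\Delta(x\cdot f) = x\Delta f + 2\mathcal{D}f$ together with Proposition \ref{double_dirac} to expand $\Delta\mathcal{D}(x^{m+1}) = \Delta\mathcal{D}(x\cdot x^m)$, but this duplicates work already carried out and is unnecessary.
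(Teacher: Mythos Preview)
Your proposal is correct and matches the paper's approach exactly: the paper's proof is the single sentence ``It follows by formula \eqref{dirac} and Lemma \ref{somma}.'' Your write-up simply spells out that substitution in detail.
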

\begin{proof}
It follows by formula \eqref{dirac} and Lemma \ref{somma}.
\end{proof}
Let us recall the following fact
\begin{equation}
\label{l1}
\mathcal{D}_{\underline{x}}(\underline{x}^m)= \begin{cases}
-m \underline{x}^{m-1} \qquad m \quad \hbox{even},\\
-(m+4) \underline{x}^{m-1} \qquad m \quad \hbox{odd},
\end{cases}
\end{equation}
where $ \mathcal{D}_{\underline{x}}= \sum_{j=1}^5 e_j\frac{\partial}{\partial x_j}$.
\begin{proposition}
\label{l6}
Let $ m \geq 1$. For any $x \in \mathbb{R}^6$, if $ \underline{x} \neq 0$ we have
\begin{equation}
\label{l9}
\mathcal{\overline{D}}x^m=2 \left[mx^{m-1}+2 \sum_{k=1}^m x^{m-k} \bar{x}^{k-1}\right].
\end{equation}
On the other hand if $ \underline{x}=0$ we have
$$ \mathcal{\overline{D}}x^m=6mx^{m-1}.$$
Moreover,
$$ \mathcal{\overline{D}}(x^m)=(x^m) \mathcal{\overline{D}}.$$
\end{proposition}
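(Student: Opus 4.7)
The plan is to reduce the statement to Lemma \ref{begher} via the elementary identity $\mathcal{\overline{D}} = 2\partial_{x_0} - \mathcal{D}$, which is immediate from the definitions of $\mathcal{D}$ and $\mathcal{\overline{D}}$. Applied to the monomial $x^m$, this gives
\[
\mathcal{\overline{D}}(x^m) = 2\partial_{x_0}(x^m) - \mathcal{D}(x^m).
\]
Since $x_0$ is central in $\mathbb{R}_5$ and $\partial_{x_0} x = 1$, a one-line induction based on the Leibniz rule establishes $\partial_{x_0}(x^m) = m x^{m-1}$ for every $m \geq 1$. Substituting Lemma \ref{begher} in the form $\mathcal{D}(x^m) = -4\sum_{k=1}^m x^{m-k}\bar{x}^{k-1}$ yields
\[
\mathcal{\overline{D}}(x^m) = 2m x^{m-1} + 4\sum_{k=1}^m x^{m-k}\bar{x}^{k-1} = 2\Bigl[mx^{m-1} + 2\sum_{k=1}^m x^{m-k}\bar{x}^{k-1}\Bigr],
\]
which is exactly formula \eqref{l9}.

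The case $\underline{x} = 0$ is obtained by specialization: on the real axis $\bar{x} = x$, so each summand of $\sum_{k=1}^m x^{m-k}\bar{x}^{k-1}$ equals $x^{m-1}$, and the whole sum collapses to $m x^{m-1}$. Plugging into the expression just derived gives $\mathcal{\overline{D}}(x^m) = 2[m x^{m-1} + 2m x^{m-1}] = 6m x^{m-1}$, as claimed; note this is simply the continuous extension of \eqref{l9} to the real axis, and no separate computation is needed.

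Finally, the commutativity $\mathcal{\overline{D}}(x^m) = (x^m)\mathcal{\overline{D}}$ is inherited from the analogous statement for $\mathcal{D}$ in Lemma \ref{begher}, combined with the trivial identity $(x^m)\partial_{x_0} = \partial_{x_0}(x^m)$ (the scalar partial derivative does not depend on left vs.\ right convention). Rewriting both sides of the equality using $\mathcal{\overline{D}} = 2\partial_{x_0} - \mathcal{D}$ immediately yields the claim. There is no real obstacle: the entire proposition is an algebraic consequence of Lemma \ref{begher} together with the splitting $\mathcal{D} + \mathcal{\overline{D}} = 2\partial_{x_0}$.
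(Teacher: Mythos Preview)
Your proof is correct and takes a genuinely different route from the paper. The paper decomposes $\mathcal{\overline{D}}=\partial_{x_0}-\mathcal{D}_{\underline{x}}$ and expands $x^m=(x_0+\underline{x})^m$ via the binomial theorem, then uses the explicit formula \eqref{l1} for $\mathcal{D}_{\underline{x}}(\underline{x}^k)$ to arrive at the intermediate expression \eqref{l2}; to convert the odd-index binomial sum into $\sum_{k=1}^m x^{m-k}\bar{x}^{k-1}$ it must pass through $(\underline{x})^{-1}(x^m-\bar{x}^m)$, which is why the hypothesis $\underline{x}\neq 0$ appears and the real-axis case is treated separately. Your approach via the splitting $\mathcal{\overline{D}}=2\partial_{x_0}-\mathcal{D}$ is cleaner: it reduces everything to Lemma~\ref{begher} and the trivial computation of $\partial_{x_0}(x^m)$, avoids the binomial expansion and the division by $\underline{x}$, and shows in passing that \eqref{l9} is valid for all $x\in\mathbb{R}^6$ with the $\underline{x}=0$ statement being a mere specialization rather than an independent case. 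The commutativity argument is likewise more economical than the paper's, which repeats the binomial computation on the right.
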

\begin{proof}
We will perform a direct computations. By the Binomial theorem and formula \eqref{l1} we get
\begin{eqnarray*}
\mathcal{\overline{D}}(x^m)&=& \left(\frac{\partial}{\partial x_0}- \mathcal{D}_{\underline{x}}\right) (x_0+ \underline{x})^m=\left(\frac{\partial}{\partial x_0}- \mathcal{D}_{\underline{x}}\right) \left( \sum_{k=0}^m \binom{m}{k} x_{0}^{m-k} \underline{x}^k\right)\\
&=& \sum_{k=0}^{m-1} \binom{m}{k} (m-k) x_{0}^{m-k-1} \underline{x}^{k}- \sum_{k=0}^m \binom{m}{k}x_{0}^{m-k} \mathcal{D}_{\underline{x}} (\underline{x}^k)\\
&=& m \sum_{k=0}^{m-1} \frac{(m-1)!}{k!(m-k-1)!} x_{0}^{m-k-1} \underline{x}^{k}+m \sum_{k=1}^{m} \frac{(m-1)!}{(k-1)!(m-k)!} x_{0}^{m-k} \underline{x}^{k-1}+\\
&& + 4\sum_{k=1, k \, \, \hbox{odd}}^m \binom{m}{k} x_{0}^{m-k} \underline{x}^{k-1}.
\end{eqnarray*}
By rearranging the indices of the sum and by using another time the binomial theorem we get
\begin{equation}
\label{l2}
\mathcal{\overline{D}}(x^m)=2mx^{m-1}+ 4\sum_{k=1, k \, \, \hbox{odd}}^m \binom{m}{k} x_{0}^{m-k} \underline{x}^{k-1}.
\end{equation}
If $ \underline{x} =0$ we get
$$ \overline{D}x^m=6mx^{m-1}.$$
On the other side, if $ \underline{x} \neq 0$ by the Binomial theorem we have
\begin{eqnarray*}
2 \sum_{k=1, k \hbox{ odd}}^m \binom{m}{k} x_{0}^{m-k} \underline{x}^{k-1}&=& \sum_{k=1}^m \binom{m}{k} x_{0}^{m-k} \underline{x}^{k-1}+ \sum_{k=1}^m \binom{m}{k} x_{0}^{m-k} (-\underline{x})^{k-1}\\
&=& (\underline{x})^{-1} \left[ \sum_{k=1}^m \binom{m}{k} x_{0}^{m-k} \underline{x}^k - \sum_{k=1}^{m} \binom{m}{k} x_{0}^{m-k} (-\underline{x})^{k}\right]\\
&=& (\underline{x})^{-1}(x^m- \bar{x}^m).
\end{eqnarray*}
Now, since $x^{m}- \bar{x}^m=2 \underline{x} \sum_{k=1}^m x^{m-k} \bar{x}^{k-1}$ we get
\begin{equation}
\label{l3}
2 \sum_{k=1}^m \binom{m}{k} x_{0}^{m-k} \underline{x}^{k-1}= 2 \sum_{k=1}^m x^{m-k} \bar{x}^{k-1}.
\end{equation}
By putting formula \eqref{l3} in \eqref{l2} we obtain formula \eqref{l9}.
\\ Finally, since $ \mathcal{D}_{\underline{x}}(\underline{x}^m)=(\underline{x}^m)\mathcal{D}_{\underline{x}}$, we can repeat the previous computations, thus we get
$$ \mathcal{\overline{D}}(x^m)=(x^m) \mathcal{\overline{D}}.$$
\end{proof}
To compute $ \mathcal{\overline{D}}^2x^m$ we need the following result.
\begin{lemma}
\label{l7}
Let $m \geq 2$. For any $x \in \mathbb{R}^6$ we have
$$ \mathcal{D}_{\underline{x}} \left( \sum_{k=1}^m x^{m-k} \bar{x}^{k-1}\right)= \sum_{k=1}^{m-1}(2k-m) x^{m-k-1} \bar{x}^{k-1}.$$
\end{lemma}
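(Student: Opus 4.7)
My plan is to avoid any direct computation with $\sum_j e_j\partial_{x_j}$ on products of $\underline{x}$-factors, which would force one to track Clifford anticommutators. Instead I will decompose the Dirac operator as $\mathcal{D}=\partial_{x_0}+\mathcal{D}_{\underline{x}}$ and use previously established formulas for $\mathcal{D}(x^m)$ and $\mathcal{D}^2(x^m)$ to isolate the piece I want. Set
$$T_m:=\sum_{k=1}^m x^{m-k}\bar{x}^{k-1},$$
so the claim is $\mathcal{D}_{\underline{x}}(T_m)=\sum_{k=1}^{m-1}(2k-m)x^{m-k-1}\bar{x}^{k-1}$.

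First I would observe that, by Lemma \ref{begher}, $\mathcal{D}(x^m)=-4T_m$. Applying $\mathcal{D}$ once more and invoking Proposition \ref{double_dirac} gives
$$-4\mathcal{D}(T_m)=\mathcal{D}^2(x^m)=-8\sum_{k=1}^{m-1}k\,x^{m-k-1}\bar{x}^{k-1},$$
so that $\mathcal{D}(T_m)=2\sum_{k=1}^{m-1}k\,x^{m-k-1}\bar{x}^{k-1}$. This is the total Dirac derivative of $T_m$; all that remains is to subtract its scalar part $\partial_{x_0}T_m$.

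Next I would compute $\partial_{x_0}T_m$ directly. Since $\partial_{x_0}x=\partial_{x_0}\bar{x}=1$ and $x,\bar{x}$ commute with scalars, the ordinary product rule gives
$$\partial_{x_0}(x^{m-k}\bar{x}^{k-1})=(m-k)x^{m-k-1}\bar{x}^{k-1}+(k-1)x^{m-k}\bar{x}^{k-2}.$$
Summing over $k$, shifting the index in the second sum and noting that the boundary terms $k=1$ and $k=m$ vanish, the two sums combine to $\partial_{x_0}T_m=m\sum_{k=1}^{m-1}x^{m-k-1}\bar{x}^{k-1}=m\,T_{m-1}$.

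Finally, writing $\mathcal{D}_{\underline{x}}(T_m)=\mathcal{D}(T_m)-\partial_{x_0}(T_m)$ and combining the two formulas above yields
$$\mathcal{D}_{\underline{x}}(T_m)=\sum_{k=1}^{m-1}\bigl(2k-m\bigr)x^{m-k-1}\bar{x}^{k-1},$$
which is the claim. The only nontrivial step is bookkeeping the reindexings when computing $\partial_{x_0}T_m$; there is no genuine obstacle since the heavy lifting (computing $\mathcal{D}^2(x^m)$) has already been carried out in Proposition \ref{double_dirac}. The base $m=2$ is handled automatically since both sides then reduce to $0$.
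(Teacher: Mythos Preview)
Your proof is correct and follows essentially the same strategy as the paper: write $\mathcal{D}_{\underline{x}}(T_m)=\mathcal{D}(T_m)-\partial_{x_0}(T_m)$, obtain $\mathcal{D}(T_m)$ from Lemma~\ref{begher} and Proposition~\ref{double_dirac}, and then subtract $\partial_{x_0}T_m$. The only cosmetic difference is in the computation of $\partial_{x_0}T_m$: the paper uses the closed form $T_m=(2\underline{x})^{-1}(x^m-\bar{x}^m)$ and differentiates that, whereas you apply the product rule termwise to $x^{m-k}\bar{x}^{k-1}$ and reindex; both yield $mT_{m-1}$, and your version has the minor advantage of not requiring $\underline{x}\neq 0$.
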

\begin{proof}
By Proposition \ref{double_dirac}  and formula \eqref{dirac} we have
\begin{eqnarray}
\nonumber
-8 \sum_{k=1}^{m-1} k x^{m-k-1} \bar{x}^{k-1}&=& \mathcal{D}(\mathcal{D} x^m)=-4 \mathcal{D} \left(\sum_{k=1}^m x^{m-k} \bar{x}^{k-1}\right)\\
&=& -4 \left[ \frac{\partial}{\partial x_0}\left(\sum_{k=1}^m x^{m-k} \bar{x}^{k-1}\right)+ \mathcal{D}_{\underline{x}}\left(\sum_{k=1}^m x^{m-k} \bar{x}^{k-1}\right) \right]. \label{l4}
\end{eqnarray}
Now, since $\sum_{k=1}^m x^{m-k} \bar{x}^{k-1}= (2 \underline{x})^{-1}(x^m- \bar{x}^n)$ we get
\begin{eqnarray}
\nonumber
\frac{\partial}{\partial x_0}\left(\sum_{k=1}^m x^{m-k} \bar{x}^{k-1}\right) &=& \frac{\partial}{\partial x_0} \left[(2 \underline{x})^{-1}(x^m- \bar{x}^m)\right]\\
\nonumber
&=& (2 \underline{x})^{-1} m(x^{m-1} - \overline{x}^{m-1})\\
\label{l5}
&=& m \sum_{k=1}^{m-1} x^{m-1-k} \bar{x}^{k-1}.
\end{eqnarray}
Therefore by formula \eqref{l4} and formula \eqref{l5} we get
\begin{eqnarray*}
\mathcal{D}_{\underline{x}} \left( \sum_{k=1}^m x^{m-k} \bar{x}^{k-1}\right)&=& 2 \sum_{k=1}^{m-1} k x^{m-k-1} \bar{x}^{k-1}-\frac{\partial}{\partial x_0}\left(\sum_{k=1}^m x^{m-k} \bar{x}^{k-1}\right) \\
&=& 2 \sum_{k=1}^{m-1} k x^{m-k-1} \bar{x}^{k-1}-m \sum_{k=1}^{m-1} x^{m-1-k} \bar{x}^{k-1}\\
&=& \sum_{k=1}^{m-1} (2k-m) x^{m-k-1} \bar{x}^{k-1}.
\end{eqnarray*}
\end{proof}
\begin{proposition}
\label{antidouble}
Let $m \geq 2$. Then for any $x \in \mathbb{R}^6$ we have
\begin{equation}
\label{poly1}
\mathcal{\overline{D}}^2 (x^m)= (x^m)\mathcal{\overline{D}}^2=4 \left[m(m-1) x^{m-2}+2 \sum_{k=1}^{m-1} (2m-k)x^{m-k-1} \bar{x}^{k-1}\right].
\end{equation}
\end{proposition}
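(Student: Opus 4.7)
The plan is to differentiate formula (\ref{l9}) once more, treating the two summands $2mx^{m-1}$ and $4\sum_{k=1}^{m}x^{m-k}\bar{x}^{k-1}$ separately and then recombining.

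First I would write $\overline{\mathcal{D}}^{2}(x^{m}) = \overline{\mathcal{D}}\bigl(\overline{\mathcal{D}} x^{m}\bigr)$ and apply Proposition \ref{l6} to expand the inner operator, obtaining
\[
\overline{\mathcal{D}}^{2}(x^{m}) = 2m\,\overline{\mathcal{D}}(x^{m-1}) + 4\,\overline{\mathcal{D}}\!\left(\sum_{k=1}^{m}x^{m-k}\bar{x}^{k-1}\right).
\]
For the first term, a second application of Proposition \ref{l6} (with exponent $m-1$) yields
\[
2m\,\overline{\mathcal{D}}(x^{m-1}) = 4m(m-1)x^{m-2} + 8m\sum_{k=1}^{m-1}x^{m-k-1}\bar{x}^{k-1}.
\]

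Next I would handle the more delicate second term by splitting $\overline{\mathcal{D}} = \partial_{x_0} - \mathcal{D}_{\underline{x}}$. The $\partial_{x_0}$ part was already computed inside the proof of Lemma \ref{l7} (using the telescoping identity $\sum_{k=1}^{m}x^{m-k}\bar{x}^{k-1} = (2\underline{x})^{-1}(x^{m}-\bar{x}^{m})$), and gives $m\sum_{k=1}^{m-1}x^{m-1-k}\bar{x}^{k-1}$. The $\mathcal{D}_{\underline{x}}$ part is exactly the content of Lemma \ref{l7}, producing $\sum_{k=1}^{m-1}(2k-m)x^{m-k-1}\bar{x}^{k-1}$. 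Subtracting, the second term simplifies to
\[
4\,\overline{\mathcal{D}}\!\left(\sum_{k=1}^{m}x^{m-k}\bar{x}^{k-1}\right) = 8\sum_{k=1}^{m-1}(m-k)\,x^{m-k-1}\bar{x}^{k-1}.
\]

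Adding the two contributions, the coefficient of $x^{m-k-1}\bar{x}^{k-1}$ becomes $8m + 8(m-k) = 8(2m-k)$, producing precisely
\[
\overline{\mathcal{D}}^{2}(x^{m}) = 4m(m-1)x^{m-2} + 8\sum_{k=1}^{m-1}(2m-k)\,x^{m-k-1}\bar{x}^{k-1},
\]
which matches \eqref{poly1}. The commutation statement $\overline{\mathcal{D}}^{2}(x^{m}) = (x^{m})\overline{\mathcal{D}}^{2}$ then follows immediately from the single-operator commutation $\overline{\mathcal{D}}(x^{m}) = (x^{m})\overline{\mathcal{D}}$ already established in Proposition \ref{l6}, applied twice. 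The only mildly tricky bookkeeping step is the index-matching between $\sum_{k=1}^{m-1}x^{m-1-k}\bar{x}^{k-1}$ and $\sum_{k=1}^{m-1}x^{m-k-1}\bar{x}^{k-1}$ (they are identical, just written differently), so no real obstacle is expected beyond careful arithmetic.
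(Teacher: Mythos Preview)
Your proposal is correct and follows essentially the same route as the paper: apply Proposition~\ref{l6} to expand $\overline{\mathcal{D}}x^{m}$, apply it again to the $x^{m-1}$ piece, then split $\overline{\mathcal{D}}=\partial_{x_0}-\mathcal{D}_{\underline{x}}$ on the sum and invoke formula~\eqref{l5} and Lemma~\ref{l7}, with the commutation statement deduced from Proposition~\ref{l6} applied twice. The only cosmetic difference is that the paper keeps a common factor of $4$ outside before combining the sums, whereas you carry the $8$'s through; the arithmetic and the logical dependencies are identical.
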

\begin{proof}
By applying two times Proposition \ref{l6} and the fact that $ \mathcal{\overline{D}}=\frac{\partial}{\partial x_0}- \mathcal{D}_{\underline{x}}$ we have
\begin{eqnarray*}
\mathcal{\overline{D}}^2 (x^m)&=& \mathcal{\overline{D}}(\mathcal{\overline{D}} x^m)\\
&=& 2m \mathcal{\overline{D}}(x^{m-1})+4\mathcal{\overline{D}} \left( \sum_{k=1}^m x^{m-k} \bar{x}^{k-1} \right) \\
&=& 2m \left( 2(m-1) x^{m-2} +4 \sum_{k=1}^{m-1} x^{m-1-k} \bar{x}^{k-1}\right)+\\
&& +4 \left[ \frac{\partial}{\partial x_0}\left( \sum_{k=1}^m x^{m-k} \bar{x}^{k-1} \right) - \mathcal{D}_{\underline{x}}\left( \sum_{k=1}^m x^{m-k} \bar{x}^{k-1} \right) \right].
\end{eqnarray*}
By formula \eqref{l5} and Lemma \ref{l7} we get
\begin{eqnarray*}
\mathcal{\overline{D}}^2 x^m&=&4 \biggl[m(m-1) x^{m-2}+2m \sum_{k=1}^{m-1} x^{m-1-k} \bar{x}^{k-1}+\\
&& +m \sum_{k=1}^{m-1} x^{m-1-k} \bar{x}^{k-1}- \sum_{k=1}^{m-1}(2k-m) x^{m-k-1} \bar{x}^{k-1} \biggl]\\
&=& 4 \left[m(m-1) x^{m-2}+2 \sum_{k=1}^{m-1} (2m-k)x^{m-k-1} \bar{x}^{k-1}\right].
\end{eqnarray*}
Finally, since $ \mathcal{\overline{D}} (x^m)= (x^m)\mathcal{\overline{D}}$ we get
$$ \mathcal{\overline{D}}^2 (x^m)= (x^m)\mathcal{\overline{D}}^2.$$
\end{proof}

\begin{proposition}
\label{antidirac}
Let $ m \geq 3$. For any $x \in \mathbb{R}^6$ we have
\begin{equation}
\label{poly0}
\Delta \mathcal{\overline{D}}(x^m)=(x^m)\Delta \mathcal{\overline{D}}=-16 \sum_{k=1}^{m-2} (m-k-1)(m+k)x^{m-k-2} \bar{x}^{k-1}.
\end{equation}
\end{proposition}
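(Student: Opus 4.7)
My plan is to reduce the claim to two formulas already proved in the paper. First I will expand $\mathcal{\overline{D}}(x^m)$ via Proposition~\ref{l6} and then apply $\Delta$ termwise, invoking Proposition~\ref{laplacian} for the monomial piece and Lemma~\ref{somma} for the sum piece. This is the natural strategy because $\Delta\mathcal{\overline{D}}$ is a linear scalar-times-first-order operator and Proposition~\ref{l6} already decomposes $\mathcal{\overline{D}}(x^m)$ into a single monomial plus a sum of the $x^{m-k}\bar{x}^{k-1}$ whose Laplacians are packaged in Lemma~\ref{somma}.

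Concretely, Proposition~\ref{l6} gives $\mathcal{\overline{D}}(x^m) = 2m\,x^{m-1} + 4\sum_{k=1}^{m} x^{m-k}\bar{x}^{k-1}$. Since $m\geq 3$, Proposition~\ref{laplacian} applies at the exponent $m-1\geq 2$ and produces $2m\,\Delta x^{m-1} = -16m\sum_{k=1}^{m-2}(m-1-k)\,x^{m-k-2}\bar{x}^{k-1}$; similarly, Lemma~\ref{somma} (whose hypothesis $m\geq 3$ is exactly what is assumed) yields $4\sum_{k=1}^{m}\Delta(x^{m-k}\bar{x}^{k-1}) = -16\sum_{k=1}^{m-2}(m-k-1)k\,x^{m-k-2}\bar{x}^{k-1}$. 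Both contributions are sums over the same range $1\leq k\leq m-2$ of the same monomials $x^{m-k-2}\bar{x}^{k-1}$, so adding them and extracting $(m-k-1)$ as a common factor produces the coefficient $(m-1-k)m+(m-k-1)k = (m-k-1)(m+k)$, delivering the claimed identity for $\Delta\mathcal{\overline{D}}(x^m)$.

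For the right-sided identity I will exploit the fact that $\Delta$ carries no Clifford units and therefore commutes with both left and right Clifford multiplication; combining this with the commutation $(x^m)\mathcal{\overline{D}} = \mathcal{\overline{D}}(x^m)$ established in the final line of Proposition~\ref{l6}, one obtains $(x^m)\Delta\mathcal{\overline{D}} = \Delta\bigl((x^m)\mathcal{\overline{D}}\bigr) = \Delta\mathcal{\overline{D}}(x^m)$, reducing the right-sided statement to the left-sided one already handled above.

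No step is a genuine obstacle; the whole argument is bookkeeping. The only points requiring care are aligning the two summation ranges so that the $2m\,\Delta x^{m-1}$ and the $4\sum_{k}\Delta(x^{m-k}\bar{x}^{k-1})$ pieces can be combined into a single sum, and checking that the polynomial coefficient rearrangement really produces the factor $(m+k)$ — which is the single-line identity $m(m-1-k)+k(m-k-1) = (m-k-1)(m+k)$.
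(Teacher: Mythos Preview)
Your proposal is correct and follows essentially the same route as the paper: apply $\Delta$ to the decomposition of $\mathcal{\overline{D}}(x^m)$ from Proposition~\ref{l6}, then invoke Proposition~\ref{laplacian} for the $2m\,x^{m-1}$ term and Lemma~\ref{somma} for the $4\sum_k x^{m-k}\bar{x}^{k-1}$ term, combining the two sums via the factorization $m(m-1-k)+k(m-k-1)=(m-k-1)(m+k)$. The right-sided identity is handled identically in both, using that $\Delta$ is a real operator together with $(x^m)\mathcal{\overline{D}}=\mathcal{\overline{D}}(x^m)$.
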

\begin{proof}
By applying the operator $\Delta$ to formula \eqref{l7} we get
$$ \Delta \mathcal{\overline{D}}x^m=2m \Delta(x^{m-1})+4 \sum_{k=1}^m \Delta(x^{m-k} \bar{x}^{k-1}).$$
Therefore, by Proposition \ref{laplacian} and Lemma \ref{somma} we obtain
\begin{eqnarray*}
\Delta \mathcal{\overline{D}}(x^m)&=& -16m \sum_{k=1}^{m-2} (m-1-k)x^{m-2-k} \bar{x}^{k-1}+\\
&&-16 \sum_{k=1}^{m-2}(m-k-1)k x^{m-k-2} \bar{x}^{k-1}\\
&=&-16 \sum_{k=1}^{m-2}(m-k-1)(m+k)x^{m-2-k} \bar{x}^{k-1}.
\end{eqnarray*}
Finally, since the laplacian is a real operator and $ \mathcal{\overline{D}}(x^m)= (x^m)\mathcal{\overline{D}}$ we get
$$ \Delta \mathcal{\overline{D}}(x^m)=(x^m)\Delta \mathcal{\overline{D}}.$$
\end{proof}
\begin{remark}
The polynomials found in \eqref{poly1} and \eqref{poly0} are polyanalytic of order 2 and 3, respectively.  However, these polynomials do not coincide to which ones found in \cite{DMD3}, in which the authors obtained polyanalytic polynomials by applying the Fueter-polyanalytic maps, see \cite{ADS}.
\end{remark}

\begin{definition}[left and right kernel series]
Let $s,\, x\in\mathbb R^6$, then we define
\begin{itemize}
\item the left $ \mathcal{D}$-kernel series as
\begin{equation}
\label{q1res_ser}
-4\sum_{m=1}^{\infty}\sum_{k=1}^m x^{m-k}\overline x^{k-1} s^{-1-m},
\end{equation}
and the right $ \mathcal{D}$-kernel series as
$$ -4\sum_{m=1}^{\infty}\sum_{k=1}^m  s^{-1-m} x^{m-k}\overline x^{k-1},$$
\item the left $ \Delta$-kernel series as
\begin{equation}
\label{f1res_ser}
-8\sum_{m=2}^{\infty}\sum_{k=1}^{m-1} (m-k)x^{m-k-1}\overline x^{k-1}s^{-1-m},
\end{equation}
the right $ \Delta$-kernel series as
$$-8\sum_{m=2}^{\infty}\sum_{k=2}^{m-1} (m-k)s^{-1-m}x^{m-k-1}\overline x^{k-1},$$
\item the left $ \Delta \mathcal{D}$-kernel series as
\begin{equation}
\label{q2res_ser}
16\sum_{m=3}^\infty\sum_{k=1}^{m-2}(m-k-1)kx^{m-k-2}\overline x^{k-1} s^{-1-m},
\end{equation}
the right $ \Delta \mathcal{D}$-kernel series as
$$ 16\sum_{m=3}^\infty\sum_{k=3}^{m-2}(m-k-1)k s^{-1-m} x^{m-k-2}\overline x^{k-1},$$
\item the left $ \mathcal{D}^2$-kernel series as
\begin{equation}
-8 \sum_{m=2}^\infty \sum_{k=1}^{m-1} k x^{m-k-1} \bar{x}^{k-1}s^{-1-m},
\end{equation}
the right $ \mathcal{D}^2$-kernel series as
$$-8 \sum_{m=2}^\infty \sum_{k=1}^{m-1} k s^{-1-m} x^{m-k-1} \bar{x}^{k-1},$$
\item the left $ \mathcal{\overline{D}}$-kernel series as
\begin{equation}
\label{abs1}
2  \left[\sum_{m=1}^\infty \left(mx^{m-1}  +2  \sum_{k=1}^{m} x^{m-k} \bar{x}^{k-1} \right) \right] s^{-1-m},
\end{equation}
the right $ \mathcal{\overline{D}}$-kernel series as
$$2  \left[\sum_{m=1}^\infty s^{-1-m} \left(mx^{m-1}  +2  \sum_{k=1}^{m} x^{m-k} \bar{x}^{k-1} \right) \right] ,$$
\item the left $ \mathcal{\overline{D}}^2$-kernel series as
\begin{equation}
\label{abs2}
4 \left[\sum_{m=2}^\infty \left(m(m-1)x^{m-2}+2  \sum_{k=1}^{m-1} (2m-k)x^{m-k-1}\bar{x}^{k-1}\right) s^{-1-m} \right],
\end{equation}
the right $ \mathcal{\overline{D}}^2$-kernel series as
$$4 \left[\sum_{m=2}^\infty s^{-1-m} \left(m(m-1)x^{m-2}+2  \sum_{k=1}^{m-1} (2m-k)x^{m-k-1}\bar{x}^{k-1}\right)  \right],$$
\item the left $ \Delta \mathcal{\overline{D}}$-kernel series as
\begin{equation}
-16 \sum_{m=3}^\infty \sum_{k=1}^{m-2} (m-k-1)(m+k)x^{m-k-2} \bar{x}^{k-1}s^{-1-m}.
\end{equation}
the right $ \Delta \mathcal{\overline{D}}$-kernel series as
$$-16 \sum_{m=3}^\infty \sum_{k=1}^{m-2} (m-k-1)(m+k)s^{-1-m}x^{m-k-2} \bar{x}^{k-1}.$$
\end{itemize}
\end{definition}

\begin{remark}
The left and the right $ \mathcal{D}$-kernel series are equal, where they converge, as well as the left and the right $ \Delta \mathcal{D}$-kernel series.
\end{remark}

We collect some technical lemmas that we have used in the proofs of some of the following results.
\begin{lemma}
\label{sum}
For $ m \geq 3$ we have
$$ \sum_{k=1}^{m-2} (m-k-1)k= \frac{m(m-1)(m-2)}{6}.$$
\end{lemma}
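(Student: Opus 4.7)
The identity is a standard polynomial one in $m$, and the cleanest route is a direct computation by splitting the summand and applying the two well-known closed forms $\sum_{k=1}^{N} k = \tfrac{N(N+1)}{2}$ and $\sum_{k=1}^{N} k^2 = \tfrac{N(N+1)(2N+1)}{6}$. Concretely, I would write
\[
\sum_{k=1}^{m-2}(m-k-1)k = (m-1)\sum_{k=1}^{m-2}k - \sum_{k=1}^{m-2}k^2,
\]
and insert the two closed forms with $N=m-2$. Factoring $(m-1)(m-2)/6$ out of both terms reduces the bracket to $3(m-1)-(2m-3)=m$, and the claimed expression $m(m-1)(m-2)/6$ drops out immediately.

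An alternative plan, equally short, is induction on $m\ge 3$. The base case $m=3$ gives $1\cdot 1=1=\tfrac{3\cdot 2\cdot 1}{6}$. For the inductive step one writes
\[
\sum_{k=1}^{m-1}(m-k)k = \sum_{k=1}^{m-2}(m-k-1)k + \sum_{k=1}^{m-1}k,
\]
and applies the inductive hypothesis together with $\sum_{k=1}^{m-1}k=\tfrac{m(m-1)}{2}$; elementary simplification gives $\tfrac{(m+1)m(m-1)}{6}$, which is the statement at $m+1$.

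There is essentially no obstacle here: the identity is equivalent to $\binom{m}{3}=\sum_{k=1}^{m-2}k(m-k-1)$, a standard ``choose the middle element'' count for $3$-subsets of $\{1,\dots,m\}$. I would opt for the first (direct) approach, since it takes only three or four lines and uses tools strictly weaker than those invoked elsewhere in the section.
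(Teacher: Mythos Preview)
Your first (direct) approach is exactly the one the paper uses: split the sum as $(m-1)\sum_{k=1}^{m-2}k-\sum_{k=1}^{m-2}k^2$, insert the standard closed forms, and factor out $(m-1)(m-2)$ to reduce to $m/6$. The proof is correct and essentially identical to the paper's, so there is nothing to add.
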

\begin{proof}
We know that $ \sum_{k=1}^{m-2}k= \frac{(m-1)(m-2)}{2}$ and $ \sum_{k=1}^{m-2} k^2= \frac{(m-2)(m-1)(2m-3)}{6}$. Thus we have
\begin{eqnarray*}
\sum_{k=1}^{m-2} (m-k-1)k &=& (m-1)\sum_{k=1}^{m-2}k-\sum_{k=1}^{m-2} k^2\\
&=& \frac{(m-1)^2(m-2)}{2}- \frac{(m-2)(m-1)(2m-3)}{6}\\
&=& \frac{(m-1)(m-2)}{2} \left((m-1)- \frac{(2m-3)}{3}\right)\\
&=& \frac{m(m-1)(m-2)}{6}.
\end{eqnarray*}
\end{proof}
\begin{lemma}
\label{sum2}
For $ m \geq 3$ we have
$$ \sum_{k=1}^{m-2}(m-k-1)(m+k)= \frac{2m(m-1)(m-2)}{3}.$$
\end{lemma}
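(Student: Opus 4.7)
The plan is to expand the summand algebraically and reduce the identity to an application of the standard closed-form expressions for $\sum k$ and $\sum k^2$, exactly as was done in the proof of Lemma \ref{sum}. First I would rewrite the product as
$$(m-k-1)(m+k) = m(m-1) - k - k^2,$$
which separates the sum into one constant part (in $k$) plus two elementary power sums.

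Next I would split
$$\sum_{k=1}^{m-2}(m-k-1)(m+k) = (m-2)\, m(m-1) - \sum_{k=1}^{m-2} k - \sum_{k=1}^{m-2} k^2,$$
and substitute the well-known formulas $\sum_{k=1}^{m-2} k = \frac{(m-2)(m-1)}{2}$ and $\sum_{k=1}^{m-2} k^2 = \frac{(m-2)(m-1)(2m-3)}{6}$. Factoring out the common term $\frac{(m-2)(m-1)}{6}$ from the three contributions leaves a bracket of the form $6m - 3 - (2m-3) = 4m$, which yields the desired value $\frac{2m(m-1)(m-2)}{3}$.

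There is no genuine obstacle here: the argument is a one-line manipulation after the initial expansion, completely analogous to Lemma \ref{sum}. An equally acceptable alternative would be a short induction on $m$, checking the base case $m=3$ (where both sides equal $4$) and then using $(m+1-k-1)(m+1+k) - (m-k-1)(m+k) = 2m - k + 1$ to match the telescoped difference with $\frac{2m(m+1)(m-1)}{3} - \frac{2m(m-1)(m-2)}{3} = 2m(m-1)$, but the direct computation via power sums is cleaner and matches the style of the preceding lemma.
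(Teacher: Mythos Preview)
Your proof is correct and follows essentially the same approach as the paper, reducing the sum to the standard closed forms for $\sum k$ and $\sum k^2$. The only cosmetic difference is that the paper splits $(m-k-1)(m+k)=(m-k-1)k+m(m-k-1)$ and reuses Lemma~\ref{sum} for the first piece, whereas you expand directly as $m(m-1)-k-k^2$; the arithmetic is the same either way.
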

\begin{proof}
Since $ \sum_{k=1}^{m-2}k= \frac{(m-1)(m-2)}{2}$ we get
\begin{eqnarray}
\nonumber
m \sum_{k=1}^{m-2} (m-1-k)&=&m^2(m-2)-m(m-2)- \frac{m(m-1)(m-2)}{2}\\
\label{sum3}
&=& \frac{m(m-1)(m-2)}{2}.
\end{eqnarray}
Finally, by formula \eqref{sum3} and Lemma \ref{sum} we get
\begin{eqnarray*}
\sum_{k=1}^{m-2}(m-k-1)(m+k)&=& \sum_{k=1}^{m-2}(m-k-1)k +m\sum_{k=1}^{m-2}(m-k-1)\\
&=& \frac{m(m-1)(m-2)}{6}+\frac{m(m-1)(m-2)}{2}\\
&=& \frac{2m(m-1)(m-2)}{3}.
\end{eqnarray*}
\end{proof}

\begin{proposition}
\label{conve}
For $s$,$x\in\mathbb R^6$ with $|x|<|s|$, all the left kernel series are convergent.
\end{proposition}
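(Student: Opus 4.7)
The plan is to prove absolute convergence of each of the seven left kernel series by a uniform term-by-term estimation that reduces everything to the convergence of $\sum_m m^p q^m$ for $q:=|x|/|s|<1$.

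First I would record three elementary facts. Since $x\bar x=\bar x x=|x|^2$, the paravectors $x$ and $\bar x$ commute; consequently every monomial $x^a\bar x^b$ lies in a commutative subalgebra isomorphic to $\mathbb{C}$ (namely $\mathbb{C}_{J_x}$ with $J_x=\underline{x}/|\underline{x}|$, or simply $\mathbb R$ if $\underline{x}=0$) and therefore $|x^a\bar x^b|=|x|^{a+b}$. Analogously, $s^{-1}=\bar s/|s|^2$ is again a paravector, so $s^{-m-1}$ lies in $\mathbb{C}_{J_s}$ and $|s^{-m-1}|=|s|^{-m-1}$. Finally, the finite-dimensionality of $\mathbb{R}_n$ supplies a constant $C_n>0$ such that $|ab|\le C_n|a||b|$ for all $a,b\in\mathbb{R}_n$; in particular, $|x^a\bar x^b\,s^{-m-1}|\le C_n|x|^{a+b}/|s|^{m+1}$.

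Second, with these facts in hand I would bound the $m$-th block of each series separately. For the $\mathcal{D}$-kernel series \eqref{q1res_ser} the block is bounded by $m\,C_n|x|^{m-1}/|s|^{m+1}$. For the $\Delta$-kernel series \eqref{f1res_ser} the inner sum has at most $m-1$ summands with coefficients $m-k\le m$, yielding the bound $m(m-1)\,C_n|x|^{m-2}/|s|^{m+1}$. For \eqref{q2res_ser} the coefficients $(m-k-1)k$ are bounded by $m^2$ and the inner sum runs over $m-2$ indices, so the block is controlled by $m^2(m-2)\,C_n|x|^{m-3}/|s|^{m+1}$. Entirely analogous estimates, with at most cubic polynomial coefficients and at most $m-1$ inner summands, apply to the left $\mathcal{D}^2$-, $\overline{\mathcal{D}}$-, $\overline{\mathcal{D}}^2$- and $\Delta\overline{\mathcal{D}}$-kernel series. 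Thus in every case the $m$-th block is dominated by $P(m)\,C_n\,|x|^{m-c}/|s|^{m+1}$ for some polynomial $P$ (of degree at most three) and some fixed integer $c\in\{1,2,3\}$.

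Third, setting $q:=|x|/|s|<1$, each series is therefore majorized by a multiple of $\sum_{m\ge m_0}P(m)\,q^{m}$, which converges by the ratio test for every polynomial $P$ since $q<1$; this yields the desired absolute convergence in $\mathbb{R}_n$ of all seven series under the hypothesis $|x|<|s|$. The main obstacle is purely bookkeeping: one must verify seven times that the polynomial-in-$m$ coefficients together with the linear-in-$m$ cardinality of the inner sum do not destroy the geometric decay $q^m$. There is no analytical difficulty beyond the identity $|x^a\bar x^b|=|x|^{a+b}$, which is itself an immediate consequence of the commutativity of $x$ and $\bar x$ and the fact that both lie in a complex subalgebra of $\mathbb{R}_n$.
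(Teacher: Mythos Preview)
Your proof is correct and follows essentially the same strategy as the paper's: bound each block termwise by $P(m)\,|x|^{m-c}|s|^{-m-1}$ for a polynomial $P$ and conclude via the ratio test. The only cosmetic differences are that the paper computes the inner sums exactly (using Lemmas~\ref{sum} and~\ref{sum2}) rather than using crude polynomial upper bounds, and it suppresses the submultiplicativity constant $C_n$ that you make explicit; neither difference affects the argument.
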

\begin{proof}
In order to show the convergence of the series it is enough to show the convergence of the series of the moduli.	
\begin{itemize}
\item In order to prove that the left $ \mathcal{D}$-kernel series is convergent it is sufficient to show that the following series is convergent
\begin{equation}
\label{abs3}
\sum_{m=1}^{\infty}m|x|^{m-1} |s|^{-1-m}.
\end{equation}
This series converges by the ratio test. Indeed, since  $|x|<|s|$, we have
$$\lim_{m\to\infty}\frac{(m+1)|x|^m|s|^{-2-m}}{m|x|^{m-1}|s|^{-1-m}}=\lim_{m\to \infty}\frac{m+1}{m}|x||s|^{-1}<1.$$
\item To prove the convergence of the $\Delta$-kernel series, we have to show that the following series convergences
\begin{equation}
\label{abs4}
\sum_{m=2}^\infty \left(\sum_{k=1}^{m-1} (m-k) \right)|x|^{m-2}|s|^{-1-m}.
\end{equation}
Since $\sum_{k=1}^{m-1}( m-k)=\frac{m(m-1)}{2}$, we have
\begin{equation}
\label{limit}
\lim_{m\to\infty}\frac{(m+1)m|x|^{m-1}|s|^{-2-m}}{m(m-1)|x|^{m-2}|s|^{-1-m}}=|x||s|^{-1}<1.
\end{equation}
Therefore, by the ratio test the series is convergent.
\item
In order to prove the convergence of the $\Delta \mathcal{D}$-kernel series, we have to show the convergence of the following series
\begin{equation}
\label{abs5}
\sum_{m=1}^{\infty}\left(\sum_{k=1}^{m-2}(m-k-1)k\right) |x|^{m-3}|s|^{-1-m}.
\end{equation}
Now, since $\sum_{k=1}^{m-2}(m-k-1)k=\frac{m(m-1)(m-2)}{6}$ (see Lemma \ref{sum}), by applying the ratio test we get
$$
\lim_{m\to\infty}\frac{m(m+1)(m-1)|x|^{m-2}|s|^{-2-m}}{m(m-1)(m-2) |x|^{m-3}|s|^{-1-m}}=|x||s|^{-1}<1.
$$
Therefore the series is convergent.
\item To show the convergence of the $ \mathcal{D}^2$- kernel series we need to show the convergence of the following series
$$ \sum_{m=2}^{\infty} \left( \sum_{k=1}^{m-1} k\right) |x|^{m-2} |s|^{-1-m}.$$
Since $ \sum_{k=1}^{m-1} k= \frac{(m-1)m}{2}$, by applying the ratio test we re obtain the same limit of \eqref{limit}, and so the series is convergent
\item In order to prove the convergence of the $ \mathcal{\overline{D}}$-kernel series we put the modulus to the series in \eqref{abs1} and after some manipulations we get that
$$  \left| \sum_{m=1}^\infty \left(mx^{m-1}  +2  \sum_{k=1}^{m} x^{m-k} \bar{x}^{k-1}   \right) \right| |s|^{-1-m} \leq \sum_{m=1}^\infty 3m |x|^{m-1} |s|^{-1-m}.$$
The convergence of the series $\sum_{m=1}^\infty 3m |x|^{m-1} |s|^{-1-m}$ follows by similar arguments for the convregence of the series in \eqref{abs3}.
\item As done in the previous point, we insert the modulus in the series \eqref{abs2} and after some computations we get
\begin{eqnarray*}
&& \left|\sum_{m=2}^\infty \left(m(m-1)x^{m-2}+2  \sum_{k=1}^{m-1} (2m-k)x^{m-k-1}\bar{x}^{k-1}\right)\right| |s|^{-1-m} \\
&& \leq \sum_{m=2}^\infty  \left(m(m-1)+ \sum_{k=1}^{m-1} (2m-k) \right)|x|^{m-2}|s|^{-1-m}\\
&&=  \sum_{m=2}^\infty \frac{5m (m-1)}{2} |x|^{m-2}|s|^{-1-m}.
\end{eqnarray*}
The convergence of the series $\sum_{m=2}^\infty \frac{5m (m-1)}{2} |x|^{m-2}|s|^{-1-m}$ follows similarly as the series in \eqref{abs4}.
\item Finally, in order to show the convergence of the left $ \Delta \mathcal{\overline{D}}$ it is enough to show the convergence of the following series
$$ \sum_{m=3}^\infty \left( \sum_{k=1}^{m-2} (m-k-1)(m+k) \right) |x|^{m-3} |s|^{-1-m}.$$
Since $ \sum_{k=1}^{m-2} (m-k-1)(m+k)= \frac{2m(m-1)(m-2)}{3}$ (see Lemma \ref{sum2}), the convergence follows similarly as done for the series in \eqref{abs5}.
\end{itemize}
\end{proof}
\begin{remark}
Similarly, all the right kernel series are convergent.
\end{remark}
Now, we can expand in series the left and the right kernels, computed in the previous section.
\begin{lemma}
\label{lk}
For $s$, $x\in\mathbb R^6$ such that $|x|<|s|$ we can expand
\begin{itemize}
\item the left slice $ \mathcal{D}$-kernel as
$$ S^{-1}_{\mathcal{D},L}(s,x)= -4\sum_{m=1}^{\infty}\sum_{k=1}^m x^{m-k}\overline x^{k-1} s^{-1-m},$$
and the right slice $ \mathcal{D}$-kernel as
$$ S^{-1}_{\mathcal{D},R}(s,x)= -4\sum_{m=1}^{\infty}\sum_{k=1}^m s^{-1-m}x^{m-k}\overline x^{k-1},$$
\item the left slice $ \Delta$-kernel as
$$S^{-1}_{\Delta,L}(s,x)=-8\sum_{m=2}^\infty\sum_{k=1}^{m-1} (m-k)x^{m-k-1}\overline x^{k-1}s^{-1-m},$$
and the right slice $ \Delta$-kernel as
$$S^{-1}_{\Delta,R}(s,x)=-8\sum_{m=2}^\infty\sum_{k=1}^{m-1} (m-k)s^{-1-m}x^{m-k-1}\overline x^{k-1},$$
\item the left slice $ \Delta \mathcal{D}$-kernel as
$$S^{-1}_{\Delta \mathcal{D},L}(s,x)=16\sum_{m=3}^{\infty}\sum_{k=1}^{m-2}(m-k-1)k x^{m-k-2}\overline x^{k-1}s^{-1-m},$$
and the right slice $ \Delta \mathcal{D}$-kernel as
$$S^{-1}_{\Delta \mathcal{D},R}(s,x)=16 \sum_{m=3}^{\infty}\sum_{k=1}^{m-2}(m-k-1)k s^{-1-m}x^{m-k-2}\overline x^{k-1},$$
\item we can expand the left slice $ \mathcal{D}^2$-kernel as
$$S^{-1}_{\mathcal{D}^2,L}(s,x)=-8 \sum_{m=2}^\infty \sum_{k=1}^{m-1} k x^{m-k-1} \bar{x}^{k-1}s^{-1-m},$$
and the right slice $ \mathcal{D}^2$-kernel as
$$S^{-1}_{\mathcal{D}^2,R}(s,x)=-8 \sum_{m=2}^\infty \sum_{k=1}^{m-1} k s^{-1-m} x^{m-k-1} \bar{x}^{k-1},$$
\item the left slice $ \mathcal{\overline{D}}$-kernel as
$$S^{-1}_{\mathcal{\overline{D}},L}(s,x)=2  \left[\sum_{m=1}^\infty \left(mx^{m-1}  +2  \sum_{k=1}^{m} x^{m-k} \bar{x}^{k-1} \right) \right] s^{-1-m},$$
and the right slice $ \mathcal{\overline{D}}$-kernel as
$$S^{-1}_{\mathcal{\overline{D}},R}(s,x)=2  \left[\sum_{m=1}^\infty s^{-1-m} \left(mx^{m-1}  +2  \sum_{k=1}^{m} x^{m-k} \bar{x}^{k-1} \right) \right] ,$$
\item the left slice $ \mathcal{D}^2$-kernel as
$$S^{-1}_{\mathcal{\overline{D}}^2,L}(s,x)=4 \left[\sum_{m=2}^\infty \left(m(m-1)x^{m-2}+2  \sum_{k=1}^{m-1} (2m-k)x^{m-k-1}\bar{x}^{k-1}\right) s^{-1-m} \right],$$
and the right slice $ \mathcal{\overline{D}}^2$-kernel as
$$S^{-1}_{\mathcal{\overline{D}}^2,R}(s,x)=4 \left[\sum_{m=2}^\infty s^{-1-m} \left(m(m-1)x^{m-2}+2  \sum_{k=1}^{m-1} (2m-k)x^{m-k-1}\bar{x}^{k-1}\right)  \right],$$
\item the left slice $ \Delta \mathcal{\overline{D}}$-kernel as
$$S^{-1}_{\Delta \mathcal{\overline{D}},L}(s,x)=-16 \sum_{m=3}^\infty \sum_{k=1}^{m-2} (m-k-1)(m+k)x^{m-k-2} \bar{x}^{k-1}s^{-1-m},$$
and the right slice $ \Delta \mathcal{\overline{D}}$-kernel as
$$S^{-1}_{\Delta \mathcal{\overline{D}},R}(s,x)= -16 \sum_{m=3}^\infty \sum_{k=1}^{m-2} (m-k-1)(m+k)s^{-1-m}x^{m-k-2} \bar{x}^{k-1}.$$
\end{itemize}
\end{lemma}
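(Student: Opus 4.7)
The plan is to start from the series representation of the slice hyperholomorphic Cauchy kernel provided by Theorem \ref{ts}. For $s,x\in\mathbb R^{6}$ with $|x|<|s|$, we have
\[
S^{-1}_L(s,x)=\sum_{m=0}^{\infty}x^{m}s^{-1-m},\qquad S^{-1}_R(s,x)=\sum_{m=0}^{\infty}s^{-1-m}x^{m},
\]
with normal convergence on compact subsets of $\{(s,x):|x|<|s|\}$. Since each kernel of the fine structure has been defined in Section \ref{serieskernels} by applying one of the operators $\mathcal D$, $\Delta$, $\Delta\mathcal D$, $\mathcal D^{2}$, $\overline{\mathcal D}$, $\overline{\mathcal D}^{2}$, $\Delta\overline{\mathcal D}$ in the variable $x$ to $S^{-1}_{L}(s,x)$ (or to $S^{-1}_{R}(s,x)$), the strategy is simply to apply these operators termwise to the Cauchy series, invoking the explicit actions of each operator on the monomial $x^{m}$ that were computed in Lemma \ref{begher} and Propositions \ref{laplacian}, \ref{double_dirac}, \ref{dirac_laplacian}, \ref{l6}, \ref{antidouble} and \ref{antidirac}.

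The key step is the termwise differentiation. For every operator $P\in\{\mathcal D,\Delta,\Delta\mathcal D,\mathcal D^{2},\overline{\mathcal D},\overline{\mathcal D}^{2},\Delta\overline{\mathcal D}\}$ and every compact $K\subset\{|x|<|s|\}$, the series obtained by replacing $x^{m}$ with $Px^{m}$ (read off from the cited propositions) is bounded in modulus by one of the series treated in Proposition \ref{conve}; hence it converges normally on $K$. Standard results on termwise differentiation of series of $\mathcal C^{\infty}$ functions then allow the interchange of $P$ and $\sum_{m}$. Carrying out the substitution, for instance with $P=\mathcal D$ one gets
\[
S^{-1}_{\mathcal D,L}(s,x)=\mathcal D\!\sum_{m=0}^{\infty}x^{m}s^{-1-m}=\sum_{m=1}^{\infty}\mathcal D(x^{m})\,s^{-1-m}=-4\sum_{m=1}^{\infty}\sum_{k=1}^{m}x^{m-k}\overline{x}^{k-1}s^{-1-m},
\]
and the remaining left-kernel expansions follow line by line by substituting the corresponding formula for $Px^{m}$.

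For the right kernels one starts from $S^{-1}_{R}(s,x)=\sum_{m=0}^{\infty}s^{-1-m}x^{m}$; because all the operators involved act only in the variable $x$, they commute with the left multiplication by $s^{-1-m}$, so the same termwise application yields the stated right series, using the fact (established in the cited propositions) that $P(x^{m})=(x^{m})P$ for each operator $P$ under consideration. The main obstacle is not conceptual but merely bookkeeping: one has to keep track of index shifts (e.g.\ the series for $\Delta\mathcal D$ starts at $m=3$ because $\Delta\mathcal D x^{m}$ vanishes for $m\le 2$) and verify that the bounds used to dominate the differentiated series match those proved in Proposition \ref{conve}. Once this is done, the identifications between the analytic formulas in Theorems \ref{app1}--\ref{app7} and the series expansions above are established.
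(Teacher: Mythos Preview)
Your proposal is correct and follows essentially the same approach as the paper: start from the Cauchy kernel expansion $S^{-1}_L(s,x)=\sum_{m=0}^\infty x^m s^{-1-m}$, apply each operator termwise using the explicit formulas for $P(x^m)$ from Lemma \ref{begher} and Propositions \ref{laplacian}, \ref{double_dirac}, \ref{dirac_laplacian}, \ref{l6}, \ref{antidouble}, \ref{antidirac}, and justify the interchange of operator and sum via the convergence established in Proposition \ref{conve}. The paper's proof is slightly terser on the justification of termwise differentiation, but the logical content is the same.
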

\begin{proof}
We know that we can expand the left Cauchy kernel in the following way
\begin{equation}
\label{expa}
S^{-1}_L(s,x)=\sum_{m=0}^\infty x^ms^{-1-m}.
\end{equation}
In order to obtain all the expansions it is enough to apply to formula \eqref{expa} the operators $\mathcal{D}$, $\Delta$, $\Delta \mathcal{D}$, $\mathcal{D}^2$, $\mathcal{\overline{D}}$, $ \overline{D}^2$ and $ \Delta \mathcal{\overline{D}}$. Due to Proposition \ref{conve} we can exchange the roles of the operators with the sum. Finally, in order to get the expansions written in terms of $x$ and $ \bar{x}$ we apply formula \eqref{dirac}, Proposition \ref{laplacian}, Proposition \ref{dirac_laplacian}, Proposition \ref{double_dirac}, Proposition \ref{l6}, Proposition \ref{antidouble} and Proposition \ref{antidirac}. By similar arguments we have the result for the right kernels.
\end{proof}

\begin{remark}
By Lemma \ref{lk} together with Theorem \ref{app1}, Theorem \ref{app2}, Theorem \ref{app3} and Theorem \ref{app6}, we deduce the following equalities
$$\mathcal Q_{c,s}(x)^{-2}=\sum_{m=3}^{\infty}\sum_{k=1}^{m-2}(m-k-1)k x^{m-k-2}\overline x^{k-1}s^{-1-m}=\sum_{m=3}^{\infty}\sum_{k=1}^{m-2}(m-k-1)k s^{-1-m}x^{m-k-2}\overline x^{k-1},$$
$$ S^{-1}_L(s,x) \mathcal{Q}_{c,s}(x)^{-1}=\sum_{m=2}^\infty\sum_{k=1}^{m-1} (m-k)x^{m-k-1}\overline x^{k-1}s^{-1-m},$$
$$Q_{c,s}(x)^{-1}=\sum_{m=1}^\infty \sum_{k=1}^m x^{m-k}\overline x^{k-1}s^{-1-m}=\sum_{m=1}^\infty \sum_{k=1}^m s^{-1-m} x^{m-k}\overline x^{k-1},$$
$$ S^{-1}_L (s, \bar{x})\qcs^{-1}=\sum_{m=2}^\infty \sum_{k=1}^{m-1} k x^{m-k-1} \bar{x}^{k-1}s^{-1-m}.$$
It is possible to obtain similar equalities with the right kernels.
\end{remark}

\section{Preliminaries on the $SC$-functional calculus and the $F$-functional calculus}\label{prelSCFFC}

We now recall some basic facts of the $SC$-functional calculus, see \cite{CS}. This functional calculus is the commutative version of the $S$-functional calculus (see \cite{CGKBOOK}). By $V$ we denote a real Banach space over $\rr$ with norm $\|\cdot\|$. It is possible to endow $V$ with an operation of multiplication by elements of $\rr_n$ that gives a two sided module over $\rr_n$. We denote by $V_n$ the two sided Banach module $V\otimes \rr_n$. An element of $V_n$ is of the form $\sum_{A\subset\{1,\ldots, n\}}e_A v_A$ with $v_A\in V$, where $e_\emptyset =1$ and $e_A=e_{i_1}\cdots e_{i_r}$ for $A=\{i_1,\ldots, i_r\}$ with $i_1<\cdots<i_r$. The multiplication (right and left) of an element $v\in V_n$ with a scalar $a\in\rr_n$ are defined as
$$ va=\sum_A v_A\otimes(e_A a)\quad\textrm{and}\quad av=\sum_A v_A\otimes (ae_A).$$
For short, we shall write $\sum_A v_Ae_A$ instead of $\sum_A v_A\otimes e_A$. Moreover, we define
$$\|v\|_{V_n}=\sum_A\|v_A\|_V.$$
Let $\mathcal B(V)$ be the space of bounded $\rr$-homomorphism of the Banach space $V$ into itself endowed with the natural norm denoted by $\|\cdot\|_{\mathcal B(V)}$.

If $T_A\in\mathcal B(V)$, we can define the operator $T=\sum_AT_A e_A$ and its action on
$$ v=\sum_{B}v_Be_B $$
as
$$ T(v)=\sum_{A,B} T_A(v_B)e_Ae_B.$$

The set of all such bounded operators is denoted by $\mathcal B(V_n)$. The norm is defined by
$$ \|T\|_{\mathcal B(V_n)}=\sum_A\|T_A\|_{\mathcal B(V)}. $$

In the following, we shall only consider operators of the form $T=T_0+\sum_{j=1}^ne_jT_j$, where $T_j\in\mathcal B(V)$ for $j=0,1,\ldots, n$, and we recall that the conjugate is defined by
$$\overline T=T_0-\sum_{j=1}^ne_j T_j.$$
The set of such operators in $\mathcal B(V_n)$ will be denoted by $\mathcal B^{0,1}(V_n)$. In this section we shall always consider $n$-tuples of bounded commuting operators, in paravector form, and we shall denote the set of such operators as $\mathcal{BC}^{0,1}(V_n)$.

\begin{definition}[the $S$-spectrum and the $S$-resolvent sets]
	Let $T\in \mathcal {BC}^{0,1}(V_n)$. We define the $S$-spectrum $\sigma_S(T)$ of $T$ as
	$$ \sigma_S(T)=\{s\in\rr^{n+1}:\, s^2\mathcal I -s(T+\overline T)+T\overline T\quad\textrm{is not invertible}\}. $$
	The $S$-resolvent set $\rho_S(T)$ is defined by
	$$ \rho_S(T)=\rr^{n+1}\setminus \sigma_S(T). $$
\end{definition}
For $s\in\rho_S(T)$, the operator
\begin{equation}\label{PSERES}
\mathcal Q_{c,s}(T)^{-1}:=(s^2\mathcal I-s(T+\overline T)+T\overline T)^{-1}
\end{equation}
is called the commutative pseudo $SC$-resolvent operator of $T$ at $s$.

\begin{theorem}
	Let $T\in\mathcal {BC}^{0,1}(V_n)$ and $s\in\mathbb \rr^{n+1}$ with $\|T\|< |s|$. Then we have
	$$ \sum_{n=0}^{\infty}T^n s^{-n-1}=-(s\mathcal I-\overline T)\mathcal{Q}_{c,s}(T)^{-1},$$
	and
	$$ \sum_{n=0}^{\infty}s^{-n-1}T^n=-\mathcal{Q}_{c,s}(T)^{-1}(s\mathcal I-\overline T). $$
\end{theorem}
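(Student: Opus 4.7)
The plan is to verify the identity by right-multiplying both sides by $\mathcal{Q}_{c,s}(T)$ and showing that the resulting sum telescopes. The key algebraic input is that, since $T\in\mathcal{BC}^{0,1}(V_n)$ has commuting components, both $T+\overline T=2T_0$ and $T\overline T=T_0^2+T_1^2+\cdots+T_n^2$ are \emph{real} operators (their Clifford coefficients vanish outside the scalar part). Consequently they commute with the paravector $s$ and with every power $s^{k}$, even though $s$ itself does not commute with $T$. This is precisely why $\mathcal{Q}_{c,s}$ was defined with $T+\overline T$ and $T\overline T$ rather than with $T$ alone.

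\medskip
First I would establish norm convergence: submultiplicativity gives $\|T^n s^{-n-1}\|_{\mathcal B(V_n)}\le C\|T\|^{n}|s|^{-n-1}$, so under $\|T\|<|s|$ the series converges absolutely in $\mathcal B(V_n)$ and may be manipulated termwise. Setting $A:=\sum_{n=0}^\infty T^n s^{-n-1}$, I would then distribute $A\,\mathcal{Q}_{c,s}(T)$ to obtain
\[
A\,\mathcal{Q}_{c,s}(T)=\sum_{n=0}^\infty T^n s^{1-n}-\sum_{n=0}^\infty T^{n+1}s^{-n}-\sum_{n=0}^\infty T^n\overline T\,s^{-n}+\sum_{n=0}^\infty T^{n+1}\overline T\,s^{-n-1},
\]
where the centrality of $T+\overline T$ and $T\overline T$ with respect to $s$ lets one move the scalar powers freely past these real operators. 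The first two series telescope, with only the $n=0$ contribution $s\mathcal I$ surviving; using $T^n\overline T=\overline T T^n$ and a reindexing, the last two telescope to $-\overline T$. Hence $A\,\mathcal{Q}_{c,s}(T)=s\mathcal I-\overline T$, and right-multiplying by $\mathcal{Q}_{c,s}(T)^{-1}$ yields the first formula. The second identity follows identically by left-multiplication, since the same real operators lie between $\mathcal{Q}_{c,s}(T)$ and the series.

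\medskip
The only subtlety is that $\mathcal{Q}_{c,s}(T)^{-1}$ must exist in order for the right-hand side to be meaningful. I would obtain this as a byproduct of the telescoping argument: the operator $s\mathcal I-\overline T$ is invertible under $\|T\|<|s|$ by the standard Neumann series applied to $\overline T\,s^{-1}$, and the identity $A\,\mathcal{Q}_{c,s}(T)=s\mathcal I-\overline T$ then exhibits $A(s\mathcal I-\overline T)^{-1}$ as a right inverse of $\mathcal{Q}_{c,s}(T)$; the analogous left-sided computation produces a left inverse, so $\mathcal{Q}_{c,s}(T)^{-1}$ is well-defined and the two one-sided inverses coincide. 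I expect this invertibility verification to be the main technical obstacle; once it is in place, the telescoping is mechanical, and the commutativity of the components—through the reality of $T+\overline T$ and $T\overline T$—absorbs all potential ordering concerns between $s$ and $T$.
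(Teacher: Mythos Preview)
Your telescoping argument is sound, and it is precisely the technique the paper uses for the closely related resolvent identities (see the proof of Proposition~\ref{sk}, where the same observation---that polynomials in the commuting components $T_i$ with real coefficients commute with every power of $s$---drives an identical telescoping). Note, however, that the paper itself does not supply a proof of this particular theorem: it is stated without proof in the preliminaries on the $SC$-functional calculus, so there is no paper argument to compare against directly.

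One point worth flagging: your computation yields $A\,\mathcal{Q}_{c,s}(T)=s\mathcal I-\overline T$, hence $\sum_{m\geq 0}T^m s^{-m-1}=(s\mathcal I-\overline T)\,\mathcal{Q}_{c,s}(T)^{-1}$ \emph{without} the minus sign appearing in the displayed statement. Your version is the correct one---it matches both the scalar analogue in Theorem~\ref{ts} and the definition of $S^{-1}_L(s,T)$ given immediately after the theorem---so the sign in the stated identity is a typo. Your treatment of the invertibility of $\mathcal{Q}_{c,s}(T)$, bootstrapping from the Neumann-series invertibility of $s\mathcal I-\overline T$ (note $\|\overline T\|=\|T\|<|s|$ in the norm $\|\cdot\|_{\mathcal B(V_n)}=\sum_A\|T_A\|$), is also correct.
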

According to the left or right slice hyperholomorphicity, there exist two different resolvent operators.
\begin{definition}[$SC$-resolvent operators]
	Let $T\in\mathcal{BC}^{0,1}(V_n)$ and $s\in\rho_S(T)$. We define the left $SC$-resolvent operator as
	$$ S^{-1}_{L}(s,T):=(s\mathcal I-\overline T)(s^2\mathcal I-s(T+\overline T)+T\overline T)^{-1}, $$
	and the right $SC$-resolvent operator as
	$$ S^{-1}_R(s, T):=(s^2\mathcal I-s(T+\overline T)+T\overline T)^{-1}(s\mathcal I-\overline T). $$
\end{definition}

An equation that involves both $SC$-resolvent operators is the so called $SC$-resolvent equation (see \cite{ACGS15}). In order to give a definition of $SC$-functional calculus we need the following classes of functions.

\begin{definition}
	Let $T \in \mathcal{BC}^{0,1}(X)$. We denote by $\mathcal{SH}_L(\sigma_S(T))$, $\mathcal{SH}_R(\sigma_S(T))$ and $N(\sigma_S(T))$ the sets of all left, right and intrinsic slice hyperholomorphic functions $f$ with $ \sigma_S(T) \subset dom(f)$.
\end{definition}

\begin{definition}[$SC$-functional calculus]
	\label{Sfun}
	Let $T \in \mathcal{BC}^{0,1}(V_n)(X)$. Let $U$ be a slice Cauchy domain that contains $\sigma_S(T)$  and $\overline{U}$ is contained in the domain of $f$.  Set $ds_J=-dsJ$. We define
	\begin{equation}
		\label{Scalleft}
		f(T):={{1}\over{2\pi }} \int_{\partial (U\cap \mathbb{C}_J)} S_L^{-1} (s,T)\  ds_J \ f(s), \ \ {\it for\ every}\ \ f\in \mathcal{SH}_L(\sigma_S(T)).
	\end{equation}
	We define
	\begin{equation}
		\label{Scalright}
		f(T):={{1}\over{2\pi }} \int_{\partial (U\cap \mathbb{C}_J)} \  f(s)\ ds_J
		\ S_R^{-1} (s,T),\ \  {\it for\ every}\ \ f\in \mathcal{SH}_R(\sigma_S(T)).
	\end{equation}
\end{definition}
It is possible to prove that the $SC$-functional calculus is well posed since the integrals in (\ref{Scalleft}) and (\ref{Scalright}) depend neither on $U$ and nor on the imaginary unit $J\in\mathbb{S}$.

Now we want to introduce the $F$-functional calculus (for a more complete presentation of this topic see \cite{CG}). The definition of the $F_n$-resolvent operators is suggested by the Fueter-Sce mapping theorem in integral form.
\begin{definition}[$F_n$-resolvent operators] Let $n$ be an odd number and let $T \in \mathcal{BC}^{0,1}(V_n)$. For $s\in\rho_F(T)$ we define the left $F$-resolvent operator as
	$$ F_n^{L}(s,T)=\gamma_n(s\mathcal{I}- \bar{T}) \mathcal{Q}_{c,s}(T)^{-\frac{n+1}2}, \qquad s \in \rho_{S}(T),$$
	and the right $F$-resolvent operator as
	$$ F_n^{R}(s,T)=\gamma_n\mathcal{Q}_{c,s}(T)^{-\frac{n+1}2}(s\mathcal{I}- \bar{T}) , \qquad s \in \rho_{S}(T).$$
\end{definition}

With the above definitions and Theorem \ref{Fueter} at hand, we can recall the $F$-functional calculus.

\begin{theorem}[The $F$-functional calculus for bounded operators]
	Let $n$ be an odd number, let $T\in\mathcal{BC}^{0,1}(V_n)$ and set $ds_J=ds/J$. Then, for any $f\in \mathcal{SH}_L(\sigma_S(T))$, we define
	\begin{equation}\label{DefFCLUb}
		\breve{f}(T):=\frac{1}{2\pi}\int_{\pp(U\cap \mathbb{C}_J)} F_n^L(s,T) \, ds_J\, f(s),
	\end{equation}
	and, for any $f\in \mathcal{SH}_R(\sigma_S(T))$, we define
	\begin{equation}\label{SCalcMON}
		\breve{f}(T):=\frac{1}{2\pi}\int_{\pp(U\cap \mathbb{C}_J)} f(s) \, ds_J\, F_n^R(s,T).
	\end{equation}
	The previous integrals depend neither on the imaginary unit $J\in\mathbb S$ nor on the set $U$.
\end{theorem}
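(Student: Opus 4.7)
The plan is to reduce the well-posedness of \eqref{DefFCLUb} and \eqref{SCalcMON} to the analogous well-posedness statement for the slice $SC$-functional calculus (Definition \ref{Sfun}) by exploiting the slice hyperholomorphicity in $s$ of the $F_n$-resolvent operators. First I would verify that, for $T\in\mathcal{BC}^{0,1}(V_n)$, the operator-valued function $s\mapsto F_n^L(s,T)=\gamma_n(s\mathcal I-\overline T)\mathcal Q_{c,s}(T)^{-(n+1)/2}$ is right slice hyperholomorphic on $\rho_S(T)$, and $s\mapsto F_n^R(s,T)$ is left slice hyperholomorphic there. This is the operator analogue of Proposition \ref{reg} (read in the variable $s$ rather than $x$): since $\mathcal Q_{c,s}(T)$ is a polynomial in $s$ with coefficients commuting among themselves and with $\overline T$, each entry of the resolvent behaves like an intrinsic slice hyperholomorphic function composed with commuting bounded operators.

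Next I would prove independence from the imaginary unit $J\in\mathbb S$. Fix two units $J_1,J_2\in\mathbb S$ and a slice Cauchy domain $U$ with $\sigma_S(T)\subset U$ and $\overline U\subset\operatorname{dom}(f)$. Since $F_n^L(s,T)$ is right slice hyperholomorphic in $s$ and $f$ is left slice hyperholomorphic on a neighborhood of $\overline U$, the standard Cauchy-type argument used for the $SC$-functional calculus applies verbatim: one picks a slightly smaller slice Cauchy domain $U'$ with $\sigma_S(T)\subset U'\subset\overline{U'}\subset U$, and applies the slice Cauchy integral representation (Theorem \ref{Cauchy}) to rewrite the boundary integral over $\partial(U\cap\mathbb C_{J_1})$ as one over $\partial(U\cap\mathbb C_{J_2})$. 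The right case is symmetric, using that $F_n^R(s,T)$ is left slice hyperholomorphic in $s$.

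Then I would establish independence of the slice Cauchy domain $U$. Let $U_1,U_2$ be two such domains; by slicing into connected components we may assume either $\overline{U_1}\subset U_2$ or that $U_1\cap U_2$ contains $\sigma_S(T)$ and we can insert a third domain $U_3\subset U_1\cap U_2$ with $\sigma_S(T)\subset U_3$. On the region between $\partial(U_j\cap\mathbb C_J)$ and $\partial(U_3\cap\mathbb C_J)$, the integrand $F_n^L(s,T)\,ds_J\,f(s)$ is the boundary trace of a function that is slice hyperholomorphic in $s$ (since the $S$-resolvent set of $T$ contains this annular region and $f$ is slice hyperholomorphic on it). The slice Cauchy theorem therefore forces the two integrals to coincide.

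The one step that requires some care, and which I expect to be the main technical obstacle, is justifying the manipulations of the Cauchy-type closed-contour argument at the level of operator-valued integrals: one must check that the $\mathcal B(V_n)$-valued function $s\mapsto F_n^L(s,T)$ actually satisfies the slice Cauchy integral formula on every $\mathbb C_J$-slice inside $\rho_S(T)$. This is not entirely immediate because the operators $s\mathcal I-\overline T$ and $\mathcal Q_{c,s}(T)^{-1}$ must be manipulated as genuinely slice hyperholomorphic objects in $s$; the cleanest route is to expand in the convergent series given in Lemma \ref{lk} applied to $T$ in a neighborhood of infinity, show that the equality $F_n^L(\cdot,T)=\gamma_n(s\mathcal I-\overline T)\mathcal Q_{c,s}(T)^{-(n+1)/2}$ agrees there with the formal power series, and then extend by slice hyperholomorphic continuation to all of $\rho_S(T)$. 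Once this is done, all three independence properties (from $J$, from $U$, and consistency between the left and right definitions when $f\in\mathcal N(\sigma_S(T))$) follow by the same contour-deformation template as in the $SC$-functional calculus.
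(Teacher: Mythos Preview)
The paper does not actually prove this theorem: it is recalled as a preliminary result from the literature (see the reference to \cite{CG} at the beginning of Section~\ref{prelSCFFC}), so there is no in-paper proof to compare against directly. That said, your approach is correct and is exactly the template the paper uses later to prove well-definedness of the fine structure functional calculi (the theorem immediately following Definition~\ref{fcalc}): one uses only that the resolvent kernel is right slice hyperholomorphic in $s$, gets independence from $U$ via the Cauchy integral formula, and gets independence from $J$ by representing the kernel at $s\in\partial(U_s\cap\mathbb C_J)$ through a Cauchy integral over a smaller domain $\partial(U_x\cap\mathbb C_I)$ and applying Fubini. Your sketch is slightly more verbose about the operator-valued slice hyperholomorphicity step than is strictly necessary---the explicit formula $F_n^L(s,T)=\gamma_n(s\mathcal I-\overline T)\mathcal Q_{c,s}(T)^{-(n+1)/2}$ already exhibits it as a finite combination of intrinsic slice hyperholomorphic functions of $s$ with operator coefficients, so no series expansion or continuation argument is needed.
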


\section{The functional calculi of the fine structures }\label{FCFSTRU}

Using the expressions of the kernels written in terms of $x$ and $ \bar{x}$ and the fine Fueter-Sce integral theorems, we can define the fine Fueter-Sce functional calculi.

\begin{definition}
	Let $T=T_0+\sum_{i=1}^5 e_i T_i \in \mathcal{BC}^{0,1}(V_5)$, $s \in \mathbb{R}^6$. We formally define
	\begin{itemize}
		\item the left and the right $\bigD$-kernel operator series as
		$$ -4 \sum_{m=1}^ \infty \sum_{k=1}^m T^{m-k} \bar{T}^{k-1}s^{-1-m}, $$
		and
		$$ -4 \sum_{m=1}^ \infty \sum_{k=1}^m s^{-1-m}T^{m-k} \bar{T}^{k-1}; $$
		\item the left and the right $\Delta$-kernel operator series as
		$$ -8 \sum_{m=2}^ \infty \sum_{k=1}^{m-1} (m-k)T^{m-k-1} \bar{T}^{k-1}s^{-1-m},$$
		and
		$$ -8 \sum_{m=2}^ \infty \sum_{k=1}^{m-1} (m-k)s^{-1-m}T^{m-k-1} \bar{T}^{k-1};$$
		\item the left and the right $\Delta\bigD$-kernel operator series as
		$$ 16 \sum_{m=3}^ \infty \sum_{k=1}^{m-2} (m-k-1)kT^{m-k-2} \bar{T}^{k-1}s^{-1-m},$$
		and
		$$ 16 \sum_{m=3}^ \infty \sum_{k=1}^{m-2} (m-k-1)k s^{-1-m} T^{m-k-2} \bar{T}^{k-1}.$$
		\item the left and the right $\bigD^2$-kernel operator series as
		$$ -8 \sum_{m=2}^\infty \sum_{k=1}^{m-1} k  T^{m-k-1} \bar{T}^{k-1} s^{-1-m}, $$
		and
		$$ -8 \sum_{m=2}^\infty \sum_{k=1}^{m-1} k s^{-1-m} T^{m-k-1} \bar{T}^{k-1}; $$
		\item the left and the right $\overline \bigD$-kernel operator series as
		$$ 2  \left[\sum_{m=1}^\infty  \left(mT^{m-1}  +2  \sum_{k=1}^{m} T^{m-k} \bar{T}^{k-1} \right) s^{-1-m} \right], $$
		and
		$$ 2  \left[\sum_{m=1}^\infty s^{-1-m} \left(mT^{m-1}  +2  \sum_{k=1}^{m} T^{m-k} \bar{T}^{k-1} \right) \right]; $$
		\item the left and the right $\overline \bigD^2$-kernel operator series as
		$$ 4 \left[\sum_{m=2}^\infty  \left(m(m-1)T^{m-2}+2  \sum_{k=1}^{m-1} (2m-k)T^{m-k-1}\bar{T}^{k-1}\right) s^{-1-m} \right], $$
		and
		$$ 4 \left[\sum_{m=2}^\infty s^{-1-m} \left(m(m-1)T^{m-2}+2  \sum_{k=1}^{m-1} (2m-k)T^{m-k-1}\bar{T}^{k-1}\right)  \right]; $$
		\item the left and the right $\Delta \overline\bigD$-kernel operator series as
		$$ -16 \sum_{m=3}^\infty \sum_{k=1}^{m-2} (m-k-1)(m+k)T^{m-k-2} \bar{T}^{k-1} s^{-1-m}, $$
		and
		$$ -16 \sum_{m=3}^\infty \sum_{k=1}^{m-2} (m-k-1)(m+k)s^{-1-m}T^{m-k-2} \bar{T}^{k-1}. $$
	\end{itemize}
\end{definition}

\begin{proposition}\label{sk}
	Let us consider $T \in \mathcal{BC}^{0,1}(V_5)$, $s \in \mathbb{R}^6$ and $ \| T\| < |s|$ then the series previously defined are convergent and, in particular, we can expand
	\begin{itemize}
		\item the left and right $\mathcal D$-resolvent operator as
		\begin{eqnarray*}
			S^{-1}_{\bigD,L}(s,T)&=& S^{-1}_{\bigD,R}(s,T) =-4 \sum_{m=1}^ \infty \sum_{k=1}^m T^{m-k} \bar{T}^{k-1}s^{-1-m}\\
			&=& -4 \sum_{m=1}^ \infty \sum_{k=1}^m s^{-1-m}T^{m-k} \bar{T}^{k-1};
		\end{eqnarray*}
		\item the left and right $\Delta$-resolvent operator as
		$$ S^{-1}_{\Delta, L}(s,T)= -8 \sum_{m=2}^ \infty \sum_{k=1}^{m-1} (m-k)T^{m-k-1} \bar{T}^{k-1}s^{-1-m},$$
		and
		$$ S^{-1}_{\Delta, R}(s,T)=  -8 \sum_{m=2}^ \infty \sum_{k=1}^{m-1} (m-k)s^{-1-m}T^{m-k-1} \bar{T}^{k-1};$$
		\item the left and right $\Delta\bigD$-resolvent operator as
		\begin{eqnarray*}
			S^{-1}_{\Delta\bigD,L}(s,T)&=& S^{-1}_{\Delta\bigD,R}(s,T)=16 \sum_{m=3}^ \infty \sum_{k=1}^{m-2} (m-k-1)kT^{m-k-2} \bar{T}^{k-1}s^{-1-m}\\
			&=& 16 \sum_{m=3}^ \infty \sum_{k=1}^{m-2} (m-k-1)k s^{-1-m} T^{m-k-2} \bar{T}^{k-1};
		\end{eqnarray*}
		\item the left and right $\bigD^2$-resolvent operator as
		$$ S^{-1}_{\bigD^2,L}(s,T) = -8 \sum_{m=2}^\infty \sum_{k=1}^{m-1} k  T^{m-k-1} \bar{T}^{k-1} s^{-1-m}, $$
		and
		$$ S^{-1}_{\bigD^2,R}(s,T) = -8 \sum_{m=2}^\infty \sum_{k=1}^{m-1} k s^{-1-m} T^{m-k-1} \bar{T}^{k-1}; $$
		\item the left and right $\overline\bigD$-resolvent operator as
		$$ S^{-1}_{\overline\bigD,L}(s,T) = 2  \left[\sum_{m=1}^\infty  \left(mT^{m-1}  +2  \sum_{k=1}^{m} T^{m-k} \bar{T}^{k-1} \right) s^{-1-m} \right], $$
		and
		$$ S^{-1}_{\overline\bigD,R}(s,T) = 2  \left[\sum_{m=1}^\infty s^{-1-m} \left(mT^{m-1}  +2  \sum_{k=1}^{m} T^{m-k} \bar{T}^{k-1} \right) \right]; $$
		\item the left and right $\overline\bigD^2$-resolvent operator as
		$$ S^{-1}_{\overline\bigD^2,L}(s,T) = 4 \left[\sum_{m=2}^\infty  \left(m(m-1)T^{m-2}+2  \sum_{k=1}^{m-1} (2m-k)T^{m-k-1}\bar{T}^{k-1}\right) s^{-1-m} \right], $$
		and
		$$ S^{-1}_{\overline\bigD^2,R}(s,T) = 4 \left[\sum_{m=2}^\infty s^{-1-m} \left(m(m-1)T^{m-2}+2  \sum_{k=1}^{m-1} (2m-k)T^{m-k-1}\bar{T}^{k-1}\right)  \right]; $$
		\item the left and right $\Delta\overline \bigD$-resolvent operator as
		$$ S^{-1}_{\Delta\overline \bigD,L}(s,T) = -16 \sum_{m=3}^\infty \sum_{k=1}^{m-2} (m-k-1)(m+k)T^{m-k-1} \bar{T}^{k-1} s^{-1-m}, $$
		and
		$$ S^{-1}_{\Delta\overline \bigD,R}(s,T) = -16 \sum_{m=3}^\infty \sum_{k=1}^{m-2} (m-k-1)(m+k)s^{-1-m}T^{m-k-1} \bar{T}^{k-1}. $$
	\end{itemize}
\end{proposition}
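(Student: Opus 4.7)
The plan is to deduce both claims---absolute convergence in $\mathcal{B}(V_5)$ and identification with the closed-form resolvents---from the scalar results of Section \ref{serieskernels}, exploiting the fact that the commuting components of $T \in \mathcal{BC}^{0,1}(V_5)$ make $T$ and $\bar T$ commute with each other and with every real scalar. I would treat all seven resolvents uniformly in two steps.

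For convergence I would first note that $\|\bar T\|_{\mathcal{B}(V_5)} \leq C\|T\|_{\mathcal{B}(V_5)}$ for a constant depending only on the Clifford algebra, so submultiplicativity yields $\|T^{m-k}\bar T^{k-1}\|\leq C^{k-1}\|T\|^{m-1}$. Each of the seven operator series is then dominated termwise by the corresponding scalar majorant of Proposition \ref{conve}, with $|x|$ replaced by $\|T\|$, and since $\|T\|<|s|$ the ratio test arguments of that proposition apply verbatim. This gives absolute convergence in operator norm, justifying in particular all subsequent term-by-term manipulations.

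For the identification step, the backbone is the operator analogue of $\mathcal{Q}_{c,s}(x)^{-1}=\sum_m\sum_k x^{m-k}\bar x^{k-1}s^{-1-m}$. Using the factorization $\mathcal{Q}_{c,s}(T)=(s\mathcal{I}-T)(s\mathcal{I}-\bar T)$, which is valid because $T\bar T=\bar T T$, together with the Neumann series for $(s\mathcal{I}-T)^{-1}$ and $(s\mathcal{I}-\bar T)^{-1}$, one obtains
$$\mathcal{Q}_{c,s}(T)^{-1}=\sum_{m=1}^{\infty}\sum_{k=1}^{m}T^{m-k}\bar T^{k-1}s^{-1-m},$$
and, by the Cauchy product, the corresponding series for $\mathcal{Q}_{c,s}(T)^{-2}$. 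Substituting these into the closed forms of Theorems \ref{app1}--\ref{app7}, which express every resolvent as a polynomial in $T,\bar T,s$ multiplied by $\mathcal{Q}_{c,s}(T)^{-1}$ or $\mathcal{Q}_{c,s}(T)^{-2}$, and performing the rearrangements already used in Propositions \ref{laplacian}--\ref{antidirac}---these rearrangements are purely algebraic and depend only on the commutativity that we have---produces the claimed expansions. The coefficient identities of Lemmas \ref{sum}--\ref{sum2} enter exactly as in the scalar setting, and the equalities $S^{-1}_{\mathcal{D},L}=S^{-1}_{\mathcal{D},R}$ and $S^{-1}_{\Delta\mathcal{D},L}=S^{-1}_{\Delta\mathcal{D},R}$ are immediate because both sides reduce to $-4\mathcal{Q}_{c,s}(T)^{-1}$ and $16\mathcal{Q}_{c,s}(T)^{-2}$ respectively.

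The main obstacle is the bookkeeping for the $\overline{\mathcal{D}}$ and $\overline{\mathcal{D}}^2$ resolvents, whose closed forms in Theorems \ref{app4}--\ref{app5} mix $(s-\bar T)\mathcal{Q}_{c,s}(T)^{-2}$ with the scalar factor $s-T_0$. I would handle this by writing $T_0=(T+\bar T)/2$ and noting that $T_0$ commutes with everything in sight, so that the regrouping which produces the coefficients $m(m-1)$ and $2m-k$ in the $\overline{\mathcal{D}}^2$-series and $(m-k-1)(m+k)$ in the $\Delta\overline{\mathcal{D}}$-series proceeds exactly as in the scalar computations of Propositions \ref{antidouble} and \ref{antidirac}. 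No new analytic difficulty arises beyond what is already present in Section \ref{serieskernels}.
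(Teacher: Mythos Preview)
Your convergence argument is sound and aligns with the paper's. The genuine gap is in the identification step: the factorization $\mathcal{Q}_{c,s}(T)=(s\mathcal{I}-T)(s\mathcal{I}-\bar T)$ is \emph{false} for a general paravector $s\in\mathbb{R}^6$. Expanding the right-hand side gives $s^2-s\bar T-Ts+T\bar T$, which differs from $\mathcal{Q}_{c,s}(T)=s^2-sT-s\bar T+T\bar T$ by the commutator $sT-Ts$. Commutativity of the components $T_0,\dots,T_5$ yields $T\bar T=\bar T T$, but it does \emph{not} force $sT=Ts$: taking $T=e_1A$ with $A\in\mathcal{B}(V)$ and $s=e_2$ gives $(s-T)(s-\bar T)=-1+A^2+2e_2e_1A\neq -1+A^2=\mathcal{Q}_{c,s}(T)$. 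For the same reason there is no Neumann series ``for $(s\mathcal{I}-T)^{-1}$'' to invoke---that two-sided inverse need not exist---and the Cauchy product you propose would require moving $s^{-a-1}$ past $\bar T^b$, which is not legitimate.

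The paper avoids factoring altogether. It verifies directly that
\[
\Bigl(\sum_{m\ge1}\sum_{k=1}^m T^{m-k}\bar T^{k-1}s^{-1-m}\Bigr)\mathcal{Q}_{c,s}(T)=\mathcal{I}
\]
by multiplying out $\mathcal{Q}_{c,s}(T)=s^2-s(T+\bar T)+T\bar T$, shifting indices, and telescoping. What makes this work---and what turns the one-sided computation into a two-sided inverse---is precisely the observation missing from your argument: each inner block $\sum_{k=1}^m T^{m-k}\bar T^{k-1}$ is invariant under $T\leftrightarrow\bar T$, hence is a polynomial in the \emph{real} operators $T_0$ and $|T|^2=T\bar T$, and therefore commutes with every power of $s$. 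Once this realness is recorded, your substitution strategy for the remaining six resolvents does become viable (the Cauchy product of the $\mathcal{Q}_{c,s}(T)^{-1}$ series with itself is then legitimate), but the paper instead declares that each case follows by the same direct telescoping scheme.
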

\begin{proof}
	The convergences of the series for $\|T\|<|s|$ can be proved considering the series of the operator norms and reasoning as in Proposition \ref{conve}. We prove only the first equality between the kernels and the series because the other equalities follow by similar arguments. Since $$S^{-1}_{\mathcal D,L}(s,T)=S^{-1}_{\mathcal D,R}(s,T)=-4\mathcal Q_{c,s}(T)^{-1},$$ it is sufficient to prove
	\begin{equation}\label{es1bis}
		\mathcal Q_{c,s}(T)\left(\sum_{m=1}^\infty  \sum_{k=1}^m T^{m-k} \overline T^{k-1} s^{-1-m}\right)=\left(\sum_{m=1}^\infty  \sum_{k=1}^m T^{m-k} \overline T^{k-1} s^{-1-m}\right)\mathcal Q_{c,s}(T)=\mathcal I .
	\end{equation}
	The first equality in \eqref{es1bis} is a consequence of the following facts: for any positive integer $m$ the sum $\sum_{k=1}^m T^{m-k}\overline T^{k-1}$ is an operator of the components of $T$ with real coefficients which then commute with any power of $s$, the components of $T$ are commuting among each other and the operator $\mathcal Q_{c,s}(T)$ can be written in the following form: $s^2\mathcal I-2s T_0+\sum_{i=0}^5 T_i^2$. Now we want to prove the second equality in \eqref{es1bis}. First we observe that
	\[
	\begin{split}
		&\left(\sum_{m=1}^\infty  \sum_{k=1}^m T^{m-k} \overline T^{k-1} s^{-1-m}\right)\mathcal Q_{c,s}(T)=\left(\sum_{m=1}^\infty  \sum_{k=1}^m T^{m-k} \overline T^{k-1} s^{-1-m}\right)(s^2-s(T+\overline T)+T\overline T)\\
		&= \sum_{m=1}^\infty  \sum_{k=1}^m T^{m-k}\overline{T}^{k-1}s^{1-m}- \sum_{m=1}^\infty  \sum_{k=1}^m T^{m+1-k}\overline T^{k-1}s^{-m} - \sum_{m=1}^\infty \sum_{k=1}^m T^{m-k}\overline T^k s^{-m}\\
		&\, \, \, \, \, \,+ \sum_{m=1}^{\infty} \sum_{k=1}^m T^{m-k+1} \bar{T}^{k}s^{-1-m}.
	\end{split}
	\]
	Making the change of index $m'=1+m$ in the second and fourth series, we have
	\[
	\begin{split}
		&\left(\sum_{m=1}^\infty  \sum_{k=1}^m T^{m-k} \overline T^{k-1} s^{-1-m}\right)\mathcal Q_{c,s}(T)=\\
		&= \sum_{m=1}^\infty  \sum_{k=1}^m T^{m-k}\overline{T}^{k-1}s^{1-m}- \sum_{m'=2}^\infty  \sum_{k=1}^{m'-1} T^{m'-k}\overline T^{k-1}s^{1-m'} - \sum_{m=1}^\infty \sum_{k=1}^m T^{m-k}\overline T^k s^{-m}\\
		&\, \, \, \, \, \,+\sum_{m'=2}^\infty  \sum_{k=1}^{m'-1} T^{m'-k} \overline T^k s^{-m'}\\
		&=\mathcal{I}+\sum_{m=2}^\infty  \sum_{k=1}^m T^{m-k}\overline{T}^{k-1}s^{1-m}- \sum_{m'=2}^\infty  \sum_{k=1}^{m'-1} T^{m'-k}\overline T^{k-1}s^{1-m'} +\\
		&\, \, \, \, \, \,-\bar{T}s^{-1}- \sum_{m=2}^\infty \sum_{k=1}^m T^{m-k}\overline T^k s^{-m}+\sum_{m'=2}^\infty  \sum_{k=1}^{m'-1} T^{m'-k} \overline T^k s^{-m'}.\\
	\end{split}
	\]
	Simplifying the opposite terms in the first and second series and in the third and fourth series, in the end we get
	\[
	\left(\sum_{m=1}^\infty  \sum_{k=1}^m T^{m-k} \overline T^{k-1} s^{-1-m}\right)\mathcal Q_{c,s}(T)=\mathcal I+\sum_{m=2}^\infty\overline T^{m-1}s^{1-m}-\sum_{m=2}^\infty\overline T^{m-1}s^{1-m}=\mathcal I.\\
	\]
\end{proof}

We can now define the resolvent operators of the fine structure and study their regularity.
Based on the previous series expansions and the structure of the kernels of the function spaces
we can now define the resolvent operators associated to the fine structure spaces.
Using these resolvent operators associated with the $S$-spectrum we will define the functional calculi associated with the respective functions spaces.

\begin{definition}[The resolvent operators associated with the fine structure]
	\label{DEFRES}
	Let $T\in \mathcal {BC}^{0,1}(V_5)$ and $s\in \rho_S(T)$, we recall that
$$
\mathcal Q_{c,s}(T)^{-1}:=(s^2\mathcal I-s(T+\overline T)+T\overline T)^{-1}.
$$
\begin{itemize}
\item
The left and the right  {\bf $\mathcal{D}$-resolvent operators} $S^{-1}_{\bigD ,L}(s,T)$ and $S^{-1}_{\bigD ,R}(s,T)$ are defined as
	\begin{equation}
		\label{oneRES}
		S^{-1}_{\bigD ,L}(s,T):=-4 \mathcal{Q}_{c,s}(T)^{-1},
	\end{equation}
	and
	\begin{equation}
		\label{secondRES}
		S^{-1}_{\bigD ,R}(s,T):=-4 \mathcal{Q}_{c,s}(T)^{-1}.
	\end{equation}
\item
 The left and the right {\bf $\Delta$-resolvent operators} $S^{-1}_{\Delta,L}(s,T)$ and $S^{-1}_{\Delta,R}(s,T)$ are defined as
		\begin{equation}
		\label{thirdRES}
		S^{-1}_{\Delta ,L}(s,T):=-8S^{-1}_L(s,T) \mathcal{Q}_{c,s}(T)^{-1},
	\end{equation}
	and
	\begin{equation}
		\label{fourthRES}
		S^{-1}_{\Delta ,R}(s,T):=-8 \mathcal{Q}_{c,s}(T)^{-1}S^{-1}_R(s,T) .
	\end{equation}
\item
	The left and the right
{\bf $\Delta\mathcal{D}$-resolvent operators} $S^{-1}_{\Delta\mathcal{D},L}(s,T)$ and $S^{-1}_{\Delta\mathcal{D},R}(s,T)$ are defined as
	\begin{equation}
		\label{fiveRES}
		S^{-1}_{\Delta\bigD ,L}(s,T):=16 \mathcal{Q}_{c,s}(T)^{-2},
	\end{equation}
	and
	\begin{equation}
		\label{sixRES}
		S^{-1}_{\Delta\bigD ,R}(s,T):=16 \mathcal{Q}_{c,s}(T)^{-2}.
\end{equation}
\item
	The left and the right {\bf $ \mathcal{\overline{D}}$-resolvent operators} $S^{-1}_{\mathcal{\overline{D}},L}(s,T)$ and $S^{-1}_{\mathcal{\overline{D}},R}(s,T)$ are defined as
	\begin{equation}\label{sevenRES}
		S^{-1}_{\overline\bigD ,L}(s,T):=4(s-\bar T)\mathcal{Q}_{c,s}(T)^{-2}(s-T_0)+2\mathcal{Q}_{c,s}(T)^{-1}
	\end{equation}
	and
	\begin{equation}\label{eightRES}
		S^{-1}_{\overline\bigD ,R}(s,T):=4(s-T_0)\mathcal{Q}_{c,s}(T)^{-2}(s-\bar T)+2\mathcal{Q}_{c,s}(T)^{-1}.
	\end{equation}
		\item
The left and the right {\bf $\mathcal{\overline{D}}^2$-resolvent operators} $S^{-1}_{\mathcal{\overline{D}}^2,L}(s,T)$ and $S^{-1}_{\mathcal{\overline{D}}^2,R}(s,T)$ are defined as
	\begin{equation}\label{nineRES}
		S^{-1}_{\overline\bigD^2 ,L}(s,T):=32(s-\bar T)\mathcal{Q}_{c,s}(T)^{-3}(s-T_0)^2
	\end{equation}
	and
	\begin{equation}\label{tenRES}
		S^{-1}_{\overline\bigD^2 ,R}(s,T):=32(s-T_0)^2\mathcal{Q}_{c,s}(T)^{-3} (s-\bar T).
	\end{equation}
	
\item
The left and the right  {\bf $ \bigD^2$-resolvent operators} $S^{-1}_{\bigD^2,L}(s,T)$ and $S^{-1}_{\bigD^2,R}(s,T)$ are defined as
	\begin{equation}\label{elevenRES}
		S^{-1}_{\bigD^2 ,L}(s,T):=8S^{-1}_L (s, \bar{T})\mathcal{Q}_{c,s}(T)^{-1}
	\end{equation}
	and
	\begin{equation}\label{twelveRES}
		S^{-1}_{\bigD^2 ,R}(s,T):=8\mathcal{Q}_{c,s}(T)^{-1} S^{-1}_R (s, \bar{T}).
	\end{equation}
	\item
The left and the right  {\bf $ \Delta\mathcal{\overline{D}}$-resolvent operators}
$S^{-1}_{\Delta \mathcal{\overline{D}},L}(s,T)$ and $S^{-1}_{\Delta\mathcal{\overline{D}},R}(s,T)$ are defined as
	\begin{equation}\label{thirteenRES}
		S^{-1}_{\Delta\overline\bigD ,L}(s,T):=-64(s-\bar T)\mathcal{Q}_{c,s}(T)^{-3}(s-T_0)
	\end{equation}
	and
	\begin{equation}\label{fourteenRES}
		S^{-1}_{\Delta\overline\bigD ,R}(s,x):= -64 (s-T_0)\mathcal{Q}_{c,s}(T)^{-3} (s-\bar T).
	\end{equation}
\end{itemize}
\end{definition}

Now, we study the regularity of the previous kernels.

\begin{proposition}\label{reg1RES}
	Let $T\in \mathcal {BC}^{0,1}(V_5)$.  Then
the resolvent operators associated with the fine structure
in Definition \ref{DEFRES}
are slice hyperholomorphic operators valued functions for 	
and $s\in \rho_S(T)$.
\end{proposition}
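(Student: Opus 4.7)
The plan is to establish slice hyperholomorphicity in $s$ of each resolvent in Definition~\ref{DEFRES} by mirroring, at the operator level, the argument used for the scalar kernels in Proposition~\ref{reg1}. The key ingredient is that the commutative pseudo $SC$-resolvent operator $\mathcal{Q}_{c,s}(T)^{-1}$ is intrinsic slice hyperholomorphic in $s$ on $\rho_S(T)$: using the commutativity of the components of $T$, one computes
\[
\mathcal{Q}_{c,s}(T) = s^2\mathcal{I} - 2s T_0 + (T_0^2 + T_1^2 + \cdots + T_5^2),
\]
which is a quadratic polynomial in $s$ with scalar operator coefficients, hence intrinsic slice hyperholomorphic in $s$; so are all of its integer powers on $\rho_S(T)$ as well as the factor $(s\mathcal I - T_0)$.

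For the left resolvents, I would then expand each explicit formula and write it as a finite sum of terms of the shape $A_j \cdot h_j(s)$, where $A_j \in \mathcal B(V_5)$ is independent of $s$ (built from $\mathcal I$, $T_0$, $\bar T$) and each $h_j(s) = s^{k_j}\, \mathcal Q_{c,s}(T)^{-m_j}$ with $0\le k_j\le 3$ and $1\le m_j\le 3$, so that $h_j$ is intrinsic slice hyperholomorphic in $s$. The non-trivial step is to bring the $s$-dependence to the right; this is done using the identity $(s\mathcal I - \bar T)\mathcal Q_{c,s}(T)^{-m} = s\,\mathcal Q_{c,s}(T)^{-m} - \bar T\,\mathcal Q_{c,s}(T)^{-m}$ and the fact that $(s\mathcal I - T_0)$ commutes with $\mathcal Q_{c,s}(T)^{-m}$ since both are polynomials in $s$ with scalar operator coefficients. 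Since left multiplication of an intrinsic slice hyperholomorphic function by a constant operator produces a right slice hyperholomorphic function, each summand, and therefore the whole resolvent, is right slice hyperholomorphic in $s$ on $\rho_S(T)$. A symmetric argument shows that each right resolvent is left slice hyperholomorphic in $s$.

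The main obstacle is the verification of the decomposition for the more involved kernels $S^{-1}_{\overline{\mathcal D},L}$, $S^{-1}_{\overline{\mathcal D}^2,L}$, $S^{-1}_{\Delta\overline{\mathcal D},L}$ and $S^{-1}_{\mathcal D^2,L}$: one must carefully track the non-commutativity of $\bar T$ with $s$ and apply the expansion above to every occurrence of $s\mathcal I - \bar T$. As an independent verification bypassing the product rule entirely, Proposition~\ref{sk} shows that on $\{|s|>\|T\|\}$ each left resolvent is represented by a norm-convergent series $\sum_m C_m\, s^{-1-m}$ with $C_m \in \mathcal B(V_5)$, which is manifestly right slice hyperholomorphic there; the closed-form expression in Definition~\ref{DEFRES} then provides the slice hyperholomorphic extension to all of $\rho_S(T)$.
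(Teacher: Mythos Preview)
Your proposal is correct and follows essentially the same approach as the paper: the paper's one-line proof simply says ``It follows by a direct computations and in the case of the $S$-resolvent operators or the $F$-resolvent operators,'' which is precisely the direct verification you spell out, namely writing each resolvent as a left (resp.\ right) combination of the intrinsic building blocks $s^k\mathcal Q_{c,s}(T)^{-m}$, exactly as was done for the scalar kernels in Proposition~\ref{reg1}. Your additional cross-check via the series expansions of Proposition~\ref{sk} is not in the paper but is a legitimate independent confirmation on the subdomain $\{|s|>\|T\|\}$.
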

\begin{proof}
It follows by a direct computations and in the case of the $S$-resolvent operators or the $F$-resolvent operators.
\end{proof}

Now, we can to define the functional calculi associated with the fine structure.

\begin{definition}[The functional calculi of the fine structure in dimension five]\label{fcalc}
	Let $T \in \mathcal{BC}^{0,1}(V_5)$ and set $ds_J=ds(-J)$ for $J \in \mathbb{S}$. Let $f$ be a function that belongs to $\mathcal{SH}_L(\sigma_S(T))$ or belongs to $\mathcal{SH}_R(\sigma_S(T))$. Let $U$ be a bounded slice Cauchy domain with $\sigma_S(T)\subset U$ and $\overline U\subset\operatorname{dom}(f)$.

Keeping in mind the resolvent operators associated with the fine structure in  Definition \ref{DEFRES} we
define functional calculi associated of the fine structure as:
	\begin{itemize}
		\item ({\bf The $\ell+1$-harmonic functional calculus for $0\leq\ell\leq 1$}) For every function $\tilde{f}_\ell=\Delta^{1-\ell}\mathcal{D}f$ with $f \in \mathcal{SH}_L(\sigma_S(T))$ and $0\leq\ell\leq 1$, we set
		\begin{equation}
			\label{fun1}
			\tilde{f}_\ell(T):=\frac 1{2\pi} \int_{\partial(U \cap \mathbb{C}_J)} S^{-1}_{\Delta^{1-\ell}\bigD,L}(s,T) ds_J f(s),
		\end{equation}
		and, for every function $ \tilde{f}_\ell=f\Delta^{1-\ell}\mathcal{D}$ with $f \in \mathcal{SH}_R(\sigma_S(T))$, we set
		\begin{equation}
			\label{fun2}
			\tilde{f}_\ell(T):=\frac 1{2\pi} \int_{\partial(U \cap \mathbb{C}_J)} f(s) ds_JS^{-1}_{\Delta^{1-\ell}\bigD,R}(s,T).
		\end{equation}
		\item ({\bf The holomorphic Cliffordian functional calculus of order 1}) For every function $f^\circ=\Delta f$ with $f \in \mathcal{SH}_L(\sigma_S(T))$, we set
		\begin{equation}
			\label{fun3}
			f^\circ(T):= \frac{1}{2\pi} \int_{\partial(U \cap \mathbb{C}_J)}
S^{-1}_{\Delta,L}(s,T) ds_J f(s),
		\end{equation}
		and, for every function $f^\circ=\Delta f$ with $f \in \mathcal{SH}_R(\sigma_S(T))$, we set
		\begin{equation}
			\label{fun4}
			f^\circ(T):= \frac{1}{2\pi} \int_{\partial(U \cap \mathbb{C}_J)} f(s) ds_JS^{-1}_{\Delta,R}(s,T).
		\end{equation}
		\item ({\bf The polyanalytic Cliffordian functional calculus of order $(1,2)$}) For every function $\breve f^\circ=\overline{\mathcal{D}}f$ with $f \in \mathcal{SH}_L(\sigma_S(T))$, we set
		\begin{equation}
			\label{fun5}
			\breve{f}^\circ(T):= \frac{1}{2\pi} \int_{\partial(U \cap \mathbb{C}_J)} S^{-1}_{\overline\bigD,L}(s,T) ds_J f(s),
		\end{equation}
		and, for every function $ \breve{f}^\circ=f\overline{\mathcal{D}}$ with $f \in \mathcal{SH}_R(\sigma_S(T))$, we set
		\begin{equation}
			\label{fun6}
			\breve{f}^\circ (T):= \frac{1}{2\pi} \int_{\partial(U \cap \mathbb{C}_J)} f(s) ds_JS^{-1}_{\overline\bigD,R}(s,T).
		\end{equation}
		\item ({\bf The polyanalytic functional calculus of order $3-\ell$ for $0\leq\ell\leq 1$}) For every function $\breve f_\ell=\Delta^\ell \overline\bigD^{2-\ell}f$ with $f \in \mathcal{SH}_L(\sigma_S(T))$, we set
		\begin{equation}
			\label{fun7}
			\breve{f}_\ell(T):= \frac{1}{2\pi} \int_{\partial(U \cap \mathbb{C}_J)} S^{-1}_{\Delta^\ell \overline\bigD^{2-\ell},L}(s,T) ds_J f(s),
		\end{equation}
		and, for every function $ \breve{f}_\ell=f\overline{\mathcal{D}}$ with $f \in \mathcal{SH}_R(\sigma_S(T))$, we set
		\begin{equation}
			\label{fun8}
			\breve{f}^\circ (T):=\frac{1}{2\pi} \int_{\partial(U \cap \mathbb{C}_J)} f(s) ds_JS^{-1}_{\Delta^\ell \overline\bigD^{2-\ell},R}(s,T) .
		\end{equation}
		\item ({\bf The anti-holomorphic Cliffordian functional calculus of order $1$}) For every function $f_0=\mathcal{D}^2f$ with $f \in \mathcal{SH}_L(\sigma_S(T))$, we set
		\begin{equation}
			\label{fun9}
			f_0 (T):= \frac{1}{2\pi} \int_{\partial(U \cap \mathbb{C}_J)} S^{-1}_{\bigD^2,L}(s,T) ds_J f(s),
		\end{equation}
		and, for every function $ f_0=f\mathcal{D}^2$ with $f \in \mathcal{SH}_R(\sigma_S(T))$, we set
		\begin{equation}
			\label{fun10}
			f_0(T):= \frac{1}{2\pi} \int_{\partial(U \cap \mathbb{C}_J)} f(s) ds_JS^{-1}_{\bigD^2,R}(s,T) .
		\end{equation}
	\end{itemize}
\end{definition}

\begin{theorem}
	The previous functional calculi are well defined, in the sense that the integrals in Definition \ref{fcalc} depend neither on the imaginary unit $J \in \mathbb{S}$ and nor on the slice Cauchy domain $U$.
\end{theorem}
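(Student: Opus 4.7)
The plan is to mimic, for each of the seven integrals in Definition \ref{fcalc}, the standard argument used to show that the $S$-functional calculus and the $F$-functional calculus are well-defined (see \cite{CGKBOOK,CG}). The key input is Proposition \ref{reg1RES}: every left resolvent operator of the fine structure is a right slice hyperholomorphic operator-valued function of $s$ on $\rho_S(T)$, and symmetrically for the right versions. Since each kernel is a left $\mathbb R_n$-linear combination of intrinsic slice hyperholomorphic functions of $s$ (namely $\mathcal Q_{c,s}(T)^{-m}$, $s\,\mathcal Q_{c,s}(T)^{-m}$, $s^2\,\mathcal Q_{c,s}(T)^{-m}$, $s^3\,\mathcal Q_{c,s}(T)^{-m}$ for $1\le m\le 3$) with coefficients depending only on $T$, the integrand $S^{-1}_{\bullet,L}(s,T)\,ds_J\,f(s)$ has the same formal structure as the $S$-functional calculus integrand, and the arguments go through uniformly.

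To prove independence from the slice Cauchy domain $U$, fix $J\in\mathbb S$ and let $U_1,U_2$ be two bounded slice Cauchy domains with $\sigma_S(T)\subset U_i$ and $\overline{U_i}\subset\operatorname{dom}(f)$ for $i=1,2$. By a standard argument one can find a third bounded slice Cauchy domain $V$ with $\sigma_S(T)\subset V\subset\overline V\subset U_1\cap U_2$ so that the pairwise open regions $U_i\setminus\overline V$ have closures inside $\operatorname{dom}(f)\cap\rho_S(T)$. On each such region the restriction of $s\mapsto S^{-1}_{\bullet,L}(s,T)\,f(s)$ to $\mathbb C_J$ is a $\mathbb C_J$-holomorphic operator-valued function (by right slice hyperholomorphicity of the kernel in $s$, by left slice hyperholomorphicity of $f$, and by the standard fact that the product of a right and a left slice hyperholomorphic function, when evaluated on $\mathbb C_J$, is $\mathbb C_J$-holomorphic). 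Cauchy's theorem in $\mathbb C_J$ then gives
$$\int_{\partial(U_1\cap\mathbb C_J)}S^{-1}_{\bullet,L}(s,T)\,ds_J\,f(s)=\int_{\partial(V\cap\mathbb C_J)}S^{-1}_{\bullet,L}(s,T)\,ds_J\,f(s)=\int_{\partial(U_2\cap\mathbb C_J)}S^{-1}_{\bullet,L}(s,T)\,ds_J\,f(s),$$
which proves $U$-independence for the left calculi; the right versions are identical.

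The independence from $J\in\mathbb S$ is proved in two steps. First, assume the axially symmetric condition $\|T\|<r$ and $U=B_r$ is a ball with $B_r\supset\sigma_S(T)$; then by Proposition \ref{sk} the resolvent admits a norm-convergent series expansion whose terms are monomials $T^{m-k}\overline T^{k-1}s^{-1-m}$ (or with the coefficients on the right). Termwise integration together with the classical identity $\frac{1}{2\pi}\int_{\partial(U\cap\mathbb C_J)}s^{-1-m}\,ds_J\,f(s)=\frac{1}{m!}f^{(m)}(0)$ (which is $J$-independent because $f$ is slice hyperholomorphic) shows that the integral equals a $J$-independent operator. Second, for a general slice Cauchy domain $U$ containing $\sigma_S(T)$, by the $U$-independence just proved we may enlarge $U$ to a slice Cauchy domain $U'$ contained in a ball $B_r$ on which the previous series expansion is valid; uniqueness of the integral then forces $J$-independence in general. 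Alternatively, one can directly invoke the structure formula for slice functions: since $s\mapsto S^{-1}_{\bullet,L}(s,T)\,f(s)$ is the product of a right slice hyperholomorphic kernel and a left slice hyperholomorphic function, its restrictions to different complex planes $\mathbb C_J,\mathbb C_{J'}$ are related by the representation formula, and this relation is exactly the one that renders such boundary integrals $J$-independent.

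The main technical point is not any single one of the seven cases but rather verifying that the stated slice hyperholomorphicity of the kernels in $s$ (which was established in Proposition \ref{reg1RES} and is also transparent from the explicit expressions in Remark \ref{fkernels} where the $F_n$-kernels are intrinsic in $s$) is preserved uniformly when paired with the Dirac-type factors $(s-\overline T)$, $(s-T_0)$, $(s-T_0)^2$ appearing in \eqref{sevenRES}--\eqref{fourteenRES}. Once this is checked, the argument above applies verbatim to all of \eqref{fun1}--\eqref{fun10}, and the seven functional calculi are simultaneously shown to be independent of $U$ and of $J$.
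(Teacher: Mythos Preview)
Your argument for $U$-independence is correct and essentially the same as the paper's: it relies only on the right slice hyperholomorphicity of the kernel in $s$ (Proposition \ref{reg1RES}) together with Cauchy's theorem on the slice.

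Your argument for $J$-independence, however, has a genuine gap. The series expansion in Proposition \ref{sk} is only valid on the region $\{|s|>\|T\|\}$, so your ``first step'' requires the boundary of integration to lie entirely outside the ball of radius $\|T\|$. In the ``second step'' you propose to enlarge $U$ to a ball $B_r$ with $r>\|T\|$ using the $U$-independence already proved. But $U$-independence only lets you deform $U$ among slice Cauchy domains whose closure stays inside $\operatorname{dom}(f)$, and there is no reason for $\operatorname{dom}(f)$ to contain such a large ball: $f$ may have singularities at points $p$ with $|p|\le\|T\|$ that are not in $\sigma_S(T)$ (think of a quasi-nilpotent $T$ with small $S$-spectrum but $\|T\|$ not small). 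So the reduction to the series case fails in general. The alternative you mention via the representation formula could be made to work, but as written it is only a gesture.

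The paper avoids this problem entirely by a Fubini argument: given two imaginary units $I\neq J$, it takes nested slice Cauchy domains $\overline{U_x}\subset U_s\subset\operatorname{dom}(f)$, uses the right slice hyperholomorphic Cauchy formula (Theorem \ref{Cauchy}) to write
\[
K_L(s,T)=\frac{1}{2\pi}\int_{\partial(U_x\cap\mathbb C_I)}K_L(x,T)\,dx_I\,S_L^{-1}(s,x)
\]
for $s\in\partial(U_s\cap\mathbb C_J)$ (this only needs $K_L(\cdot,T)$ to be right slice hyperholomorphic on $\rho_S(T)$ and to vanish at infinity), and then exchanges the order of integration to transfer the $J$-integral of $f$ into an $I$-integral via the Cauchy formula for $f$. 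No series expansion or enlargement of the domain is needed.
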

\begin{proof}
	We prove the result for the functional calculi defined using the left slice hyperholomorphic functions since the right counterpart can be proved with similar computations. The only property of the kernels that we shall use to prove the theorem is that they are all right slice hyperholomoprhic in the variable $s$ (see Proposition \ref{reg1}). For this reason, in what follows, we can refer to an arbitrary left kernel described in Proposition \ref{sk} with the symbol $K_L(s,T)$.
	\\ Since $ K_L(s,T)$ is a right-slice hyperholomorphic function in $s$ and $f$ is left slice hyperholomorphic, the independence from the set $U$ follows by the Cauchy integral formula (see Theorem \ref{Cauchy}).
	\\ Now, we want to show the independence from the imaginary unit, let us consider two imaginary units $J$, $I \in \mathbb{S}$ with $J \neq I$ and two bounded slice Cauchy domains $U_x$, $U_s$ with $ \sigma_{S}(T) \subset U_x$, $\overline{U}_x \subset U_s$ and $\overline{U}_s \subset dom(f)$. Then every $s \in \partial (U_s \cap \mathbb{C}_J)$ belongs to the unbounded slice Cauchy domain $\mathbb{R}^6\setminus  U_x $.
	\\ Since $ \lim_{x \to + \infty} K_{L}(x,T)=0$, the slice hyperholomorphic Cauchy formula implies
	\begin{eqnarray}
		\nonumber
		K_L(s,T)&=& \frac{1}{2 \pi} \int_{\partial (\mathbb{R}^6 \setminus (U_x \cap \mathbb{C}_I))} K_L(x,T) dx_I S^{-1}_R(x,s)\\
		\label{inte3}
		&=& \frac{1}{2 \pi} \int_{\partial (U_x \cap \mathbb{C}_I)} K_L(x,T) dx_I S^{-1}_L(s,x).
	\end{eqnarray}
	The last equality is due to the facts that $ \partial (\mathbb{R}^6 \setminus (U_x \cap \mathbb{C}_I))=-\partial (U_x \cap \mathbb{C}_I)$ and $S^{-1}_R(x,s)=-S^{-1}_L(s,x).$ Thus, by formula \eqref{inte3} we get
	\begin{eqnarray*}
		\tilde{f}(T)&=& \int_{\partial(U_s \cap \mathbb{C}_J)} K_L(s,T) ds_J f(s)\\
		&=& \int_{\partial(U_s \cap \mathbb{C}_J)} \left( \frac{1}{2 \pi} \int_{\partial(U_x \cap \mathbb{C}_I)}  K_L(x,T) dx_I S_{L}^{-1}(s,x)\right) ds_J f(s).
	\end{eqnarray*}
	Due to Fubini's theorem we can exchange the order of integration and by the Cauchy formula we obtain
	\begin{eqnarray*}
		\tilde{f}(T)&=&  \int_{\partial (U_x \cap \mathbb{C}_I)} K_L(x,T)dx_I  \left( \frac{1}{2 \pi} \int_{\partial (U_s \cap \mathbb{C}_J)}   S_{L}^{-1}(s,x) ds_J f(s) \right)\\
		&=&  \int_{\partial(U_x \cap \mathbb{C}_I)} K_L(x,T) dx_I f(x).
	\end{eqnarray*}
	This proves the statement.
\end{proof}

In what follows we denote by $\mathsf P$ one of the operators: $\Delta^{1-\ell}\mathcal D$, $\Delta$, $\overline{\mathcal D}$, $\Delta^\ell\overline{\mathcal D}^{2-\ell}$ and $\mathcal D^2$ for $\ell=0,\, 1$.
\begin{prob}
	\label{three}
  Let $ \Omega$ be a slice Cauchy domain. It might happen that $f,g\in \mathcal{SH}_L(\Omega)$ (resp. $f,g\in \mathcal {SH}_R(\Omega)$) and $\mathsf Pf=\mathsf Pg$  (resp. $f\mathsf P=g\mathsf P$). Is it possible to show that for any $T\in\mathcal{BC}^{0,1}(V_5)$, with $\sigma_S(T)\subset \Omega$, we have $(\mathsf Pf)(T)=(\mathsf Pg)(T)$ (resp. $(f\mathsf P)(T)=(g\mathsf P)(T)$)?
\end{prob}

In order to address the problem  we need an auxiliary result. We start by observing that by hypothesis we have $\mathsf P(f-g)=0$ (resp. $(f-g)\mathsf P=0$).
Therefore it is necessary to study the set
$$ (\ker{\mathsf P})_{\mathcal{SH}_L(\Omega)}:=\{f\in \mathcal{SH}_L(\Omega): \mathsf P(f)=0\}\quad\textrm{resp. $(\ker{\mathsf P})_{\mathcal{SH}_R(\Omega)}:=\{f\in \mathcal{SH}_R(\Omega): (f)\mathsf P=0\}$} .$$
\begin{theorem}\label{Tcost} Let $\Omega$ be a connected slice Cauchy domain of $\mathbb R^6$, then
	\begin{eqnarray*}
		& (\ker{\mathsf P})_{\mathcal{SH}_L(\Omega)} =\{f\in \mathcal{SH}_L(\Omega): f\equiv \alpha_0+\dots+x^t \alpha_t\quad\textrm{for some $\alpha_0,\dots,\alpha_t\in\mathbb R_n$}\}\\
		&=\{f\in \mathcal{SH}_R(\Omega): f\equiv \alpha_0+\dots+\alpha_tx^t\quad\textrm{for some $\alpha_0,\dots,\alpha_t\in\mathbb R_n$}\}=(\ker{\mathsf P})_{\mathcal {SH}_R(\Omega)}
	\end{eqnarray*}
where $t$ is equal to the degree of $P$ minus 1. 	
\end{theorem}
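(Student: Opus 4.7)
My plan is to prove the characterization separately for the left and right kernels, since the two arguments mirror each other; I will describe only the left case, aiming at
$$ (\ker\mathsf P)_{\mathcal{SH}_L(\Omega)} = \left\{ \sum_{m=0}^t x^m \alpha_m : \alpha_0,\dots,\alpha_t \in \mathbb R_n \right\}, $$
with $t$ equal to the order of $\mathsf P$ minus one. The easy inclusion $\supseteq$ is immediate from the explicit action of $\mathsf P$ on the monomials $x^m$ recorded in Section \ref{serieskernels} — namely Lemma \ref{begher} and Propositions \ref{laplacian}, \ref{dirac_laplacian}, \ref{double_dirac}, \ref{l6}, \ref{antidouble}, and \ref{antidirac} — which collectively show that for every admissible $\mathsf P$ one has $\mathsf P(x^m)\equiv 0$ as soon as $m\leq t$; consequently $\mathsf P$ annihilates every polynomial of the stated form.

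For the opposite inclusion I will fix $f\in\mathcal{SH}_L(\Omega)$ with $\mathsf P f\equiv 0$ and exploit the fact that $\Omega$, being a slice Cauchy domain, meets the real axis. After a real translation I may take $0\in\Omega$ and pick a ball $B(0,r)\subset\Omega$; on this ball $f$ admits the standard Taylor expansion of a left slice hyperholomorphic function centred at a real point,
$$ f(x)=\sum_{m=0}^\infty x^m\alpha_m,\qquad \alpha_m\in\mathbb R_n, $$
with convergence uniform on compacta of $B(0,r)$. Applying $\mathsf P$ term by term (legitimate by the uniform convergence of all derivatives up to order $\deg\mathsf P$) gives
$$ 0=\mathsf P f(x)=\sum_{m=0}^\infty \mathsf P(x^m)\alpha_m. $$
The key observation, read off from the formulas cited above, is that $\mathsf P(x^m)$ is a real linear combination of monomials $x^i\bar x^j$ with $i+j=m-\deg\mathsf P$; since this total degree strictly increases with $m$, contributions coming from different values of $m$ are linearly independent as functions on $B(0,r)$, and the displayed identity forces $\mathsf P(x^m)\alpha_m\equiv 0$ for every $m$ separately.

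The last step is to upgrade $\mathsf P(x^m)\alpha_m\equiv 0$ into $\alpha_m=0$ for $m\geq \deg \mathsf P$. My plan is to restrict $x$ to the real axis, i.e.\ set $\underline x=0$: using the elementary identities $\sum_{k=1}^{m-1}1,\ \sum_{k=1}^{m-1}k$ together with Lemmas \ref{sum} and \ref{sum2}, I will check case by case that
$$ \mathsf P(x^m)\big|_{\underline x=0}=c_m\,x_0^{\,m-\deg\mathsf P} $$
with an explicit, non-zero real constant $c_m$ — for instance $c_m=-4m$ for $\mathsf P=\mathcal D$, $c_m=6m$ for $\mathsf P=\overline{\mathcal D}$, $c_m=-4m(m-1)$ for both $\mathsf P=\Delta$ and $\mathsf P=\mathcal D^2$, $c_m=\tfrac{8}{3}m(m-1)(m-2)$ for $\mathsf P=\Delta\mathcal D$, $c_m=16m(m-1)$ for $\mathsf P=\overline{\mathcal D}^2$, and $c_m=-\tfrac{32}{3}m(m-1)(m-2)$ for $\mathsf P=\Delta\overline{\mathcal D}$. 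Non-vanishing of $c_m$ in the relevant range turns $c_m x_0^{m-\deg\mathsf P}\alpha_m\equiv 0$ into $\alpha_m=0$. Hence $f$ coincides on $B(0,r)$ with the polynomial $p(x):=\sum_{m=0}^t x^m\alpha_m$, and by the identity principle for slice hyperholomorphic functions on the connected slice domain $\Omega$, the equality $f\equiv p$ propagates to all of $\Omega$. The right counterpart proceeds identically, using the expansion $f=\sum \alpha_m x^m$ together with the two-sided identities $\mathcal D(x^m)=(x^m)\mathcal D$, $\overline{\mathcal D}(x^m)=(x^m)\overline{\mathcal D}$ already established in Section \ref{serieskernels}.

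The most delicate point is the last paragraph: a purely formal argument appealing only to the order of $\mathsf P$ is not enough, because one must rule out that $\mathsf P(x^m)$ is a non-trivial Clifford-valued polynomial whose right-multiplication by a non-zero $\alpha_m$ happens to vanish identically. What saves the argument is precisely the explicit axis evaluation, which shows that $\mathsf P(x^m)$ always contains a non-zero real multiple of $x_0^{m-\deg\mathsf P}$; verifying the seven combinatorial identities underlying this claim is the only step that is not purely formal.
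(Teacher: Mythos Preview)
Your proof is correct and follows essentially the same approach as the paper: expand $f$ in a power series at a real center, apply $\mathsf P$ term by term, restrict to the real axis where $\mathsf P(x^m)$ becomes a nonzero real multiple of $x_0^{m-\deg\mathsf P}$, read off $\alpha_m=0$ for $m\geq\deg\mathsf P$, and conclude by the identity principle on the connected domain $\Omega$. Your write-up is in fact more careful than the paper's (you explicitly compute the constants $c_m$ and add the intermediate homogeneity separation), but the underlying argument is the same.
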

\begin{proof}
	We prove the result in the case $f\in \mathcal{SH}_L(\Omega)$ since the case $f\in \mathcal{SH}_R(\Omega)$ follows by similar arguments. We proceed by double inclusion. The fact that
	$$
	(\ker{\mathsf P})_{\mathcal{SH}_L(\Omega)}\supseteq\{f\in \mathcal{SH}_L(\Omega): f\equiv \alpha_0+\dots+x^t \alpha_t\quad\textrm{for some $\alpha_0,\dots,\alpha_t\in\mathbb R_n$}\}
	$$ is obvious. The other inclusion can be proved observing that if $f\in (\ker{\mathsf P})_{\mathcal{SH}_L(\Omega)}$, after a change of variable if needed, there exists $r>0$ such that the function $f$ can be expanded in a convergent series at the origin
	$$f(x)=\sum_{k=0}^{\infty}x^k \alpha_k\quad\textrm{for $(\alpha_k)_{k\in\mathbb N_0}\subset\mathbb R_n$ and for any $x\in B_r(0)$}$$
	where $B_r(0)$ is the ball centered at $0$ and of radius $r$. We have
	$$
	0=\mathsf Pf(x)\equiv\sum_{k=\operatorname{deg}(\mathsf P)}^{\infty} \mathsf P(x^k)\alpha_k,\quad \forall x\in B_r(0).
	$$
	By Lemma \ref{begher}, Proposition \ref{laplacian}, Proposition \ref{double_dirac}, Proposition \ref{dirac_laplacian}, Proposition \ref{l6}, Proposition \ref{antidouble} and Proposition \ref{antidirac} we can compute explicitly the expressions $\mathsf P(x^k)$ and, when they are restricted to a neighborhood of zero of the real axis, they coincide up to a constant to $x^{k-\operatorname{deg}(\mathsf P)}$. Since the power series is identically zero, its coefficients $\alpha_k$'s must be zero for any $k\geq \operatorname{deg}(\mathsf P)$. This yields $f(x)\equiv \alpha_0+\dots +x^t\alpha_t$ in $B_r(0)$ and, since $\Omega$ is connected, we also have $f(x)\equiv \alpha_0+\dots +x^t\alpha_t$ for any $x\in\Omega$.
\end{proof}
We solve the problem \ref{three} in the case in which $ \Omega$ is connected.
\begin{proposition}\label{con}
	Let $T\in\mathcal{BC}^{0,1}(V_5)$ and let $U$ be a  connected slice   Cauchy domain with $\sigma_S(T)\subset U$. If $f,g\in \mathcal{SH}_L(U)$ (resp. $f,g\in \mathcal{SH}_R(U)$) satisfy the property $\mathsf Pf=\mathsf Pg$ (resp. $f\mathsf P=g\mathsf P$) then $ (\mathsf Pf)(T)=(\mathsf Pg)(T)$ (resp. $(f\mathsf P)(T)=(g\mathsf P)(T)$).
\end{proposition}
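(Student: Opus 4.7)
Set $h:=f-g$. By linearity of the integral defining the functional calculus, the identity $(\mathsf Pf)(T)=(\mathsf Pg)(T)$ is equivalent to $(\mathsf Ph)(T)=0$, and by hypothesis $\mathsf Ph=0$. So the goal is to show that if $h\in\mathcal{SH}_L(U)$ satisfies $\mathsf Ph=0$, then
\begin{equation*}
(\mathsf Ph)(T)=\frac{1}{2\pi}\int_{\partial(U\cap\mathbb{C}_J)} S^{-1}_{\mathsf P,L}(s,T)\,ds_J\,h(s)=0,
\end{equation*}
with the analogous statement for $h\in\mathcal{SH}_R(U)$.

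First, I invoke Theorem \ref{Tcost} on the connected slice Cauchy domain $U$: the hypothesis forces
\begin{equation*}
h(x)=\alpha_0+x\alpha_1+\cdots+x^{t}\alpha_t,\qquad \alpha_0,\ldots,\alpha_t\in\mathbb{R}_5,\qquad t=\deg(\mathsf P)-1.
\end{equation*}
Next, since the integral in Definition \ref{fcalc} is independent of the slice Cauchy domain (so long as it contains $\sigma_S(T)$) and of the imaginary unit $J\in\mathbb{S}$, I deform the contour from $\partial(U\cap\mathbb{C}_J)$ to $\partial(B_R(0)\cap\mathbb{C}_J)$ for some $R>\|T\|$ and some fixed $J\in\mathbb{S}$. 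On this new contour $|s|=R>\|T\|$, so by Proposition \ref{sk} the series expansion of $S^{-1}_{\mathsf P,L}(s,T)$ in negative powers of $s$ converges in the operator norm uniformly on the contour.

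The key observation is that for each of the seven admissible $\mathsf P$, inspection of the explicit expansions in Proposition \ref{sk} shows that the series starts at $s^{-1-d}$ with $d=\deg(\mathsf P)$; more precisely,
\begin{equation*}
S^{-1}_{\mathsf P,L}(s,T)=\sum_{m=d}^{\infty}A_m(T)\,s^{-1-m},
\end{equation*}
with $A_m(T)\in\mathcal{B}(V_5)$. Indeed, the $\mathcal D$- and $\overline{\mathcal D}$-series start at $m=1$, the $\Delta$-, $\mathcal D^2$- and $\overline{\mathcal D}^2$-series start at $m=2$, and the $\Delta\mathcal D$- and $\Delta\overline{\mathcal D}$-series start at $m=3$, matching $\deg(\mathsf P)$ in each case. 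Substituting this expansion and the polynomial formula for $h$ and integrating term-by-term (justified by uniform convergence on the compact contour) yields
\begin{equation*}
(\mathsf Ph)(T)=\sum_{m=d}^{\infty}\sum_{k=0}^{t}A_m(T)\,\alpha_k\cdot\frac{1}{2\pi}\int_{\partial(B_R(0)\cap\mathbb{C}_J)} s^{-1-m+k}\,ds_J.
\end{equation*}
Since $m\geq d$ and $k\leq t=d-1$, the exponent satisfies $-1-m+k\leq -2$; hence each contour integral of $s^{-1-m+k}$ around a circle in $\mathbb{C}_J$ vanishes, and $(\mathsf Ph)(T)=0$.

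The right case is handled identically, using the right versions of Theorem \ref{Tcost} and Proposition \ref{sk} and writing $h(x)=\alpha_0+\alpha_1 x+\cdots+\alpha_t x^t$ to the right of the resolvent. The main point requiring care is verifying case-by-case that the leading power of $s^{-1}$ in the series for $S^{-1}_{\mathsf P,L}$ agrees with $\deg(\mathsf P)$; once this is checked, the vanishing of all contour integrals is an immediate residue-type computation, which is why the assumption of $U$ connected (needed to apply Theorem \ref{Tcost}) is the only real hypothesis used.
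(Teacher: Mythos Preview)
Your proof is correct and follows essentially the same route as the paper: reduce to $h=f-g$, apply Theorem \ref{Tcost} on the connected domain to get $h(s)=\alpha_0+\cdots+s^t\alpha_t$ with $t=\deg(\mathsf P)-1$, deform the contour to a large circle $|s|=R>\|T\|$, expand $S^{-1}_{\mathsf P,L}(s,T)$ via Proposition \ref{sk} as $\sum_{m\geq \deg(\mathsf P)}A_m(T)s^{-1-m}$, and conclude by the vanishing of $\int s^{-1-m+k}\,ds_J$ for $-1-m+k\leq -2$. The only cosmetic differences are that the paper cites Proposition \ref{reg1RES} directly for the contour deformation (rather than the well-definedness theorem) and does not spell out the case check on the starting index of the series, which you do explicitly.
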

\begin{proof}
	We prove the theorem in the case $f,g\in \mathcal{SH}_L(\Omega)$ since the case $f,g\in\mathcal{SH}_R(\Omega)$ follows by similar arguments. By Definition \ref{fcalc}, we have
	$$
	(\mathsf P f)(T)-(\mathsf Pg)(T)=\frac 1{2\pi}\int_{\partial (U\cap\cc_J)}S_{\mathsf P,L}^{-1}(s,T)ds_J(f(s)-g(s)).
	$$
	Since $S_{\mathsf P,L}^{-1}(s,T)$ is slice hyperholomorphic in the variable $s$ by Proposition \ref{reg1RES}, we can change the domain of integration to $B_r(0)\cap\cc_J$ for some $r>0$ with $ \| T \| <r$. Moreover, by hypothesis we have that $f(s)-g(s) \in (\ker{\mathsf P})_{\mathcal{SH}_L(\Omega)}$, thus by Theorem \ref{Tcost} and Proposition \ref{sk} we get
	\[
	\begin{split}
		&(\mathsf Pf)(T)-(\mathsf Pg)(T)=\frac 1{2\pi}\int_{\partial (B_r(0)\cap\cc_J)} S^{-1}_{\mathsf P,L}(s,T)ds_J(f(s)-g(s))
		\\
		&
		=\frac 1{2\pi}\int_{\partial (B_r(0)\cap\cc_J)} S^{-1}_{\mathsf P,L}(s,T) ds_J(\alpha_0+\dots+s^t\alpha_t)\\
		&= \frac 1{2\pi} \sum_{m=\operatorname{deg}(\mathsf P)}^\infty\left(g_{\mathsf P,m}(T,\bar T)\right) \int_{\partial (B_r(0)\cap \cc_J)}s^{-1-m} ds_J (\alpha_0+\dots+s^t\alpha_t)=0.
	\end{split}
	\]
	where $g_{\mathsf P,m}(T,\bar T)$ is a polynomial in $T$ and $\bar T$ (see Proposition \ref{sk}) and $t:=\operatorname{deg}(\mathsf P)-1$.
\end{proof}
Now, we write the resolvent operators associated with the fine structures in terms of the $F_n$- resolvent operators.

\begin{proposition}
\label{Fkernel}
Let $T \in \mathcal{BC}^{0,1}(V_5)$ and $s \in \rho_S(T)$. Then, we have
\begin{eqnarray*}
S^{-1}_{\mathcal D,L}(s,T)&=&-\frac 1{16}\left[F^L_5(s,T)s^3-(T+2T_0)F^L_5(s,T)s^2+(2T_0x+|x|^2)F^L_5(s,T)s-T|T|^2F^L_5(s,T)\right],\\
S^{-1}_{\mathcal D,R}(s,T)&=&-\frac 1{16}\left[s^3F^R_5(s,T)-s^2F^R_5(s,T)(T+2T_0)+sF^R_5(s,T)(2T_0T+|T|^2)-F^R_5(s,T)T|T|^2\right],
\end{eqnarray*}
\begin{eqnarray*}
S^{-1}_{\Delta,L}(s,T)&=&-\frac 1{8}\left[ F^L_5(s,T)s^2-2T_0F^L_5(s,T)s+|T|^2F^L_5(s,x) \right],\\
S^{-1}_{\Delta,R}(s,T)&=&-\frac 1{8}\left[ F^R_5(s,T)T^2-2sF^R_5(s,T)T_0+F^R_5(s,x)|T|^2 \right],\\
\end{eqnarray*}
\begin{eqnarray*}
S^{-1}_{\Delta\mathcal D,L}(s,T)&=&\frac {1}{4}\left[F^{L}_5(s,T)s-TF^L_5(s,T)\right],\\
S^{-1}_{\Delta\mathcal D,R}(s,T)&=&\frac {1}{4}\left[sF^{R}_5(s,T)-F^R_5(s,T)T\right],\\
\end{eqnarray*}

\begin{eqnarray*}
S^{-1}_{\overline{\mathcal D},L}(s,T)&=& \frac 1{32}\left[3F^L_5(s,T)s^3-(8T_0+T)F^L_5(s,T)s^2+(4T_0^2+2T_0T+3|T|^2)F^L_5(s,T)s\right.\\
&&\left.-(T|T|^2+2T_0|T|^2)F^L_5(s,T)\right],\\
S^{-1}_{\overline{\mathcal D},R}(s,T)&=& \frac 1{32}\left[3s^3F^R_5(s,T)-s^2F^R_5(s,T)(8T_0+T)+sF^R_5(s,T)(4T_0^2+2T_0T+3|T|^2)\right.\\
&&\left.-F^R_5(s,x)(T|T|^2+2T_0|T|^2)\right],\\
\end{eqnarray*}

\begin{eqnarray*}
S^{-1}_{\mathcal{D}^2,L}(s,T)&=&-\frac 1{8}\left[F^L_5(s,T)s^2-2T F^L_5(s,T)s+T^2F^L_5(s,T)\right],\\
S^{-1}_{\mathcal{D}^2,R}(s,T)&=&-\frac 1{8}\left[s^2F^R_5(s,T)-2 sF^R_5(s,T)T+F^R_5(s,T)T^2\right],\\
\end{eqnarray*}

\begin{eqnarray*}
S^{-1}_{\mathcal{\overline{D}}^{2},L}(s,T)&=&\frac{1}{2}[F_{5}^{L}(s,T)s^2-2T_0 F_{5}^{L}(s,T)s+ x_0^2 F_{5}^{L}(s,T)],\\
\nonumber
S^{-1}_{\mathcal{\overline{D}}^{2},R}(s,T)&=&\frac{1}{2}[s^2F_{5}^{R}(s,T)-2 sF_{5}^{R}(s,T)T_0+  F_{5}^{R}(s,T)T_0^2],\\
\nonumber
\end{eqnarray*}

\begin{eqnarray*}
S^{-1}_{\Delta\mathcal{\overline{D}},L}(s,T)&=&-F_{5}^L(s,x)s+T_0 F_{5}^L(s,T),\\
\nonumber
S^{-1}_{\Delta\mathcal{\overline{D}},R}(s,T)&=&-sF_{5}^R(s,T)+ F_{5}^R(s,T)T_0.
\end{eqnarray*}
\end{proposition}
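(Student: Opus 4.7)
The plan is to mirror at the operator level the computations already carried out at the paravector level in Remark \ref{fkernels}. Each of the formulas in the statement is, structurally, the identity of Remark \ref{fkernels} with the variable $x$ replaced by the operator $T$; what must be verified is that the algebraic manipulations used there survive the substitution. The crucial point is that, since $T\in\mathcal{BC}^{0,1}(V_5)$, the components $T_0,T_1,\ldots,T_5$ commute pairwise, so that the operator
$$
\mathcal{Q}_{c,s}(T)=s^2\mathcal I-s(T+\bar T)+T\bar T=s^2\mathcal I-2sT_0+\sum_{i=0}^5 T_i^{\,2}
$$
is a polynomial in $s$ with \emph{real} coefficients in commuting operators. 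This means $\mathcal{Q}_{c,s}(T)^{-k}$ lies in the center of the subalgebra generated by $s,T_0,T,\bar T,|T|^2:=T\bar T$, and in particular commutes with each of $s$, $T$, $\bar T$, $T_0$ and $|T|^2$ in the operator sense.

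First I would establish the operator analogue of the crucial identity \eqref{p0} for $n=5$, namely
$$
F_5^L(s,T)\,s-T\,F_5^L(s,T)=\gamma_5\,\mathcal{Q}_{c,s}(T)^{-2}=64\,\mathcal{Q}_{c,s}(T)^{-2},
$$
and its right analogue. This follows by expanding $F_5^L(s,T)=64(s\mathcal I-\bar T)\mathcal{Q}_{c,s}(T)^{-3}$ on both sides, commuting $\mathcal{Q}_{c,s}(T)^{-3}$ past $s,T,\bar T$ in view of the observation above, and collecting the factor $s^2-(T+\bar T)s+T\bar T=\mathcal{Q}_{c,s}(T)$. Combined with Definition \ref{DEFRES} this immediately yields the formulas for $S^{-1}_{\Delta\mathcal{D},L}$ and $S^{-1}_{\Delta\mathcal{D},R}$.

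Next, for each remaining resolvent I would invert the defining relation by inserting $\mathcal{Q}_{c,s}(T)=s^2\mathcal I-2sT_0+|T|^2$ as many times as needed to raise the power of $\mathcal{Q}_{c,s}(T)^{-1}$ to three and thereby produce a factor of $F_5^L(s,T)$ (or $F_5^R(s,T)$). For instance, for $S^{-1}_{\Delta,L}(s,T)=-8(s\mathcal I-\bar T)\mathcal{Q}_{c,s}(T)^{-2}$ I would write
$$
(s\mathcal I-\bar T)\mathcal{Q}_{c,s}(T)^{-2}=(s\mathcal I-\bar T)\mathcal{Q}_{c,s}(T)^{-3}\bigl(s^2-2sT_0+|T|^2\bigr)=\tfrac{1}{64}F_5^L(s,T)\bigl(s^2-2sT_0+|T|^2\bigr),
$$
which gives the stated identity. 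The formulas for $S^{-1}_{\mathcal{D},L}$, $S^{-1}_{\overline{\mathcal{D}},L}$, $S^{-1}_{\mathcal{D}^2,L}$, $S^{-1}_{\overline{\mathcal{D}}^2,L}$ and $S^{-1}_{\Delta\overline{\mathcal{D}},L}$ are obtained by exactly the same type of manipulation, raising $\mathcal{Q}_{c,s}(T)^{-k}$ to $\mathcal{Q}_{c,s}(T)^{-3}$ by multiplying by appropriate powers of $\mathcal{Q}_{c,s}(T)$, expanded in the commuting generators $s$, $T_0$, $|T|^2$, with the non-central factor $(s\mathcal I-\bar T)$ absorbed into $F_5^L(s,T)$. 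The right-kernel versions are handled identically, with the factor $(s\mathcal I-\bar T)$ on the right.

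The main obstacle is bookkeeping: although the identities at the paravector level in Remark \ref{fkernels} are instantaneous, transferring them to the operator level requires justifying every commutation, because $T$ and $s$ as paravectors do not commute. The content of the commutativity of $T$'s components is precisely that the central objects $s$, $T_0$, $|T|^2$, and $\mathcal{Q}_{c,s}(T)^{-k}$ can be freely reordered, while the non-central factor $(s\mathcal I-\bar T)$ must remain in the leftmost position for the left kernels and in the rightmost position for the right kernels. Once this discipline is respected, the proof reduces to straightforward (if tedious) polynomial identities and can be organized as a case-by-case verification, one bullet of the statement at a time.
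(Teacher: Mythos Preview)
Your proposal is correct and follows essentially the same approach as the paper: the paper's proof is a single sentence stating that the result follows by formally replacing the variable $x$ of Remark \ref{fkernels} by the paravector operator $T$. You have simply made explicit the algebraic justification (commutativity of the components of $T$, centrality of $\mathcal{Q}_{c,s}(T)^{-k}$) that the paper leaves implicit.
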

\begin{proof}
it follows by formally replacing the variable $x$ of Remark \ref{fkernels} by the paravector operator $T$.
\end{proof}
In order to solve Problem \ref{three}, in the case $ \Omega$ not connected, we need the following lemma, which is based on the monogenic functional calculus developed in \cite{J,JM}. We chose to annihilate the last component of the operator $T$, namely $T_4=0$. In the monogenic functional calculus  McIntosh and collaborators consider zero the first component $T_0=0$. However, in our case this is a drawback due to the structure of the polyanalytic resolvent operators, see Proposition \ref{Fkernel}.

\begin{lemma}
	\label{mono}
	Let $T \in \mathcal{BC}^{0,1}(V_5)$ be such that $T= T_0e_0+ T_1e_1+ T_2 e_2 + T_3 e_3$, and assume that the operators $T_{\ell}$, $\ell=0,1,2,3$, have real spectrum. Let $G$ be a bounded slice Cauchy domain such that $(\partial G) \cap \sigma_{S}(T)= \emptyset$. For every $J \in \mathbb{S}$ we have
	\begin{equation}
		\int_{\partial{(G \cap \mathbb{C}_J)}} S^{-1}_{\mathsf P,L}(s,T) ds_J (\alpha_0+\dots+ s^t\alpha_t)=0,
	\end{equation}
and
	\begin{equation}
	\label{right}
	\int_{\partial{(G \cap \mathbb{C}_J)}} (\alpha_0+\dots+ s^t\alpha_t) ds_J S^{-1}_{\mathsf P,R}(s,T)=0,
\end{equation}
where $t=\operatorname{deg}(\mathsf P)-1$ and $\alpha_j\in\mathbb R_n$ for any $0\leq j\leq t$.
\end{lemma}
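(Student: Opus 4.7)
My plan is to reduce the problem to a statement about the $F$-kernel $F_5^L(s,T)$ via Proposition \ref{Fkernel}. Substituting the expressions for each $S^{-1}_{\mathsf P,L}(s,T)$ shows that, in all seven cases, this kernel is a finite $\mathbb R$-linear combination of terms $P_1(T)\,F_5^L(s,T)\,s^j$ with $P_1$ a polynomial in $T,\bar T,T_0$, and the combined degree $j+t$ of the $s$-polynomial appearing inside the integral is at most $3$ (verified case-by-case: for $\mathsf P=\mathcal D,\Delta,\Delta\mathcal D$ the pair $(j_{\max},t)$ is $(3,0),(2,1),(1,2)$, and similarly for the remaining four). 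Pulling the left $T$-polynomial factors out of the integral and expanding $\alpha_0+\dots+s^t\alpha_t$, the identity to prove reduces to
\[
\int_{\partial(G\cap\mathbb C_J)} F_5^L(s,T)\,ds_J\,s^k\;=\;0,\qquad 0\leq k\leq 3,
\]
on any bounded slice Cauchy domain $G$ with $\partial G\cap\sigma_S(T)=\emptyset$; the coefficients $\alpha_k$ enter linearly on the right. The right-handed equation \eqref{right} is handled symmetrically using the right kernels of Proposition \ref{Fkernel} and $F_5^R$.

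Next, I dispose of the routine topological cases. Decomposing $G$ into its finitely many connected components $G_1,\dots,G_N$, I treat each $G_i$ separately. If $G_i\cap\sigma_S(T)=\emptyset$, the integrand is slice hyperholomorphic in $s$ on a neighborhood of $\overline{G_i}$ (the analogue of Proposition \ref{reg1RES} for $F_5^L$), so the contribution vanishes by Cauchy's theorem. If $G_i$ contains \emph{all} of $\sigma_S(T)$, I deform $\partial(G_i\cap\mathbb C_J)$ to a large circle in $\rho_S(T)$ and use the expansion $F_5^L(s,T)=\sum_{m\geq 4}g_m(T,\bar T)\,s^{-1-m}$ valid for $|s|>\|T\|$ (which follows from $F_5^L(s,T)=\gamma_5(s-\bar T)\mathcal Q_{c,s}(T)^{-3}$ together with the $\Delta\mathcal D$-series of Proposition \ref{sk}): the coefficient of $s^{-1}$ in $F_5^L(s,T)s^k$ is zero whenever $k\leq 3$, hence the integral vanishes. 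Equivalently, this reflects the fact that the Fueter-Sce image $\Delta^2 s^k$ is identically zero for $k\leq 3$, which is immediate by iterating Proposition \ref{laplacian}.

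The delicate case is a component $G_i$ that encloses some, but not all, of the connected clusters of $\sigma_S(T)$. Here the assumptions $T_4=T_5=0$ and ``each $T_\ell$ has real spectrum'' are used in an essential way, placing us in the setting of the monogenic (Weyl) functional calculus of McIntosh and co-workers \cite{J,JM} for the commuting $4$-tuple $(T_0,T_1,T_2,T_3)$. That calculus produces mutually commuting spectral projections $E_1,\dots,E_N$ associated with the connected clusters $K_1,\dots,K_N$ of $\sigma_S(T)$ (with $\sum_j E_j=\mathcal I$) and identifies $\frac{1}{2\pi}\int_{\partial(G_i\cap\mathbb C_J)}F_5^L(s,T)\,ds_J\,s^k$ with the monogenic functional calculus of the Fueter-Sce image $\Delta^2 s^k$ composed with the projection $\sum_{j:K_j\subset G_i}E_j$. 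Since $\Delta^2 s^k=0$ identically for $k\leq 3$, the resulting operator is zero. The main obstacle is precisely this last identification: carefully linking the $F$-kernel integral to the monogenic calculus through the spectral decomposition of $\sigma_S(T)$ in the partial-containment situation, which is where the structural hypotheses on $T$ play their decisive role.
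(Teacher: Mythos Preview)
Your reduction via Proposition~\ref{Fkernel} to the identity $\int_{\partial(G\cap\mathbb C_J)} F_5^L(s,T)\,ds_J\,s^k=0$ for $0\le k\le 3$ matches the paper exactly, including the degree bookkeeping. Where you diverge is in how to establish this operator identity.

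The paper does not split $G$ into components. It invokes the monogenic functional calculus once, globally, to write
\[
F_5^L(s,T)=\int_{\partial\Omega}G(\omega,T)\,\mathbf{D}\omega\,F_5^L(s,\omega)
\]
for a fixed domain $\Omega$ containing the monogenic spectrum of $T$, then applies Fubini to reduce to the \emph{function-level} identity $\int_{\partial(G\cap\mathbb C_J)}F_5^L(s,\omega)\,ds_J\,s^k=0$ for $\omega\notin\partial G$. The latter is immediate: by Theorem~\ref{Fueter} it equals $\Delta^2(\omega^k)=0$ when $\omega\in G$, and it vanishes by slice Cauchy's theorem when $\omega\notin\overline G$. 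The argument is uniform in $G$ and never needs to know which pieces of $\sigma_S(T)$ lie inside.

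Your cases~1 and~2 are fine, but your case~3 (partial containment) is a genuine gap: you assert that the $F$-integral over $\partial(G_i\cap\mathbb C_J)$ agrees with a spectrally-projected monogenic calculus evaluation of $\Delta^2 s^k$, and you yourself label this identification as ``the main obstacle'' without proving it. Making it rigorous would require essentially the same input as the paper's argument---the integral representation of $F_5^L(s,T)$ through the monogenic resolvent $G(\omega,T)$---so the detour through spectral projections buys nothing and leaves the decisive step unfinished. The paper's route shows that once you have that representation formula, no spectral decomposition of $\sigma_S(T)$ is needed at all.
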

\begin{proof}
	Since $\Delta^2(1)=0$, $\Delta^2(x)=0$, $\Delta^2(x^2)=0$ and $\Delta^2(x^3)=0$ by Theorem \ref{Fueter} we also have
	\begin{equation}\label{nova}
		\int_{\partial{(G \cap \mathbb{C}_J)}} F_5^L(s,x) ds_J=\Delta^2(1)=0,
	\end{equation}
	and
	\begin{equation}\label{nova_bis}
		\int_{\partial{(G \cap \mathbb{C}_J)}} F_5^L(s,x) ds_Js =\Delta^2(x)=0,
	\end{equation}
	and
	\begin{equation}\label{nova_tris}
		\int_{\partial{(G \cap \mathbb{C}_J)}} F_5^L(s,x) ds_Js^2 =\Delta^2(x^2)=0,
	\end{equation}
	and
	\begin{equation}\label{nova_quattro}
		\int_{\partial{(G \cap \mathbb{C}_J)}} F_5^L(s,x) ds_Js^3 =\Delta^2(x^3)=0
	\end{equation}
	for all $x \notin \partial G$ and $J \in \mathbb{S}$. By the monogenic functional calculus \cite{J, JM} we have
	$$ F^L_5(s,T)= \int_{\partial \Omega} G(\omega,T) \mathbf{D}\omega F^L_5(s,\omega),$$
	where $\mathbf{D}\omega$ is a suitable differential form, the open set $ \Omega$ contains the left spectrum of $T$ and $G(\omega,T)$ is the Fueter resolvent operator. By Proposition \ref{Fkernel} we can write
	$$S^{-1}_{\mathsf P,L}(s,T)=\sum_{\ell=0}^{4-\operatorname{deg}(\mathsf P)} g_{\mathsf P,\ell}(T,\bar T)F^L_5(s,T) s^{\ell},$$
	and thus we have
	$$S^{-1}_{\mathsf P,L}(s,T) (\alpha_0+\dots+ s^t\alpha_t) =\sum_{\ell=0}^{3} g'_{\mathsf P,\ell}(T,\bar T)F^L_5(s,T) s^{\ell}\beta_\ell$$
	for appropriate polynomial in $T, \, \bar T$: $g'_{\mathsf P,\ell}(T,\bar T)$, and $\beta_{\ell}\in\mathbb R_n$. We can conclude the proof of the theorem observing that for any $\ell=0,1,2,3$ we have
	\begin{eqnarray*}
		&& g_{\mathsf P,\ell}(T,\bar T) \int_{\partial{(G \cap \mathbb{C}_J)}}  F^L_5(s,T) ds_J s^\ell \beta_\ell=- \left(g_{\mathsf P,\ell}(T,\bar T) \int_{\partial{(G \cap \mathbb{C}_J)}}  \int_{\partial\Omega} G(\omega, T)\mathbf{D}\omega F_L(s,\omega)s^{\ell}\, ds_J \right)\beta_\ell\\
		&&=\left( g_{\mathsf P,\ell}(T,\bar T) \int_{\partial \Omega} G(\omega,T) \mathbf{D}\omega \left(\int_{\partial{(G \cap \mathbb{C}_J)}} F_L(s,\omega) ds_J s^\ell \right)\right)\beta_\ell= 0
	\end{eqnarray*}
	where the second equality is a consequence of the Fubini's Theorem and the last equality is a consequence of formulas \eqref{nova}-\eqref{nova_quattro}. Therefore, we get
$$	\int_{\partial{(G \cap \mathbb{C}_J)}} S^{-1}_{\mathsf P,L}(s,T) ds_J (\alpha_0+\dots+ s^t\alpha_t)=0.	$$
By similar computations it is possible to show \eqref{right}.
\end{proof}

Finally in the following result we give an answer to the question in Problem \ref{three}.
\begin{proposition}
	Let $T \in \mathcal{BC}^{0,1}(V_5)$ be such that $T= T_0e_0+T_1e_1+T_2 e_2+T_3e_3$, and assume that the operators $T_{\ell}$, $\ell=0,1,2,3$, have real spectrum. Let $U$ be a slice Cauchy domain with $\sigma_S(T)\subset U$. If $f,g\in \mathcal{SH}_L(U)$ (resp. $f,g\in \mathcal{SH}_R(U)$) satisfy the property $\mathsf Pf=\mathsf Pg$ (resp $f\mathsf P=g\mathsf P$) then $(\mathsf Pf)(T)=(\mathsf Pg)(T)$ (resp. $(f\mathsf P)(T)=(g\mathsf P)(T)$).
\end{proposition}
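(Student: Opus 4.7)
The plan is to reduce the disconnected case to the argument of Proposition~\ref{con} by splitting $U$ into its finitely many connected components and then invoking Lemma~\ref{mono} on each piece. I will carry out the proof only for $f,g\in\mathcal{SH}_L(U)$; the right case is analogous.

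First I would observe that, as a slice Cauchy domain, $U$ is axially symmetric with a finite number of connected components $U_1,\dots,U_N$, each of which is itself an axially symmetric slice Cauchy domain whose boundary is disjoint from $\sigma_S(T)$ (since $\sigma_S(T)\subset U$). On each $U_k$, which is connected, the hypothesis $\mathsf{P}f=\mathsf{P}g$ gives $f-g\in(\ker\mathsf{P})_{\mathcal{SH}_L(U_k)}$, so by Theorem~\ref{Tcost} we can write
$$(f-g)(x)=\alpha_0^{(k)}+x\alpha_1^{(k)}+\cdots+x^{t}\alpha_t^{(k)}\qquad\text{for all }x\in U_k,$$
where $t:=\operatorname{deg}(\mathsf{P})-1$ and $\alpha_0^{(k)},\dots,\alpha_t^{(k)}\in\mathbb{R}_n$. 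The crucial point is that the polynomial representative depends on $k$, which is why the connected argument does not immediately transfer.

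Next I would use Definition~\ref{fcalc} to write
$$(\mathsf{P}f)(T)-(\mathsf{P}g)(T)=\frac{1}{2\pi}\int_{\partial(U\cap\mathbb{C}_J)}S^{-1}_{\mathsf{P},L}(s,T)\,ds_J\,(f(s)-g(s))$$
and decompose this integral as a finite sum over the boundaries of the components, obtaining
$$(\mathsf{P}f)(T)-(\mathsf{P}g)(T)=\sum_{k=1}^{N}\frac{1}{2\pi}\int_{\partial(U_k\cap\mathbb{C}_J)}S^{-1}_{\mathsf{P},L}(s,T)\,ds_J\,\bigl(\alpha_0^{(k)}+s\alpha_1^{(k)}+\cdots+s^{t}\alpha_t^{(k)}\bigr).$$
At this point the hypotheses of Lemma~\ref{mono} are met for each $k$: $T=T_0e_0+T_1e_1+T_2e_2+T_3e_3$ with the $T_\ell$ having real spectrum, and $U_k$ is a bounded slice Cauchy domain with $(\partial U_k)\cap\sigma_S(T)=\emptyset$. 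Applying the lemma termwise makes each integral vanish, and I conclude $(\mathsf{P}f)(T)=(\mathsf{P}g)(T)$.

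The main obstacle, if any, is justifying that the connected components of a slice Cauchy domain are themselves slice Cauchy domains (axial symmetry is preserved because the $\mathbb{S}$-orbit of any point lies in a single connected component, and piecewise-smoothness of the boundary on each slice is inherited). Once that point is settled, Lemma~\ref{mono} — which itself relies on the monogenic functional calculus of McIntosh and the identities $\Delta^{2}(s^j)=0$ for $0\le j\le 3$ — does all the heavy lifting, since it is precisely the tool that was missing in Proposition~\ref{con} to handle polynomials with different coefficients on different components. The right-slice-hyperholomorphic case is completely parallel, using $S^{-1}_{\mathsf{P},R}(s,T)$ and formula~\eqref{right} of Lemma~\ref{mono}.
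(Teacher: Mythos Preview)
Your proof is correct and follows essentially the same approach as the paper: split $U$ into its finitely many connected components, apply Theorem~\ref{Tcost} on each to identify $f-g$ with a polynomial of degree at most $t=\operatorname{deg}(\mathsf P)-1$, decompose the defining integral accordingly, and then invoke Lemma~\ref{mono} on each piece. The paper's argument is slightly terser but identical in structure, and your remark about the connected components of a slice Cauchy domain again being (bounded) slice Cauchy domains is a legitimate point that the paper leaves implicit.
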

\begin{proof}
	If $U$ is connected we can use Proposition \ref{con}. If $U$ is not connected then $U=\cup_{\ell=1}^m U_\ell$ where the $U_\ell$ are the connected components of $U$. Hence, there exist constants $\alpha_{\ell,i}\in\mathbb R_n$ for $\ell=1,\dots, m$ and $i=0,1,2,3$ such that $f(s)-g(s)=\sum_{\ell=1}^m \sum_{i=0}^t \chi_{U_\ell}(s) s^i \alpha_{\ell,i}$ where $t=\operatorname{deg}(\mathsf P)-1$. Thus we can write
	$$
	(\mathsf P f)(T)-(\mathsf Pg)(T)=\sum_{\ell=1}^m\frac 1{2\pi}\int_{\partial(U_\ell\cap\mathbb C_J)}S^{-1}_{\mathsf P,L}(s,T)ds_J(\alpha_{\ell,0}+\dots+s^t\alpha_{\ell,t}).
	$$
	The last summation is zero by Lemma \ref{mono}.
\end{proof}

\begin{remark}
	It is possible to prove some other important properties for these functional calculi, this will be investigated in a forthcoming work.
\end{remark}

\section{The product rule for the $F$-functional calculus}\label{PRDRULE}

In order to obtain a product rule for the $F$-functional calculus in dimension five, it is crucial the Dirac fine structure of the kind $(\mathcal{D}, \mathcal{\overline{D}}, \mathcal{D}, \mathcal{\overline{D}} )$, see \eqref{fine3}.
\begin{lemma}
	\label{comm}
	Let $B \in \mathcal{B}^{0,1}(V_n)$. Let $G$ be a bounded slice Cauchy domain and let $f$ be an intrinsic slice monogenic function whose domain contains $G$. Then for $p \in G$, and for any $I \in \mathbb{S}$ we have
	$$ \frac{1}{2 \pi} \int_{\partial(G_1 \cap \mathbb{C}_J)} f(s) ds_I (\bar{s}B-Bp)(p^2-2s_0p+|s|^2)^{-1}=Bf(p).$$
\end{lemma}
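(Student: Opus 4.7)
The structure of the identity is suggested by the right slice hyperholomorphic Cauchy formula $f(p) = \frac{1}{2\pi}\int_{\partial(G\cap\mathbb{C}_I)} f(s)\, ds_I\, (\bar s - p)\mathcal{Q}_s(p)^{-1}$, of which the lemma is the ``$B$-twisted'' analog. My approach is to expand the twisted kernel in a power series and reduce to monomials. The central algebraic identity is
\begin{equation*}
(\bar s B - Bp)(p^2 - 2s_0 p + |s|^2)^{-1} = \sum_{n=0}^{\infty} s^{-n-1} B p^n \qquad \text{for } \|p\| < |s|,
\end{equation*}
which I would verify by multiplying both sides on the right by $\mathcal{Q}_s(p) = p^2 - 2s_0 p + |s|^2$, shifting indices in the three resulting sums, and using the paravector relation $s^2 - 2 s_0 s + |s|^2 = 0$ to collapse the bulk of the series; only the $n = 0$ and $n = 1$ terms survive, contributing $\bar s B$ and $-Bp$ respectively.

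Because each summand $s^{-n-1} B p^n$ is left slice hyperholomorphic in $s$ away from the origin, the series defines a left slice hyperholomorphic function of $s$ on $\{|s| > \|p\|\}$, and the identity principle promotes the kernel $\Phi(s,p) := (\bar s B - Bp)\mathcal{Q}_s(p)^{-1}$ to a left slice hyperholomorphic function on all of $\mathbb{R}^{n+1} \setminus [p]$. The intrinsic function $f$ being both left and right slice hyperholomorphic, the slice Cauchy theorem then guarantees the integral $\frac{1}{2\pi}\int_{\partial(G\cap\mathbb{C}_I)} f(s)\, ds_I\, \Phi(s,p)$ is independent of the slice Cauchy domain $G$ (provided $[p]\subset G\subset\operatorname{dom}(f)$) and of $I \in \mathbb{S}$.

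By linearity in $f$ it then suffices to verify the conclusion for monomials $f(s) = s^k$, $k \geq 0$. For such an entire $f$ I would deform $G$ to a ball $B(0,R)$ with $R > \|p\|$, where the series for $\Phi$ converges uniformly; termwise integration together with the standard residue $\frac{1}{2\pi}\int_{|s|=R,\, s\in\mathbb{C}_I} s^m\, ds_I = \delta_{m,-1}$ yields $B p^k$. For a general intrinsic $f$ on $\bar G$, the Taylor coefficients in $f(s) = \sum_k c_k (s-a)^k$ at a real base point $a$ are real---this is the defining feature of intrinsic functions---and this reality is precisely what lets $B$ slide past each $c_k$, assembling $\sum_k c_k B p^k = B\sum_k c_k p^k = B f(p)$. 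The principal obstacle is globalizing the monomial reduction when $\operatorname{dom}(f)$ is too small to admit a Taylor series valid on all of $G$; this is handled by combining the contour invariance from step two with a local expansion (or density) argument, or equivalently by representing $f$ via the slice hyperholomorphic Cauchy formula and applying Fubini to reduce to the already-established monomial case.
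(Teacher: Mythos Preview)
The paper states this lemma without proof; it is a standard auxiliary tool in the $S$-functional calculus literature for establishing product rules (variants appear, e.g., in \cite{CGKBOOK}), so there is no in-paper argument to compare against.

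Your approach is correct and is the standard one. The telescoping verification of $(\bar s B - Bp)\mathcal{Q}_s(p)^{-1} = \sum_{n\geq 0} s^{-n-1}Bp^n$ for $|p|<|s|$ is right, and the left slice hyperholomorphicity of $\Phi(s,p)$ in $s$ on $\mathbb{R}^{n+1}\setminus[p]$ can be checked directly: with $s=u+Iv$ one finds $\Phi=\alpha+I\beta$ where $\alpha=(uB-Bp)\mathcal{Q}_s(p)^{-1}$ and $\beta=-vB\mathcal{Q}_s(p)^{-1}$ are $I$-independent operator-valued functions satisfying the Cauchy--Riemann system, the computation using only that $\mathcal{Q}_s(p)$ commutes with $p$. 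Together with $f$ intrinsic (hence right slice hyperholomorphic) this yields the contour independence you claim. The globalization concern you flag is genuine but routine; either of your proposed fixes---Runge-type approximation by intrinsic rational functions, or representing $f$ by its Cauchy integral over a larger domain and applying Fubini---closes the argument.
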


In order to show a product rule for the $F$-functional calculus we need the following result, see \cite[Lemma 4.1]{CDS}.
\begin{lemma}[the $F$-resolvent equation for $n=5$]
	\label{reseq}
	Let $T \in \mathcal{BC}^{0,1}(V_5)$. Then for $p$, $s \in \rho_S(T)$ the following equation holds
	\begin{eqnarray*}
		&&F_5^R(s,T)S^{-1}_L(s,T)+S^{-1}_R(s,T) F_5^L(p,T)+2^6 \mathcal{Q}_{c,s}(T)^{-1}S^{-1}_R(s,T) S^{-1}_{L}(p,T) \mathcal{Q}_{c,p}(T)^{-1}+\\
		&&+2^{6}[ \mathcal{Q}_{c,s}(T)^{-2} \mathcal{Q}_{c,p}(T)^{-1}+ \mathcal{Q}_{c,s}(T)^{-1} \mathcal{Q}_{c,p}(T)^{-2}]= \{ [F_{5}^R(s,T)- F_5^{L}(p,T)]p+\\
		&&- \bar{s}[F_5(s,T)^R-F_5^{L}(p,T)]\} \mathcal{Q}_s(p)^{-1},
	\end{eqnarray*}
	where $ \mathcal{Q}_s(p)=p^2-2s_0p+|s|^2$.
\end{lemma}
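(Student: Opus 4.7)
The first step is to observe that for $n=5$ the Fueter--Sce constant is $\gamma_5=(-1)^{2}\cdot 2^{4}\cdot [2!]^{2}=64=2^{6}$, so the coefficient $2^{6}$ appearing in the identity is exactly $\gamma_5$. This suggests that the equation is really the $S$-resolvent equation dressed up by factors of $\gamma_5\,\mathcal{Q}_{c,\cdot}(T)^{-2}$ coming from the definition $F_5^{L/R}(s,T)=\gamma_5\mathcal{Q}_{c,s}(T)^{-3}(s-\bar T)$ (left/right). Because $T\in\mathcal{BC}^{0,1}(V_5)$ has mutually commuting components, the operators $\overline T$, $\mathcal{Q}_{c,s}(T)^{-1}$, $\mathcal{Q}_{c,p}(T)^{-1}$, $S_L^{-1}(p,T)$, $S_R^{-1}(s,T)$, $F_5^L(p,T)$, $F_5^R(s,T)$ all commute with one another and with the paravector scalars $s$, $p$. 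So it suffices to verify the identity as a formal algebraic identity in a commutative Clifford setting, replacing $T$ by a paravector $x$ whose components commute with $s$ and $p$; the operator version then follows by inserting $x=T$ in the final formula.

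The plan for the scalar (commutative) version is as follows. First, using the explicit formulas
\[
F_5^L(s,x)=\gamma_5(s-\bar x)\mathcal{Q}_{c,s}(x)^{-3},\qquad S_L^{-1}(s,x)=(s-\bar x)\mathcal{Q}_{c,s}(x)^{-1}
\]
(and the analogous right kernels), I would rewrite each of the four summands on the left-hand side as a product of the single factor $\gamma_5$ with explicit powers of $\mathcal{Q}_{c,s}(x)$ and $\mathcal{Q}_{c,p}(x)$ and linear factors in $(s-\bar x)$, $(p-\bar x)$. In particular
\[
F_5^R(s,x)S_L^{-1}(s,x)=\gamma_5(s-\bar x)^{2}\mathcal{Q}_{c,s}(x)^{-4},\quad S_R^{-1}(s,x)F_5^L(p,x)=\gamma_5(s-\bar x)(p-\bar x)\mathcal{Q}_{c,s}(x)^{-1}\mathcal{Q}_{c,p}(x)^{-3}.
\]
Factoring $\gamma_5\,\mathcal{Q}_{c,s}(x)^{-2}\mathcal{Q}_{c,p}(x)^{-2}$ out of the whole left-hand side reduces the identity to one whose left-hand side is a polynomial of low degree in $\mathcal{Q}_{c,s}(x)$, $\mathcal{Q}_{c,p}(x)$, $(s-\bar x)$, $(p-\bar x)$.

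The second step is to invoke the scalar $S$-resolvent equation
\[
S_R^{-1}(s,x)S_L^{-1}(p,x)=\bigl\{[S_R^{-1}(s,x)-S_L^{-1}(p,x)]p-\bar s[S_R^{-1}(s,x)-S_L^{-1}(p,x)]\bigr\}\mathcal{Q}_s(p)^{-1},
\]
where $\mathcal{Q}_s(p)=p^{2}-2s_0p+|s|^{2}$, and to expand analogously the right-hand side of the claimed identity, replacing $F_5^R(s,x)$ and $F_5^L(p,x)$ by their explicit forms. After clearing the common factor $\gamma_5\mathcal{Q}_{c,s}(x)^{-2}\mathcal{Q}_{c,p}(x)^{-2}\mathcal{Q}_s(p)^{-1}$ the claim becomes a polynomial identity in the commuting variables $s,\bar s, s_0, p,\bar p, p_0, x,\bar x$ that one verifies by direct expansion, using only $s+\bar s=2s_0$, $s\bar s=|s|^2$, and the analogous relations for $p$ and $x$.

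The main obstacle is precisely this final algebraic bookkeeping: the left-hand side carries four different combinations of $\mathcal{Q}_{c,s}(x)^{-a}\mathcal{Q}_{c,p}(x)^{-b}$ with $(a,b)\in\{(4,0),(1,3),(3,2),(2,3),(1,2),(2,1)\}$ after clearing denominators, and to match the right-hand side one must verify that the two mixed terms $\mathcal{Q}_{c,s}(x)^{-2}\mathcal{Q}_{c,p}(x)^{-1}$ and $\mathcal{Q}_{c,s}(x)^{-1}\mathcal{Q}_{c,p}(x)^{-2}$ appearing with the bare coefficient $2^{6}$ are exactly what is needed to absorb the cross-terms produced by expanding $(s-\bar x)^{2}\mathcal{Q}_{c,s}(x)^{-4}$ and $(p-\bar x)^{2}\mathcal{Q}_{c,p}(x)^{-4}$ via $(s-\bar x)^2=\mathcal{Q}_{c,s}(x)-(s-x)(s-\bar x)+\text{adjustment}$, and similarly for $p$. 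Once this reduction is carried out the commutative identity follows, and by commutativity of the $T$-components the operator version is immediate.
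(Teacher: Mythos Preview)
The paper does not give its own proof of this lemma; it is quoted from \cite[Lemma 4.1]{CDS}. So there is no in-paper argument to compare your proposal against. That said, your proposal contains a genuine error that would prevent it from working.

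Your reduction to a ``commutative Clifford setting'' is not valid. The claim that $\overline{T}$, $\mathcal{Q}_{c,s}(T)^{-1}$, $\mathcal{Q}_{c,p}(T)^{-1}$, $S_L^{-1}(p,T)$, $S_R^{-1}(s,T)$, $F_5^L(p,T)$, $F_5^R(s,T)$ all commute with one another and with the paravector scalars $s,p$ is false. Mutual commutativity of the \emph{components} $T_0,\dots,T_5$ does not make the listed operators commute: each of $S_L^{-1}(p,T)$, $F_5^R(s,T)$, etc.\ contains factors such as $(s-\overline{T})$ or $(p-\overline{T})$, and the Clifford paravectors $s$, $p$, $\overline{T}$ do not commute with one another. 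Already in the scalar model you invoke, $(s-\bar{x})(p-\bar{x})\neq(p-\bar{x})(s-\bar{x})$ in general, and $\mathcal{Q}_{c,s}(x)$ (which is a polynomial in $s$ with real coefficients $x_0,|x|^2$) does not commute with $p$ or with $\bar{x}$. Consequently the step ``verify the identity as a formal algebraic identity in a commutative Clifford setting'' collapses: the left- and right-hand sides depend on the order of the factors, and any purely commutative computation will not reproduce the correct ordered products. This is exactly why the $S$- and $F$-resolvent equations are nontrivial---they encode genuinely noncommutative identities, with $\mathcal{Q}_s(p)^{-1}$ on the right absorbing the failure of $s$ and $p$ to commute.

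The route actually taken in \cite{CDS} keeps careful track of the order of factors throughout: one multiplies the $S$-resolvent equation on the left by $\gamma_5\mathcal{Q}_{c,s}(T)^{-2}$ and on the right by $\gamma_5\mathcal{Q}_{c,p}(T)^{-2}$ (these pseudo-resolvents \emph{do} commute with the $S$- and $F$-resolvents, because they are polynomials in $s$, respectively $p$, with coefficients the real operators $T_0$ and $T\overline{T}$, and one checks directly that such operators slide past $(s-\overline{T})$ and $(p-\overline{T})$), then uses the identity $F_n^L(s,T)s-TF_n^L(s,T)=\gamma_n\mathcal{Q}_{c,s}(T)^{-(n-1)/2}$ and its right analogue to rewrite the cross terms. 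The telescoping that produces the two bare $2^6\mathcal{Q}_{c,s}(T)^{-a}\mathcal{Q}_{c,p}(T)^{-b}$ summands comes from this ordered manipulation, not from a commutative expansion of $(s-\bar{x})^2$. Your final paragraph essentially admits that the bookkeeping is the whole difficulty; without the correct commutation relations identified, that bookkeeping cannot be completed.
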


\begin{theorem}[Product rule for the $F$-functional calculus for $n=5$]
	\label{prodo}
	Let $T \in \mathcal{BC}(V_5)$. We assume $f \in \mathcal{N}(\sigma_S(T))$ and $g \in \mathcal{SH}_L(\sigma_S(T))$, then we have
	\[
\begin{split}
&\Delta^2(fg)(T)= \Delta^2(f)(T) g(T)+f(T) \Delta^2(g)(T)+\Delta(f)(T) \Delta (g)(T)\\
&\qquad\qquad\qquad \qquad \qquad \qquad \qquad \qquad \qquad \qquad \qquad -\mathcal{D} \Delta (f)(T) \mathcal{D}(g)(T)-\mathcal{D}(f)(T)\Delta \mathcal{D}(g)(T).
	\end{split}
	\]
\end{theorem}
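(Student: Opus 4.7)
The plan is to reduce the identity to the $F$-resolvent equation of Lemma~\ref{reseq} together with a Cauchy-type integration formula. Since $f$ is intrinsic, the product $fg$ is left slice hyperholomorphic on $\sigma_S(T)$, and the $F$-functional calculus gives
$$\Delta^2(fg)(T) \;=\; \frac{1}{2\pi}\int_{\partial(U\cap\mathbb{C}_I)} F_5^L(p,T)\,dp_I\,(fg)(p).$$
For each of the five terms on the right-hand side I would write it as a double integral by using the \emph{right} integral representation for the relevant fine-structure functional calculus applied to $f$ (valid because $f$ is intrinsic, see Definition~\ref{fcalc}) and the \emph{left} integral representation for the corresponding calculus applied to $g$; for instance
$$\Delta^2(f)(T)\,g(T) \;=\; \frac{1}{(2\pi)^2}\iint f(s)\,ds_J\,F_5^R(s,T)\,S_L^{-1}(p,T)\,dp_I\,g(p),$$
and analogously for $f(T)\Delta^2(g)(T)$, $\Delta(f)(T)\Delta(g)(T)$, $\mathcal{D}\Delta(f)(T)\mathcal{D}(g)(T)$ and $\mathcal{D}(f)(T)\Delta\mathcal{D}(g)(T)$, choosing nested slice Cauchy domains with $\sigma_S(T)\subset U$, $\overline U\subset W$, and $W\subset\operatorname{dom}(f)$.

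Summing the five expressions and using the identifications $\mathcal{Q}_{c,s}(T)^{-1}=-\tfrac14 S^{-1}_{\mathcal{D},L}(s,T)$, $\mathcal{Q}_{c,s}(T)^{-2}=\tfrac{1}{16}S^{-1}_{\Delta\mathcal{D},L}(s,T)$, and $S^{-1}_{\Delta,R}(s,T)S^{-1}_{\Delta,L}(p,T)=64\,\mathcal{Q}_{c,s}(T)^{-1}S_R^{-1}(s,T)S_L^{-1}(p,T)\mathcal{Q}_{c,p}(T)^{-1}$, the combined integrand's kernel is exactly the left-hand side of the $F$-resolvent equation. Applying Lemma~\ref{reseq} rewrites it as
$$\bigl\{[F_5^R(s,T)-F_5^L(p,T)]\,p - \bar{s}\,[F_5^R(s,T)-F_5^L(p,T)]\bigr\}\,\mathcal{Q}_s(p)^{-1},$$
which I split into an $F_5^L(p,T)$-part and an $F_5^R(s,T)$-part. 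In the $F_5^L(p,T)$-part the inner $s$-integral has integrand $f(s)\,ds_J\,(\bar{s}\,F_5^L(p,T)-F_5^L(p,T)\,p)\,\mathcal{Q}_s(p)^{-1}$ with $F_5^L(p,T)$ independent of $s$; Lemma~\ref{comm} applied with $B=F_5^L(p,T)$ evaluates it to $F_5^L(p,T)f(p)$. Since $f$ is intrinsic, $f(p)\in\mathbb{C}_I$ commutes with $dp_I$, so the outer $p$-integration produces exactly $\frac{1}{2\pi}\int F_5^L(p,T)\,dp_I\,(fg)(p)=\Delta^2(fg)(T)$.

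It remains to prove that the $F_5^R(s,T)$-part, $\iint f(s)\,ds_J\,[F_5^R(s,T)\,p-\bar{s}\,F_5^R(s,T)]\,\mathcal{Q}_s(p)^{-1}\,dp_I\,g(p)$, vanishes. Swapping the order of integration by Fubini, the inner $p$-integral is taken for $s$ fixed on the outer contour, where $[s]\cap\overline U=\emptyset$. Since $\mathcal{Q}_s(p)$ is a polynomial in $p$ with real (intrinsic in $p$) coefficients and $g$ is left slice hyperholomorphic, one can factor $F_5^R(s,T)$ and $\bar{s}F_5^R(s,T)$ out of the inner integral, leaving Cauchy-type integrals of $p\,\mathcal{Q}_s(p)^{-1}$ and $\mathcal{Q}_s(p)^{-1}$ against $dp_I\,g(p)$ that, by the slice hyperholomorphic Cauchy theorem applied to $s$ lying outside the $p$-contour, combine to zero. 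The main technical obstacle is executing this cancellation rigorously, given the subtle non-commutativity of $\bar{s}$ with $F_5^R(s,T)$, and in particular identifying the correct Cauchy-type evaluation in $p$ that makes the two terms telescope; this likely requires a variant of Lemma~\ref{comm} with the roles of the $s$- and $p$-integrations interchanged.
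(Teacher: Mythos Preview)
Your outline is essentially the paper's proof: nested contours with $p$ on the inner boundary and $s$ on the outer, double-integral representation of the five terms via right calculi on $f$ and left calculi on $g$, recognition of the combined kernel as the left-hand side of Lemma~\ref{reseq}, then Lemma~\ref{comm} with $B=F_5^L(p,T)$ on the $F_5^L$-part.

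Your closing worry about the $F_5^R$-part is unfounded, and the paper dispatches it in one line. With $s\in\partial(G_2\cap\mathbb{C}_J)$ outside $\overline{G}_1$, the functions $p\mapsto p\,\mathcal{Q}_s(p)^{-1}$ and $p\mapsto\mathcal{Q}_s(p)^{-1}$ are intrinsic slice hyperholomorphic on all of $\overline{G}_1$. Since $F_5^R(s,T)$ and $\bar{s}F_5^R(s,T)$ are $p$-independent, the integrands $F_5^R(s,T)\,p\,\mathcal{Q}_s(p)^{-1}$ and $\bar{s}F_5^R(s,T)\,\mathcal{Q}_s(p)^{-1}$ are right slice hyperholomorphic in $p$ on $\overline{G}_1$, and each inner integral $\int_{\partial(G_1\cap\mathbb{C}_J)}(\,\cdot\,)\,dp_I\,g(p)$ vanishes \emph{separately} by the Cauchy theorem. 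There is no telescoping to arrange, no variant of Lemma~\ref{comm} needed, and the order of $\bar{s}$ and $F_5^R(s,T)$ is irrelevant because both are constants with respect to $p$.
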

\begin{proof}
	Let $G_1$ and $G_2$ be two bounded slice Cauchy domain such that contain $\sigma_S(T)$ and $ \bar{G}_1 \subset G_2$, with $\bar{G}_2 \subset dom(f) \cap dom(g)$. We choose $p \in \partial(G_1 \cap \mathbb{C}_J)$ and $s \in \partial (G_2 \cap \mathbb{C}_J)$.
	\\ For every $I \in \mathbb{S}$, from the definitions of $F$-functional calculus, $SC$-functional calculus, holomorphic Cliffordian functional calculus of order 1 and $ \ell+1$-harmonic functional calculus ($ 0 \leq \ell \leq 1$) we get
	\[
	\begin{split}
		&\Delta^2(f)(T) g(T)+f(T) \Delta^2(g)(T)+\Delta(f)(T) \Delta (g)(T)-\mathcal{D} \Delta (f)(T) \mathcal{D}(g)(T)-\mathcal{D}(f)(T)\Delta \mathcal{D}(g)(T)\\
		&= \frac{1}{(2 \pi)^2} \int_{\partial(G_2 \cap \mathbb{C}_J)} f(s)ds_I F_5^R(s,T) \int_{\partial(G_1 \cap \mathbb{C}_J)} S^{-1}_L(p,T) dp_I g(p)\\
		&+\frac{1}{(2 \pi)^2} \int_{\partial(G_2 \cap \mathbb{C}_J)} f(s)ds_I S^{-1}_R(s,T) \int_{\partial(G_1 \cap \mathbb{C}_J)} F_5^L(p,T) dp_I g(p)\\
		&+\frac{16}{\pi^2} \int_{\partial(G_2 \cap \mathbb{C}_J)} f(s)ds_I \mathcal{Q}_{c,s}(T)^{-1} S^{-1}_R(s,T) \int_{\partial(G_2 \cap \mathbb{C}_J)} S^{-1}_L(p,T) \mathcal{Q}_{p,T}(T)^{-1}dp_I g(p)\\
		& + \frac{16}{\pi^2}\int_{\partial(G_2 \cap \mathbb{C}_J)} f(s)ds_I \mathcal{Q}_{c,s}(T)^{-2}  \int_{\partial(G_2 \cap \mathbb{C}_J)}  \mathcal{Q}_{c,p}(T)^{-1}dp_I g(p)\\
		& + \frac{16}{\pi^2}\int_{\partial(G_2 \cap \mathbb{C}_J)} f(s)ds_I \mathcal{Q}_{c,s}(T)^{-1}  \int_{\partial(G_2 \cap \mathbb{C}_J)}  \mathcal{Q}_{c,p}(T)^{-2}dp_I g(p)\\
		&= \frac{1}{(2 \pi)^2}\int_{\partial(G_2 \cap \mathbb{C}_J)} \int_{\partial(G_1 \cap \mathbb{C}_J)} f(s) ds_I\{ F_5^R(s,T)S^{-1}_L(s,T)+S^{-1}_R(s,T) F_5^L(p,T)+\\
		&+2^6[ \mathcal{Q}_{c,s}(T)^{-1}S^{-1}_R(s,T) S^{-1}_{L}(p,T) \mathcal{Q}_{c,p}(T)^{-1}+ \mathcal{Q}_{c,s}(T)^{-2} \mathcal{Q}_{c,p}(T)^{-1}+\\
		& + \mathcal{Q}_{c,s}(T)^{-1} \mathcal{Q}_{c,p}(T)^{-2}] \} dp_I g(p)
	\end{split}	
	\]
	By Lemma \ref{reseq} we get
	\[
	\begin{split}
		& \Delta^2(f)(T) g(T)+f(T) \Delta^2(g)(T)+\Delta(f)(T) \Delta (g)(T)-\mathcal{D} \Delta (f)(T) \mathcal{D}(g)(T)-\mathcal{D}(g)(T)\Delta \mathcal{D}g(T)\\
		&= \frac{1}{(2 \pi)^2}\int_{\partial(G_2 \cap \mathbb{C}_J)} \int_{\partial(G_1 \cap \mathbb{C}_J)}f(s)) ds_I \{ [F_{5}^R(s,T)- F_5^{L}(p,T)]p- \bar{s}[F_5^R(s,T)-F_5^{L}(p,T)]\}\times\\
		&\qquad\qquad\qquad\qquad\qquad\qquad\qquad\qquad\qquad\qquad\qquad\qquad\qquad\qquad\qquad\qquad\times \mathcal{Q}_s(p)^{-1} \}dp_I g(p) .
	\end{split}
	\]
	Now, since the functions $p \mapsto p \mathcal{Q}_s(p)^{-1}$, $p \mapsto \mathcal{Q}_s(p)^{-1}$ are intrinsic slice hyperholomoprhic on $ \bar{G_1}$ by the Cauchy integral formula we have
	$$ \int_{\partial(G_2 \cap \mathbb{C}_J)} f(s)ds_I \int_{\partial(G_1 \cap \mathbb{C}_J)} F_5^R(s,T)p \mathcal{Q}_s(p)^{-1} dp_I g(p)=0,$$
	$$ \int_{\partial(G_2 \cap \mathbb{C}_J)} f(s)ds_I \int_{\partial(G_1 \cap \mathbb{C}_J)} \bar{s}F_5^R(s,T) \mathcal{Q}_s(p)^{-1} dp_I g(p)=0.$$	
	Therefore we obtain
	\[
	\begin{split}
		& \Delta^2(f)(T) g(T) +f(T) \Delta^2(g)(T)+\Delta(f)(T) \Delta (g)(T)-\mathcal{D} \Delta (f)(T) \mathcal{D}(g)(T)-\mathcal{D}(f)(T)\Delta \mathcal{D}(g)(T)\\
		&=\frac{1}{(2 \pi)^2}\int_{\partial(G_2 \cap \mathbb{C}_J)} f(s)ds_I \int_{\partial(G_1 \cap \mathbb{C}_J)} [ \bar{s} F_5(p,T)^L-F_5^L(p,T)p]\mathcal{Q}_{s}(p)^{-1}dp_I g(p).
	\end{split}
	\]
	From Lemma \ref{comm} with $B:=	F_5^L(p,T)$ we get
	\[
	\begin{split}
		& \Delta^2(f)(T) g(T)+f(T) \Delta^2(g)(T)+\Delta(f)(T) \Delta (g)(T)-\mathcal{D} \Delta (f)(T) \mathcal{D}(g)(T)-\mathcal{D}(f)(T)\Delta \mathcal{D}(g)(T)\\
		&=	\frac{1}{(2 \pi)}\int_{\partial(G_1 \cap \mathbb{C}_J)} F_5^L(p,T) dp_If(p)g(p)\\
		&=	\frac{1}{(2 \pi)}\int_{\partial(G_1 \cap \mathbb{C}_J)} F_5^L(p,T) dp_I(fg)(p)\\
		&= \Delta^2(fg)(T).
	\end{split}	
	\]

\end{proof}

\section{Appendix: visualization of all possible fine structures in dimension five}

In this appendix we show all the possible fine structures in dimension five. Firstly, we recall the symbols of the classes of functions involved
\newline
\newline
$ \mathcal{ABH}(\Omega_D)$: axially bi-harmonic functions,
\newline
$ \mathcal{ACH}_1(\Omega)$: axially Cliffordian holomorphic functions of order 1
\newline
$ \mathcal{AH}(\Omega_D)$: axially harmonic functions,
\newline
$ \mathcal{AP}_2(\Omega_{D})$: axially polyanalytic of order 2,
\newline
$ \mathcal{ACH}_1(\Omega_D)$: axially anti Cliffordian of order 1,
\newline
$ \mathcal{ACP}_{(1,2)}$: axially polyanalytic Cliffordian of order $(1,2)$,
\newline
$ \mathcal{AP}_3(\Omega_{D})$: axially polyanalytic of order 3.
\newline
\newline
If we apply first the Dirac operator we have the following diagram
\begin{figure}[H]
\centering
\resizebox{0.95\textwidth}{!}{%
\tikzset{every picture/.style={line width=0.75pt}} 

\begin{tikzpicture}[x=0.75pt,y=0.75pt,yscale=-1,xscale=1]
	
	\draw    (270.16,240) -- (313.78,313.2) ;
	\draw [shift={(314.8,314.92)}, rotate = 239.21] [color={rgb, 255:red, 0; green, 0; blue, 0 }  ][line width=0.75]    (10.93,-3.29) .. controls (6.95,-1.4) and (3.31,-0.3) .. (0,0) .. controls (3.31,0.3) and (6.95,1.4) .. (10.93,3.29)   ;
	\draw    (310.81,229) -- (578.24,229) ;
	\draw [shift={(580.24,229)}, rotate = 180] [color={rgb, 255:red, 0; green, 0; blue, 0 }  ][line width=0.75]    (10.93,-3.29) .. controls (6.95,-1.4) and (3.31,-0.3) .. (0,0) .. controls (3.31,0.3) and (6.95,1.4) .. (10.93,3.29)   ;
	\draw    (310.81,213) -- (360.33,180.11) ;
	\draw [shift={(362,179)}, rotate = 146.41] [color={rgb, 255:red, 0; green, 0; blue, 0 }  ][line width=0.75]    (10.93,-3.29) .. controls (6.95,-1.4) and (3.31,-0.3) .. (0,0) .. controls (3.31,0.3) and (6.95,1.4) .. (10.93,3.29)   ;
	\draw    (306.32,204) -- (347.89,114.81) ;
	\draw [shift={(348.73,113)}, rotate = 114.99] [color={rgb, 255:red, 0; green, 0; blue, 0 }  ][line width=0.75]    (10.93,-3.29) .. controls (6.95,-1.4) and (3.31,-0.3) .. (0,0) .. controls (3.31,0.3) and (6.95,1.4) .. (10.93,3.29)   ;
	\draw    (52,222) -- (112,221.03) ;
	\draw [shift={(114,221)}, rotate = 179.08] [color={rgb, 255:red, 0; green, 0; blue, 0 }  ][line width=0.75]    (10.93,-3.29) .. controls (6.95,-1.4) and (3.31,-0.3) .. (0,0) .. controls (3.31,0.3) and (6.95,1.4) .. (10.93,3.29)   ;
	\draw    (418.8,329) -- (460.46,329) ;
	\draw [shift={(462.46,329)}, rotate = 180] [color={rgb, 255:red, 0; green, 0; blue, 0 }  ][line width=0.75]    (10.93,-3.29) .. controls (6.95,-1.4) and (3.31,-0.3) .. (0,0) .. controls (3.31,0.3) and (6.95,1.4) .. (10.93,3.29)   ;
	\draw    (553.8,322.92) -- (602.71,247.59) ;
	\draw [shift={(603.8,245.92)}, rotate = 123] [color={rgb, 255:red, 0; green, 0; blue, 0 }  ][line width=0.75]    (10.93,-3.29) .. controls (6.95,-1.4) and (3.31,-0.3) .. (0,0) .. controls (3.31,0.3) and (6.95,1.4) .. (10.93,3.29)   ;
	\draw    (436,178) -- (577.14,217.46) ;
	\draw [shift={(579.07,218)}, rotate = 195.62] [color={rgb, 255:red, 0; green, 0; blue, 0 }  ][line width=0.75]    (10.93,-3.29) .. controls (6.95,-1.4) and (3.31,-0.3) .. (0,0) .. controls (3.31,0.3) and (6.95,1.4) .. (10.93,3.29)   ;
	\draw    (436.71,119) -- (586.49,207.98) ;
	\draw [shift={(588.21,209)}, rotate = 210.71] [color={rgb, 255:red, 0; green, 0; blue, 0 }  ][line width=0.75]    (10.93,-3.29) .. controls (6.95,-1.4) and (3.31,-0.3) .. (0,0) .. controls (3.31,0.3) and (6.95,1.4) .. (10.93,3.29)   ;
	\draw    (438.77,106) -- (481.15,106.95) ;
	\draw [shift={(483.15,107)}, rotate = 181.29] [color={rgb, 255:red, 0; green, 0; blue, 0 }  ][line width=0.75]    (10.93,-3.29) .. controls (6.95,-1.4) and (3.31,-0.3) .. (0,0) .. controls (3.31,0.3) and (6.95,1.4) .. (10.93,3.29)   ;
	\draw    (436,85) -- (504.23,36.16) ;
	\draw [shift={(505.86,35)}, rotate = 144.41] [color={rgb, 255:red, 0; green, 0; blue, 0 }  ][line width=0.75]    (10.93,-3.29) .. controls (6.95,-1.4) and (3.31,-0.3) .. (0,0) .. controls (3.31,0.3) and (6.95,1.4) .. (10.93,3.29)   ;
	\draw    (584.39,34) -- (618.07,197.04) ;
	\draw [shift={(618.48,199)}, rotate = 258.33] [color={rgb, 255:red, 0; green, 0; blue, 0 }  ][line width=0.75]    (10.93,-3.29) .. controls (6.95,-1.4) and (3.31,-0.3) .. (0,0) .. controls (3.31,0.3) and (6.95,1.4) .. (10.93,3.29)   ;
	\draw    (556.56,113) -- (596.71,204.17) ;
	\draw [shift={(597.52,206)}, rotate = 246.23] [color={rgb, 255:red, 0; green, 0; blue, 0 }  ][line width=0.75]    (10.93,-3.29) .. controls (6.95,-1.4) and (3.31,-0.3) .. (0,0) .. controls (3.31,0.3) and (6.95,1.4) .. (10.93,3.29)   ;
	\draw    (175,222) -- (219,222) ;
	\draw [shift={(221,222)}, rotate = 180] [color={rgb, 255:red, 0; green, 0; blue, 0 }  ][line width=0.75]    (10.93,-3.29) .. controls (6.95,-1.4) and (3.31,-0.3) .. (0,0) .. controls (3.31,0.3) and (6.95,1.4) .. (10.93,3.29)   ;
	\draw    (400,309) -- (587.92,241.59) ;
	\draw [shift={(589.8,240.92)}, rotate = 160.27] [color={rgb, 255:red, 0; green, 0; blue, 0 }  ][line width=0.75]    (10.93,-3.29) .. controls (6.95,-1.4) and (3.31,-0.3) .. (0,0) .. controls (3.31,0.3) and (6.95,1.4) .. (10.93,3.29)   ;
	
	\draw (8,211.4) node [anchor=north west][inner sep=0.75pt]    {$\mathcal{O}( D)$};
	\draw (319.95,315.4) node [anchor=north west][inner sep=0.75pt]    {$\overline{\mathcal{ACH}_{1}( \Omega _{D})}$};
	\draw (223.02,213.4) node [anchor=north west][inner sep=0.75pt]    {$\mathcal{ABH}( \Omega _{D})$};
	\draw (192,200.4) node [anchor=north west][inner sep=0.75pt]    {$\mathcal{D}$};
	\draw (304.7,262.4) node [anchor=north west][inner sep=0.75pt]    {$\mathcal{D}$};
	\draw (472.78,321.4) node [anchor=north west][inner sep=0.75pt]    {$\mathcal{AH}( \Omega _{D})$};
	\draw (432.97,305.4) node [anchor=north west][inner sep=0.75pt]    {$\overline{\mathcal{D}}$};
	\draw (583.81,217.4) node [anchor=north west][inner sep=0.75pt]    {$\mathcal{AM}( \Omega _{D})$};
	\draw (394.26,202.4) node [anchor=north west][inner sep=0.75pt]    {$\Delta \overline{\mathcal{D}}$};
	\draw (361.81,166.4) node [anchor=north west][inner sep=0.75pt]    {$\mathcal{AH}( \Omega _{D})$};
	\draw (330.93,172.4) node [anchor=north west][inner sep=0.75pt]    {$\Delta $};
	\draw (461.79,164.4) node [anchor=north west][inner sep=0.75pt]    {$\overline{\mathcal{D}}$};
	\draw (346.23,93.4) node [anchor=north west][inner sep=0.75pt]    {$\mathcal{ACH}_{1}( \Omega _{D})$};
	\draw (503.88,140.4) node [anchor=north west][inner sep=0.75pt]    {$\Delta $};
	\draw (484.19,98.4) node [anchor=north west][inner sep=0.75pt]    {$\mathcal{AH}( \Omega _{D})$};
	\draw (507.97,21.4) node [anchor=north west][inner sep=0.75pt]    {$\mathcal{AP}_{2}( \Omega _{D})$};
	\draw (557.66,263.4) node [anchor=north west][inner sep=0.75pt]    {$\overline{\mathcal{D}}$};
	\draw (299.81,143.4) node [anchor=north west][inner sep=0.75pt]    {$\overline{\mathcal{D}}$};
	\draw (441.98,87.4) node [anchor=north west][inner sep=0.75pt]    {$\mathcal{D}$};
	\draw (577.29,124.4) node [anchor=north west][inner sep=0.75pt]    {$\overline{\mathcal{D}}$};
	\draw (444.5,41.4) node [anchor=north west][inner sep=0.75pt]    {$\overline{\mathcal{D}}$};
	\draw (603.18,90.4) node [anchor=north west][inner sep=0.75pt]    {$\mathcal{D}$};
	\draw (119,211.4) node [anchor=north west][inner sep=0.75pt]    {$\mathcal{SH}( \Omega _{D})$};
	\draw (64,197.4) node [anchor=north west][inner sep=0.75pt]    {$T_{FS1}$};
	\draw (449.97,253.4) node [anchor=north west][inner sep=0.75pt]    {$\overline{\mathcal{D}^{2}}$};

\end{tikzpicture}
}
\end{figure}
If we apply fist the conjugate of the Dirac operator we get
\begin{figure}[H]
\centering
\resizebox{0.95\textwidth}{!}{%
\tikzset{every picture/.style={line width=0.75pt}} 

\begin{tikzpicture}[x=0.75pt,y=0.75pt,yscale=-1,xscale=1]
	
	\draw    (59.22,327.04) -- (107.64,327.35) ;
	\draw [shift={(109.64,327.37)}, rotate = 180.37] [color={rgb, 255:red, 0; green, 0; blue, 0 }  ][line width=0.75]    (10.93,-3.29) .. controls (6.95,-1.4) and (3.31,-0.3) .. (0,0) .. controls (3.31,0.3) and (6.95,1.4) .. (10.93,3.29)   ;
	\draw    (186,329) -- (250.08,329) ;
	\draw [shift={(252.08,329)}, rotate = 180] [color={rgb, 255:red, 0; green, 0; blue, 0 }  ][line width=0.75]    (10.93,-3.29) .. controls (6.95,-1.4) and (3.31,-0.3) .. (0,0) .. controls (3.31,0.3) and (6.95,1.4) .. (10.93,3.29)   ;
	\draw    (366,327) -- (624.08,327) ;
	\draw [shift={(626.08,327)}, rotate = 180] [color={rgb, 255:red, 0; green, 0; blue, 0 }  ][line width=0.75]    (10.93,-3.29) .. controls (6.95,-1.4) and (3.31,-0.3) .. (0,0) .. controls (3.31,0.3) and (6.95,1.4) .. (10.93,3.29)   ;
	\draw    (290,310) -- (407.29,171.19) ;
	\draw [shift={(408.58,169.67)}, rotate = 130.2] [color={rgb, 255:red, 0; green, 0; blue, 0 }  ][line width=0.75]    (10.93,-3.29) .. controls (6.95,-1.4) and (3.31,-0.3) .. (0,0) .. controls (3.31,0.3) and (6.95,1.4) .. (10.93,3.29)   ;
	\draw    (292.58,342.67) -- (373.48,465) ;
	\draw [shift={(374.58,466.67)}, rotate = 236.52] [color={rgb, 255:red, 0; green, 0; blue, 0 }  ][line width=0.75]    (10.93,-3.29) .. controls (6.95,-1.4) and (3.31,-0.3) .. (0,0) .. controls (3.31,0.3) and (6.95,1.4) .. (10.93,3.29)   ;
	\draw    (309,321) -- (421.88,251.8) ;
	\draw [shift={(423.58,250.75)}, rotate = 148.49] [color={rgb, 255:red, 0; green, 0; blue, 0 }  ][line width=0.75]    (10.93,-3.29) .. controls (6.95,-1.4) and (3.31,-0.3) .. (0,0) .. controls (3.31,0.3) and (6.95,1.4) .. (10.93,3.29)   ;
	\draw    (508,257.5) -- (649.7,309.07) ;
	\draw [shift={(651.58,309.75)}, rotate = 200] [color={rgb, 255:red, 0; green, 0; blue, 0 }  ][line width=0.75]    (10.93,-3.29) .. controls (6.95,-1.4) and (3.31,-0.3) .. (0,0) .. controls (3.31,0.3) and (6.95,1.4) .. (10.93,3.29)   ;
	\draw    (496.58,163.17) -- (665.06,306.87) ;
	\draw [shift={(666.58,308.17)}, rotate = 220.46] [color={rgb, 255:red, 0; green, 0; blue, 0 }  ][line width=0.75]    (10.93,-3.29) .. controls (6.95,-1.4) and (3.31,-0.3) .. (0,0) .. controls (3.31,0.3) and (6.95,1.4) .. (10.93,3.29)   ;
	\draw    (501,151.92) -- (565.58,151.92) ;
	\draw [shift={(567.58,151.92)}, rotate = 180] [color={rgb, 255:red, 0; green, 0; blue, 0 }  ][line width=0.75]    (10.93,-3.29) .. controls (6.95,-1.4) and (3.31,-0.3) .. (0,0) .. controls (3.31,0.3) and (6.95,1.4) .. (10.93,3.29)   ;
	\draw    (496.58,145.17) -- (607,59.89) ;
	\draw [shift={(608.58,58.67)}, rotate = 142.32] [color={rgb, 255:red, 0; green, 0; blue, 0 }  ][line width=0.75]    (10.93,-3.29) .. controls (6.95,-1.4) and (3.31,-0.3) .. (0,0) .. controls (3.31,0.3) and (6.95,1.4) .. (10.93,3.29)   ;
	\draw    (599.58,166.67) -- (676.63,307.91) ;
	\draw [shift={(677.58,309.67)}, rotate = 241.39] [color={rgb, 255:red, 0; green, 0; blue, 0 }  ][line width=0.75]    (10.93,-3.29) .. controls (6.95,-1.4) and (3.31,-0.3) .. (0,0) .. controls (3.31,0.3) and (6.95,1.4) .. (10.93,3.29)   ;
	\draw    (647.58,60.67) -- (685.28,309.69) ;
	\draw [shift={(685.58,311.67)}, rotate = 261.39] [color={rgb, 255:red, 0; green, 0; blue, 0 }  ][line width=0.75]    (10.93,-3.29) .. controls (6.95,-1.4) and (3.31,-0.3) .. (0,0) .. controls (3.31,0.3) and (6.95,1.4) .. (10.93,3.29)   ;
	\draw    (462,469.42) -- (536.58,469.42) ;
	\draw [shift={(538.58,469.42)}, rotate = 180] [color={rgb, 255:red, 0; green, 0; blue, 0 }  ][line width=0.75]    (10.93,-3.29) .. controls (6.95,-1.4) and (3.31,-0.3) .. (0,0) .. controls (3.31,0.3) and (6.95,1.4) .. (10.93,3.29)   ;
	\draw    (604.58,446.67) -- (654.7,344.46) ;
	\draw [shift={(655.58,342.67)}, rotate = 116.12] [color={rgb, 255:red, 0; green, 0; blue, 0 }  ][line width=0.75]    (10.93,-3.29) .. controls (6.95,-1.4) and (3.31,-0.3) .. (0,0) .. controls (3.31,0.3) and (6.95,1.4) .. (10.93,3.29)   ;
	\draw    (416,454.42) -- (636.8,342.57) ;
	\draw [shift={(638.58,341.67)}, rotate = 153.14] [color={rgb, 255:red, 0; green, 0; blue, 0 }  ][line width=0.75]    (10.93,-3.29) .. controls (6.95,-1.4) and (3.31,-0.3) .. (0,0) .. controls (3.31,0.3) and (6.95,1.4) .. (10.93,3.29)   ;
	
	\draw (15.09,316.97) node [anchor=north west][inner sep=0.75pt]  [rotate=-0.38] [align=left] {$\displaystyle \mathcal{O}( D)$};
	\draw (66.72,298.35) node [anchor=north west][inner sep=0.75pt]  [rotate=-0.38]  {$T_{FS1}$};
	\draw (116.07,316.79) node [anchor=north west][inner sep=0.75pt]  [rotate=-0.38]  {$\mathcal{SH}( \Omega _{D})$};
	\draw (203,304.9) node [anchor=north west][inner sep=0.75pt]    {$\overline{\mathcal{D}}$};
	\draw (256,318.9) node [anchor=north west][inner sep=0.75pt]    {$\mathcal{APC}_{(}{}_{1,2) \ }( \Omega _{D})$};
	\draw (633,316.4) node [anchor=north west][inner sep=0.75pt]    {$\mathcal{AM}( \Omega _{D})$};
	\draw (471,303.4) node [anchor=north west][inner sep=0.75pt]    {$\Delta \mathcal{D}$};
	\draw (295,252.9) node [anchor=north west][inner sep=0.75pt]    {$\mathcal{D}$};
	\draw (295,387.9) node [anchor=north west][inner sep=0.75pt]    {$\overline{\mathcal{D}}$};
	\draw (346,267.9) node [anchor=north west][inner sep=0.75pt]    {$\Delta $};
	\draw (424,240.9) node [anchor=north west][inner sep=0.75pt]    {$\mathcal{AP}_{2}( \Omega _{D})$};
	\draw (574,260.9) node [anchor=north west][inner sep=0.75pt]    {$\mathcal{D}$};
	\draw (381,458.9) node [anchor=north west][inner sep=0.75pt]    {$\mathcal{AP}_{3}( \Omega _{D})$};
	\draw (405,143.9) node [anchor=north west][inner sep=0.75pt]    {$\mathcal{ACH}_{1}( \Omega _{D})$};
	\draw (566,206.32) node [anchor=north west][inner sep=0.75pt]    {$\Delta $};
	\draw (534,129.9) node [anchor=north west][inner sep=0.75pt]    {$\mathcal{D}$};
	\draw (510,95.9) node [anchor=north west][inner sep=0.75pt]    {$\overline{\mathcal{D}}$};
	\draw (600,33.9) node [anchor=north west][inner sep=0.75pt]    {$\mathcal{AP}_{2}( \Omega _{D})$};
	\draw (569,143.4) node [anchor=north west][inner sep=0.75pt]    {$\mathcal{AH}( \Omega _{D})$};
	\draw (621,187.9) node [anchor=north west][inner sep=0.75pt]    {$\overline{\mathcal{D}}$};
	\draw (671,138.9) node [anchor=north west][inner sep=0.75pt]    {$\mathcal{D}$};
	\draw (552,458.82) node [anchor=north west][inner sep=0.75pt]    {$\mathcal{AP}_{2}( \Omega _{D})$};
	\draw (491,444.9) node [anchor=north west][inner sep=0.75pt]    {$\mathcal{D}$};
	\draw (643,399.9) node [anchor=north west][inner sep=0.75pt]    {$\mathcal{D}$};
	\draw (495,378.9) node [anchor=north west][inner sep=0.75pt]    {$\mathcal{D}^{2}$};

\end{tikzpicture}
}
\end{figure}

Finally, all the other possible fine structures are given by the diagram:
\begin{figure}[H]
\centering
\resizebox{0.95\textwidth}{!}{%
\tikzset{every picture/.style={line width=0.75pt}} 

\begin{tikzpicture}[x=0.75pt,y=0.75pt,yscale=-1,xscale=1]
	
	\draw    (62.22,182.04) -- (110.64,182.35) ;
	\draw [shift={(112.64,182.37)}, rotate = 180.37] [color={rgb, 255:red, 0; green, 0; blue, 0 }  ][line width=0.75]    (10.93,-3.29) .. controls (6.95,-1.4) and (3.31,-0.3) .. (0,0) .. controls (3.31,0.3) and (6.95,1.4) .. (10.93,3.29)   ;
	\draw    (295.3,168.75) -- (387.6,111.8) ;
	\draw [shift={(389.3,110.75)}, rotate = 148.32] [color={rgb, 255:red, 0; green, 0; blue, 0 }  ][line width=0.75]    (10.93,-3.29) .. controls (6.95,-1.4) and (3.31,-0.3) .. (0,0) .. controls (3.31,0.3) and (6.95,1.4) .. (10.93,3.29)   ;
	\draw    (301.24,193.67) -- (373.92,270.3) ;
	\draw [shift={(375.3,271.75)}, rotate = 226.51] [color={rgb, 255:red, 0; green, 0; blue, 0 }  ][line width=0.75]    (10.93,-3.29) .. controls (6.95,-1.4) and (3.31,-0.3) .. (0,0) .. controls (3.31,0.3) and (6.95,1.4) .. (10.93,3.29)   ;
	\draw    (190.24,181.67) -- (245.08,181.67) ;
	\draw [shift={(247.08,181.67)}, rotate = 180] [color={rgb, 255:red, 0; green, 0; blue, 0 }  ][line width=0.75]    (10.93,-3.29) .. controls (6.95,-1.4) and (3.31,-0.3) .. (0,0) .. controls (3.31,0.3) and (6.95,1.4) .. (10.93,3.29)   ;
	\draw    (344,186) -- (503.08,186) ;
	\draw [shift={(505.08,186)}, rotate = 180] [color={rgb, 255:red, 0; green, 0; blue, 0 }  ][line width=0.75]    (10.93,-3.29) .. controls (6.95,-1.4) and (3.31,-0.3) .. (0,0) .. controls (3.31,0.3) and (6.95,1.4) .. (10.93,3.29)   ;
	\draw    (474.3,111.75) -- (525.65,162.02) ;
	\draw [shift={(527.08,163.42)}, rotate = 224.39] [color={rgb, 255:red, 0; green, 0; blue, 0 }  ][line width=0.75]    (10.93,-3.29) .. controls (6.95,-1.4) and (3.31,-0.3) .. (0,0) .. controls (3.31,0.3) and (6.95,1.4) .. (10.93,3.29)   ;
	\draw    (454.3,276.75) -- (532.62,203.78) ;
	\draw [shift={(534.08,202.42)}, rotate = 137.03] [color={rgb, 255:red, 0; green, 0; blue, 0 }  ][line width=0.75]    (10.93,-3.29) .. controls (6.95,-1.4) and (3.31,-0.3) .. (0,0) .. controls (3.31,0.3) and (6.95,1.4) .. (10.93,3.29)   ;
	\draw    (171.3,167.75) -- (382.39,100.36) ;
	\draw [shift={(384.3,99.75)}, rotate = 162.29] [color={rgb, 255:red, 0; green, 0; blue, 0 }  ][line width=0.75]    (10.93,-3.29) .. controls (6.95,-1.4) and (3.31,-0.3) .. (0,0) .. controls (3.31,0.3) and (6.95,1.4) .. (10.93,3.29)   ;
	\draw    (176,198) -- (371.46,280.97) ;
	\draw [shift={(373.3,281.75)}, rotate = 203] [color={rgb, 255:red, 0; green, 0; blue, 0 }  ][line width=0.75]    (10.93,-3.29) .. controls (6.95,-1.4) and (3.31,-0.3) .. (0,0) .. controls (3.31,0.3) and (6.95,1.4) .. (10.93,3.29)   ;
	
	\draw (18.09,173.97) node [anchor=north west][inner sep=0.75pt]  [rotate=-0.38] [align=left] {$\displaystyle \mathcal{O}( D)$};
	\draw (69.72,155.35) node [anchor=north west][inner sep=0.75pt]  [rotate=-0.38]  {$T_{FS1}$};
	\draw (119.07,173.79) node [anchor=north west][inner sep=0.75pt]  [rotate=-0.38]  {$\mathcal{SH}( \Omega _{D})$};
	\draw (299.64,216.53) node [anchor=north west][inner sep=0.75pt]  [rotate=-0.38]  {$\mathcal{D}$};
	\draw (312.89,126.48) node [anchor=north west][inner sep=0.75pt]  [rotate=-0.38]  {$\mathcal{\overline{D}}$};
	\draw (515,170.48) node [anchor=north west][inner sep=0.75pt]    {$\mathcal{AM}( \Omega _{D})$};
	\draw (381,266.4) node [anchor=north west][inner sep=0.75pt]    {$\mathcal{AH}( \Omega _{D})$};
	\draw (205,162.4) node [anchor=north west][inner sep=0.75pt]    {$\Delta $};
	\draw (250,172.4) node [anchor=north west][inner sep=0.75pt]    {$\mathcal{ACH}_{1}( \Omega _{D}) \ $};
	\draw (418,164.4) node [anchor=north west][inner sep=0.75pt]    {$\Delta $};
	\draw (393,94.4) node [anchor=north west][inner sep=0.75pt]    {$\mathcal{AP}_{2} \ ( \Omega _{D})$};
	\draw (516.64,117.53) node [anchor=north west][inner sep=0.75pt]  [rotate=-0.38]  {$\mathcal{D}$};
	\draw (513.89,233.48) node [anchor=north west][inner sep=0.75pt]  [rotate=-0.38]  {$\mathcal{\overline{D}}$};
	\draw (230,241.4) node [anchor=north west][inner sep=0.75pt]    {$\Delta \mathcal{D}$};
	\draw (246,103.4) node [anchor=north west][inner sep=0.75pt]    {$\Delta \overline{\mathcal{D}}$};

\end{tikzpicture}
}
\end{figure}

\end{document}